\documentclass[a4paper;11pt]{amsart}
\usepackage{mathrsfs}\usepackage{graphicx}
 \usepackage[arrow,matrix]{xy}
\usepackage{amsmath,amssymb,amscd,bbm,amsthm,mathrsfs}
\usepackage{color,xcolor}
\usepackage{graphicx}
\usepackage{manfnt}

 \usepackage{soul} 

\newtheorem{thm}{Theorem}[section]
\newtheorem{lem}{Lemma}[section]
\newtheorem{cor}{Corollary}[section]
\newtheorem{prop}{Proposition}[section]
\newtheorem{rem}{Remark}[section]

\theoremstyle{definition}

\begin{document}
\numberwithin{equation}{section}

 \title[ Characterizations of  Hardy spaces    on    tube domains   ]{  Characterizations of Hardy spaces on tube domains over polyhedral cones  }
 
\author {Zunwei Fu$^a$, Loukas Grafakos$^b$, Wei Wang$^c$ and Qingyan Wu$^a$}

\thanks{\textit{2020 Mathematics Subject Classification:} 42B30, 42B35, 42B25, 32A35}

\thanks{ \textit {Keywords:} Hardy space, tube domain with polyhedral cone, iterated Poisson  integral, twisted  rectangle,  Lusin-Littlewood-Paley area function,
  Fefferman-Stein type good-$\lambda$ inequality, non-tangential maximal function}

\thanks{$a$ Department of Mathematics,
         Linyi University,
         Shandong  276005, China, Email: fuzunwei@lyu.edu.cn (Z. Fu), wuqingyan@lyu.edu.cn (Q. Wu);}
     \thanks{$ b$    Department of Mathematics, University of Missouri, Columbia, MO 65211, USA),  Email:  grafakosl@missouri.edu;}
         \thanks{$ c$ Department of Mathematics, Zhejiang University, Zhejiang 310058, China, Email: wwang@zju.edu.cn.}

\begin{abstract}  
This paper is devoted to the equivalence of various characterizations of  holomorphic $H^1$ Hardy spaces on tube domains over polyhedral cones. We establish a new iterated Poisson integral formula
 which reproduces holomorphic functions on such domains. However, this formula shows that holomorphic $H^1$ functions have boundary values in a new type of Hardy { space} of real variables on their 
 Shilov boundaries $\mathbb{R}^n$, which cannot be treated by   { 
 standard classical} 
 multi-parameter harmonic analysis. We overcome this difficulty by developing 
 {   techniques  suitably adapted in this setting}. 
Using the iterated Poisson integral as our approximation to the identity, and employing a lifting technique,
we introduce various notions of multi-parameter analysis adapted to tube domains,
 such as twisted rectangles, new non-tangential approach regions, 
non-tangential maximal functions and Littlewood-Paley type functions.
 All these notions exhibit new geometric features associated with polyhedral cones and involve hidden parameters, as in the flag setting.
We develop the necessary { multi-parameter  tools} to investigate these new Hardy spaces.
{ In particular, we apply these tools
to obtain equivalent characterizations of the
 holomorphic $H^1$ Hardy spaces on tube domains  in terms of non-tangential maximal,  Lusin-Littlewood-Paley area and Littlewood-Paley $g$-functions.} 
\end{abstract}

\thanks{The first  author  is supported
by the National Natural Science Foundation  (NNSF) (No.12431006). {The second  author  is supported by Simons Foundation Grant (No.00012681).}  The third author   is supported by
by  NNSF (No.12371082). The fourth  author   is supported by NNSF
(No.12171221) and Taishan Scholars Program for Young Experts of Shandong Province (tsqn202507265).
}

 \maketitle

  \tableofcontents

\section{Introduction }
It is well known that holomorphic functions in the Hardy space $H^p(\mathbb{R}_+^2)$ on the upper half-plane $\mathbb{R}_+^2$ can be characterized by their non-tangential maximal functions or 
Lusin-Littlewood-Paley area functions, and that the real parts of the boundary values of such functions constitute the Hardy space $H^p(\mathbb{R})$ of real-variable functions on $\mathbb{R}$.
 In order to define the Hardy space $H^p(\mathbb{R}^n)$ of real variables on $\mathbb{R}^n$, Stein and Weiss \cite{SW} introduced the notion of a generalized analytic function 
 $u=(u_0,\dots,u_n):\mathbb{R}_+^{n+1}\rightarrow \mathbb{R}^{n+1}$, which satisfies
\begin{equation*}
\sum_{j=0}^n \frac {\partial u_j}{\partial x_j}=0,\qquad
   \frac {\partial u_j}{\partial x_k}=\frac {\partial u_k}{\partial x_j}\quad \text{for } j\neq k,
\end{equation*}
and proved that the Hardy space $H^p(\mathbb{R}_+^{n+1})$ of generalized analytic functions for $p \ge \frac {n-1}{n}$ can also be characterized by non-tangential maximal functions or area functions. The 
components $u_0$ of the boundary values of such functions constitute the Hardy space $H^p(\mathbb{R}^n)$ of
real variables on $\mathbb{R}^n$.

There is a long history of extending the { classical} characterizations of analytic Hardy spaces to more general pseudoconvex domains in several complex variables and of establishing a correspondence 
between the boundary values of analytic Hardy functions and suitable Hardy spaces of real variables on their Shilov boundaries. For the Siegel upper half-space, Geller \cite{Gel} proved that these 
characterizations are equivalent and that the boundary values of holomorphic $H^p$ functions belong to the Hardy space $H^p$ on the Heisenberg group. Such characterizations can be extended to a large class of 
smooth pseudoconvex domains  with smooth boundary (cf. e.g. \cite{KL,Stout}).

However, for pseudoconvex domains with non-smooth boundary, new phenomena
 {appear.}
 Even for the bidisc, there are fundamental difficulties
 {caused by}
 the
 {higher-dimensional}
 geometric complexity.
 {In the 1970s, Malliavin and Malliavin \cite{MM} proved that}
 the maximal function characterization implies the characterization in terms of the Lusin-Littlewood-Paley area function, by introducing a technique
 {based on}
 an auxiliary function. This result was generalized to the $H^p$ theory for the bidisc by Gundy and Stein \cite{GuS}. Biharmonic or holomorphic $H^p$ functions on the bidisc have boundary values in the 
 bi-parameter Hardy space on its Shilov boundary,
 {namely}
 a torus. These results stimulated the development of multi-parameter harmonic analysis (cf. e.g. \cite{CF85,Feff-S,J,LPPW,P}),  {which remains an active field of research.}
 Later, Sato \cite{Sato} extended the theory to the equivalence of characterizations of Hardy
 {spaces} for generalized analytic functions on the product $\mathbb{R}_+^{n_1+1}\times \mathbb{R}_+^{n_2+1}$.

Since general Siegel domains or symmetric spaces have non-smooth boundaries, admissible convergence of Poisson integrals
 {becomes}
 very complicated (cf. e.g. \cite{Kr,Sj,St83}),
 {and the study of}
 holomorphic $H^p$ Hardy spaces is usually restricted to homogeneous Siegel domains and to $p>1$ (cf. e.g. \cite{BS,CP21} and
 {the} references therein). However, in \cite{WW23}, the second and third named authors proved that any holomorphic $H^1$ function on the product of two Siegel upper half-spaces has a boundary value in  {a} 
 bi-parameter Hardy space $H^1$ on its Shilov boundary, which is the product of two Heisenberg groups. By using bi-parameter harmonic analysis (cf. e.g. \cite{CDLWY}), the Cauchy-Szeg\H o projection is used to 
 decompose a holomorphic $H^1$ function into a sum of holomorphic atoms.
In \cite{WW24}, { the authors 
 study} a new class of flag-like singular integral kernels, { that includes the} Cauchy-Szeg\H o kernels on certain Siegel { domains. These kernels are neither of product type nor of flag 
 type on their Shilov boundaries, which have the structure of nilpotent Lie groups of step two, say $\mathscr N$. 
 In \cite{WW24} the   authors also introduce   new Hardy spaces defined by iterated heat kernel integrals on these  groups $\mathscr N$.

In general, holomorphic $H^p$ functions on different domains may have boundary values in different
 {types} of Hardy spaces of real variables on their Shilov boundaries, and one needs to use different
 {kinds} of multi-parameter harmonic analysis to deal with them. It was
 { attested} by Stein \cite{St98} that ``a suitable version of multi-parameter analysis will provide the missing theory of singular integrals needed for a variety of questions in several complex 
 variables. This is indeed an exciting prospect.'' Under this philosophy, Nagel, Ricci and Stein \cite{NRS} introduced and studied an important class of Siegel domains: tube domains over polyhedral cones.
A {\it polyhedral cone} $\Omega$ in $\mathbb{R}^n$ is the interior of the convex hull of a finite number of rays meeting  {at} the origin, among which there exist at least $n$ that are linearly independent. The 
{\it tube domain} over $\Omega$ is the domain in $\mathbb{C}^n$ defined by
\begin{equation*}
  T_\Omega:=\mathbb{R}^n+\mathbf{i}\Omega=\{x+\mathbf{i}y\in \mathbb{C}^n ; x\in \mathbb{R}^n,\  y\in \Omega \},
\end{equation*}
whose Shilov boundary is the Euclidean space $\mathbb{R}^{n}+\mathbf{i}0$.
The holomorphic Hardy space $H^p( T_\Omega )$
consists of all holomorphic functions $F$ on $ T_\Omega $ such that
\begin{equation}\label{eq:H2}
   \|F\|_{H^p( T_\Omega)}:=\left(\sup_{y \in \Omega}\int_{\mathbb{R}^{n}}\left|F(  x +\mathbf{i}y
    )\right|^p\,dx\right)^{1/p}
    <\infty.
\end{equation}
See \cite[Chapter 3]{SW71} for basic facts about $H^p$ Hardy spaces on tube domains.
The Cauchy-Szeg\H o projection $L^2(\mathbb{R}^{n})\rightarrow H^p( T_\Omega)$ has a kernel, called the {\it Cauchy-Szeg\H o kernel}, given by
\begin{equation*}
   C(x+ \mathbf{i }y)=\int_{\Omega^*}
e^{2\pi \mathbf{i}(x+ \mathbf{i } y)\cdot \xi}   \,d\xi,
\end{equation*}
where $\Omega^*:=\{\xi\in \mathbb{R}^n : y\cdot\xi\geq 0,\ y\in \Omega\}$ is the dual cone of $\Omega$.
Nagel, Ricci and Stein \cite{NRS} proved that
 {this kernel} is a sum of flag kernels when restricted to the Shilov boundary. However,  the study of the Hardy spaces associated to sums of flag kernels  is still in its early 
 stage (cf. e.g. \cite{HC} and the references therein).
This paper is devoted to
 {developing}
 the necessary tools of a ``suitable version of multi-parameter analysis'' to investigate new types of Hardy spaces of real variables on the Shilov boundary $\mathbb{R}^{n}$,   to applying them to the holomorphic 
 $H^1$ Hardy spaces on $T_\Omega$,   and  
 to proving the equivalence of various  of its characterizations.

It is well known \cite{FS} that Hardy spaces of real variables are defined in terms of approximations to the identity.
 Thus our first step is to find an integral representation formula for holomorphic functions on $T_\Omega$, which can also be viewed as an approximation to the identity on the Shilov boundary $\mathbb{R}^{n}$. 
  We use this representation  to define new Hardy spaces corresponding to holomorphic $H^1$ Hardy spaces on $T_\Omega$, and { study the latter.}
For simplicity, we restrict
 {ourselves}
 to $H^p( T_\Omega)$ with $p=1$ in this paper, and
 {assume}
 that the polyhedral cone is spanned by unit vectors $e_j$ in $\mathbb{R}^n$, $j=1,\dots,m$, i.e.
\begin{equation*}
    \Omega=\{\lambda_1e_1+\cdots+\lambda_me_m ; \lambda_1,\dots,\lambda_m> 0 \},
\end{equation*}
and
 {that}
 any $n$ of $e_1,\dots,e_m$ are linearly independent,
 {in order to} avoid degeneracy  (see  Figure 1). We must have $m\geq n$. 
  By definition, the polyhedral cone $\Omega$ has the form $\{\lambda y ; \lambda>0,\ y\in \mathcal{C}\}$, where $\mathcal{C}$ is the convex hull of the
 {vectors} $\{e_1,\dots,e_m\}$.
 \begin{figure}[h]
  \centering
\includegraphics[scale=0.5]{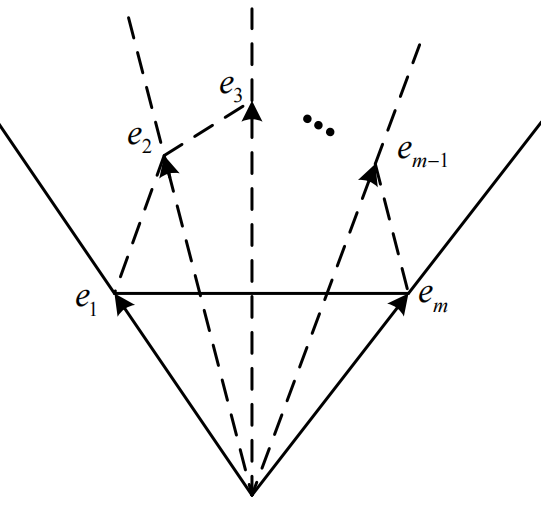}
  \caption{Polyhedral cone}
  \label{fig:fig1} 
\end{figure}

Let $P_t(s)={ \frac1\pi \frac{t}{t^2+s^2}}$ be the standard Poisson kernel on $\mathbb{R}$. Define
 {the} {\it Poisson integral along the line} $e_\mu$ by
\begin{equation}\label{eq:convolution}
   f*_\mu P_t(x):=\int_{\mathbb{R}} f(x-se_\mu)P_t(s)\,ds
\end{equation}
for $x\in \mathbb{R}^n$, and
 {the} {\it iterated Poisson integral} by
\begin{equation}\label{eq:iterated-Poisson-integral}
    { P_{\mathbf{t}} (f)(x)}:= f*_1 P_{t_1}\cdots *_m P_{t_m}(x),
\end{equation}
which is a function of
 {the} variables $(x,\mathbf{t})$ in the space $\mathbb{R}^n\times(\mathbb{R}_+)^m$.  
  We often   write   $P_{\mathbf{t}}(f)(x)$ as $ f*P_{\mathbf{t}} (x)$.     
As in the flag or flag-like cases (cf. e.g. \cite{MRS,WW24}), we consider the lifting space $ \mathbb{R}^m$ and the projection $\pi:\mathbb{R}^m\rightarrow \mathbb{R}^n$ given by
\begin{equation}\label{eq:lifting}
   (\lambda_1,\ldots,\lambda_m )\mapsto \lambda_1e_1+\cdots+\lambda_me_m .
\end{equation}

\begin{thm}  \label{prop:integral-representation}
Suppose that $T_\Omega$ is a tube domain over
 {a} polyhedral cone as above. For $F  \in H^p(T_\Omega)$ with $p>1$, there exists $F^b\in L^p(\mathbb{R}^n)$ such that
  \begin{equation}\label{eq:integral-representation}
 F ( x+\mathbf{i}\pi(\mathbf{t}))=
    F^b*  P_{\mathbf{t}}(x),
 \end{equation}
for
 {all} $( x,\mathbf{t})\in\mathbb{R}^n\times(\mathbb{R}_+)^m$. Moreover, if $F$ is continuous on $\overline{T_\Omega}$, then $
F^b=\lim_{\mathbf{t}\rightarrow 0} F ( x+\mathbf{i}\pi(\mathbf{t}))$.
\end{thm}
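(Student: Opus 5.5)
The plan is to combine the classical Paley--Wiener description of $H^p(T_\Omega)$ (for $1<p\le 2$ first, via Fourier transform) with the one-variable Poisson reproducing formula applied successively along each direction $e_\mu$. The basic observation is that for fixed $x\in\mathbb{R}^n$ and $y\in\Omega$, the function $\zeta\mapsto F(x+\zeta e_\mu+\mathbf{i}y)$ is holomorphic on the upper half-plane $\{\operatorname{Im}\zeta>0\}$, since $y+(\operatorname{Im}\zeta) e_\mu\in\Omega$ whenever $y\in\Omega$ and $\operatorname{Im}\zeta\ge0$ (the cone is convex and invariant under positive dilations, and $e_\mu\in\overline\Omega$). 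So one expects
\begin{equation*}
F(x+\mathbf{i}y+\mathbf{i}t e_\mu)=\int_{\mathbb{R}}F(x-se_\mu+\mathbf{i}y)P_t(s)\,ds ,
\end{equation*}
i.e. $F(\cdot+\mathbf{i}(y+te_\mu))=F(\cdot+\mathbf{i}y)*_\mu P_t$. Iterating over $\mu=1,\dots,m$ starting from $y=\varepsilon y_0$ with a fixed $y_0\in\Omega$ gives
\begin{equation*}
F(x+\mathbf{i}\varepsilon y_0+\mathbf{i}\pi(\mathbf{t}))=F_{\varepsilon}*P_{\mathbf{t}}(x),\qquad F_\varepsilon:=F(\cdot+\mathbf{i}\varepsilon y_0).
\end{equation*}
Then let $\varepsilon\to0$.

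\emph{Step 1 (one-direction formula).} I would justify the slice identity by Fourier transform. By the Paley--Wiener theorem for tubes (Stein--Weiss, Ch.~3), for $p=2$ one has $F(x+\mathbf{i}y)=\int_{\Omega^*}e^{2\pi\mathbf{i}(x+\mathbf{i}y)\cdot\xi}g(\xi)\,d\xi$, and the identity reduces to $\widehat{P_t}(\xi\cdot e_\mu)=e^{-2\pi t|\xi\cdot e_\mu|}=e^{-2\pi t\,\xi\cdot e_\mu}$ because $\xi\cdot e_\mu\ge0$ on $\Omega^*$. For general $p>1$ I would avoid Fourier support issues by working directly with the slice. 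Since $\sup_{y}\|F(\cdot+\mathbf{i}y)\|_{L^p}<\infty$, Fubini shows that for a.e.\ line parallel to $e_\mu$ the restriction of $F(\cdot+\mathbf{i}y')$ lies in $L^p(\mathbb{R})$. A cleaner route is to use that $F(\cdot+\mathbf{i}y)$ for $y\in\Omega$ is smooth and bounded (mean value inequality on polydiscs inside $T_\Omega$ gives $|F(z)|\le C_y\|F\|_{H^p}$ uniformly on $\mathbb{R}^n+\mathbf{i}(y+\overline\Omega)$). Hence the slice $\zeta\mapsto F(x+\zeta e_\mu+\mathbf{i}y)$ is bounded and holomorphic on the upper half-plane and continuous up to the boundary, and it is therefore the Poisson integral of its boundary values. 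This is the classical fact for bounded harmonic functions on $\mathbb{R}^2_+$, which needs no $L^p$ hypothesis.

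\emph{Step 2 (iteration).} Apply Step 1 successively with $y=\varepsilon y_0+t_1e_1+\dots+t_{\mu-1}e_{\mu-1}\in\Omega$, using Fubini, which is justified by boundedness, to commute the convolutions. This gives the displayed formula with $F_\varepsilon$ for every $\varepsilon>0$.

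\emph{Step 3 (boundary limit).} The family $F_\varepsilon$ is bounded in $L^p(\mathbb{R}^n)$, $1<p<\infty$, so a subsequence converges weakly to some $F^b\in L^p$. For fixed $(x,\mathbf{t})$, the map $f\mapsto f*P_{\mathbf{t}}(x)$ is a continuous linear functional on $L^p$ if the iterated kernel is a function in $L^{p'}(\mathbb{R}^n)$. This is the main obstacle, since each $*_\mu P_{t_\mu}$ is a singular measure supported on a line. However, $P_{\mathbf{t}}$ is the pushforward under $\pi$ of the product density $\prod_\mu P_{t_\mu}(s_\mu)$ on $\mathbb{R}^m$. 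Because any $n$ of the $e_\mu$ are independent, one can change variables in $n$ of the coordinates, say $s_1,\dots,s_n$, and obtain a density bounded by $C|\det(e_1,\dots,e_n)|^{-1}\prod_{\mu\le n}t_\mu^{-1}$, which is also integrable; hence it lies in $L^{p'}$ for all $p'$. Meanwhile the left side $F(x+\mathbf{i}\varepsilon y_0+\mathbf{i}\pi(\mathbf{t}))\to F(x+\mathbf{i}\pi(\mathbf{t}))$ by continuity of $F$ in the interior. This yields \eqref{eq:integral-representation}. If $F$ is continuous on $\overline{T_\Omega}$, then $F_\varepsilon\to F(\cdot+\mathbf{i}0)$ pointwise, so $F^b$ equals this boundary function. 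Since $P_{\mathbf{t}}$ is an approximate identity as $\mathbf{t}\to0$, together with continuity this gives $F^b(x)=\lim_{\mathbf{t}\to0}F(x+\mathbf{i}\pi(\mathbf{t}))$.
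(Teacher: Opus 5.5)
Your proof is correct, and its skeleton matches the paper's. You establish a one-direction reproducing identity along each $e_\mu$, iterate it from a shifted base point, and pass to a weak limit in $L^p$. The limit is justified because the push-forward under $\pi$ of $\prod_\mu P_{t_\mu}(s_\mu)\,d\mathbf{s}$ is a bounded integrable density, which is exactly the paper's kernel $\pi(P_{t_1}\cdots P_{t_m})$.

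The genuine difference is how you get the one-direction identity. The paper's route is as follows.
\begin{itemize}
\item It restricts to slices $f_{\varepsilon,x^\perp}\in L^p(\mathbb{R})$ for almost every $x^\perp$.
\item It proves the decay claim \eqref{eq:claim} for both $v_{\varepsilon,x^\perp}$ and the restriction of $F$ outside large rectangles $\mathcal{R}_{2R,b}$. This uses $L^p$-integrability of $F$ over $\mathbb{R}^n\times\{y\in\Omega;|y|<2b\}$ together with mean-value estimates.
\item It applies the maximum principle on those rectangles.
\item It passes from a.e.\ $x^\perp$ to all $x$ by continuity.
\end{itemize}

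You use only two ingredients. The first is the uniform bound $|F|\le C_y\|F\|_{H^p(T_\Omega)}$ on $\mathbb{R}^n+\mathbf{i}(y+\overline{\Omega})$. This also follows from Proposition \ref{prop:upperbound-hardy}: if $y=\pi(\mathbf{r}_0)$ then $y+\overline{\Omega}=\pi(\mathbf{r}_0+[0,\infty)^m)$, and $|R(0,\mathbf{r})|$ is increasing in $\mathbf{r}$. The second is uniqueness for bounded harmonic functions on the half-plane that are continuous up to the boundary, via Schwarz reflection and Liouville.

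What your route buys:
\begin{itemize}
\item It removes all decay estimates and the almost-every-slice bookkeeping.
\item It works on every line, not just almost every one.
\item It makes clear that $p>1$ is used only for weak compactness in Step 3.
\item The same device, with the Phragm\'en--Lindel\"of principle for bounded subharmonic functions on the half-plane, would also give Proposition \ref{prop:subharmonic-ineq}.
\end{itemize}
What the paper's route buys: it only ever invokes the maximum principle on bounded domains, at the price of the decay analysis.

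For the ``moreover'' part, which the paper's proof does not actually treat, your identification $F^b=F(\cdot+\mathbf{i}0)$ is right but needs a line of justification. Continuity on $\overline{T_\Omega}$ gives $F_\varepsilon\to F(\cdot+\mathbf{i}0)$ uniformly on compact sets. Testing the weak limit against $\mathscr C_c(\mathbb{R}^n)$ functions then yields $F^b=F(\cdot+\mathbf{i}0)$ a.e. Your appeal to $P_{\mathbf{t}}$ being an approximate identity is unnecessary, because $\lim_{\mathbf{t}\to0}F(x+\mathbf{i}\pi(\mathbf{t}))=F(x+\mathbf{i}0)$ holds directly by continuity.
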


\begin{rem}
\textup{(1)}  This reproducing formula is unusual in the sense that we have to use the lifting cone $(\mathbb{R}_+)^m$, where a point $y$ in the cone $\Omega$ is represented by $\pi(\mathbf{t})$.
The iterated Poisson integral has {an $m$-parameter structure}, although $\mathbb{R}^n$ is only $n$-dimensional. Namely, there exist $m-n$ hidden parameters.

\textup{(2)}  This formula
 {identifies} the new type of Hardy spaces on $\mathbb{R}^n$ to which the boundary values of holomorphic Hardy functions belong, and
 {indicates}
 what kind of multi-parameter analysis is needed to
 {study}
 them. We will develop
 {the} necessary tools of the corresponding multi-parameter harmonic analysis on the Shilov boundary $\mathbb{R}^n$ to solve problems concerning holomorphic Hardy functions
 {on} the tube domain.
\end{rem}

The iterated Poisson integral is our approximation to the identity; 
 { it allows us to}
 introduce natural notions of multi-parameter analysis adapted to tube domains $T_\Omega$, such as twisted rectangles, new non-tangential regions, non-tangential maximal functions, Lusin-Littlewood-Paley area 
 functions and new types of Hardy spaces.
 {All of these differ} from the standard ones and have new geometric features associated with polyhedral cones.

For $\mathbf{t} :=\left(t_1 ,  \dots ,t_m \right)\in (\mathbb{R}_+)^m$,
the standard rectangle in $\mathbb{R} ^m$ is $\widetilde{R}(0,\mathbf{t})=(-t_1,t_1)\times\cdots\times (-t_m,t_m)$. Its image $\pi(\widetilde{R}(0,\mathbf{t}))$ under the projection $\pi$ in \eqref{eq:lifting} is
denoted by $R(0,\mathbf{t})$, and is called a {\it twisted rectangle} in
$\mathbb{R}^n$. For $x\in \mathbb{R}^n$, let $ R(x,\mathbf{t}):=x+ R(0,\mathbf{t})$. Then
  \begin{equation}\label{eq:twisted-rectangle}
   R(x,\mathbf{t}):=\{x+\lambda_1e_1+\cdots+\lambda_me_m ; |\lambda_1| < t_1, \dots, |\lambda_m| < t_m \}.
 \end{equation}
For $x\in \mathbb{R}^n$ and any $\beta >0$, we define the {\it non-tangential region in $\mathbb{R}^n \times(\mathbb{R}_+)^m$} by
  \begin{equation}\label{eq:nontangential-region}
     \Gamma_\beta( x):=\left\{(x' ,\mathbf{t} )\in \mathbb{R}^n \times(\mathbb{R}_+)^m ; x' \in R\left(x,\beta  \mathbf{t}  \right)
     \right\}.
  \end{equation}
 { We also denote $\Gamma_1( x)$ by $ \Gamma ( x)$ for short.}
 Define the {\it non-tangential maximal function}
on $\mathbb{R}^n$ by
 \begin{equation}\label{eq:nontangential}
  N^\beta( f )(x) := \sup_{(x',\mathbf{t})\in  \Gamma_\beta(x)}   \big| (f*P_{\mathbf{t} })(x' )\big|.
 \end{equation}
The Hardy space $  H^1_{ \max;\Omega} (\mathbb{R}^n)$
consists of
$f\in L^1( \mathbb{R}^n)$ such that $N^\beta( f )\in L^1(\mathbb{R}^n)$.
For two polyhedral cones ${\Omega}, \Omega'$ with
$\overline{\Omega}\subset\Omega'$, we have $H^1 (T_{\Omega'} )\subsetneqq H^1 (T_\Omega)$ and $ H^1_{ \max;\Omega'}(\mathbb{R}^n) \varsubsetneq  H^1_{ \max; \Omega}(\mathbb{R}^n)$
(cf. Proposition \ref{prop:inclusion}). {  If there is no ambiguity, 
we will omit the dependence of  $H^1_{ \max;\Omega}$ on $\Omega$.}

Define the {\it Lusin-Littlewood-Paley area function} of $f$ by
\begin{equation}\label{eq:Littlewood-Paley-area}
   S  (f)(x):=\left(\int_{ \Gamma (x)}\left| \nabla_1 \cdots \nabla_m( f*P_{\mathbf{t} })(x' )  \right|^2 \frac
   {\mathbf{t}\,d\mathbf{t}\,dx'}{|R(x ,\mathbf{t})|}\right)^{1/2},
\end{equation}
for all $x \in \mathbb{R}^n$, where $\nabla_j$ is the gradient in the plane spanned by $e_j$ and $t_j$, and
$
   \mathbf{t}\,d\mathbf{t}=t_1\cdots t_m \,dt_1\cdots dt_m$.
Define the {\it $S$-function Hardy space} { $H^1_{S}(\mathbb{R}^n)$ by setting} 
\[
    H^1_{S}(\mathbb{R}^n):=\{f\in L^1(\mathbb{R}^n) ;  S (f)\in L^1(\mathbb{R}^n)\}
\]
{ and equipped with the norm 
$ \|f\|_{H_{S }^1(\mathbb{R}^n)}:=\|S (f)\|_{L^1(\mathbb{R}^n)} .$ }
The {\it Littlewood-Paley $g$-function} of $f$ is defined by
\begin{equation*}
   g (f)(x):=\left(\int_{(\mathbb{R}_+)^m}\left| \nabla_1 \cdots \nabla_m( f*P_{\mathbf{t} })  \right|^2(x ) \,\mathbf{t}\,d\mathbf{t}
   \right)^{1/2},
\end{equation*}
for all $x \in \mathbb{R}^n$. We define the {\it $g$-function Hardy space}
\[
    H^1_{g }(\mathbb{R}^n):=\{f\in L^1(\mathbb{R}^n) ;  g (f)\in L^1(\mathbb{R}^n)\}
\]
{ and this space is naturally equipped with   the   norm}
  $\|f\|_{H_{g }^1(\mathbb{R}^n)}:=\|g (f)\|_{L^1(\mathbb{R}^n)}$.

Note that the  tube domain   $
T_\Omega=\mathbb{R}^n+\mathbf{i}\Omega $  is different from $\mathbb{R}^n \times(\mathbb{R}_+)^m$, but  $ \Omega=\pi((\mathbb{R}_+)^m )$. If we lift the function $F$ on    $
T_\Omega $  to a function
$
 u(x,\mathbf{t})= F(x  +\mathbf{i}\pi(\mathbf{t})   )
$
on
 $\mathbb{R}^n \times(\mathbb{R}_+)^m$, then by Theorem
 \ref{prop:integral-representation}, $u(x,\mathbf{t})=  F^b*  P_{\mathbf{t}}(x)$ if $  F  \in H^p(T_\Omega)$ with $p>1$. Motivated by  nontangential  maximal function,  area function and $g$-function for  iterated 
 Poisson  integrals above, we
  define the   {\it nontangential  maximal function} of a holomorphic function $F$
   on $T_\Omega$ by
 \begin{equation}\label{eq:nontangential}
  {\mathbb{ N}}^\beta( F )(x) := \sup\limits_{(x',\mathbf{t})\in  \Gamma_\beta(x)}   \left|F(x' +\mathbf{i}\pi(\mathbf{t})   ) \right|,
 \end{equation}for all $x \in \mathbb{R}^n$. { We also define 
the} {\it Lusin-Littlewood-Paley area function}
by
\begin{equation*}
\mathbb{   S  }(F)(x):=\left(\int_{ \Gamma (x)}
\left| \nabla_1 \cdots \nabla_m(F (x'+\mathbf{i}\pi(\mathbf{t})  )  \right|^2 \frac
   {\mathbf{t}d\mathbf{t}dx'}{|R(x ,\mathbf{t})|}\right)^{\frac 12},
\end{equation*}
{ and the}   {\it Littlewood-Paley $g$-function}   by 
    \begin{equation*}
    \mathbb{  {G} }(F) (x):=\left(\int_{(\mathbb{R}_+)^m  }
\left| \nabla_1 \cdots \nabla_m(F (x +\mathbf{i}\pi(\mathbf{t})  )  \right|^2
 { \mathbf{t}}  {d\mathbf{t}  }\right)^{\frac 12}.
\end{equation*}

{ Having established this notation, we are now able to state our main theorem.}

 \begin{thm}\label{thm:equivalence}  Suppose that $T_\Omega$ is a tube domain  over
 {a} polyhedral cone as above. Then, for a  holomorphic  function $F$  on
$ T_\Omega $, the following  properties are equivalent:
\\
\textup{(1)} $F\in H^1( T_\Omega)$;
\\
\textup{(2)} For any $\beta>0$,
  $\mathbb{ N}^\beta(F )\in L^1(\mathbb{R}^n)$;
  \\
\textup{(3)}    $\lim_{\substack {| y| \rightarrow +\infty\\
  y\in y_0+\Omega} } F(x+\mathbf{i} y )= 0 $ for any fixed $y_0\in  \Omega$,  and $\mathbb{ S}
(F)\in L^1(\mathbb{R}^n)$. \\
\textup{(4)}    $\lim_{\substack {| y| \rightarrow +\infty\\
  y\in y_0+\Omega} } F(x+\mathbf{i} y )= 0 $   for any fixed $y_0\in  \Omega$, and $\mathbb{ G}
(F)\in L^1(\mathbb{R}^n)$.
\\
Moreover,
 \begin{equation*}\|F\|_{H^1( T_\Omega)} \approx \|  \mathbb{ N}^\beta(F)\|_{L^1( \mathbb{R}^n)}\approx \| \mathbb{ S} (F)\|_{L^1( \mathbb{R}^n)}\approx
    \| \mathbb{ G} (F)\|_{L^1( \mathbb{R}^n)} .
 \end{equation*}
\end{thm}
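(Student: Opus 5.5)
The plan is to prove the cycle $(1)\Rightarrow(2)\Rightarrow(3)\Rightarrow(1)$ and then attach $(4)$ through $(1)\Leftrightarrow(4)$, using the lifting $u(x,\mathbf{t})=F(x+\mathbf{i}\pi(\mathbf{t}))$ throughout. For $(1)\Rightarrow(2)$, first reduce to Theorem \ref{prop:integral-representation} with $p>1$ by a Stein--Weiss type trick adapted to the product structure. Fix $y_0\in\Omega$ and set $F_{y_0}(z)=F(z+\mathbf{i}y_0)$, which is holomorphic and bounded on $T_\Omega$. Then $|F_{y_0}|^{1/2}$ is plurisubharmonic, so $G=|F_{y_0}|^{1/2}$ satisfies $G(x+\mathbf{i}\pi(\mathbf{t}))\le G^b*P_{\mathbf{t}}(x)$, where $G^b\in L^2$ is the boundary value. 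This uses subharmonicity in each complex line $x+se_\mu+\mathbf{i}t_\mu e_\mu$ and iteration in $\mu=1,\dots,m$, which is exactly the iterated structure of $P_{\mathbf{t}}$. We thus get $\mathbb N^\beta(F_{y_0})\le \big(N^\beta(G^b)\big)^2$. Bound the non-tangential maximal function pointwise by the strong maximal operator along the twisted rectangles, $N^\beta(h)\lesssim_\beta M_\Omega h$, where $M_\Omega h(x)=\sup_{\mathbf{t}}|R(x,\mathbf{t})|^{-1}\int_{R(x,\mathbf{t})}|h|$ (or by iterated one-dimensional Hardy--Littlewood maximal operators along $e_1,\dots,e_m$). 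Since each one-dimensional directional maximal operator is $L^2$-bounded, we get $\|\mathbb N^\beta(F_{y_0})\|_{L^1}\lesssim\|G^b\|_{L^2}^2\le\|F\|_{H^1}$ uniformly in $y_0$. Letting $y_0\to0$ and applying monotone convergence gives $(2)$. The reverse direction $(2)\Rightarrow(1)$ is immediate, since $|F(x+\mathbf{i}\pi(\mathbf{t}))|\le\mathbb N^\beta(F)(x)$ and every $y\in\Omega$ equals $\pi(\mathbf{t})$ for some $\mathbf{t}$.

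For $(2)\Rightarrow(3)$, the decay at infinity follows from the mean value property on polydiscs of radius comparable to $\mathrm{dist}(y,\partial\Omega)\gtrsim |y|$ for $y\in y_0+\Omega$, together with the $L^1$ bound from $(1)$. The area estimate $\|\mathbb S(F)\|_1\lesssim\|\mathbb N(F)\|_1$ is the Merryfield/Gundy--Stein route: an $m$-parameter good-$\lambda$ argument. Let $E=\{\mathbb N^{\beta}(F)\le\lambda\}$ and $\widetilde E=\{M_\Omega\chi_{E^c}<1/2\}$. Integrate $|\nabla_1\cdots\nabla_m u|^2\,\mathbf{t}\,d\mathbf{t}$ over the union of cones $\bigcup_{x\in \widetilde E}\Gamma(x)$. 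Then apply Green's formula $m$ times, once in each half-plane $(e_\mu,t_\mu)$, using that $\Delta_\mu |u|^2=4|\partial_\mu u|^2$ for holomorphic $u$. The conclusion to aim for is $|\{\mathbb S(F)>\lambda\}\cap\widetilde E|\lesssim\lambda^{-2}\int_{\widetilde E}\min(\mathbb N^\beta(F),\lambda)^2$. Because the product of gradients does not factor, boundary terms on the sawtooth region are delicate. I would first try the Malliavin--Gundy--Stein auxiliary-function device: work with $|u|^2$ times a product of smooth cut-offs adapted to twisted rectangles, and reduce to the known multiparameter case via a change of variables on the lifted space $\mathbb{R}^m$. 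This good-$\lambda$/Green step is the main obstacle, because the twisted rectangles are images of genuine rectangles and Journé-type covering for them must be proved on $\mathbb{R}^n$ with the $m-n$ hidden parameters.

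For $(3)\Rightarrow(1)$, the decay hypothesis gives the reproducing identity $u=\int$ of iterated $\partial_{t_\mu}$ derivatives, namely $u(x,\mathbf{t})=\int_{\mathbf{s}>\mathbf{t}}\partial_{s_1}\cdots\partial_{s_m}u\,d\mathbf{s}$ up to sign. I would then prove $\|F_{y_0}\|_{L^1}\lesssim\|\mathbb S(F)\|_1$ by duality with the product BMO adapted to twisted rectangles, via a Carleson-measure/tent-space estimate (Chang--Fefferman style). Alternatively, I would use an atomic decomposition of the tent function $\nabla_1\cdots\nabla_m u$ over open sets and the strong-type bound for $M_\Omega$. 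The $g$-function statement $(4)$ is handled similarly. The bound $\mathbb G\lesssim$ a pointwise combination of $\mathbb S$-type functions at larger aperture follows from subharmonicity of $|\nabla_1\cdots\nabla_m u|^2$ in each variable pair, giving $\|\mathbb G(F)\|_1\lesssim\|\mathbb S_{\beta}(F)\|_1\lesssim\|F\|_{H^1}$, where the aperture change is controlled by the same good-$\lambda$ machinery. For $(4)\Rightarrow(1)$, the same reproducing/duality argument works with $\mathbb G$ in place of $\mathbb S$, since the pairing $\int u\bar v$ is a vertical integral and only uses $g$-type square functions.
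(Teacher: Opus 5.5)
The genuine gap is in the direction (4)$\Rightarrow$(1). The central step $\|\mathbb S(F)\|_1\lesssim\|\mathbb N^\beta(F)\|_1$ is also left open.

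\textbf{What is fine.} Your (1)$\Rightarrow$(2) is essentially the paper's argument: Proposition~\ref{prop:subharmonic-ineq} with $q=1/2$, the $L^2$-bound for $M_{it}$, and Fatou as $\boldsymbol\varepsilon\to0$. Your observation that (2)$\Rightarrow$(1) is immediate is correct.

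\textbf{The gap at (4)$\Rightarrow$(1).} You assert that the duality/Carleson argument works with $\mathbb G$ in place of $\mathbb S$ because the pairing is a vertical integral. That does not work. The tent-space estimate $\iint|a|\,|b|\,d\mu\lesssim\|\mathcal A(a)\|_{L^1}\sup_U\big(|U|^{-1}\int_{\widehat U}|b|^2\,d\mu\big)^{1/2}$ needs the conical functional $\mathcal A$. In the stopping time over $\Omega_k=\{\mathcal A(a)>2^k\}$, one bounds $\int_{T_k\setminus T_{k+1}}|a|^2\,d\mu$ by averaging over $R(x,\mathbf t)\setminus\Omega_{k+1}$, which is an area integral; the base point $x$ itself may lie in $\Omega_{k+1}$.

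With the vertical square function $\mathcal V(a)(x)=(\int|a(x,t)|^2\,dt/t)^{1/2}$ the estimate is false for general $a$. Already for $m=n=1$, take $a=A\chi_{[0,\delta]\times[1,2]}$ and the Carleson function $b=\delta^{-1/2}\chi_{[0,\delta]\times[1,2]}$. The pairing is $\approx A\delta^{1/2}$, while $\|\mathcal V(a)\|_1\approx A\delta$.

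So harmonicity must enter through a Plancherel--P\'olya-type inequality, and that is exactly what Section~5 supplies:
\begin{itemize}
\item Proposition~\ref{thm:Plancherel-Polya} gives $\|S_\psi(\mathfrak f)\|_1\lesssim\|g_P(\mathfrak f)\|_1$ for $\mathfrak H$-valued functions on $\mathbb R$ (Proposition~\ref{prop:S<g}).
\item The atomic decomposition gives $\|\mathfrak f\|_{L^1(\mathbb R,\mathfrak H)}\lesssim\|S_\psi(\mathfrak f)\|_1$ (Proposition~\ref{prop:g=L1}).
\item Proposition~\ref{prop:g-L1} iterates this along $e_1,\dots,e_m$ with $\mathfrak H=L^2((\mathbb R_+)^{m-1},d\mathbf t_{\mathfrak m_2}/\mathbf t_{\mathfrak m_2})$. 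This uses that the integration domain $(\mathbb R_+)^m$ of $\mathbb G$ is a product even though $\Gamma(x)$ is not.
\item The truncations $F_{\varepsilon M}$, Theorem~\ref{prop:integral-representation} and Corollary~\ref{cor:g-L1} then give $\|F\|_{H^1}\lesssim\|\mathbb G(F)\|_1$.
\end{itemize}
The paper closes the cycle through (3)$\Rightarrow$(4), a pointwise bound $\mathbb G\lesssim\mathbb S$ with the same aperture (Proposition~\ref{prop:g-S}). It therefore never needs a direct (3)$\Rightarrow$(1).

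Your duality route for (3)$\Rightarrow$(1) looks viable as an alternative. The stopping time needs essentially only the $L^2$-bound of the twisted maximal function. The Carleson bound for $L^\infty$ test functions follows from the $L^2$ theory if you put a compactly supported kernel on that side of a mixed reproducing formula along each $e_\mu$. But you only name this route, and it yields nothing for (4).

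\textbf{The good-$\lambda$ step.} For $\|\mathbb S(F)\|_1\lesssim\|\mathbb N^\beta(F)\|_1$ you point in the right direction: an auxiliary function and Green's formula in one half-plane at a time. However, the decisive part is left open, and the tools you name for it are not what closes it.
\begin{itemize}
\item Lifting to $\mathbb R^m$ does not directly reduce matters to the product theory. The function $F(\pi(\cdot)+\mathbf i\pi(\mathbf t))$ is invariant along the $(m-n)$-dimensional subspace $\ker\pi$, so it lies in no product $H^1$, and $\pi^{-1}(E^c)$ has infinite measure.
\item No Journ\'e-type covering is needed. The paper uses the smooth cut-off $\phi(P_{\mathbf t}\chi_{E_\beta(\lambda)})$ and integrates by parts with Proposition~\ref{prop:Green} in one half-plane $(e_\mu,t_\mu)$ at a time.
\item The cross terms are tracked inductively. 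They produce $|P_{\mathbf t_{\mathfrak j}}f_\varepsilon|^2\Phi_{|\mathfrak j|}(v)^2\Sigma_{\mathfrak j}(v)$ (Lemmas~\ref{lem:estimate0}--\ref{lem:estimate2}, Proposition~\ref{prop:estimate-Fefferman-Stein0}).
\item These terms are $\lesssim\lambda^2|E_\beta(\lambda)^c|$. The reason is that $\Phi_a(v)\neq0$ forces the point into $\widetilde W_\beta$, hence $|P_{\mathbf t_{\mathfrak j}}f_\varepsilon|\le\lambda$; this is where $\beta$ large is used (Proposition~\ref{prop:W}). This is combined with H\"older and the $L^{2l}$-bounds for partial $g$-functions applied to $1-\chi$ (Lemma~\ref{prop:estimate-u2-G-ki}).
\end{itemize}

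\textbf{The decay argument.} Your justification is wrong as stated. The bound $\mathrm{dist}(y,\partial\Omega)\gtrsim|y|$ fails on $y_0+\Omega$: along $y_0+se_\mu$ with $e_\mu$ an edge of $\Omega$, the distance stays $\le|y_0|$, so isotropic polydiscs give no decay. You need the anisotropic discs in $\mathbb C e_\mu$ of radii $r_\mu$, i.e. Proposition~\ref{prop:upperbound-hardy}: $|F(x+\mathbf i\pi(\mathbf r))|\lesssim\|F\|_{H^1}/|R(0,\mathbf r)|$, with $|R(0,\mathbf r)|\to\infty$ as $|y|\to\infty$ in $y_0+\Omega$.
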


\begin{rem}
A key step in our proof is to approximate holomorphic $H^1$ functions by holomorphic $H^p$ functions with $p>1$. For the latter class, we can apply Theorem \ref{prop:integral-representation} to establish a 
correspondence with real-variable $H^p$ functions defined via the iterated Poisson integral on the Shilov boundary. Then, by employing the tools of multi-parameter analysis we have { outlined} and
using the holomorphicity of $F$, we obtain a complete characterization of holomorphic $H^1$ Hardy spaces on
such tube domains. This approach also provides a partial characterization of a new type of Hardy space on the Shilov boundary $\mathbb{R}^n$.
\end{rem}

 For the equivalence of characterizations of Hardy spaces of real variables  in various settings, see, for example,  {\cite{CHW,CCLLO,CDLWY,DLWY,GLY,HLLW,HLS,YZ}}.
 
The proof of Theorem \ref{thm:equivalence}
 {proceeds}
 as follows. For holomorphic functions $F$, we can use the subharmonicity of $|F|^p$ for $p>0$ to prove $(1)\Rightarrow(2)$. The implication $(2)\Rightarrow(3)$ can be reduced to a Fefferman--Stein type 
 good-$\lambda$ inequality: there exist $ C > 0$ and  $\beta > 1$ such that for $f\in L^1( \mathbb{R}^n)$ and all $\lambda> 0$,
\begin{equation}\label{eq:Fefferman-Stein-inequality}
\begin{split}
\left|  \left \{x\in \mathbb{R}^n; { S  }(f)(x)> \lambda \right\}\right|
&\leq C\left| \left \{x\in  \mathbb{R}^n ; {N} ^\beta(f)(x)> \lambda\right\}\right|
+ \frac
C{\lambda^2}\int_{\{ x\in  \mathbb{R}^n ;   {N} ^\beta( f )(x)\leq \lambda\}}  {N} ^\beta( f )(x)^2 \,dx,
 \end{split}
\end{equation}
where $C$ is a constant
independent of $f$ and $\lambda$.
In the one-parameter case, this inequality was proved by Fefferman and Stein \cite{FeffS72, St70} via smooth surface approximation and an application of Green's formula in smooth domains. The main difficulty in 
the bi-parameter case is the lack of such an effective approximation, owing to the higher-dimensional geometric complexity.
To overcome this difficulty for biharmonic functions, Malliavin and Malliavin \cite{MM} introduced an auxiliary function. So far, maximal function
 {characterizations have} been established only in a few cases by proving analogues of \eqref{eq:Fefferman-Stein-inequality}: the $\mathbb{R}^n\times \mathbb{R}^n$ case  by Merryfield \cite{Merry}, the
Muckenhoupt--Stein Bessel operator setting by { Doung, Li, Wick, Yang \cite{DLWY},}
 the multi-parameter flag setting by  { Han, Lee, Li, Wick}  \cite{HLLW,HLLW25},
product stratified Lie groups by { Cowling, Fan, Li, Yan \cite{CLLP},} the flag setting
 {on} the Heisenberg group by  { Chen, Cowling, Lee, Li, Ottazzi \cite{CCLLO}} and the Shilov boundary of products of domains of finite type by Li \cite{L}, etc.
In our {setting},
 the difficulty lies
 {in the fact} that the number of parameters is arbitrary, unlike the aforementioned situations which involve only two. Thus we have to find general inductive formulae to overcome this difficulty.

In the product case $\mathbb{ R}_+^{n_1+1}\times \mathbb{ R}_+^{n_2+1}$, the implication $(3)\Rightarrow(1)$ was proved { by use of} the estimate $\|\mathfrak f \|_{1}   \lesssim \|
S (\mathfrak f)\|_{1}$ for Hilbert space-valued functions $\mathfrak f$
of one variable \cite{Sato}. Because our non-tangential region  $\Gamma_\beta( x)$ in \eqref{eq:nontangential-region}
 {cannot}
 be written as a product, we
 {cannot} { employ} this approach. However, since the domain $(\mathbb{R}_+)^m$ of integration for the $g$-function has a product structure, we add condition (4), which usually does not appear in 
 characterizations of (generalized) analytic Hardy spaces
(cf. e.g. \cite{Gel,Sato,SW}).
The implication $(3)\Rightarrow(4)$ follows from a pointwise control of the $\mathbb{ G}$-function by the $\mathbb{ S}$-function via the mean value formula.
The
 {real-variable} version of $(4)\Rightarrow(1)$ is
\begin{equation}\label{eq:3->1}
  \sup_{\mathbf{t}\in(\mathbb{R}_+)^m} \|f*P_{\mathbf{t}}\|_{L^1(\mathbb{R}^n)}\lesssim \|g(f)\|_{L^1(\mathbb{R}^n)}.
\end{equation}
We first establish a one-dimensional version of \eqref{eq:3->1} for Hilbert space-valued functions by applying the corresponding one-dimensional Plancherel-P\'olya type inequality. Then we use this estimate 
iteratively to prove
\eqref{eq:3->1}.

The paper is organized as follows. In Section 2, we discuss partial convolutions with the Poisson kernel along lines, which satisfy partial Laplace equations. In Section 3, we characterize the geometric shape of 
twisted rectangles and prove the equivalence of iterated and twisted maximal functions. We also
 {obtain} boundary growth estimates for iterated Poisson integrals and for holomorphic $H^p$-functions, respectively, and then show
 {that}
 the iterated Poisson integral reproduces holomorphic functions by using the maximum principle.
In Section 4, the
Fefferman-Stein type good-$\lambda$ inequality \eqref{eq:Fefferman-Stein-inequality} is reduced to the estimate of an integral over $\mathbb{R}^n
\times  (\mathbb{R}_+)^m$
  {by means of}
 an auxiliary function.
 {We use differential identities to estimate the terms appearing in this integral, the new terms arising after these estimates, and in general all terms appearing inductively.}
 In Section 5, we prove a one-dimensional version of \eqref{eq:3->1} for Hilbert space-valued functions, from which we deduce the
 {real-variable} version \eqref{eq:3->1} of $(4)\Rightarrow(1)$. In Section 6, we prove the equivalence of the various characterizations in the main Theorem \ref{thm:equivalence}.
In the appendix, we give the proof of the Plancherel-P\'olya type inequality for Hilbert space-valued functions for the convenience of the readers.

 \section{Preliminaries}
\subsection{Partial convolutions  along  lines and partial Laplace equations}

 Consider the {\it complex line} in $\mathbb{C}^n$  that contains    $e_\mu$:
 \begin{equation}\label{eq: L-mu}
    L_\mu:=\{(s +\mathbf{i}t )e_\mu ; s ,t \in \mathbb{R}\},
 \end{equation}and its upper half plane
 \begin{equation}\label{eq: L-mu+}
    L_\mu^+=\{(s +\mathbf{i}t )e_\mu; s  \in \mathbb{R},t >0\},
 \end{equation}with   boundary $\mathbb{R} e_\mu $ contained in the Shilov bounday $\mathbb{R}^n+\mathbf{i}0$ of $T_\Omega$ .
 Then $x+L_\mu^+$ may be contained in  the boundary of  the  tube domain  for any $x\in \mathbb{R}^n$, because $ e_\mu$ may be contained in $ \partial\Omega$. But we always have $x+iy+L_\mu^+\subset T_\Omega$ for 
 $y\in \Omega$.

  Denote by $  { {*}}_\mu$   the {\it partial convolution on $\mathbb{R}^n$ along the line  $ e_\mu $}, i.e. for
$f\in L^1(\mathbb{R}^n)$ and $ g\in L^1(\mathbb{R} )$,
 \begin{equation}\label{eq:convolution-subgroup} \begin{split}
f*  _\mu g (x   ):
=&\int_{  \mathbb{R} }  f\left(x-se_\mu \right)g ( s) d s
 ,
 \end{split}\end{equation}for almost all $x\in \mathbb{R}^n$. It is obvious that partial  convolutions   along two lines  $ e_\mu $ and $ e_\nu $ are commutative:
  \begin{equation}\label{eq:convolution-commutativity}
    f*  _\mu g  *_\nu h(x   )
=f*  _\nu h *_\mu g (x   ),
 \end{equation} for any  $ g,h\in L^1(\mathbb{R} )$.
 If
 write
 \begin{equation}\label{eq:e-mu}
     e_\mu= { (e_{\mu 1},\dots,e_{\mu n})} \in \mathbb{R}^n,
 \end{equation} 
  { $\mu=1,\dots,m,$}
 then the vector field
 \begin{equation}\label{eq:X_j}
   X_\mu:=\sum_{ k=1 }^n e_{\mu k}  \frac {\partial }{\partial x_k  },
 \end{equation}
  is the derivative along the direction $e_\mu$.  Define the {\it $\mu$-th Laplacian} and the gradient of a function $u$ along the upper half plane $L_\mu^+$ in \eqref{eq: L-mu+} as
 \begin{equation*}
    \triangle_\mu:=X_\mu^2+  \frac {\partial^2 }{\partial t_\mu^2},
\qquad
      \nabla_\mu u:=\left( X_\mu u ,\frac {\partial u }{\partial t_\mu } \right),
   \end{equation*}respectively,
 { $\mu=1,\dots,m$.}
 \begin{prop}\label{prop:triangle-j}  (1) If $F$ is holomorphic on  the  tube domain $  T_\Omega $, then { for fixed $x+\mathbf{i}y\in  T_\Omega $, the restriction of $F$ to} $x+\mathbf{i}y+L_\mu^+$ is  
 holomorphic,
    i.e. $F(x+\mathbf{i}y+ze_\mu)$ is holomorphic in $z\in \mathbb{C}$ for $\operatorname{Im} z>0$.
    \\
    (2) For  $f\in { \mathscr C^2(\mathbb{R}^n)}\cap L^1 (\mathbb{R}^n)$,     we have
    \begin{equation}\label{eq:holomorphic}
       \triangle_\mu [f*_\mu  P_{  t_\mu
     }(x)]=0,
   \qquad
 {\rm   and }
  \qquad
       \nabla_\mu \left[f*_\mu  P_{  t_\mu     }(x)\right]=f*_\mu \widetilde{\nabla}_\mu  P_{  t_\mu     }(x) ,
    \end{equation}
    where $ \widetilde{\nabla}_\mu $ is the gradient in the plane with variables $s_\mu,   t_\mu   $:
     $
       \widetilde{\nabla}_\mu v(  s_\mu,   t_\mu)  =\left(\frac {\partial v}{\partial s_\mu   }(  s_\mu,   t_\mu),\frac {\partial  v}{\partial {  t_\mu     }   }(  s_\mu,   t_\mu)
       \right).
  $
     \end{prop}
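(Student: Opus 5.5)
Part (1) is immediate from the chain rule. Fix $x+\mathbf{i}y\in T_\Omega$ and consider the affine map $z\mapsto x+\mathbf{i}y+ze_\mu$ from $\{\operatorname{Im} z>0\}$ into $\mathbb{C}^n$. It is holomorphic, since its components are $x_k+\mathbf{i}y_k+ze_{\mu k}$. Its image lies in $T_\Omega$: if $z=s+\mathbf{i}t$ with $t>0$, the imaginary part of the image is $y+te_\mu$. Writing $y=\sum_\nu\lambda_\nu e_\nu$ with all $\lambda_\nu>0$, we get $y+te_\mu=\sum_\nu\lambda'_\nu e_\nu$ with $\lambda'_\mu=\lambda_\mu+t>0$ and all other $\lambda'_\nu=\lambda_\nu>0$, so $y+te_\mu\in\Omega$. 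The composition of $F$ with this map is therefore holomorphic in $z$, which proves (1).

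For part (2), the plan is to reduce everything to the classical one-variable fact that the Poisson kernel $P_t(s)$ is harmonic in $(s,t)\in\mathbb{R}^2_+$, and to move derivatives onto the kernel. Set $u(x,t_\mu)=f*_\mu P_{t_\mu}(x)=\int_{\mathbb{R}} f(x-se_\mu)P_{t_\mu}(s)\,ds$.

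\emph{The $t_\mu$-derivative.} Derivatives in $t_\mu$ fall directly on the kernel. To differentiate under the integral, I will use that for $t_\mu$ in a compact subset of $(0,\infty)$ the functions $\partial_t^k P_t(s)$, $k\le 2$, are bounded by $C(1+s^2)^{-1}$. I also need a bound on $f$ along lines, but $f\in L^1(\mathbb{R}^n)$ alone gives no pointwise control. I will therefore assume $f$ is bounded together with its derivatives up to order two; alternatively, one can argue distributionally, or in the sense of almost every $x$ after first integrating in the directions transversal to $e_\mu$. With that in place, dominated convergence gives $\partial_{t_\mu}u=f*_\mu\partial_{t}P_{t_\mu}$ and $\partial_{t_\mu}^2u=f*_\mu\partial_t^2P_{t_\mu}$.

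\emph{The $X_\mu$-derivative.} Note that $X_\mu$ applied to $x\mapsto f(x-se_\mu)$ equals $\frac{d}{dr}f(x+re_\mu-se_\mu)|_{r=0}=-\frac{\partial}{\partial s}[f(x-se_\mu)]$. Hence
\[
X_\mu u=\int_{\mathbb{R}}-\frac{\partial}{\partial s}[f(x-se_\mu)]\,P_{t_\mu}(s)\,ds=\int_{\mathbb{R}} f(x-se_\mu)\,\partial_sP_{t_\mu}(s)\,ds
\]
after integrating by parts in $s$. The boundary terms vanish because $P_t(s)\to0$ as $|s|\to\infty$ and $f$ is bounded along the line. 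This establishes the second identity in \eqref{eq:holomorphic}, namely $\nabla_\mu u=f*_\mu\widetilde\nabla_\mu P_{t_\mu}$. Repeating the same argument gives $X_\mu^2u=f*_\mu\partial_s^2P_{t_\mu}$.

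\emph{Harmonicity.} Combining the two computations,
\[
\triangle_\mu u=f*_\mu(\partial_s^2+\partial_t^2)P_{t_\mu}=0,
\]
since $P_t(s)=\frac1\pi\operatorname{Im}\frac{-1}{s+\mathbf{i}t}$ is harmonic on the upper half plane.

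The main obstacle is justifying the interchange of derivatives and integrals, and the vanishing of the boundary terms, under the stated hypothesis $f\in\mathscr C^2\cap L^1$. I would handle it by first proving the identities for $f\in\mathscr C^2_c$ and then passing to the general case. For the latter, I would either use the bounded-derivative assumption above, or observe that $u$ is a smooth function of $t_\mu$, pair with test functions, move derivatives onto the test function via Fubini, and conclude by Weyl's lemma that $u$ is harmonic.
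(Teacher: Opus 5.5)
Your proposal is correct and follows essentially the same route as the paper. For (1) the paper writes out the chain-rule computation $(\partial_s+\mathbf{i}\partial_t)F(x+\mathbf{i}y+(s+\mathbf{i}t)e_\mu)=\sum_k e_{\mu k}(\partial_{x_k}+\mathbf{i}\partial_{y_k})F=0$. For (2) it uses $\partial_s[f(x-se_\mu)]=-X_\mu f(x-se_\mu)$, integrates by parts to put all derivatives on $P_{t_\mu}$, and invokes harmonicity of the Poisson kernel.

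The only difference is that you justify differentiating under the integral and dropping the boundary terms, via boundedness of $f$ and its derivatives, whereas the paper calls this ``obvious''. That precaution is warranted: under the bare hypothesis $f\in\mathscr C^2\cap L^1$, the line integrals defining $f*_\mu P_{t_\mu}$ need not converge at every point.
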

 \begin{proof} (1) If we write the coordinates $z_1,\dots,z_n$ of the tube domain as $z_j=x_j+\mathbf{i}y_j$  and $z=s+\mathbf{i}t$, then, { for fixed $\mu$ we have} 
  \begin{equation*}\begin{split}
    2\frac {\partial F}{\partial \overline{z}}&=\left(\frac {\partial   }{\partial s}+\mathbf{i}\frac {\partial  }{\partial
    t}\right)[F(x+\mathbf{i}y+(s+\mathbf{i}t)e_\mu)]\\&= { \sum_{k=1}^n}
    e_{\mu k}\frac {\partial F }{\partial
    x_k}(x+\mathbf{i}y+(s+\mathbf{i}t)e_\mu)+\mathbf{i}
    { \sum_{k=1}^n} e_{\mu k}\frac {\partial  F }{\partial y_k}(x+\mathbf{i}y+(s+\mathbf{i}t)e_\mu)\\
    &={ \sum_{k=1}^n}  e_{\mu k}\left(\frac {\partial  F }{\partial
    x_k}+\mathbf{i} \frac {\partial  F}{\partial y_k}\right) \left (x+\mathbf{i}y+(s+\mathbf{i}t)e_\mu\right)=0,
 \end{split}  \end{equation*}
 by using \eqref{eq:e-mu} and the Cauchy-Riemann equation satisfied by $F$.

 (2)   It is obvious that $f*_\mu  P_{  t_\mu     }(x)$   {lies in $ \mathscr C^2 (\mathbb{R}^n\times \mathbb{R}_+)$. } Since
  \begin{equation}\label{eq:directional}
    \frac {\partial}{\partial s} [ f(x-se_\mu)]=-X_\mu f(x-se_\mu),
 \end{equation}by definition \eqref{eq:X_j}, we get
 \begin{equation*}\begin{split}
   \triangle_\mu [f*_\mu  P_{  t_\mu     }(x)]=& \int_{\mathbb{R}} X_\mu^2f(x-se_\mu)P_{  t_\mu     }(s)ds+\int_{\mathbb{R}}  f(x-se_\mu) \frac {\partial^2 }{\partial t_\mu^2} P_{
   t_\mu     }(s)ds\\
   =& \int_{\mathbb{R}} \frac {\partial^2 }{\partial s^2} [ f(x-se_\mu)] P_{  t_\mu     }(s)ds+\int_{\mathbb{R}}  f(x-se_\mu) \frac {\partial^2 }{\partial t_\mu^2} P_{  t_\mu
   }(s)ds\\
   =&  \int_{\mathbb{R}} f(x-se_\mu) \left(\frac {\partial^2 }{\partial s^2}+\frac {\partial^2 }{\partial t_\mu^2} \right)P_{  t_\mu     }(s)ds=0,
 \end{split}  \end{equation*}by integration by parts,  and
 \begin{equation*}\begin{split}
   \nabla_\mu [f*_\mu  P_{  t_\mu     }(x)]=& \left(\int_{\mathbb{R}} X_\mu f(x-se_\mu)P_{  t_\mu     }(s)ds, \int_{\mathbb{R}}  f(x-se_\mu) \frac {\partial  }{\partial t_\mu } P_{
   t_\mu     }(s)ds\right)\\
   =& \left(\int_{\mathbb{R}} -\frac {\partial  }{\partial s } [ f(x-se_\mu)] P_{  t_\mu     }(s)ds,\int_{\mathbb{R}}  f(x-se_\mu) \frac {\partial  }{\partial t_\mu } P_{  t_\mu
   }(s)ds\right) \\
   =&  \int_{\mathbb{R}}  f(x-se_\mu)\widetilde{\nabla}_\mu P_{  t_\mu     }(s)ds.
 \end{split}  \end{equation*}
 The proposition is proved.\end{proof}

Denote $ \mathfrak m :=\{1 ,\dots,m\}$. For a subset $  \mathfrak j=\{ {j_1},\dots, j_a\}$ of $\mathfrak m $, { we introduce the notation} 
 \begin{equation}\label{eq:nabla-j}\mathbf{t}_{\mathfrak j}:=(t_{j_1},\dots, t_{j_a}),\qquad \qquad
   \nabla_{\mathfrak j }v:=\nabla_{j_1}\cdots \nabla_{j_a}v,\qquad \qquad  \mathbf{t}_{\mathfrak j} d\mathbf{t}_{\mathfrak j}:=t_{j_1} \cdots  t_{j_a}d t_{j_1} \cdots d t_{j_a},
\end{equation}
and for  $f\in   L^1 (\mathbb{R}^n)$ and $\phi\in L^1(\mathbb{R})$, { we set} 
 \begin{equation*}
 f *_{\mathfrak j } \phi_{\mathbf{t}_{\mathfrak j }}:=f *_{j_1} \phi_{t_{j_1}}\cdots  *_{j_a} \phi_{t_{j_a}}(x),
 \end{equation*} where $\phi_{  t}(s)= \frac 1t\phi (\frac st)$.
Let
 \begin{equation*}
   \mathfrak m_k=\{k ,\dots,m\}.
 \end{equation*}
 In { view of this notation}, we have $\mathbf{t}_{\mathfrak m}=\mathbf{t} $.
We will write $f * \phi_{\mathbf{t} }$ instead of $f *_{\mathfrak j } \phi_{\mathbf{t}_{\mathfrak j }}$ when $\mathfrak j =\mathfrak m$.

  For a subset ${\mathfrak j} $ of  $ \mathfrak m$ and $f \in L^p(\mathbb{R}^n )$ ($1\leq p<\infty$),  define the  {\it partial   Littlewood-Paley $g$-function} of $f$ by 
\begin{equation*}
   g_{{\mathfrak j}} (f)(x):=\left(\int_{(\mathbb{R}_+)^{|{\mathfrak j}|}}\left| \nabla_{\mathfrak j }(  f *_{\mathfrak j } P_{\mathbf{t}_{\mathfrak j }})(x )  \right|^2\mathbf{t} _{{\mathfrak
   j}}d\mathbf{t}_{{\mathfrak j}}
   \right)^{\frac 12}
    .
\end{equation*}

\begin{prop} \label{thm:g-function}    For    $f \in L
^p
(\mathbb{R}^n)$ with $1<p<\infty$, we have
$
      \|g_{{\mathfrak j}}  (f)\|_p\lesssim \|f\|_p
$.
\end{prop}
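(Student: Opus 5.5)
The plan is to reduce the multi-parameter estimate to a one-parameter, Hilbert-space-valued estimate along a single line, and then peel off one direction at a time by induction on $a=|\mathfrak j|$.

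\emph{One direction.} Fix $\mu$ and a separable Hilbert space $\mathcal H$. For $f\in L^p(\mathbb{R};\mathcal H)$ consider the one-dimensional operator
\[
T f(s,t)=\widetilde\nabla P_t * f(s),
\]
with values in $\mathcal K:=L^2\big(\mathbb{R}_+,t\,dt;\mathbb{C}^2\otimes\mathcal H\big)$. I would show $\|Tf\|_{L^p(\mathbb{R};\mathcal K)}\lesssim\|f\|_{L^p(\mathbb{R};\mathcal H)}$ for $1<p<\infty$ by vector-valued Calder\'on--Zygmund theory.
\begin{itemize}
\item \emph{$L^2$ bound.} By Plancherel,
\[
\int_0^\infty \big|\widehat{\widetilde\nabla P_t}(\xi)\big|^2\,t\,dt=\int_0^\infty c\,|\xi|^2e^{-4\pi t|\xi|}\,t\,dt=c'
\]
is a constant independent of $\xi\neq0$. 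Since the kernel acts scalarly, tensoring with $\mathrm{id}_{\mathcal H}$ gives the $L^2(\mathcal H)\to L^2(\mathcal K)$ bound with the same constant.
\item \emph{Kernel estimates.} The kernel $K(s)=\{\widetilde\nabla P_t(s)\}_{t>0}$ satisfies
\[
\|K(s)\|_{L^2(t\,dt)}\lesssim |s|^{-1},\qquad \|\partial_sK(s)\|_{L^2(t\,dt)}\lesssim |s|^{-2}.
\]
This follows from the bound $|\partial^\alpha\widetilde\nabla P_t(s)|\lesssim t^{-2-|\alpha|}(1+|s|/t)^{-2-|\alpha|}$ and the substitution $t=|s|u$.
\item \emph{Conclusion.} These give the H\"ormander condition, and hence boundedness on $L^p$ for $1<p<\infty$.
\end{itemize}

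\emph{Transfer to $\mathbb{R}^n$.} Choose a complementary subspace $V$ with $\mathbb{R}^n=\mathbb{R}e_\mu\oplus V$ and write $x=v+se_\mu$. By Proposition \ref{prop:triangle-j}(2), $\nabla_\mu(f*_\mu P_{t_\mu})(v+se_\mu)$ is exactly $T$ applied to the one-variable function $s\mapsto f(v+se_\mu)$. Fubini in the linear coordinates $(v,s)$, whose Jacobian is a constant, then gives the vector-valued partial estimate
\[
\|g^{\mathcal H}_{\{\mu\}}(F)\|_{L^p(\mathbb{R}^n)}\lesssim\big\|\,|F|_{\mathcal H}\big\|_{L^p(\mathbb{R}^n)}.
\]

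\emph{Induction.} Write $\mathfrak j=\{j_1\}\cup\mathfrak j'$ and set $\mathcal H=L^2\big((\mathbb{R}_+)^{a-1},\mathbf t_{\mathfrak j'}d\mathbf t_{\mathfrak j'};\mathbb{C}^{2^{a-1}}\big)$. Define $G(x)=\nabla_{\mathfrak j'}(f*_{\mathfrak j'}P_{\mathbf t_{\mathfrak j'}})(x)$ as an $\mathcal H$-valued function, so that $|G(x)|_{\mathcal H}=g_{\mathfrak j'}(f)(x)$. Partial convolutions and derivatives in different directions commute: this is \eqref{eq:convolution-commutativity}, together with the fact that $X_{j_1}$ commutes with $*_\nu$. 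Hence
\[
g_{\mathfrak j}(f)=g^{\mathcal H}_{\{j_1\}}(G).
\]
It follows that
\[
\|g_{\mathfrak j}(f)\|_p\lesssim\|g_{\mathfrak j'}(f)\|_p\lesssim\cdots\lesssim\|f\|_p .
\]

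\emph{Main obstacle.} The delicate point is justifying the interchanges, i.e. that $G$ is a well-defined, strongly measurable $L^p(\mathcal H)$ function to which the vector-valued theorem applies. I would first prove everything for $f$ in a dense class, such as Schwartz functions, where all integrals converge absolutely. Then I would extend to $L^p$ by density, using Fatou's lemma and the sublinearity of $g_{\mathfrak j}$.
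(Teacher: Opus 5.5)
Your proposal is correct and follows essentially the route the paper indicates. The paper omits the details, saying only that the bound follows by repeatedly using the $L^p$ boundedness of the Hilbert-space-valued one-parameter $g$-function, and it iterates one direction at a time in the same way in the proof of Proposition \ref{prop:g-L1}. Your Plancherel computation, kernel bounds, Fubini transfer along $e_\mu$, and the identity $g_{\mathfrak j}(f)=g^{\mathcal H}_{\{j_1\}}(G)$ obtained from the commutativity of partial convolutions and derivatives are all sound.
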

The proof is standard (cf. e.g. \cite{HLS,WW24}) by repeatedly using the $L^p$ boundedness of $g$-function for vector-valued functions.
See also the proof of Proposition \ref{prop:g-L1} for the converse part { when} $p=1$.
  We omit { the} details.

\subsection{A formula of integration by parts } It will be repeatedly used in the proof of     good-$\lambda$ inequality. For fixed $\mu$, we will use coordinates $x=x^\perp+ s e_\mu$, where $x^\perp$ is the part 
of $x$ perpendicular to $e_\mu$.
\begin{prop}\label{prop:Green}  Suppose that $U\in { \mathscr C^2}(\mathbb{R}^{n +1}_+)$ satisfies { the following three properties:} \\
\textup{(1)} For fixed $t_\mu>0$, we have $    \nabla^{a}  U (\cdot,t_\mu)  \in L^1(\mathbb{R}^n ) $, $a=0,1,2$, and   $ X_\mu  U(x^\perp+se_\mu,t_\mu) \rightarrow 0$    as $ s\rightarrow
\infty$ for almost all $x^\perp \in e_\mu^\perp$;
\\
\textup{(2)}  As $t_\mu\rightarrow  +\infty $,  we have $  U(\cdot,t_\mu),  t_\mu \partial_{t_\mu} U(\cdot,t_\mu) \rightarrow  0$  in $L^1(\mathbb{R}^n)$;
\\
\textup{(3)} As $t_\mu\rightarrow 0 $,  we have $  U(\cdot,t_\mu) \rightarrow U(\cdot,0)$ and $t_\mu\partial_{t_\mu}   U (\cdot,t_\mu) \rightarrow 0$ in $L^1(\mathbb{R}^n)$.
\\ Then
\begin{equation}\label{eq:Green}
  \int_0^\infty\int_{\mathbb{R}^n}  \triangle_\mu U  (x, {t}_\mu)   t_\mu dt_\mu  dx =  \int_{\mathbb{R}^n}    U   (x,0)\, dx.
\end{equation}
\end{prop}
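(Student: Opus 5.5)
The plan is to split $\triangle_\mu = X_\mu^2+\partial_{t_\mu}^2$ and handle the two pieces separately. The tangential part $X_\mu^2 U$ should integrate to zero over each line. The normal part $\partial_{t_\mu}^2 U$, weighted by $t_\mu$, should integrate in $t_\mu$ to the boundary value $U(x,0)$ via the one-dimensional identity
\[
\int_0^\infty \partial_t^2 U\, t\,dt=\bigl[t\partial_t U - U\bigr]_0^\infty .
\]
To keep Fubini legitimate, we first work on a truncated range $\varepsilon<t_\mu<T$ and only then let $\varepsilon\to0$, $T\to\infty$.

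\textbf{Step 1 (tangential term).} Fix $t_\mu>0$ and write $x=x^\perp+se_\mu$, so that $dx = c\,dx^\perp ds$ with a constant Jacobian $c$ (here $|e_\mu|=1$, so $c=1$). Since $X_\mu=\partial_s$ in these coordinates,
\[
\int_{\mathbb{R}} X_\mu^2U(x^\perp+se_\mu,t_\mu)\,ds=\lim_{S\to\infty}\bigl[X_\mu U\bigr]_{s=-S}^{s=S}=0
\]
for almost every $x^\perp$, by hypothesis (1). That hypothesis also gives $X_\mu^2U(\cdot,t_\mu)\in L^1(\mathbb{R}^n)$, so Fubini applies. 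Hence $\int_{\mathbb{R}^n}X_\mu^2U(x,t_\mu)\,dx=0$ for every $t_\mu$. Integrating against $t_\mu\,dt_\mu$ over $[\varepsilon,T]$ still gives $0$.

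\textbf{Step 2 (normal term).} Set $\Phi(t)=\int_{\mathbb{R}^n}U(x,t)\,dx$. For $t\in[\varepsilon,T]$, the $\mathscr C^2$ regularity together with local integrability of $\partial_t U$ and $\partial_t^2 U$ lets us differentiate under the integral sign. (If needed, we argue via integrating in $t$ first and then Fubini on the compact interval.) This gives
\[
\int_\varepsilon^T\!\!\int_{\mathbb{R}^n}\partial_t^2U\,t\,dx\,dt=\Bigl[\int_{\mathbb{R}^n}\bigl(t\partial_tU-U\bigr)(x,t)\,dx\Bigr]_{t=\varepsilon}^{t=T}.
\]
As $T\to\infty$, hypothesis (2) makes the upper term tend to $0$. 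As $\varepsilon\to0$, hypothesis (3) makes the lower term tend to $-\int U(x,0)\,dx$. Combining with Step 1 yields \eqref{eq:Green}, with the left side understood as $\lim_{\varepsilon\to0,\,T\to\infty}\int_\varepsilon^T$.

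\textbf{Main obstacle.} The delicate point is justifying the exchange of integrals and the differentiation of $\Phi$ under the integral, since hypothesis (1) only controls $\nabla^aU(\cdot,t_\mu)$ for each fixed $t_\mu$, not uniformly. To avoid needing that uniformity, I would first integrate $\partial_t^2U\,t$ in $t$ over $[\varepsilon,T]$ pointwise in $x$, using the fundamental theorem of calculus. Then I integrate in $x$, using the $L^1$ bounds at the endpoints $t=\varepsilon,T$. The double integral is then interpreted as this iterated limit, which is how the formula is applied to iterated Poisson integrals later in the paper.
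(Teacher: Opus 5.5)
Your proposal is correct and is essentially the paper's proof: split $\triangle_\mu=X_\mu^2+\partial_{t_\mu}^2$, kill the tangential part by integrating along each line $x^\perp+\mathbb{R}e_\mu$ using hypothesis (1), integrate the normal part by parts in $t_\mu$ over $[\varepsilon,T]$ to get $\bigl[\int_{\mathbb{R}^n}(t_\mu\partial_{t_\mu}U-U)\,dx\bigr]_{\varepsilon}^{T}$, and then let $\varepsilon\to0$, $T\to\infty$ using (2) and (3). One small caveat is that you evaluate the tangential term with the $x$-integral inside but the normal term with the $t_\mu$-integral inside, so adding them back together tacitly uses Fubini on $\mathbb{R}^n\times[\varepsilon,T]$, which needs $L^1$ bounds locally uniform in $t_\mu$ rather than only the fixed-$t_\mu$ bounds in (1); the paper's proof makes exactly the same exchange, and it is harmless in the applications, where such uniform bounds hold.
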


\begin{proof} Write
\begin{equation*}\begin{split}
 \int_\varepsilon^T\int_{\mathbb{R}^n}   \triangle _\mu U  (x, {t}_\mu)  t_\mu dt_\mu  dx = &
 \int_\varepsilon^T\int_{\mathbb{R}^n}   X_{\mu  } ^2   U (x, {t}_\mu)  t_\mu dt_\mu dx  +  \int_\varepsilon^T\int_{\mathbb{R}^n}    \partial^2_{t_\mu}    U (x, {t}_\mu)
t_\mu dt_\mu dx
 =:{\rm I}_1+{\rm I}_2.
\end{split} \end{equation*}
Note that $x\rightarrow (x^\perp, s)$ preserves the volume form since we
assume $e_\mu$ to be a unit vector.
Hence,
\begin{equation*}\begin{split}
\int_{\mathbb{R}^n}   X_{\mu   }^2   U (x,t_\mu) dx&= \int_{\mathbb{R}^{n-1}}  \int_{\mathbb{R} }\frac {d^2}{ds^2}[ U(x^\perp+se_\mu,t_\mu)] \, d s\,  { dx^\perp} \\&= \int_{\mathbb{R}^{n-1}}  
\int_{\mathbb{R} }\left(\lim_{s\rightarrow+\infty} X_{\mu   }U(x^\perp+se_\mu,t_\mu)-\lim_{s\rightarrow-\infty} X_{\mu   }U(x^\perp+se_\mu,t_\mu)\right)  \, d s\,  { dx^\perp} 
=0
\end{split}\end{equation*} by using \eqref{eq:directional} and the assumption (1).
Thus,
$ {\rm I}_1 = 0  $.
For the term ${\rm I}_2$, by integration by parts, we get
\begin{equation*}\begin{split}
  {\rm I}_2&=-\int_{\mathbb{R}^n}   \int_\varepsilon^T \partial_{t_\mu}    U (x, {t}_\mu)
 dt_\mu dx + \int_{\mathbb{R}^n}     T  \partial_{t_\mu} U (x, T)dx
    -\int_{\mathbb{R}^n}  \varepsilon\partial_{t_\mu} U (x,\varepsilon) dx
\\& =  - \int_{\mathbb{R}^n}        U (x, T) dx
    + \int_{\mathbb{R}^n}  U (x,\varepsilon)  dx + \int_{\mathbb{R}^n}  T  \partial_{t_\mu}      U (x, T)dx
  - \int_{\mathbb{R}^n}\varepsilon \partial_{t_\mu}   U (x,\varepsilon) dx.
\end{split} \end{equation*}
Then   taking   limit $T\rightarrow+\infty$ and $\varepsilon\rightarrow0$, we 
{ deduce the required conclusions in view of}
 assumptions (2)  and  (3).
 \end{proof}

\section{New maximal functions associated to twisted  rectangles  and iterated Poisson  integral formula   }

\subsection{Twisted  rectangles,  iterated and twisted
  maximal functions
}

Write $\nabla_\mu=(X^{(1 )}_{\mu},X^{(2) }_{\mu})$, i.e. $X^{(1) }_{\mu}=X_{ \mu}$ and  $X^{(2) }_{\mu}=\partial_{t_ \mu}$. Then $\nabla_{\mathfrak j }v$ in \eqref{eq:nabla-j} is a
vector valued function  with  $\mathbb{R}^{2^a}$ entries, and
their inner product is defined as
  \begin{equation*}
   \nabla_{\mathfrak j }u\cdot\nabla_{\mathfrak j }v:=\sum_{\alpha_1,\dots,\alpha_a=1,2}  X_{j_1 }^{(\alpha_1)}{ \cdots  } X_{j_a }^{(\alpha_a)}u \cdot X_{j_1 }^{(\alpha_1)}{ \cdots  }  
   X_{j_a
   }^{(\alpha_a)}v.
\end{equation*}

 Define {\it maximal function     along the line  $ e_\mu $} as
 \begin{equation*}\begin{split}
   M_{ \mu}  (f)(x):=\sup_{ {r}_\mu\in \mathbb{R} _+} \frac 1{2r_\mu }\int_{
  |s|<r_\mu   }|f (x+s e_\mu)|ds,
 \end{split}\end{equation*}for $x\in \mathbb{R}^n$ and $f\in L^1_{loc}(\mathbb{R}^n) $.
It is convenient to { work with} the {\it iterated maximal function}
$
   M_{it}   := M_m\circ\dots \circ M_1.
$ Then for $f \in  L^1_{loc}(\mathbb{R}^n) $, there exists an absolute constant $C_0>0$ such that
\begin{equation}\label{eq:heat-max}
    |P_{\mathbf{t}}( f )(x') |\leq C_0M_{it}    (f)(x)
\end{equation}for any $(x',\mathbf{t} )\in \Gamma(x)$, by the property of  the Poisson kernel \cite{St70}.
Define the {\it twisted maximal function}   as
\begin{equation*}
   M_t   (f)(x):=\sup_{\mathbf{r}\in (\mathbb{R}_+)^m} \frac 1{|R (x,\mathbf{r})|}\int_{ R (x,\mathbf{r})}|f (x')|dx'.
\end{equation*}

Note that the   projection  of the ball in the lifting space is usually called tube in  the flag or
flag-like  cases (cf. e.g. \cite{CCLLO,WW24}). But in this paper, the   projection
$ R(x,\mathbf{r})$ in \eqref{eq:twisted-rectangle}
 is  called
twisted  rectangle for avoiding confusion with the tube domain.
Given  $\mathbf{r}=(r_1,\dots,r_m)  \in (\mathbb{R}_+)^m $, let $r_{l_1}, \dots,r_{l_n}$ be the largest $n$ numbers among $\{r_{ 1}, \dots,r_{ m}\}$, i.e. $r_j\geq r_\mu$ for any
$j\in \mathfrak l:=\{l_1 ,\dots, l_n\}$ and any $\mu\notin \mathfrak l$.
Set
\begin{equation}\label{eq:Rl}
   R_{\mathfrak l}(x,\mathbf{r}):=\{x+\lambda_{l_1}e_{l_1}+{ \cdots}+\lambda_{l_n} e_{l_n};|\lambda_{l_1}| < r_{l_1}, \dots, |\lambda_{l_n}| < r_{l_n} \},
 \end{equation}
which is a parallelohedron. Note that given  $\mathbf{r}$, the choice of $\mathfrak l$ may not be unique.

On the other hand, given any subset  $\mathfrak l:=\{ l_1 ,\dots, l _n\}$ of $\mathfrak m $,  $e_{l_1},\ldots, e_{l_n}$ are linearly independent by the assumption. So there exists numbers $A^{\mathfrak
l}_{\mu j}$ such that
 $
    e_\mu=\sum_{j=1}^n A^{\mathfrak l}_{\mu j} e_{ l_j},
$
 for $\mu\notin\mathfrak l$. { As} the number of   choices of such subsets $\mathfrak l$ is finite, we can define
  \begin{equation}
 \label{eq:A}
  \mathcal{ A}:= \sum_{\mathfrak l }\sum_{ \mu \notin \mathfrak l }\sum_{   j=1}^n\left|A^{\mathfrak l}_{\mu j}\right| <+\infty .
 \end{equation}

\begin{prop}  \label{prop:twist-R}   Given  $\mathbf{r} \in (\mathbb{R}_+)^m $, let $r_{l_1}, \dots,r_{l_n}$ be the largest $n$ numbers among $\{r_{ 1}, \dots,r_{ m}\}$. Then
\begin{equation} \label{eq:RRR}
   R_{\mathfrak l}(x,\mathbf{r})\subset   R (x,\mathbf{r})\subset  R_{\mathfrak l}(x,\widetilde{\gamma}_0 ^{-1}\mathbf{r}),
\end{equation} for any $x\in \mathbb{ R}^n $, where $\widetilde{\gamma}_0=(1+ \mathcal{ A})^{-1}<1$.
\end{prop}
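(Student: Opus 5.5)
The first inclusion is immediate from the definitions. A point of $R_{\mathfrak l}(x,\mathbf{r})$ has the form $x+\sum_{j=1}^n\lambda_{l_j}e_{l_j}$ with $|\lambda_{l_j}|<r_{l_j}$. We may view it as a point of $R(x,\mathbf{r})$ in \eqref{eq:twisted-rectangle} by taking $\lambda_\mu=0$ for $\mu\notin\mathfrak l$, which satisfies $|\lambda_\mu|=0<r_\mu$.

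For the second inclusion, take $x'=x+\sum_{\mu=1}^m\lambda_\mu e_\mu\in R(x,\mathbf{r})$ with $|\lambda_\mu|<r_\mu$. For each $\mu\notin\mathfrak l$ we rewrite $e_\mu=\sum_{j=1}^nA^{\mathfrak l}_{\mu j}e_{l_j}$, using the linear independence of $e_{l_1},\dots,e_{l_n}$. This gives
\begin{equation*}
x'=x+\sum_{j=1}^n\Big(\lambda_{l_j}+\sum_{\mu\notin\mathfrak l}\lambda_\mu A^{\mathfrak l}_{\mu j}\Big)e_{l_j}=:x+\sum_{j=1}^n\lambda'_{l_j}e_{l_j}.
\end{equation*}
Because $r_{l_1},\dots,r_{l_n}$ are the $n$ largest radii, we have $|\lambda_\mu|<r_\mu\le r_{l_j}$ for every $\mu\notin\mathfrak l$ and every $j$. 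Hence
\begin{equation*}
|\lambda'_{l_j}|<r_{l_j}+r_{l_j}\sum_{\mu\notin\mathfrak l}\big|A^{\mathfrak l}_{\mu j}\big|\le(1+\mathcal A)\,r_{l_j}=\widetilde\gamma_0^{-1}r_{l_j},
\end{equation*}
where the last inequality bounds the inner sum by the full sum $\mathcal A$ in \eqref{eq:A}. This shows $x'\in R_{\mathfrak l}(x,\widetilde\gamma_0^{-1}\mathbf{r})$.

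There is no real obstacle. The one point needing care is the order comparison $r_\mu\le r_{l_j}$, which must hold for each $j$ individually, not just for the largest radius. It does, because every $r_{l_j}$ is at least every non-selected $r_\mu$. Since $\mathcal A$ is defined as a sum over all admissible $\mathfrak l$, the constant $\widetilde\gamma_0$ is uniform, independent of which maximizing index set $\mathfrak l$ is chosen when ties occur.
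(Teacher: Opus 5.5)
Your proof is correct and follows the paper's argument exactly. The first inclusion comes from setting the non-selected coefficients to zero. The second comes from re-expanding each $e_\mu$, $\mu\notin\mathfrak l$, in the basis $e_{l_1},\dots,e_{l_n}$ and using $|\lambda_\mu|<r_\mu\le r_{l_j}$ to obtain the factor $1+\mathcal A=\widetilde\gamma_0^{-1}$. Your placement of the strict inequality is slightly cleaner than the paper's: you correctly attribute it to $|\lambda_{l_j}|<r_{l_j}$, whereas the paper writes $(1+\mathcal A)r_{l_j}<\widetilde\gamma_0^{-1}r_{l_j}$ even though the two sides are equal.
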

 \begin{proof} The first inclusion is obvious by comparing definitions \eqref{eq:twisted-rectangle} and \eqref{eq:Rl}.
 For the second one, note that for any $x'\in  R (x,\mathbf{r})$, we can write
  \begin{equation*} \begin{split}
  x'=&x+\lambda_1e_1+{ \cdots}+\lambda_me_m=x+\sum_{j=1}^n\left(\lambda_{ l_j}+ \sum_{\mu\notin \mathfrak l}  A^{\mathfrak l}_{\mu j}\lambda_{\mu}\right)e_{ l_j} ,
  \end{split}  \end{equation*}
for some $|\lambda_1| <r_\mu\leq r_1, \dots, |\lambda_m| < r_m $. Then
    \begin{equation*}
     \left|\lambda_{ l_j}+ \sum_{\mu=n+1}^m A^{\mathfrak l}_{\mu j}\lambda_{\mu}\right|\leq (1+ \mathcal{ A})r_{ l_j}<\widetilde{\gamma}_0^{-1}r_{ l_j},
  \end{equation*}
    since $|\lambda_{\mu} |\leq  r_{ l_j}$ for $\mu\notin\mathfrak l$ by definition.
  \end{proof}
  As a corollary, we get
    \begin{equation}\label{eq:volum}
     |R (x,\mathbf{r})|  \approx  |R_{\mathfrak l}(x,\mathbf{r})|=2^nr_{l_1}  \cdots  r_{l_n}\det(e_{l_1},\dots, e_{l_n}).
  \end{equation}
Let $\chi   $ be the   characteristic function of the interval $[-1,1]$ and let 
{ $\chi_{t}(s):=\frac 1{t}\chi(\frac sr )$ for $t\in \mathbb R^+$ and $s\in \mathbb R$. Recall that  
$|f|* \chi_{\mathbf{t} } := |f|*_{j_1} \chi_{t_{j_1}}   \cdots *_{j_m }\chi_{t_{j_m}} $ 
for $\mathbf{t} =(t_{j_1}, \dots, t_{j_m})  \in (\mathbb{R}_+)^m $.} 
The iterated maximal function and twisted maximal
function are equivalent by the following proposition.
 \begin{prop} \label{prop:it-twist} Suppose that $ f \in L_{loc}^1( \mathbb{ R}^n )$. Then   for any 
 { $n<m$,} $x \in \mathbb{ R}^n$, and  $\mathbf{t}  \in (\mathbb{R}_+)^m $,
\begin{equation*}
     |f|*  \chi_{\gamma_0 \mathbf{t} }(x) \lesssim \frac 1{| R  (x,\mathbf{t})|}\int_{ R(x,\mathbf{t})}|f (x')|dx' \lesssim
     |f|*  \chi_{\gamma_0 ^{-1} \mathbf{t} }(x),
\end{equation*}where $\gamma_0=(1+ \mathcal{ A})^{-2}$.
Consequently, $M_t   (f) \approx M_{it}  (f)$.
    \end{prop}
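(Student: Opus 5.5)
The idea is to express the iterated average $|f|*\chi_{\mathbf{t}}(x)$ as an average of averages over parallelohedra $R_{\mathfrak l}$, and then compare each of these with $R(x,\mathbf{t})$ using Proposition \ref{prop:twist-R} and \eqref{eq:volum}.

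\textbf{Step 1: disintegration.} By definition,
\[
|f|*\chi_{\mathbf{t}}(x)=\frac{1}{2^m t_1\cdots t_m}\int_{|s_1|<t_1}\cdots\int_{|s_m|<t_m}\Big|f\Big(x-\sum_{\mu=1}^m s_\mu e_\mu\Big)\Big|\,ds .
\]
Let $\mathfrak l=\{l_1,\dots,l_n\}$ index the $n$ largest entries of $\mathbf{t}$. Scaling $\mathbf{t}$ by $\gamma_0^{\pm1}$ does not change $\mathfrak l$.

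Split $s=(s_{\mathfrak l},s')$ with $s'=(s_\mu)_{\mu\notin\mathfrak l}$. For fixed $s'$, the map $s_{\mathfrak l}\mapsto\sum_i s_{l_i}e_{l_i}$ is linear with constant Jacobian $\det(e_{l_1},\dots,e_{l_n})$. Changing variables in the inner integral therefore gives
\[
|f|*\chi_{\mathbf{t}}(x)=\frac{1}{\prod_{\mu\notin\mathfrak l}2t_\mu}\int_{|s_\mu|<t_\mu,\ \mu\notin\mathfrak l}\ \frac{1}{|R_{\mathfrak l}(0,\mathbf{t})|}\int_{R_{\mathfrak l}(x-\pi(0,s'),\mathbf{t})}|f(y)|\,dy\,ds' ,
\]
where $\pi(0,s')=\sum_{\mu\notin\mathfrak l}s_\mu e_\mu$.

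\textbf{Step 2: left inequality.} Take $\mathbf{t}$ replaced by $\gamma_0\mathbf{t}$ in Step 1. Each translate $R_{\mathfrak l}(x-\pi(0,s'),\gamma_0\mathbf{t})$ consists of points $x+\sum_j\lambda_j e_j$ with $|\lambda_j|<\gamma_0 t_j$ for every $j$. Hence it lies in $R(x,\gamma_0\mathbf{t})\subset R(x,\mathbf{t})$.

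By \eqref{eq:volum}, $|R_{\mathfrak l}(0,\gamma_0\mathbf{t})|=\gamma_0^n|R_{\mathfrak l}(0,\mathbf{t})|\approx|R(x,\mathbf{t})|$. So each inner average is $\lesssim |R(x,\mathbf{t})|^{-1}\int_{R(x,\mathbf{t})}|f|$, and averaging over $s'$ gives the first inequality.

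\textbf{Step 3: right inequality.} Put $\mathbf{T}=\gamma_0^{-1}\mathbf{t}=(1+\mathcal A)^2\mathbf{t}$ and apply Step 1 with $\mathbf{T}$. Discard all $s'$ except those with $|s_\mu|<t_\mu$ for $\mu\notin\mathfrak l$. This subset has relative measure $(1+\mathcal A)^{-2(m-n)}$, a fixed constant, so the full average dominates a constant times the average over the subset.

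For such $s'$, I claim $R(x,\mathbf{t})\subset R_{\mathfrak l}(x-\pi(0,s'),\mathbf{T})$. Take $x'=x+\sum_j\lambda_je_j$ with $|\lambda_j|<t_j$. Expanding $e_\mu=\sum_i A^{\mathfrak l}_{\mu i}e_{l_i}$ as in the proof of Proposition \ref{prop:twist-R}, the $e_{l_i}$-coefficient of $x'-x+\pi(0,s')$ is
\[
\lambda_{l_i}+\sum_{\mu\notin\mathfrak l}A^{\mathfrak l}_{\mu i}(\lambda_\mu+s_\mu).
\]
Since $|\lambda_\mu|,|s_\mu|<t_\mu\le t_{l_i}$, this is bounded in absolute value by $(1+2\mathcal A)t_{l_i}\le(1+\mathcal A)^2t_{l_i}=T_{l_i}$, which proves the claim.

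Since also $|R_{\mathfrak l}(0,\mathbf{T})|\approx|R(x,\mathbf{t})|$, each retained inner average is $\gtrsim |R(x,\mathbf{t})|^{-1}\int_{R(x,\mathbf{t})}|f|$. This yields the second inequality.

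\textbf{Step 4: maximal functions.} Taking the supremum over $\mathbf{t}$ gives $M_t(f)\approx\sup_{\mathbf{t}}|f|*\chi_{\mathbf{t}}(x)$. The right-hand side is pointwise comparable to $M_{it}(f)(x)$, since iterated centered one-dimensional averages are bounded by the iterated one-dimensional maximal functions.

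The main obstacle is Step 3. The constant $\gamma_0=(1+\mathcal A)^{-2}$ is not large enough if one integrates over all $|s_\mu|<T_\mu$, because the translated parallelohedra need not then cover $R(x,\mathbf{t})$. Restricting to the sub-box $|s_\mu|<t_\mu$, which costs only a constant factor, is what makes the containment work.
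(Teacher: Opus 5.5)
Your proof of the two displayed inequalities is correct, but it is organized differently from the paper's.

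The paper performs one global shear $\Theta$ on the parameter space $\mathbb{R}^m$ (see \eqref{eq:transformation-Theta}), after which the integrand depends only on $s_1',\dots,s_n'$. It sandwiches $\Theta\bigl(\prod_a[-t_a,t_a]\bigr)$ between the coordinate boxes with sides $\widetilde{\gamma}_0^{\pm1}t_a$ as in \eqref{eq:inclusion}. It then integrates out the $m-n$ hidden variables to get an average over a single centered parallelohedron $R_{\mathfrak l}(x,\widetilde{\gamma}_0^{\pm1}\mathbf{t})$. Finally it uses Proposition \ref{prop:twist-R} to pass from $R_{\mathfrak l}$ to $R$, which is where the second factor in $\gamma_0=\widetilde{\gamma}_0^{2}$ comes from.

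You instead apply Fubini in the original coordinates. The iterated average becomes an average over the hidden variables $s'$ of averages over off-center translates $R_{\mathfrak l}(x-\pi(0,s'),\mathbf{t})$, and you compare each translate with $R(x,\mathbf{t})$ directly in $\mathbb{R}^n$. Your sub-box $|s_\mu|<t_\mu$ plays the role of the inner box in \eqref{eq:inclusion}. This is more elementary and slightly sharper:
\begin{itemize}
\item Step 2 works verbatim with $\gamma_0\mathbf{t}$ replaced by $\mathbf{t}$, so that side needs no dilation.
\item Step 3 only needs $1+2\mathcal{A}\le(1+\mathcal{A})^2$.
\end{itemize}
In exchange, the paper's route produces the change of variables $\Theta$ and the pushed-forward kernel $\pi(P_{t_1}\cdots P_{t_m})$, which it reuses in \eqref{eq:F-pi-P} for Theorem \ref{prop:integral-representation}.

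Step 4, however, proves only one direction. Bounding iterated centered averages by iterated maximal functions gives $\sup_{\mathbf{t}}|f|*\chi_{\mathbf{t}}\le M_{it}(f)$, hence $M_t(f)\lesssim M_{it}(f)$. It gives nothing in the reverse direction, and the reverse pointwise bound is false in general. In $M_m\circ\cdots\circ M_1$ the radius chosen by $M_1$ may depend on the other variables, just as for the iterated versus the strong maximal function.

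For a counterexample, take $n=2$, $m=3$, $e_1,e_2$ the coordinate vectors and $e_3=(e_1+e_2)/\sqrt2$. Let $f=\sum_{k=1}^K\chi_{[2^k,2^{k+1}]\times B_k}$, where $B_1,\dots,B_K$ are consecutive intervals of length $1/(2K)$ partitioning $[1/2,1)$. Then $M_1(f)(0,s_2)=1/4$ for every $s_2\in[1/2,1)$, so $M_{it}(f)(0)\gtrsim 1$. On the other hand, every twisted-rectangle average at $0$ is $O(1/K)$. By Proposition \ref{prop:twist-R} it suffices to check centered parallelograms along each pair of the $e_j$, and each such average is bounded by $|B_k|\cdot(\text{density of }\bigcup_k[2^k,2^{k+1}])$ divided by a side length that must be $\gtrsim 1$.

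The paper asserts the same ``$\approx$'' without argument. It never uses the reverse direction; later it works with $M_{it}$ directly and with the fixed-$\mathbf{t}$ inequalities. So you should state only $M_t(f)\approx\sup_{\mathbf{t}}|f|*\chi_{\mathbf{t}}\lesssim M_{it}(f)$.
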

 \begin{proof}  Without loss of generality, we { may assume that $t_{ 1}, \dots,t_{ n}$ are the largest} $n$ numbers among $\{t_{ 1}, \dots,t_{ m}\}$. The linear  independence of  $e_{1},\ldots, e_{n}$  
 implies that there exists numbers $A_{\mu j}$ such that
  \begin{equation}\label{eq:e-mu-ej}
    e_\mu=\sum_{j=1}^n A_{\mu j} e_{ j},
 \end{equation} for any $\mu=n+1,\dots ,
 m$.
Then
\begin{equation} \label{eq:f-chi}\begin{split}
     |f|*  \chi_{ \mathbf{t} }(x)&=\int_{\mathbb{  R}^m} \left |f\left(x-\sum_{j=1}^ms_j e_j\right)\right|\chi_{t_1}(s_1)\cdots \chi_{t_m}(s_m)ds_1 \cdots ds_m\\
    &=\frac 1{t_1 \cdots  t_m}\int_{[-  t_1, t_1]\times\cdots\times [-  t_m, t_m]} \left |f\left(x-\sum_{j=1}^n\left(s_{ j} +\sum_{\mu=n+1}^m  A_{\mu j}s_{\mu}\right)e_{ j}\right)
    \right  |ds_1 \cdots ds_m.
 \end{split} \end{equation}
 Take the coordinate  transformation $\Theta:\mathbb{R}^m\rightarrow \mathbb{R}^m$, $\mathbf{s}\mapsto \mathbf{s}'$ given by
  \begin{equation}\label{eq:transformation-Theta}
    s_j'=\left\{\begin{array} {ll}  s_j+\sum_{\mu=n+1}^m  A_{\mu j}s_{\mu},\qquad & j=1,\ldots,n,\\ s_{j},\qquad & j=n+1,\ldots,m.
 \end{array}   \right.
 \end{equation}
  It is obvious that $\Theta$ is one to one and its   Jacobian    identically equals to $1$. We claim that
  \begin{equation}\label{eq:inclusion}
    \prod_{a=1}^m [-\widetilde{\gamma}_0 t_a, \widetilde{\gamma }_0 t_a]\subset \Theta\left(  \prod_{a=1}^m [- t_a,   t_a] \right)\subset  \prod_{a=1}^m\left [-\widetilde{\gamma}_0^{-1} t_a,\widetilde{ 
    \gamma}_0^{-1} t_a\right],
 \end{equation} with $\widetilde{\gamma}_0
 =(1+ \mathcal{ A})^{-1}$.
  For the first inclusion, note that for each $  \mathbf{s }'\in  \prod_{a=1}^m [-\widetilde{\gamma}_0 t_a, \widetilde{\gamma}_0 t_a]$, there exists a unique preimage   $\mathbf{s }$ in $\prod_{a=1}^m [- t_a,
  t_a] $, i.e.  $\Theta(\mathbf{s })=\mathbf{s }'$. This is because $ | s_j|\leq t_j$ obviously for $j=n+1,\ldots,m$ by definition \eqref{eq:transformation-Theta}, while for $j=
  1,\ldots,n$, we have
  \begin{equation*}
   | s_j|=\left|s_j'-\sum_{\mu=n+1}^m  A_{\mu j}s_{\mu}'\right|\leq \widetilde{\gamma}_0 t_j+ \mathcal{ A} \widetilde{\gamma}_0 t_j= t_j,
 \end{equation*} by $|s_{\mu}'|\leq \widetilde{\gamma}_0 t_{\mu}\leq \widetilde{\gamma}_0 t_j$ for any $\mu>n\geq j$ by the assumption. It is similar to show the second inclusion.

Now it follows from \eqref{eq:f-chi} and \eqref{eq:inclusion} that
 \begin{equation*} \begin{split}
     |f|*  \chi_{ \mathbf{t} }(x)&\geq \frac 1{t_1 \cdots  t_m}\int_{[-\widetilde{\gamma}_0 t_1,\widetilde{\gamma}_0 t_1 ]\times\cdots\times [-\widetilde{\gamma}_0 t_m,\widetilde{\gamma}_0 t_m ] }
     \left |f\left(x-\sum_{j=1}^n s_{ j} ' e_{ j}\right)\right|ds_1'\cdots  ds_m' \\
     &= \frac {(2\widetilde{\gamma}_0)^{m-n}}{t_1 \cdots  t_n}\int_{[-\widetilde{\gamma}_0 t_1,\widetilde{\gamma}_0 t_1 ]\times\cdots\times [-\widetilde{\gamma}_0 t_n,\widetilde{\gamma}_0 t_n ] }
     \left |f\left(x-\sum_{j=1}^n s_{ j} ' e_{ j}\right)\right|ds_1'\cdots  ds_n' \\&  =\frac { \widetilde{\gamma}_0 ^{m-n}2^m}{ |   R_{\mathfrak l} (0, \mathbf{t})|} \int_{ R_{\mathfrak l}(0,\widetilde{\gamma}_0 
     \mathbf{t}) }| f\left(x-x'\right) |  dx'  \\
     &{ \geq  \frac { \widetilde{\gamma}_0 ^{m-n}2^m}{ |   R_{\mathfrak l} (0, \mathbf{t})|} \int_{ R (0,\widetilde{\gamma}_0^2 \mathbf{t}) }| f\left(x-x'\right) |  dx',}
 \end{split} \end{equation*}  
 by  taking coordinates transformation \eqref{eq:transformation-Theta} and 
 { using}  the  transformation $\mathbb{R}^n\rightarrow  \mathbb{R}^n$ given by
  \begin{equation}\label{eq:coordinates-s-x'}
{    (s_{ 1} ' , \dots , s_{ n} ') } \mapsto x'=\sum_{j=1}^n s_{ j} ' e_{ j} 
  \end{equation} 
  and Proposition \ref{prop:twist-R}.
  The   { Jacobian of this transformation} is $\det(e_{ 1},\dots, e_{ n})\neq0$
since $e_{1},\ldots, e_{n}$ are linearly independent. Similarly,
 \begin{equation*} \begin{split}
   | f*  \chi_{ \mathbf{t} }(x)|&\leq  \frac 1{t_1 \cdots  t_m}\int_{[-\widetilde{\gamma}_0^{-1} t_1,\widetilde{\gamma}_0^{-1} t_1 ]\times\cdots\times [-\widetilde{\gamma}_0^{-1} t_m,\widetilde{\gamma}_0^{-1} t_m 
   ] }
     \left |f\left(x-\sum_{j=1}^n s_{ j} ' e_{ j}\right)\right|ds_1'\cdots  ds_m'
    \\&
   \approx\frac 1{|   R_{\mathfrak l} (0,\mathbf{t})|} \int_{ R_{\mathfrak l}(0,\widetilde{\gamma}_0^{-1}\mathbf{t}) } f\left(x-x'\right)   dx'.
 \end{split} \end{equation*}
 The { required conclusion} follows by applying Proposition \ref{prop:twist-R} .
\end{proof}

\begin{rem} Such equivalence of
    maximal functions is also proved in the flag   case   \cite{CCLLO}  and the flag-like  case   \cite[Proposition 3.2]{WW24}. But the proof given here is   simpler than that in \cite{WW24}.
\end{rem}

\subsection{Boundary growth estimates for   iterated Poisson  integrals and holomorphic $H^p$-functions
}\begin{prop}\label{prop:decay-t}   For $f\in  L^1(\mathbb{R}^n ) $, let $u(x,\mathbf{t}) :=f* P_{ {\mathbf{t}}}$. Then,  for $ b_1,\cdots,  b_m \in \mathbb{N}_0$,
 \begin{equation}\label{eq:L1-f-Pt}\begin{split}
t_1^{ b_1}\cdots t_m^{ b_m}  \left \|\nabla_1^{b_1}\cdots \nabla_m^{b_m}u(\cdot ,\mathbf{t} ) \right\|_ {L^1(\mathbb{R}^n)}& \lesssim \| f  \|_{L^1(\mathbb{R}^n)},
 \\ t_1^{ b_1}\cdots t_m^{ b_m}
\left |  \nabla_1^{b_1}\cdots \nabla_m^{b_m}  u(x,\mathbf{t}) \right|&\lesssim \frac {\| f  \|_{L^1(\mathbb{R}^n)} }{|R(0,\mathbf{ t})|},
\end{split} \end{equation}for $(x,\boldsymbol {t})\in  \mathbb{ R}^n \times(\mathbb{R}_+)^m$.
  \end{prop}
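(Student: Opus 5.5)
The plan is to move every derivative onto the one-dimensional Poisson kernels and then bound the resulting iterated kernel in two ways: in $L^1$ by Young's inequality applied one line at a time, and in sup norm by comparing it with the twisted box of Proposition \ref{prop:it-twist}.

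\textbf{Moving the derivatives.} The partial convolutions along different lines commute by \eqref{eq:convolution-commutativity}, and the vector fields $X_\mu$ have constant coefficients. Hence, exactly as in Proposition \ref{prop:triangle-j}(2), integration by parts gives
\[
\nabla_1^{b_1}\cdots\nabla_m^{b_m}u(x,\mathbf t)=f*_1\widetilde\nabla_1^{b_1}P_{t_1}\cdots *_m\widetilde\nabla_m^{b_m}P_{t_m}(x).
\]
This is justified first for $f\in\mathscr C_c^\infty$. For general $f\in L^1$ it follows by differentiating under the integral sign, which is legitimate because every kernel involved is integrable and smooth in the parameters. Each factor is controlled by the one-dimensional scaling bound
\[
|\widetilde\nabla^{b}P_t(s)|\lesssim t^{-b}\,\frac{t}{t^2+s^2}\lesssim t^{-b}P_t(s),
\]
which holds because $\widetilde\nabla^bP_t(s)=t^{-1-b}(\widetilde\nabla^bP_1)(s/t)$ and $|\widetilde\nabla^b P_1(s)|\lesssim (1+s^2)^{-1-b/2}$.

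\textbf{The $L^1$ estimate.} For fixed $\mu$ and $g\in L^1(\mathbb R^n)$, Fubini gives $\|g*_\mu\phi\|_{L^1(\mathbb R^n)}\le\|g\|_{L^1(\mathbb R^n)}\|\phi\|_{L^1(\mathbb R)}$. Applying this $m$ times, together with $\|\widetilde\nabla^bP_t\|_{L^1(\mathbb R)}\lesssim t^{-b}$, yields the first inequality in \eqref{eq:L1-f-Pt} after multiplying by $t_1^{b_1}\cdots t_m^{b_m}$.

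\textbf{The pointwise estimate.} By the scaling bound,
\[
t_1^{b_1}\cdots t_m^{b_m}\bigl|\nabla_1^{b_1}\cdots\nabla_m^{b_m}u(x,\mathbf t)\bigr|\lesssim |f|*P_{\mathbf t}(x)=\int_{\mathbb R^n}|f(x-y)|\,K_{\mathbf t}(y)\,dy,
\]
where $K_{\mathbf t}$ is the push-forward under $\pi$ of the product measure $\prod_\mu P_{t_\mu}(s_\mu)\,ds_\mu$ on $\mathbb R^m$. It then suffices to show $\|K_{\mathbf t}\|_{L^\infty}\lesssim |R(0,\mathbf t)|^{-1}$. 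This step is the main obstacle: $\pi$ is not injective, so the density requires integrating out the $m-n$ hidden variables.

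To handle it, assume as in Proposition \ref{prop:it-twist} that $t_1,\dots,t_n$ are the $n$ largest parameters. Change variables by the map $\Theta$ of \eqref{eq:transformation-Theta}, which has Jacobian $1$, followed by \eqref{eq:coordinates-s-x'}. This gives
\[
K_{\mathbf t}(y)=\frac{1}{\det(e_1,\dots,e_n)}\int_{\mathbb R^{m-n}}\prod_{j=1}^nP_{t_j}\Bigl(s'_j-\sum_{\mu>n}A_{\mu j}s_\mu\Bigr)\prod_{\mu>n}P_{t_\mu}(s_\mu)\,ds_{n+1}\cdots ds_m,
\]
where $s'$ is determined by $y=\sum_{j\le n}s'_je_j$. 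Bound each $P_{t_j}$ with $j\le n$ by $\|P_{t_j}\|_\infty=(\pi t_j)^{-1}$. The remaining integrals of $P_{t_\mu}$ for $\mu>n$ equal $1$. Therefore
\[
\|K_{\mathbf t}\|_{L^\infty}\lesssim (t_1\cdots t_n)^{-1}\approx |R(0,\mathbf t)|^{-1}
\]
by \eqref{eq:volum}. Young's inequality in the form $\|\,|f|*K_{\mathbf t}\|_\infty\le\|f\|_1\|K_{\mathbf t}\|_\infty$ then gives the second inequality in \eqref{eq:L1-f-Pt}.

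The key point in this last step is to use the sup bound only on the $n$ largest directions and to integrate out the hidden directions. Using the sup bound on all $m$ factors would lose a factor of $(t_{n+1}\cdots t_m)^{-1}$.
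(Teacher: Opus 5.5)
Your proof is correct. The $L^1$ estimate is the same as the paper's: Fubini along each line $e_\mu$, one kernel at a time, with $\|t^b\widetilde\nabla^bP_t\|_{L^1(\mathbb R)}\lesssim 1$. The pointwise estimate, however, follows a genuinely different route.

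The paper never looks at the kernel. It proceeds in four steps:
\begin{itemize}
\item it uses that each component of $\nabla_1^{b_1}\cdots\nabla_m^{b_m}u$ is harmonic in every half-plane $(s_\mu,t_\mu)$;
\item it applies the mean value formula on discs of radius $t_\mu$ in all $m$ half-planes;
\item it turns the resulting spatial average $|\cdot|*\chi_{\mathbf t}$ into an average over a twisted rectangle via Proposition \ref{prop:it-twist};
\item it bounds that average by $|R(0,\mathbf t)|^{-1}$ times the $L^1$ norm already controlled by the first estimate.
\end{itemize}

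You instead dominate the differentiated kernels by Poisson kernels, which gives $t_1^{b_1}\cdots t_m^{b_m}|\nabla_1^{b_1}\cdots\nabla_m^{b_m}u|\lesssim|f|*P_{\mathbf t}$. You then bound the density of the push-forward kernel directly through $\Theta$. This is exactly the $q=\infty$ case of the bound on $\|\pi(P_{t_1}\cdots P_{t_m})\|_{L^q}$ that the paper proves, for another purpose, in the proof of Theorem \ref{prop:integral-representation}.

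Your route has three advantages:
\begin{itemize}
\item it is more elementary;
\item it treats all $(b_1,\dots,b_m)$ uniformly;
\item it yields the stronger pointwise domination by $|f|*P_{\mathbf t}$.
\end{itemize}
On the second point: in the mean-value route, once $b_\mu\ge 1$ the discs must have radius strictly smaller than $t_\mu$, say $t_\mu/2$. Otherwise $t_\mu+t'_\mu$ can approach $0$, and the factor $(t_\mu+t'_\mu)^{-b_\mu}$ coming from the $L^1$ bound is not integrable.

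The paper's route uses only partial harmonicity plus an $L^1$ bound, never the explicit kernel. That is why the same mechanism can be reused for holomorphic $F$ in Proposition \ref{prop:upperbound-hardy}, where no Poisson representation is available yet.

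There are two small inaccuracies.
\begin{itemize}
\item Your intermediate claim $|\widetilde\nabla^bP_1(s)|\lesssim(1+s^2)^{-1-b/2}$ is false when an odd number of the derivatives fall on $t$. For instance, $\partial_tP_t(s)|_{t=1}=\frac{s^2-1}{\pi(1+s^2)^2}$ decays only like $s^{-2}$. In that case the correct decay is $(1+s^2)^{-(1+b)/2}$. Since that case forces $b\ge1$, this is still $\lesssim(1+s^2)^{-1}$, so $|\widetilde\nabla^bP_t(s)|\lesssim t^{-b}P_t(s)$ still holds and nothing downstream changes.
\item Integrability of the one-dimensional kernels is not by itself what licenses differentiating under the integral for $f\in L^1(\mathbb R^n)$. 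What is needed is local boundedness of the push-forward densities of the differentiated kernels. Your own computation of $K_{\mathbf t}$ supplies this. Alternatively, and more simply, use the paper's reduction $u=f_\varepsilon*P_{\mathbf t-\boldsymbol\varepsilon}$ with $f_\varepsilon=f*P_{\boldsymbol\varepsilon}$ smooth.
\end{itemize}
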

\begin{proof} In the sequel we use notation
 $
     \boldsymbol {\varepsilon }:=(\varepsilon,\ldots,\varepsilon)\in (\mathbb{R}_+)^m
$ for $\varepsilon>0$. If  { we} let $f_{
   \varepsilon  }:=f* P_{    \boldsymbol    \varepsilon  }$, then we can write  $u(x,\mathbf{t}) :=f_{
   \varepsilon  }* P_{  \mathbf{t}- \boldsymbol
   \varepsilon  }$ for small $\varepsilon>0$ and $ f_{
   \varepsilon  }$ is smooth. So we { may assume that  $f$ is} smooth.
For fixed $\mu$, we use coordinates $x=x^\perp+ s e_\mu$ as above. Then,
\begin{equation} \label{eq:f*P-L1} \begin{split}
 \left \| f *_\mu P_{t_\mu}\right\|_{L^1(\mathbb{R}^n)}&=\int_{\mathbb{R}^{n -1}}  \int_{\mathbb{R}  }\left|\int_{\mathbb{R}  }f(x^\perp+(s-s')e_\mu)P_{t_\mu}(s') ds'\right| { ds\, dx^\perp } \\
 &=  \int_{\mathbb{R}^{n -1}} \|f(x^\perp+\cdot e_\mu)*P_{t_\mu}\|_{L^1(\mathbb{R})} dx^\perp\\&
\leq  \int_{\mathbb{R}^{n -1}} \|f(x^\perp+\cdot e_\mu)\|_{L^1(\mathbb{R})} \|P_{t_\mu}\|_{L^1(\mathbb{R})} dx^\perp\\
 & =\int_{\mathbb{R}^{n -1}} \|f(x^\perp+\cdot e_\mu)\| _{L^1(\mathbb{R} )}
 dx^\perp=  \| f  \|_{L^1(\mathbb{R}^n)}.
\end{split} \end{equation}
Applying this equality repeatedly, we see that $\|u(\cdot ,\mathbf{t })   \|_{L^1(\mathbb{R}^n)} \leq \| f  \|_{L^1(\mathbb{R}^n)}$. By replacing $ P_{t_\mu}$ by
$t_\mu^{b_\mu}\widetilde{\nabla}_\mu^{b_\mu}P_{t_\mu}$, the above
argument gives us the first estimate in \eqref{eq:L1-f-Pt}, since we also have $\|t_\mu^{b_\mu}\widetilde{\nabla}_\mu^{b_\mu}P_{t_\mu}\|_{L^1(\mathbb{R})}\lesssim 1$.

By Proposition \ref{prop:triangle-j} (2), $u(x + s_\mu' e_\mu ,\mathbf{t}' ) $ is harmonic on $\{ (s_\mu',t_\mu'); t_\mu'>0\}$, which contains the disc $|s_\mu'+\mathbf{i}(t_\mu-t_\mu')|<
t_\mu$. So we can apply the mean value  formula   to { obtain}
  \begin{equation*}
  u(x,\mathbf{t})=\frac 1{ \pi  t_\mu ^2}\int_{|s_\mu'+\mathbf{i}t_\mu'|<  t_\mu} u\left(x+ s_\mu' e_\mu, t_1,\dots, t_\mu+t_\mu',\dots\right) ds_\mu' d t_\mu' .
 \end{equation*}
 Repeating this procedure, we { deduce}
  \begin{equation} \label{eq:mean-value} \begin{split}
     |u(x ,\mathbf{t}  )|&
     \leq \frac 1{ \pi^m t_1^2\cdots t^2_m}\int_{|s_1'+\mathbf{i}t_1'|< t_1} \cdots\int_{|s_m'+\mathbf{i}t_m'|<  t_m}  \left|u \left (x+\sum_{\mu=1}^m s_\mu' e_\mu
     ,\mathbf{t} +\mathbf{t} '  \right)\right| { ds_m'd t_m'\cdots ds_1' d t_1' }
     \\& \leq \frac 1{ \pi^m t_1^2\cdots t^2_m}\int_{-   t_1}^{   t_1 } \cdots 
     \int_{-   t_m}^{   t_m } \int_{-   t_1}^{   t_1 } \cdots \int_{-  t_m}^{   t_m } 
   \left|u \left (x+\sum_{\mu=1}^m  s_\mu' e_\mu ,\mathbf{t} +\mathbf{t} '  \right)\right|
 {  d t_m'\cdots dt_1' d s_m' \cdots ds_1'.}      
     \end{split}  \end{equation}
     Note that
          \begin{equation*}\begin{split}
      \frac 1{  t_1 \cdots t _m} \int_{-   t_1}^{   t_1 } \cdots \int_{-  t_m}^{   t_m } \left|u \left (x+\sum_{\mu=1}^m s_\mu' e_\mu ,\mathbf{t} +\mathbf{t} '
      \right)\right|
  {   d s_m' \cdots ds_1'}       
      &=\left|u \left ( \cdot ,\mathbf{t} +\mathbf{t} '  \right)\right| *  \chi_{  \mathbf{t} }(x) \\
        & \lesssim \frac 1{| R  (x,\gamma_0^{-1} \mathbf{t})|}\int_{ R(x,\gamma_0^{-1} \mathbf{t})}\left|u \left ( x' ,\mathbf{t} +\mathbf{t} '  \right)\right|dx' \\
        & \lesssim \frac 1{| R  (x,\gamma_0^{-1} \mathbf{t})|}\int_{ \mathbb{R}^n }\left|u \left ( x' ,\mathbf{t} +\mathbf{t} '  \right)\right|dx'\\
        & \lesssim \frac 1{| R  (x,\mathbf{t})|}\| f  \|_{L^1(\mathbb{R}^n)},
   \end{split}   \end{equation*} by applying Proposition \ref{prop:it-twist} and  the first estimate in \eqref{eq:L1-f-Pt}. Substitute this estimate into \eqref {eq:mean-value} to obtain the second inequality in  
   \eqref{eq:L1-f-Pt}   for $b_1=\dots=b_m=0$.

   For the general case, note that each component of $\nabla_1^{b_1}\cdots \nabla_m^{b_m}u (x + s_\mu' e_\mu ,\mathbf{t}' ) $ is still harmonic on $\{ (s_\mu',t_\mu'); t_\mu'>0\}$.
   The result follows from the same argument by using  the first estimate in \eqref{eq:L1-f-Pt}.
\end{proof}

 \begin{cor}\label{cor:conditions} Fix $\varepsilon>0$  and let  $f_{\varepsilon} :=f* P_{\boldsymbol
   \varepsilon  }$ for $f\in  L^1(\mathbb{R}^n ) $. Then,
\\
\textup{(a)} $U(x,t_\mu)=(f_{\varepsilon}*_\mu P_{t_\mu}(x))^2$ satisfies conditions in Proposition \ref{prop:Green}.
 \\
 \textup{(b)} Let  ${\mathfrak j } $ be a subset of ${\mathfrak m } $ and $\mu\notin {\mathfrak j }$. Then for fixed $\mathbf{t}_{\mathfrak j}$,
  $U(x,t_\mu)=f_{\varepsilon}*_{\mathfrak j} \widetilde{\nabla}_{\mathfrak j} P_{\mathbf{t}_{\mathfrak j}}*_\mu \widetilde{\nabla}_\mu P_{t_\mu}(x) $ also satisfies conditions in Proposition \ref{prop:Green} 
  except for $  U(\cdot,t_\mu) \rightarrow U(\cdot,0)$ replaced by $  U(\cdot,t_\mu) \rightarrow 0$.
\end{cor}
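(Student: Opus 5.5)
The plan is to verify hypotheses (1)--(3) of Proposition \ref{prop:Green} directly. Everything will be reduced to two facts about the smooth function $f_\varepsilon=f*P_{\boldsymbol\varepsilon}$. First, by Proposition \ref{prop:decay-t} applied with $\mathbf t=\boldsymbol\varepsilon$ (and with $\boldsymbol\varepsilon$ shifted in the $\mu$-th slot), every derivative $\nabla_1^{b_1}\cdots\nabla_m^{b_m}$ of $f_\varepsilon*_\mu P_{t_\mu}$ lies in $L^1(\mathbb R^n)\cap L^\infty(\mathbb R^n)$. Second, we need the one-dimensional kernel estimates
\[
\|t\,\widetilde\nabla^b P_t\|_{L^1(\mathbb R)}\lesssim t^{1-b},\qquad \int_{\mathbb R}\partial_tP_t=\int_{\mathbb R}\partial_sP_t=0 .
\]
Throughout I use the slicing $x=x^\perp+se_\mu$ and Fubini, exactly as in \eqref{eq:f*P-L1}. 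In (b) I put $g:=f_\varepsilon*_{\mathfrak j}\widetilde\nabla_{\mathfrak j}P_{\mathbf t_{\mathfrak j}}$ for fixed $\mathbf t_{\mathfrak j}$. Then $g$ is smooth, and all its derivatives are in $L^1\cap L^\infty$ by the same proposition, so (b) concerns $U=g*_\mu\widetilde\nabla_\mu P_{t_\mu}$.

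For condition (1), fix $t_\mu>0$. In case (a), $U=u^2$ with $u=f_\varepsilon*_\mu P_{t_\mu}$. The Leibniz rule expresses $\nabla^aU$, $a\le2$, through products of an $L^1$ derivative of $u$ and an $L^\infty$ derivative of $u$, so $\nabla^aU\in L^1$. Case (b) is linear, and there $\nabla^a U=\nabla^a g*_\mu\widetilde\nabla_\mu P_{t_\mu}$ lies in $L^1$ by Young's inequality along lines. For the decay of $X_\mu U$, Fubini shows that for a.e.\ $x^\perp$ both $s\mapsto X_\mu U(x^\perp+se_\mu,t_\mu)$ and its $s$-derivative $X_\mu^2U$ are in $L^1(\mathbb R)$. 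Hence this function is absolutely continuous with an integrable derivative, so it has limits at $\pm\infty$, and these limits must vanish because the function itself is integrable.

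For condition (2), let $t_\mu\to\infty$. In case (a), the second estimate of Proposition \ref{prop:decay-t} with $\mathbf t=(\varepsilon,\dots,\varepsilon+t_\mu,\dots,\varepsilon)$ gives $\|u\|_\infty\lesssim\|f\|_1/|R(0,\mathbf t)|$. By \eqref{eq:volum}, $|R(0,\mathbf t)|\gtrsim t_\mu\varepsilon^{n-1}\to\infty$. Combining this with $\|u\|_1\le\|f\|_1$ and $\|t_\mu\partial_{t_\mu}u\|_1\lesssim\|f\|_1$, we get $\|U\|_1$ and $\|t_\mu\partial_{t_\mu}U\|_1=2\|u\,t_\mu\partial_{t_\mu}u\|_1$ both of size $O(1/t_\mu)$. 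In case (b), along lines $\|g*_\mu\widetilde\nabla_\mu P_{t_\mu}\|_1\le\|g\|_1\|\widetilde\nabla P_{t_\mu}\|_{L^1(\mathbb R)}\lesssim\|g\|_1/t_\mu$. Likewise $t_\mu\partial_{t_\mu}U=g*_\mu t_\mu\partial_{t_\mu}\widetilde\nabla_\mu P_{t_\mu}$ has $L^1$ norm $\lesssim\|g\|_1/t_\mu$. So both tend to $0$.

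Condition (3), the limit $t_\mu\to0$, is the main point. The key tool is the following observation. If $\psi\in L^1(\mathbb R)$ has $\int\psi=0$ and $\psi_t=t^{-1}\psi(\cdot/t)$, then for every $h\in L^1(\mathbb R^n)$ we have $h*_\mu\psi_t\to0$ in $L^1$. Indeed,
\[
\|h*_\mu\psi_t\|_1\le\int_{\mathbb R}|\psi(s)|\,\|h(\cdot-ts e_\mu)-h\|_1\,ds\to0
\]
by continuity of translation and dominated convergence. In case (a), $u\to f_\varepsilon$ in $L^1$ (approximate identity along $e_\mu$) while $\|u\|_\infty$ stays bounded, so $U=u^2\to f_\varepsilon^2=U(\cdot,0)$ in $L^1$. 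Also $t_\mu\partial_{t_\mu}u=f_\varepsilon*_\mu(t\partial_tP_t)$, and $t\partial_tP_t$ is an $L^1$-dilate of a kernel with integral $0$, so it tends to $0$ in $L^1$; multiplied by the bounded $2u$, this gives $t_\mu\partial_{t_\mu}U\to0$. In case (b), the same lemma applies componentwise. The kernels in $t_\mu\partial_{t_\mu}U$ are $t\,\partial_t\widetilde\nabla P_t$, which are $L^1$-dilates with mean zero, so $t_\mu\partial_{t_\mu}U\to0$ in $L^1$. The required $U(\cdot,t_\mu)\to0$ likewise follows from the lemma: the two components of $\widetilde\nabla_\mu P_{t_\mu}$ are $\partial_sP_t$ and $\partial_tP_t$, both of integral zero. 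Here the normalization $t\widetilde\nabla P_t$ (integrable and mean zero) is the one in which the lemma applies directly. For the unnormalized pieces I would move the $s$-derivative onto $g$ via $g*_\mu\partial_sP_t=X_\mu g*_\mu P_t$ and use the extra factor $t_\mu$ furnished by the weight in \eqref{eq:Green}. I expect this bookkeeping of which kernel carries the factor $t_\mu$ to be the only delicate step; the rest is routine.
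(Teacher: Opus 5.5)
Your part (a), and conditions (1)--(2) in part (b), are fine and follow the paper's route. Your argument that $X_\mu U\to0$ on a.e.\ line, via integrability of $X_\mu U$ and $X_\mu^2U$ along lines, is a harmless variant of the paper's explicit convolution formula.

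The gap is condition (3) in part (b). Your translation lemma needs $L^1$-normalized kernels $\psi_t=t^{-1}\psi(\cdot/t)$. But $\partial_sP_t=t^{-1}(\partial_sP_1)_t$ and $\partial_tP_t=t^{-1}\rho_t$ with $\rho=-(sP_1)'$. Likewise $t\,\partial_t\widetilde\nabla P_t$ is $t^{-1}$ times a normalized mean-zero dilate. Your own bounds $\|t\widetilde\nabla^bP_t\|_{L^1(\mathbb R)}\lesssim t^{1-b}$ and $\|t_\mu\partial_{t_\mu}U\|_1\lesssim\|g\|_1/t_\mu$ say exactly this. So the lemma only gives $t_\mu U\to0$ and $t_\mu^2\partial_{t_\mu}U\to0$.

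Moreover, the asserted $U(\cdot,t_\mu)\to0$ in $L^1$ is false. Indeed $g*_\mu\partial_sP_{t_\mu}=X_\mu g*_\mu P_{t_\mu}\to X_\mu g$ in $L^1$, and $X_\mu g\neq0$ whenever $g\neq0$, since an integrable function that is constant on a.e.\ line parallel to $e_\mu$ vanishes. So moving the derivative onto $g$ produces a nonzero limit, not zero. The weight $t_\mu$ in \eqref{eq:Green} also cannot be borrowed. Hypothesis (3) of Proposition \ref{prop:Green} concerns $U$ itself, because the boundary term $\int U(x,\varepsilon)\,dx$ comes from $\int_\varepsilon^T\partial_{t_\mu}U\,dt_\mu$, which carries no weight. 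Hence (b) cannot be proved as literally stated. The paper's one-line proof (rerun (a) with $P_{t_\mu}$ replaced by $\widetilde\nabla_\mu P_{t_\mu}$) passes over the same scaling point.

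What is true uses the $\varepsilon$-smoothing in direction $e_\mu$ that $f_\varepsilon$ already carries. Write $U(\cdot,t_\mu)=\nabla_{\mathfrak j}\nabla_\mu(f*P_{\mathbf s})$ with $s_\mu=\varepsilon+t_\mu$, $s_j=\varepsilon+t_j$ for $j\in\mathfrak j$, and $s_k=\varepsilon$ otherwise. Then the first estimate in \eqref{eq:L1-f-Pt} gives $\sup_{t_\mu>0}\|\partial_{t_\mu}U(\cdot,t_\mu)\|_{L^1}<\infty$. Consequently:
\begin{itemize}
\item $t_\mu\partial_{t_\mu}U\to0$ in $L^1$;
\item $U(\cdot,t_\mu)$ converges in $L^1$ to the generally nonzero function $\nabla_{\mathfrak j}\nabla_\mu(f*P_{\mathbf s})|_{s_\mu=\varepsilon}$.
\end{itemize}
What rescues the Green identity is that $\int_{\mathbb R^n}U(x,t_\mu)\,dx=0$ for every $t_\mu\ge0$, because $\int_{\mathbb R}\widetilde\nabla P_t=0$. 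This is presumably what ``$\to0$'' was meant to convey, and it is the correct replacement for your claim.

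Note also that downstream (Lemmas \ref{lem:estimate1} and \ref{prop:conditions-general}) the kernel in the variable of integration is $P_{t_\mu}$, not $\widetilde\nabla_\mu P_{t_\mu}$. For that version your part-(a) argument goes through verbatim with $f_\varepsilon$ replaced by $g$, with the usual limit $U(\cdot,0)$.
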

\begin{proof} (a) For condition (1), write $U(x,t_\mu)=u^2 (x,t_\mu)$ with $u(x,t_\mu):=  f_{\varepsilon}*_\mu P_{t_\mu}(x)   $. It is obviously smooth on $\mathbb{R}^{n +1}_+$ since $f_{\varepsilon}$ is. Because
$ u (\cdot,t_\mu) ,\nabla_\mu u(\cdot,t_\mu)
\in  L^1  \cap  L^\infty(\mathbb{R}^n ) $ with norms only depending on $\varepsilon$ and $\|f\|_1$ by Proposition \ref{prop:decay-t}, we see that $     U (\cdot,t_\mu)  ,  \nabla_\mu   U (\cdot,t_\mu) =2 u 
(\cdot,t_\mu) \nabla_\mu u(\cdot,t_\mu) \in  L^1  \cap  L^\infty(\mathbb{R}^n ) $. It is similar to see $    \nabla_\mu^2   U (\cdot,t_\mu)  \in
 L^1  \cap  L^\infty(\mathbb{R}^n ) $. It is standard to see that for $x=  x^\perp + s  e_\mu$,
\begin{equation*}
      X_\mu  U(x,t_\mu)= 2 u(x,t_\mu) \cdot \int_{\mathbb{R}  }  f _{\varepsilon}(x^\perp +(s -s')e_\mu)    \partial_{s'} P_{t_\mu}(s') ds' \rightarrow 0,
   \end{equation*} for almost all $x^\perp\in  e_\mu^\perp$ as $ s\rightarrow \infty$, since  $f _{\varepsilon}(x^\perp +\cdot e_\mu)\in L^1(\mathbb{R}) $ for almost all $x^\perp\in  e_\mu^\perp$.

For condition (2), apply    estimates \eqref{eq:L1-f-Pt}  to $u(x,t_\mu) =  f* P_{\boldsymbol
   \varepsilon  }*_\mu P_{t_\mu}(x)  $ to get
\begin{equation*}\begin{split}
   \|t_\mu  \partial_{t_\mu}   U (\cdot,t_\mu)  \|_{L^1(\mathbb{R}^n)}&   \lesssim
     \|   u (\cdot,t_\mu)  \| _{L^ \infty(\mathbb{R}^n)} \|t_\mu \partial_{t_\mu}  u(\cdot,t_\mu)   \| _{L^1(\mathbb{R}^n)}
\lesssim \frac {\| f  \|^2_{L^1(\mathbb{R}^n)}}
 {\varepsilon^{n-1}(\varepsilon+t_\mu)} \rightarrow 0
   \end{split}   \end{equation*}as $t_\mu\rightarrow+\infty$.
  Similarly,  $\|    U (\cdot,t_\mu)  \|_{L^1(\mathbb{R}^n)}\rightarrow0$ as $t_\mu\rightarrow+\infty$.

For condition
(3), note that $u(x,0)= f _{\varepsilon}$. Write   $\triangle(x):=\|f _{\varepsilon}( \cdot  -x) - f _{\varepsilon}(\cdot )\|_{L^1(\mathbb{R}^n)}$. It is well known that
$\triangle(x)\rightarrow 0$ as $|x|\rightarrow0$. Hence,
\begin{equation*}\begin{split}
  \| u(\cdot,t_\mu)-u(\cdot,0)\|_{L^1(\mathbb{R}^n)}&=\int_{\mathbb{R}^{n  }} \left|\int_{\mathbb{R}  }\left[f _{\varepsilon}(x^\perp+(s-s')e_\mu)-f
  _{\varepsilon}(x^\perp+ s e_\mu)\right]P_{t_\mu}(s') ds'\right| { dx^\perp  ds} \\
  &\leq \int_{\mathbb{R}  }\triangle(s'e_\mu)P_{t_\mu}(s') ds'=\int_{\mathbb{R}  }\triangle(t_\mu s'e_\mu)P_{1}(s') ds'\rightarrow 0,
   \end{split}   \end{equation*}
   as $t_\mu\rightarrow 0$, by rescaling and using  Lebegues' dominated convergence theorem (cf. \cite[P. 63]{St70}). We see that
   $\|t_\mu\partial_{t_\mu}   u (\cdot,t_\mu)\|_{L^1(\mathbb{R}^n)} \rightarrow 0$ as $t_\mu\rightarrow 0$ follows by the same argument since
      \begin{equation*}
      t_\mu\partial_{t_\mu}   u (x,t_\mu)= \int_{\mathbb{R}  } \left[ f _{\varepsilon}(x -s' e_\mu)- f _{\varepsilon}(x  ) \right ]  t_\mu\partial_{t_\mu}  P_{t_\mu}(s') ds'
   \end{equation*}by $\int_{\mathbb{R}  }    t_\mu\partial_{t_\mu}  P_{t_\mu}(s') ds'=0$. The result for $U(x,t_\mu)=u^2 (x,t_\mu)$ follows from that $  |u(x,t_\mu)| $ has an upper
   bound only depending on $\varepsilon$.

(b) { We apply the proof in part (a) with}  
 $P_{t_\mu}$ replaced by $\widetilde{\nabla}_\mu P_{t_\mu}$ and $ f_{\varepsilon}$ replaced by $ f_{\varepsilon}*_{\mathfrak j} \widetilde{\nabla}_{\mathfrak j} P_{\mathbf{t}_{\mathfrak j}} (x) $, which is in $
   L^1  \cap  L^\infty (\mathbb{R}^n) $ with a norm  only depending on $\varepsilon$.
\end{proof}

 \begin{prop} \label{prop:upperbound-hardy}  Let $ F \in {H^p( T_\Omega)}$ with $p\geq1$. Then for  $(x,\boldsymbol {r})\in  \mathbb{ R}^n \times(\mathbb{R}_+)^m$,   we have
  \begin{equation}\label{eq:upperbound-hardy}
 | F ( x+\mathbf{i}\pi(\mathbf{r}))|\lesssim \frac {  \|F\|_{H^p( T_\Omega)}  } {|   R  (0,\mathbf{r})|^{ \frac 1p }} .
 \end{equation}
    \end{prop}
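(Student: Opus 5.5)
The plan is to combine subharmonicity of $|F|^p$ along the complex lines $L_\mu$ with an iterated mean value argument, exactly as in the proof of Proposition \ref{prop:decay-t}. Then Proposition \ref{prop:it-twist} converts the iterated averages into an average over a twisted rectangle, which is controlled by the $H^p$ norm.

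\textbf{Step 1: iterated sub-mean value inequality.} Fix $x\in\mathbb{R}^n$ and $\mathbf{r}\in(\mathbb{R}_+)^m$, and write $y=\pi(\mathbf{r})\in\Omega$. By Proposition \ref{prop:triangle-j} (1), the function $z\mapsto F(x+\mathbf{i}y'+ze_\mu)$ is holomorphic for $\operatorname{Im}z>0$ and any $y'\in\Omega$. It is in fact holomorphic for $\operatorname{Im} z> -\lambda$ whenever $y'+(-\lambda) e_\mu$ stays in $\Omega$. To avoid that subtlety, I work with half-radii. The disc $|z-\mathbf{i}r_\mu/2|<r_\mu/2$ lies in $\{\operatorname{Im}z>0\}$. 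Writing $y$ as $\pi(\mathbf r/2)+\pi(\mathbf r/2)$ with $\pi(\mathbf r/2)\in\Omega$, the point $x+\mathbf{i}y$ is the centre of a disc of radius $r_\mu/2$ in the $e_\mu$-direction, and this disc stays inside $T_\Omega$. Since $|F|^p$ is subharmonic for $p\ge1$ (indeed for $p>0$), $|F(x+\mathbf{i}y)|^p$ is bounded by its average over that disc. Iterating over $\mu=1,\dots,m$ gives
\[
|F(x+\mathbf{i}\pi(\mathbf r))|^p\le \frac{C}{r_1^2\cdots r_m^2}\int_{|s_\mu|<r_\mu/2,\;|t'_\mu|<r_\mu/2}\Big|F\Big(x+\sum_\mu s_\mu e_\mu+\mathbf{i}\pi(\mathbf r+\mathbf t')\Big)\Big|^p\,d\mathbf s\,d\mathbf t'.
\]
Here $\mathbf r+\mathbf t'$ has all entries in $(r_\mu/2,3r_\mu/2)$, so $\pi(\mathbf r+\mathbf t')\in\Omega$.

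\textbf{Step 2: convert the $\mathbf s$-integral.} For fixed $\mathbf t'$, the inner integral $\frac{1}{r_1\cdots r_m}\int|G(x+\sum s_\mu e_\mu)|\,d\mathbf s$, with $G=|F(\cdot+\mathbf{i}\pi(\mathbf r+\mathbf t'))|^p$, equals $G*\chi_{\mathbf r/2}(x)$ up to constants. By Proposition \ref{prop:it-twist} it is therefore
\[
\lesssim \frac{1}{|R(x,\gamma_0^{-1}\mathbf r)|}\int_{R(x,\gamma_0^{-1}\mathbf r)}G\,dx' \le \frac{C}{|R(0,\mathbf r)|}\int_{\mathbb{R}^n}|F(x'+\mathbf{i}\pi(\mathbf r+\mathbf t'))|^p\,dx'.
\]
The last inequality uses the volume comparison \eqref{eq:volum}, which shows that $|R(0,c\mathbf r)|\approx|R(0,\mathbf r)|$ for fixed $c$. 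The remaining integral is at most $\|F\|^p_{H^p(T_\Omega)}$ by definition \eqref{eq:H2}, since $\pi(\mathbf r+\mathbf t')\in\Omega$.

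\textbf{Step 3: conclude.} The $\mathbf t'$-integral runs over a set of measure $\prod_\mu r_\mu$, which cancels the remaining factor $1/(r_1\cdots r_m)$. This leaves $|F(x+\mathbf{i}\pi(\mathbf r))|^p\lesssim\|F\|_{H^p}^p/|R(0,\mathbf r)|$, and taking $p$-th roots gives \eqref{eq:upperbound-hardy}.

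The only delicate point is Step 1. One must make sure each successive disc stays inside the tube domain even when $e_\mu\in\partial\Omega$. Restricting the vertical displacement to $|t'_\mu|<r_\mu/2$ handles this, because then all lifted parameters remain positive. One should also note that Proposition \ref{prop:it-twist} is stated for $n<m$; when $m=n$ the twisted rectangle is already a parallelepiped and the change of variables \eqref{eq:coordinates-s-x'} gives the comparison directly.
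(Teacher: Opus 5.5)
Your proof is correct and follows the same scheme as the paper. Both arguments use iterated mean-value estimates on discs in the complex lines $\mathbb{C}e_\mu$ centred at $x+\mathbf{i}\pi(\mathbf r)$ (the paper uses radius $\gamma_0 r_\mu$), then Proposition \ref{prop:it-twist} to pass to twisted-rectangle averages, and finally the definition of the $H^p$ norm. The differences are minor: you apply the sub-mean-value property to $|F|^p$ directly instead of averaging $F$ and using H\"older at the end, and you bound the imaginary-direction average trivially by $\|F\|^p_{H^p(T_\Omega)}$. The paper instead converts that average into one over $R(y,\mathbf r)\subset\Omega$ as in \eqref{eq:estimate-cauchy}, because it reuses that local form in \eqref{eq:F-claim}.
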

 \begin{proof} Given $y\in \Omega$, write $y=\pi(\mathbf{r})=\sum_{\mu=1}^m r_\mu e_\mu\in \Omega$ for some $\mathbf{r}\in (\mathbb{R}_+)^m $. Note that the disc $\{ x+\mathbf{i}y +(s_\mu +\mathbf{i}t_\mu 
 )e_\mu;|s_\mu +\mathbf{i}t_\mu |<  r_\mu \}$ is
 contained in the tube domain $
  T_\Omega$. This is because $r_\mu+t_\mu > r_\mu- r_\mu=0$. So we can apply the mean value formula for this disc in the complex line $\mathbb{C}e_\mu$ to get
  \begin{equation*}
   F ( x+\mathbf{i}y) =\frac 1{ \pi  \gamma_0^2 r_\mu ^2}\int_{|s_\mu  +\mathbf{i}t_\mu |< \gamma_0 r_\mu} F\left ( x+\mathbf{i}y+(s_\mu +\mathbf{i}t_\mu )e_\mu\right) ds_\mu  d t_\mu  .
 \end{equation*}
 Repeating this procedure, we get
  \begin{equation*} \begin{split}
     |F ( x+\mathbf{i}y)|&
     \leq  \int_{|s_1+\mathbf{i}t_1|<\gamma_0  r_1}  \cdots\int_{|s_m+\mathbf{i}t_m|< \gamma_0  r_m} 
     \frac {\left|F\left (
     x+\mathbf{i}y+\sum_{\mu=1}^m(s_\mu+\mathbf{i}t_\mu)e_\mu\right)\right|}{\pi^m \gamma_0^{2m} r_1^2\cdots r^2_m} \, { ds_md t_m\cdots ds_1 d t_1}
     \\& \leq \int_{- \gamma_0   r_1}^{ \gamma_0   r_1 } \cdots \int_{-  \gamma_0  r_m}^{ \gamma_0   r_m } \int_{- \gamma_0   r_1}^{\gamma_0    r_1 } \cdots \int_{- \gamma_0  r_m}^{ \gamma_0   r_m
     } \frac{\left|F\left ( x+\mathbf{i}y+\sum_{\mu=1}^m(s_\mu+\mathbf{i}t_\mu)e_\mu\right)\right|}{\pi^m\gamma_0^{2m}r_1^2\cdots r^2_m}
     {\, dt_m\cdots dt_1ds_m\cdots ds_1}.
\end{split}  \end{equation*}
     Note that
          \begin{equation*}\begin{split}
        \int_{-  \gamma_0  r_1}^{  \gamma_0  r_1 } \cdots \int_{-  \gamma_0 r_m}^{  \gamma_0  r_m } &\frac{\left|F\left ( x+\mathbf{i}y+\sum_{\mu=1}^m s_\mu e_\mu +\mathbf{i}\sum_{\mu=1}^m  t_\mu 
        e_\mu\right)\right|}{\gamma_0^{ m}r_1 \cdots r _m} {\, dt_m\cdots dt_1 }=\left|F\left
        ( x+ \sum_{\mu=1}^m s_\mu e_\mu +\mathbf{i}\cdot\right)\right| *  \chi_{ \gamma_0  \mathbf{r} }(y) \\
        & \lesssim \frac 1{\left| R \left (y,  \mathbf{r}\right)\right|}\int_{ R\left(y,  \mathbf{r}\right)}\left|F \left( x+ \sum_{\mu=1}^m s_\mu e_\mu +\mathbf{i}y'\right)\right|dy',
   \end{split}   \end{equation*} by applying Proposition \ref{prop:it-twist}, where $ R\left(y,  \mathbf{r}\right)\subset\Omega$. Apply the same procedure for the integral over $s_1,\dots , s_m$ to get
   \begin{equation} \label{eq:estimate-cauchy}\begin{split} |F ( x+\mathbf{i}y)|&\lesssim\frac 1{  |R\left(0,    \mathbf{r}\right)|^2}
     \int_{R\left(x,  \mathbf{r}\right)}  \int_{R\left(y,  \mathbf{r}\right)}  \left|F  (  x'+\mathbf{i} y'   )\right|
     { dy' \, dx'}\\
     &     \leq \frac 1{  \left|R\left(0,    \mathbf{r}\right)\right|^{1+\frac 1p }}
\int_{R\left(y,   \mathbf{r}\right)} \left(\int_{\mathbb{R}^n}\left|F  ( x'+\mathbf{i} y'   \right|^p dx'\right)^{\frac 1p}   { dy'} \\
&{ \lesssim \frac 1{  |R(0, \mathbf{r})|^{ \frac 1p }} \|F\|_{H^p( T_\Omega)} .}
      \end{split}  \end{equation}
{ This concludes the proof of \eqref{eq:upperbound-hardy}.}
\end{proof}

 \begin{prop} \label{prop:inclusion}   For  two  polyhedral cones    $\Omega$ and $\Omega'$ in $ \mathbb{R} ^n$ such that
   $\overline{\Omega}\subset\Omega'$, then $  H^1 (T_{\Omega'} )\subsetneqq H^1 (T_\Omega)$ and $ H^1_{ max;\Omega'}(\mathbb{R}^n) \varsubsetneq  H^1_{ max; \Omega}(\mathbb{R}^n) $.
    \end{prop}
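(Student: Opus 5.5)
The first inclusion is immediate from the definition \eqref{eq:H2}. Since $\Omega\subset\overline{\Omega}\subset\Omega'$, the supremum over $y\in\Omega$ is taken over a smaller set than the supremum over $y\in\Omega'$. Restriction of $F$ to $T_\Omega$ therefore gives $\|F\|_{H^1(T_\Omega)}\le\|F\|_{H^1(T_{\Omega'})}$. The content is strictness, and I would prove it with an explicit example built from a linear exponential.

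Duality reverses inclusions, so $\Omega'^*\subset\Omega^*$. Moreover $\overline{\Omega}\subset\Omega'$ gives $\Omega'^*\setminus\{0\}\subset\operatorname{int}\Omega^*$, because a nonzero $\eta\in\Omega'^*$ is strictly positive on the compact section of $\overline\Omega$. Hence I can pick $\xi\in\partial\Omega^*\setminus\{0\}$ with $\xi\notin\Omega'^*$, so there is $y_0\in\Omega'$ with $y_0\cdot\xi<0$. I also pick linearly independent $\eta_1,\dots,\eta_n\in\operatorname{int}\Omega'^*$ and set
\[
G(z):=\prod_{k=1}^n (z\cdot\eta_k+\mathbf{i})^{-2},\qquad F(z):=e^{2\pi\mathbf{i}z\cdot\xi}\,G(z).
\]
For $y\in\Omega'$ we have $\operatorname{Im}(z\cdot\eta_k)=y\cdot\eta_k\ge0$. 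After the linear change of variables $w_k=x\cdot\eta_k$, this gives
\[
\int_{\mathbb{R}^n}|G(x+\mathbf{i}y)|\,dx\approx\prod_{k}(1+y\cdot\eta_k)^{-1}\le C .
\]
So $G\in H^1(T_{\Omega'})\subset H^1(T_\Omega)$. On $T_\Omega$ we have $|e^{2\pi\mathbf{i}z\cdot\xi}|=e^{-2\pi y\cdot\xi}\le1$, hence $F\in H^1(T_\Omega)$. On the other hand, $F$ is entire, so it is the only possible holomorphic function on $T_{\Omega'}$ agreeing with it. Along $y=\lambda y_0$ we get $\|F(\cdot+\mathbf{i}\lambda y_0)\|_1=e^{2\pi\lambda|y_0\cdot\xi|}\|G(\cdot+\mathbf{i}\lambda y_0)\|_1$, which grows exponentially against only polynomial decay. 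Thus $F\notin H^1(T_{\Omega'})$.

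For the real-variable spaces I would argue on the Fourier side. The iterated Poisson integral for $\Omega$ has multiplier $\exp(-2\pi\sum_\mu t_\mu|e_\mu\cdot\xi|)$. To get $H^1_{\max;\Omega'}\subset H^1_{\max;\Omega}$ I would try to dominate $N^\beta_\Omega(f)$ by $N^{\beta'}_{\Omega'}(f)$ pointwise. Write each generator $e_\mu$ of $\Omega$ as a nonnegative combination of generators $e'_k$ of $\Omega'$, which is possible since $e_\mu\in\Omega'$. Then try to express $P^{(e_\mu)}_{t}$ as a superposition of iterated Poisson kernels for $\Omega'$ with bounded total mass, evaluated at points inside enlarged twisted rectangles; Proposition \ref{prop:twist-R} and Proposition \ref{prop:it-twist} would control the geometry.

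This real-variable inclusion is the step I expect to be the main obstacle. The one-direction Poisson kernel along a sum of directions is not literally a composition of kernels along those directions, because $|a+b|\ne|a|+|b|$ off the positive cone of frequencies. I would first handle frequencies in $\Omega'^*$ by the holomorphic picture, where $\sum_\mu t_\mu\, e_\mu\cdot\xi=\pi(\mathbf{t})\cdot\xi$ is linear, and then control the remaining frequency pieces separately.

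For strictness of the real-variable inclusion I would take $f$ to be the boundary value $F^b$ of the example above, regularized to lie in $L^1$, say $F(z+\mathbf{i}\epsilon y_1)$. Once the maximal characterization is available, membership in $H^1_{\max;\Omega}$ follows from $(1)\Rightarrow(2)$ of Theorem \ref{thm:equivalence}. Non-membership in $H^1_{\max;\Omega'}$ would follow from the exponential growth of $f*P^{\Omega'}_{\mathbf{t}}$ along $\pi'(\mathbf{t})=\lambda y_0$. There is a risk of circularity here, since this relies on Theorem \ref{thm:equivalence}, which is proved later in the paper.
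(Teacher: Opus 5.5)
Your holomorphic part is correct, and it is more elementary than the paper's route. The paper uses the Stein--Weiss support theorem (applied to $H^2$ translates of $H^1(T_{\Omega'})$ functions) together with the Fourier integral $F_0(z)=\int e^{2\pi\mathbf{i}z\cdot t}\psi(t)\,dt$. You instead compute $\|F(\cdot+\mathbf{i}y)\|_{L^1}$ directly for $F=e^{2\pi\mathbf{i}z\cdot\xi}G$ and see exponential growth along $\lambda y_0$. One small slip: $F$ is not entire, since $G$ is singular on $\{z\cdot\eta_k=-\mathbf{i}\}$. It is, however, holomorphic on the connected set $T_{\Omega'}$, which is all your uniqueness step needs.

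The real-variable part has genuine gaps. The inclusion $H^1_{\max;\Omega'}(\mathbb{R}^n)\subset H^1_{\max;\Omega}(\mathbb{R}^n)$ is only a plan, and no such plan can work, because the inclusion is false in general. The paper's proof does not establish it either; it only exhibits an element of $H^1_{\max;\Omega}\setminus H^1_{\max;\Omega'}$. Here is a counterexample:
\begin{itemize}
\item Take $n=m=2$, $e_1'=(1,0)$, $e_2'=(0,1)$, and let $\Omega$ be spanned by unit vectors $e_1$ and $e_2=(a,b)$ with $a,b>0$.
\item Let $f(x)=(x_1+\mathbf{i})^{-2}(x_2-\mathbf{i})^{-2}$. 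Then $f*P^{\Omega'}_{\mathbf{t}}(x)=(x_1+\mathbf{i}(1+t_1))^{-2}(x_2-\mathbf{i}(1+t_2))^{-2}$, so $N^\beta_{\Omega'}(f)(x)\lesssim(1+x_1^2)^{-1}(1+x_2^2)^{-1}$ and $f\in H^1_{\max;\Omega'}$.
\item For bounded continuous $f$, take $x'=x$ and let $t_1\to0$ in $\Gamma_\beta(x)$. This gives $N^\beta_\Omega(f)(y+\sigma e_2)\ge\sup_{t>0}|f_y*P_t(\sigma)|$, where $f_y(\sigma):=f(y+\sigma e_2)$.
\item Since $2\pi|\sigma|\,f_y*P_{|\sigma|}(\sigma)\to\int_{\mathbb{R}}f_y$, integrability of $N^\beta_\Omega(f)$ forces $\int_{\mathbb{R}}f_y=0$ for a.e.\ $y\perp e_2$.
\item A residue computation shows $\int_{\mathbb{R}}f(y+\sigma e_2)\,d\sigma$ is a nonzero multiple of $\bigl(y_1-\tfrac ab y_2+\mathbf{i}(1+\tfrac ab)\bigr)^{-3}$. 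Hence $f\notin H^1_{\max;\Omega}$.
\end{itemize}
The frequency pieces outside $(\Omega')^*$ that you flagged are exactly where the argument breaks.

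Your non-membership argument for strictness also fails. The function $f*P^{\Omega'}_{\mathbf{t}}$ is an average of $f$ against a probability density, so $|f*P^{\Omega'}_{\mathbf{t}}|\le\|f\|_{L^\infty}$ and nothing can grow. Its Fourier multiplier is $\prod_k e^{-2\pi t_k|e_k'\cdot\zeta|}$, which agrees with $e^{-2\pi\pi'(\mathbf{t})\cdot\zeta}$ only on $(\Omega')^*$, and $\widehat f$ is not supported there. So $f*P^{\Omega'}_{\mathbf{t}}$ is not the holomorphic extension $F(\cdot+\mathbf{i}\pi'(\mathbf{t})+\cdots)$. The paper's appeal to Theorem \ref{thm:equivalence} for $\Omega'$ at this point tacitly makes the same identification.

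Membership of $f$ in $H^1_{\max;\Omega}$, via Theorem \ref{prop:integral-representation} and $(1)\Rightarrow(2)$, is fine and not circular: that implication does not use this proposition.

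The line-restriction argument above repairs non-membership:
\begin{itemize}
\item Pick $k$ with $\xi\cdot e_k'<0$.
\item $\widehat f$ is nonzero on $\xi+\operatorname{int}\operatorname{cone}(\eta_1,\dots,\eta_n)$. This set meets $(e_k')^\perp$ because $\eta_j\cdot e_k'>0$.
\item Hence $\int_{\mathbb{R}}f(y+\sigma e_k')\,d\sigma\not\equiv0$.
\item Letting the other $t_j\to0$ then shows $N^\beta_{\Omega'}(f)\notin L^1(\mathbb{R}^n)$.
\end{itemize}
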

 \begin{proof}
 Since $T_\Omega\subset  T_{\Omega'} $,
$  H^1 (T_{\Omega'} )\subset H^1 (T_\Omega)$ { in view of} definition \eqref{eq:H2}. For 
{ an}  $  H^1(T_{\Omega'} ) $-function
$F$, we have $F( \cdot +  \mathbf{i}y )\in   H^2 (T_{\Omega'}) $ for fixed $y  \in  \Omega$,  by the boundary growth estimate \eqref{eq:upperbound-hardy}. So
the  Fourier transformation $\widehat{F( \cdot +  \mathbf{i}y )}$ with respect to variable $x$ must be supported in $(\Omega')^*$ by Theorem 3.1 in \cite{SW71}.

Note that $\overline{(\Omega')^*}\varsubsetneq\Omega^*$ by definition.
Let $\psi$ be a smooth function supported in a bounded subset of $  \Omega ^*$,  but is not contained in $ (\Omega')^*$. Consider a holomorphic function on $ T_{\Omega } $
\begin{equation*}
   F_0(x+ \mathbf{i }y)=\int_{ \Omega ^*}
e^{2\pi \mathbf{i}(x+ \mathbf{i } y) t} \psi(t)  dt.
\end{equation*}
 We see that $  F_0\in H^1 (T_{\Omega } )$ by the estimates: for $|x|<1$, $| F_0(x+ \mathbf{i }y)| \lesssim  1 $,  since $y \cdot t\geq0$ for $y\in \Omega, t\in \Omega ^* $; while for $|x|>1$,
\begin{equation*}\begin{split}
  | F_0(x+ \mathbf{i }y)|&=\left|\int_{\mathbb{R}^n }\frac {\triangle^n e^{2\pi \mathbf{i}(x+ \mathbf{i } y) t} }{(2\pi \mathbf{i}(x+ \mathbf{i } y))^{2n}}
\psi(t)  dt\right|\\
&=\left|\int_{\mathbb{R}^n }\frac {1}{(2\pi \mathbf{i}(x+ \mathbf{i } y))^{2n}}
e^{2\pi \mathbf{i}(x+ \mathbf{i } y) t} \triangle^n\psi(t)  dt\right|
\lesssim \frac {1}{ |x|^{2n}}
,
  \end{split}
\end{equation*} where $\triangle$ is the Laplacian on $\mathbb{R}^n_t$, and implicit constants are independent of $y$. Since the support of $\widehat{F_0( \cdot +  \mathbf{i}y )}(t)=e^{-2\pi  y  t} \psi(t) $ is
not contained in $ (\Omega')^*$, we   have $ F_0\notin H^1 (T_{\Omega '} )$. Hence,
  $  H^1 (T_{\Omega'} )\neq H^1 (T_\Omega)$.

   Since  $F_0( \cdot +  \mathbf{i}y_0 )\in   H^2 (T_\Omega) $ for fixed $y_0\in \Omega $,  by the
   integral representation in Theorem \ref{prop:integral-representation}, we have  $F_0( x +  \mathbf{i}\pi(\mathbf{t})+  \mathbf{i}y_0)= F_0(  \cdot +  \mathbf{i}y_0)*P_{\mathbf{t}}(x)$. So by  Theorem 
   \ref{thm:equivalence}, $F_0( \cdot +  \mathbf{i}y_0)|_{\mathbb{R}^n}$ belongs to $  H^1_{ max;\Omega}(\mathbb{R}^n) $, but  not  to  $H^1_{ max;\Omega ' }(\mathbb{R}^n) $. Otherwise, $F_0( \cdot +  
   \mathbf{i}y_0 )\in    H^1 (T_{\Omega'} ) $ by  Theorem \ref{thm:equivalence}, which contradicts 
   { the fact that} $ F_0\notin H^1 (T_{\Omega '} )$.
\end{proof}

\subsection{Proof of   iterated Poisson  integral formula   and harmonic control
} In the   integral representation  formula \eqref{eq:integral-representation},
for  $\mathbf{t}\neq\mathbf{t}'$ with  $\pi(\mathbf{t})= \pi(\mathbf{t}')$, we must have $ F^b*  P_{\mathbf{t}}= F^b*  P_{\mathbf{t}'}$.
  This  formula follows from the uniqueness of functions  satisfying partial Laplace equations  $
\triangle_\mu u=0$
with the same boundary condition,
by using the  maximum principle for harmonic functions.
 \begin{proof}[Proof of Theorem  \ref{prop:integral-representation}] For
 $  F  \in H^p(T_\Omega)$, we have $f_\varepsilon:=F (  \cdot +\mathbf{i}\pi(\boldsymbol{\varepsilon})) \in L^p(\mathbb{R}^n)$ for $\varepsilon>0$ by definition,  and  it is obviously smooth on $\mathbb{R}^n$. 
 Then
    \begin{equation}\label{eq: f-varepsilon-x'}
  f_{\varepsilon,x^\perp}(\cdot) :=F \left(  x^\perp+ { (\cdot)}  e_\mu +\mathbf{i}\pi(\boldsymbol{\varepsilon})\right) \in L^p(\mathbb{R} )
  \end{equation}
  for almost all $x^\perp\in e_\mu^\perp\subset \mathbb{R}^n$. Now fix such a point $x^\perp\in e_\mu^\perp $ and let
\begin{equation}\label{eq: v-varepsilon-x'}
   v_{\varepsilon,x^\perp}(s_\mu ,t_\mu ):=(f_{\varepsilon,x^\perp}*P_{t_\mu})(  s_\mu )=(f_\varepsilon*_\mu P_{t_\mu})(  x^\perp+s_\mu e_\mu ),
\end{equation}which is smooth on $ \overline{\mathbb{R}_+^2}$. { Note that}   $v_{\varepsilon,x^\perp}(\cdot ,t_\mu )\rightarrow  f_{\varepsilon,x^\perp}$  as $t_\mu \rightarrow 0$, by
the standard
property of Poisson integrals \cite[P. 62]{St70}. Namely,
\begin{equation}\label{eq:t-mu=0}
   v_{\varepsilon,x^\perp}(s_\mu ,0)= f_{\varepsilon,x^\perp}(  s_\mu )= F \left(  x^\perp+s_\mu e_\mu+ \mathbf{i}\pi(\boldsymbol{\varepsilon} )\right).
\end{equation}

  We claim that for any $\delta>0$, there exists $R,b>0$ sufficiently large such that for  $(s_\mu ,  t_\mu)$  outside the rectangle  $\mathcal{R}_{2R,b}:=\{  (s_\mu ,t_\mu )\in\mathbb{R}_+^2; |s_\mu|\leq 2R,  0< 
  t_\mu<b\}$, we have
\begin{equation}\label{eq:claim}
   |v_{\varepsilon,x^\perp}(s_\mu ,t_\mu )|<\delta,\qquad \left | F\left ( x^\perp+(s_\mu+\mathbf{i}t_\mu )e_\mu+ \mathbf{i}\pi(\boldsymbol{\varepsilon} )\right)\right|<\delta.
\end{equation}
Then by the maximal principle \cite{St70} for harmonic functions on the rectangle  $\mathcal{R}_{2R,b}$, we { obtain}
\begin{equation*}| v_{\varepsilon,x^\perp}(s_\mu ,t_\mu )-
  F\left ( x^\perp+(s_\mu+\mathbf{i}t_\mu )e_\mu+ \mathbf{i}\pi(\boldsymbol{\varepsilon} )\right)|\leq2\delta.
\end{equation*}
Since $\delta>0$ { was} arbitrarily chosen, we get
\begin{equation*}
   (f_\varepsilon*_\mu P_{t_\mu})(  x^\perp+s_\mu e_\mu )= F \left( x^\perp+(s_\mu+\mathbf{i}t_\mu )e_\mu+
   \mathbf{i}\pi(\boldsymbol{\varepsilon} )\right),
\end{equation*} for almost all $x^\perp\in e_\mu^\perp$, by \eqref{eq: v-varepsilon-x'}. Consequently,  
we { deduce} 
$
   (f_\varepsilon*_\mu P_{t_\mu} )(  x  )=F (  x +\mathbf{i}t_\mu  e_\mu +\mathbf{i}\pi(\boldsymbol{\varepsilon}))
$ for any $x\in \mathbb{R}^n$, $t_\mu \in \mathbb{R}$, { by continuity.}
   Repeating this procedure, we get
  \begin{equation}\label{eq:reproduce-integral-representation}
   (f_\varepsilon*_1P_{t_1}\cdots *_m P_{t_m})(  x  )={ F (  x+\mathbf{i}\pi(\mathbf{t})+\mathbf{i}\pi(\boldsymbol{\varepsilon}))}
 \end{equation}
 for any $x\in \mathbb{R}^n$, $\mathbf{t}  \in (\mathbb{R}_+)^m$.

 To prove the claim \eqref{eq:claim} of decay, note that by definition, we have 
{ $$\int_{  \mathbb{R}^{n}}\left|F(  x +\mathbf{i}y
    )\right|^pdx
    \leq  \|F\|_{H^p( T_\Omega)}^p
    $$
    } for any $y\in \Omega$, and so for fixed $b>0$,
{$$ 
     \int_{  \mathbb{R}^{n}  }     \int_{ \{y\in\Omega ;|    y|<2b\} }\left|F(  x +\mathbf{i}y
    )\right|^pdx dy<\infty.
  $$}
     Hence, for any $\delta'>0$,  there exists $R>0$ such that
 { $$
       \int_{ |  x|>R  } \int_{\{y\in\Omega ;|    y|<2b\} }\left|F(  x +\mathbf{i}y
    )\right|^pdx dy<\delta'
$$}
    if $R$ is sufficiently large.

On the other hand,   $R(x, \boldsymbol{\varepsilon}) +\mathbf{i}  R(y +\pi( \boldsymbol{\varepsilon}),  \boldsymbol{\varepsilon})\subset T_\Omega $ for any $y\in  \Omega $,
since
 { $$
   R(y +\pi( \boldsymbol{\varepsilon}),  \boldsymbol{\varepsilon})= \Big\{y+\sum_{j=1}^m (\varepsilon+\lambda_j) e_j;|\lambda_j|<\varepsilon,  j=1,\dots, m \Big\}\subset  \Omega.
$$}
So
by using the estimate similar to \eqref{eq:estimate-cauchy} in  Proposition \ref{prop:upperbound-hardy}, for $|  x|>2R, |y|<b$ { we obtain} 
\begin{equation}\label{eq:F-claim}\begin{split}
\left | F\left(x+ \mathbf{i}y +\mathbf{i}\pi (\boldsymbol{\varepsilon})\right)\right|&\lesssim \frac 1{  |R(0,  \boldsymbol{\varepsilon})|^2}
     \int_{R(x,  \boldsymbol{\varepsilon})} \int_{R(y,  \boldsymbol{\varepsilon})} \left|F  (    x'+\mathbf{i}\pi( \boldsymbol{\varepsilon})+\mathbf{i}  y'   )\right| \, d y' { \, d x'} 
     \\&\lesssim \frac 1{  |R(0,  \boldsymbol{\varepsilon})|^{2-\frac 2q}}\left(
     \int_{R(x,  \boldsymbol{\varepsilon})}  \int_{R(y,  \boldsymbol{\varepsilon})} \left|F  (   x'+\mathbf{i}\pi( \boldsymbol{\varepsilon})+\mathbf{i}\hat y   )\right|^p \, d  y '  { \, d x'}  
     \right)^\frac 1p\\
     &\lesssim \frac 1{  |R(0,  \boldsymbol{\varepsilon})|^{2-\frac 2q}}\left(
     \int_{|\hat x|>R }  \int_{\{\hat y\in\Omega ;|     y'|<2b\} }\left|F  (  x'+\mathbf{i}  y'   )\right|^p d  y'\right)^\frac 1p { \, d x'}  <\delta.
\end{split}\end{equation} The claim \eqref{eq:claim} for  $F (x+ \mathbf{i}y +\mathbf{i}\pi (\boldsymbol{\varepsilon}))$ is proved, while
the claim \eqref{eq:claim} for  $v_{\varepsilon,x^\perp}$ holds by the decay of  the Poisson integral of an  $L^p$ function for $p\geq 1$.

   For $   t_\mu>b$ with $b$ large, the claim \eqref{eq:claim} directly follows from the estimate in Proposition \ref{prop:decay-t} and \ref{prop:upperbound-hardy}.

  Since   $ L^p(\mathbb{R}^n)$  for $p> 1$ is reflexive and  $\|f_{ {\varepsilon } } \|_{L^p(\mathbb{R}^n)}\leq \| F \|_{ H^p(T_\Omega)}$, there exists a
subsequence    $  \varepsilon  _k \rightarrow0$   such that $\{f_{ {\varepsilon }_k} \} $ is weakly convergent to some $F^b \in L^p(\mathbb{R}^n)$
by { the} Banach-Alaoglu theorem. For fixed $\mathbf{t}$, we can assume $t_{ 1}, \dots,t_{ n}$ are the largest $n$ numbers among $\{t_{ 1}, \dots,t_{ m}\}$ and  \eqref{eq:e-mu-ej} holds. Then, 
{ one has} 
\begin{equation} \label{eq:F-pi-P}\begin{split}
f_{ {\varepsilon }_k}*P_{\mathbf t}(x)&=\int_{\mathbb{  R}^m} f_{ {\varepsilon }_k}\left(x-\sum_{j=1}^ms_j e_j\right)P_{t_1}(s_1)
\cdots  P_{t_m}(s_m)\, d\mathbf{s}\\&=\int_{\mathbb{  R}^m}  f_{ {\varepsilon }_k}\left(x-\sum_{j=1}^n\left(s_{ j} +\sum_{\mu=n+1}^m  A_{\mu j}s_{\mu}\right)e_{ j}\right)  P_{t_1}(s_1)
\cdots  P_{t_m}(s_m)\, d\mathbf{s}\\
    &=\int_{\mathbb{  R}^m}  f_{ {\varepsilon }_k}\left(x-\sum_{j=1}^n s_{ j}' e_{ j}\right) \prod_{j=1}^n P_{t_j}\left(s_{ j}' -\sum_{\mu=n+1}^m  A_{\mu j}s_{\mu}'\right) \cdot
    \prod_{j=n+1}^m P_{t_{j}}(s_{j}')  \,d\mathbf{s}' \\
    &=\frac 1{\det(e_{ 1},\dots, e_{ n})}\int_{\mathbb{  R}^n} f_{ {\varepsilon }_k}\left(x-x'\right)\pi(P_{t_1} \cdots  P_{t_m})(x')\, dx',
 \end{split} \end{equation} 
{ using the}  coordinates transformations \eqref{eq:transformation-Theta},
where
\begin{equation*}
   \pi(P_{t_1} \cdots  P_{t_m})(x'):=\int_{\mathbb{  R}^{m-n}} \prod_{j=1}^n P_{t_j}\left(s_{ j}' -\sum_{\mu=n+1}^m  A_{\mu j}s_{\mu}'\right) \cdot   \prod_{j=n+1}^m P_{t_{j}}(s_{j}')
\,   ds'_{n+1} \cdots ds'_m
\end{equation*}
for $x'=\sum_{j=1}^n s_{ j}' e_{ j}$. { This function lies} 
 in $L^q(\mathbb{R}^n)$ for any $q>1$, since
\begin{equation*} \begin{split}
 \left\|\pi(P_{t_1} \cdots )\right\|_{L^q}&\approx\left(\int_{\mathbb{  R}^{n}} \left|\int_{\mathbb{  R}^{m-n}} \prod_{j=1}^n P_{t_j}\left(s_{ j}'-\sum_{\mu=n+1}^m  A_{\mu
 j}s_{\mu}'\right)    \prod_{j=n+1}^m P_{t_{j}}(s_{j}')  ds'_{n+1}\cdots ds'_m\right|^qds'_{1} \cdots ds'_n\right)^\frac 1q\\
 &\leq  \int_{\mathbb{  R}^{m-n}}\left(\int_{\mathbb{  R}^{n}} \left|\prod_{j=1}^n P_{t_j}\left(s_{ j}'-\sum_{\mu=n+1}^m  A_{\mu j}s_{\mu}'\right)     \prod_{j=n+1}^m
 P_{t_{j}}(s_{j}')\right|^qds'_{1} \cdots ds'_n\right)^\frac 1q ds'_{n+1} \cdots ds'_m\\
 &= (t_1\cdots t_n)^{\frac 1q-1}\|P_1\|_{L^q(\mathbb{R})}^n { <\infty, }
 \end{split} \end{equation*}
{ in view of the}  coordinates transformation
\eqref{eq:coordinates-s-x'}
and   Minkowski's inequality.

Now by { the weak}  convergence of $\{f_{ {\varepsilon }_k} \} $  to  $F^b \in L^p(\mathbb{R}^n)$, we get
\begin{equation*}
   f_{ {\varepsilon }_k}*\pi(P_{t_1} \cdots  P_{t_m})(x)\rightarrow  F^b*\pi(P_{t_1} \cdots  P_{t_m})(x),
\end{equation*}for any $x \in\mathbb{R}^n $.
Then,  $f_{ {\varepsilon }_k}*P_{\mathbf t}(x)\rightarrow F^b*P_{\mathbf t}(x)$ as $k\rightarrow+\infty $ by \eqref{eq:F-pi-P}. On the other hand,
\begin{equation*}
   f_{ {\varepsilon }_k}*P_{\mathbf t}(x)= F (  x+\mathbf{i}\pi(\mathbf{t})+\mathbf{i}\pi( \boldsymbol{\varepsilon}_k))\rightarrow F (  x+\mathbf{i}\pi(\mathbf{t}) )
\end{equation*}
by \eqref{eq:reproduce-integral-representation}, since $F$ is smooth on $ T_\Omega $. The  integral representation formula \eqref{eq:integral-representation}   follows.
     \end{proof}

\begin{prop}  \label{prop:subharmonic-ineq}  Suppose $F\in H^1( T_\Omega )$.  Then for $0 < q<+ \infty  $, we have
  \begin{equation}\label{eq:subharmonic-ineq}
   | F ( x+\mathbf{i}\pi(\mathbf{t}+\boldsymbol
  { {\varepsilon }}))|^q \leq  |    f_\varepsilon
    |^q *  P_{\mathbf{t}  } (x),
 \end{equation} for { any} 
 $(x,\boldsymbol {t})\in  \mathbb{ R}^n \times(\mathbb{R}_+)^m$, where
  $
   f_\varepsilon ( x  ) :=  F ( x + \mathbf{i}\pi( \boldsymbol
  { {\varepsilon }})).
 $
\end{prop}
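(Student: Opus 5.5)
The plan is to prove the inequality one complex direction at a time, using the fact that $|F|^q$ is subharmonic on each complex line $x+\mathbf{i}y+L_\mu^+$, and to compare it with the harmonic majorant given by the Poisson integral along that line. Fix $\varepsilon>0$ and set $f_\varepsilon(x)=F(x+\mathbf{i}\pi(\boldsymbol{\varepsilon}))$. By the boundary growth estimate of Proposition \ref{prop:upperbound-hardy}, $f_\varepsilon$ is bounded and smooth, and $f_\varepsilon\in L^1(\mathbb{R}^n)$. Hence $f_\varepsilon\in L^1\cap L^\infty$, and $|f_\varepsilon|^q$ is bounded and continuous for every $q>0$.

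\emph{Step 1: one direction.} Fix $\mu$, a point $x^\perp\in e_\mu^\perp$, and a point $y\in\overline{\Omega}$ of the form $y=\pi(\mathbf t')$ with some coordinates of $\mathbf t'$ possibly zero. Consider
\[
w(s,t):=\bigl|F\bigl(x^\perp+(s+\mathbf{i}t)e_\mu+\mathbf{i}y+\mathbf{i}\pi(\boldsymbol{\varepsilon})\bigr)\bigr|^q
\]
on the upper half plane. By Proposition \ref{prop:triangle-j}(1), $F$ is holomorphic in $s+\mathbf{i}t$, so $w$ is subharmonic for any $q>0$ (since $\log|F|$ is subharmonic). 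Also, $w$ is continuous up to $t=0$ with boundary value $g(s):=w(s,0)$, and $w$ is bounded by Proposition \ref{prop:upperbound-hardy}. Let $h:=g*P_t$ be the Poisson integral of $g$, which is harmonic and bounded. I claim $w\le h$. The function $w-h$ is subharmonic and bounded on the half plane, and its boundary values are $\le 0$. By the Phragmén–Lindelöf maximum principle for bounded subharmonic functions on a half plane, $w-h\le 0$. Alternatively, one can mimic the proof of Theorem \ref{prop:integral-representation} and use the decay claim \eqref{eq:claim} on large rectangles $\mathcal R_{2R,b}$ together with the ordinary maximum principle. The bounded version avoids needing decay, since decay of $F$ is not available for $q$ small. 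The conclusion of this step is
\[
|F(x+\mathbf{i}t_\mu e_\mu+\mathbf{i}y+\mathbf{i}\pi(\boldsymbol\varepsilon))|^q\le \bigl|F(\cdot+\mathbf{i}y+\mathbf{i}\pi(\boldsymbol\varepsilon))\bigr|^q*_\mu P_{t_\mu}(x).
\]

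\emph{Step 2: iteration.} Apply Step 1 with $\mu=m$ and $y=\pi(t_1,\dots,t_{m-1},0)$. Then apply it with $\mu=m-1$ to the integrand at each shifted point $x-s e_m$, with $y=\pi(t_1,\dots,t_{m-2},0,0)$. Since the Poisson kernel is positive, the convolutions preserve the inequalities, and by Fubini and the commutativity \eqref{eq:convolution-commutativity} we can continue down to $\mu=1$, where $y=0$. This gives $|F(x+\mathbf{i}\pi(\mathbf t+\boldsymbol\varepsilon))|^q\le |f_\varepsilon|^q*P_{\mathbf t}(x)$, which is \eqref{eq:subharmonic-ineq}.

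The main obstacle is Step 1. We need to justify the maximum principle on the unbounded half plane, and for $q<1$ we lack integrability of $|f_\varepsilon|^q$. Boundedness of $w$, which follows from Proposition \ref{prop:upperbound-hardy} thanks to the $\boldsymbol\varepsilon$ shift, is exactly what makes the standard Phragmén–Lindelöf argument work. Concretely, one applies the maximum principle to $w-h-\eta\log|s+\mathbf{i}(t+1)|$ on large half discs and then lets $\eta\to0$.
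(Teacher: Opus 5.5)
Your proof is correct and follows essentially the same route as the paper. Both arguments use the subharmonicity of $|F|^q$ along each complex line $x+\mathbf{i}y+L_\mu^+$, compare it with the one-variable Poisson integral $|F(\cdot+\mathbf{i}y+\mathbf{i}\pi(\boldsymbol{\varepsilon}))|^q*_\mu P_{t_\mu}$ via the maximum principle, and then iterate over $\mu$ using positivity of the Poisson kernel. The only difference is how the maximum principle is justified on the unbounded half plane. The paper uses the decay claim \eqref{eq:claim} on large rectangles, whereas you use the boundedness of $w$ from Proposition \ref{prop:upperbound-hardy} together with a Phragm\'en--Lindel\"of argument. Your version has the minor advantage of handling all $0<q<\infty$ and every (not just almost every) $x^\perp$ uniformly.
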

 \begin{proof} It is well known that if $h$ is holomorphic on a domain $D$ in $\mathbb{C}$, then $|h|^q$ for $q>0$ is subharmonic on $D$. We see that $|F(x^\perp  +(s_\mu +\mathbf{i} t_\mu)e_\mu+\mathbf{i}\pi 
 (\boldsymbol{\varepsilon}))|^q$ is subharmonic on $\mathbb{R}_+^2$, since $ F(x^\perp  +z e_\mu+\mathbf{i}\pi (\boldsymbol{\varepsilon}))$ is holomorphic in $z\in \mathbb{C}$ for $\operatorname{Im} z>0$ by 
 Proposition \ref{prop:triangle-j}.
{ As in} the proof of Theorem  \ref{prop:integral-representation},  let
 \begin{equation}\label{eq:v-|f|^q}
   v_{x^\perp,\varepsilon}(s_\mu ,t_\mu ):=(|f_\varepsilon|^q*_\mu P_{t_\mu})(  x^\perp+s_\mu e_\mu+ \mathbf{i}\pi(\mathbf{\varepsilon} )),
\end{equation}
 on the upper half plane $\mathbb{R}_+^2$, which   is harmonic by Proposition \ref{prop:triangle-j}. So
  \begin{equation*}
    | F(x^\perp  +(s_\mu +\mathbf{i} t_\mu)e_\mu+\mathbf{i}\pi (\boldsymbol{\varepsilon}))|^q-v_{x^\perp,\varepsilon}(s_\mu ,t_\mu )
 \end{equation*}
   is also subharmonic on $\mathbb{R}_+^2$. Moreover, it vanishes  at $t_\mu=0$, i.e.
  \begin{equation*}v_{x^\perp,\varepsilon}(s_\mu ,0 )=|    f_\varepsilon(x^\perp  + s_\mu  e_\mu )
    |^q =
    | F(x^\perp  + s_\mu  e_\mu+\mathbf{i}\pi (\boldsymbol{\varepsilon}))|^q,
 \end{equation*}by definition \eqref{eq:v-|f|^q}.
 The claim \eqref{eq:claim} holds similarly by \eqref{eq:F-claim} for $p=1$ and $|f_\varepsilon|^q\in  L^{\frac 1q}(\mathbb{R}^n)$ with $\frac 1q>1$. Then, we obtain \begin{equation*}
    | F(x^\perp  +(s_\mu +\mathbf{i} t_\mu)e_\mu+\mathbf{i}\pi (\boldsymbol{\varepsilon}))|^q-v_{x^\perp,\varepsilon}(s_\mu ,t_\mu )\leq0
 \end{equation*} by applying the maximal principle for
 subharmonic functions. The result follows by repeating the procedure as in { the} proof of Theorem  \ref{prop:integral-representation}. { However,} we do not need the last step of taking 
 limit ${\varepsilon }_k\rightarrow0$ here.
 \end{proof}

  \section{The     maximal function
characterization}

 \subsection{Reduction of the Fefferman-Stein type good-$\lambda$ inequality to an integral estimate}

  \begin{thm}\label{thm:Fefferman-Stein-inequality}
 For   $f\in L^1( \mathbb{R}^n)$ and all $\lambda> 0$, if we choose $\beta$   sufficient large,  then the Fefferman-Stein type good-$\lambda$ inequality \eqref{eq:Fefferman-Stein-inequality}  holds.
 \end{thm}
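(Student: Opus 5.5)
We follow the Fefferman--Stein/Merryfield scheme, adapted to twisted rectangles. Fix $\lambda>0$ and set $E:=\{x ; N^\beta(f)(x)>\lambda\}$ and $\widetilde E:=\{x ; M_t(\chi_E)(x)>\frac 12\}$. By Proposition \ref{prop:it-twist}, $M_t\approx M_{it}$, and $M_{it}$ is a composition of one-dimensional maximal operators along lines, so it is bounded on $L^2$. Hence $|\widetilde E|\lesssim |E|$. It therefore suffices to estimate $|\{x\notin \widetilde E ; S(f)(x)>\lambda\}|$, and by Chebyshev this reduces to
\[
\int_{\mathbb{R}^n\setminus \widetilde E} S(f)^2\,dx\lesssim \int_{\{N^\beta(f)\le\lambda\}}N^\beta(f)^2\,dx+\lambda^2|E| .
\]
By approximation we may replace $f$ by $f_\varepsilon=f*P_{\boldsymbol\varepsilon}$, so that Proposition \ref{prop:decay-t} and Corollary \ref{cor:conditions} apply to $u:=f_\varepsilon*P_{\mathbf t}$. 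By Fubini,
\[
\int_{\widetilde E^c}S(f)^2\,dx=\int_{\mathbb{R}^n\times(\mathbb{R}_+)^m}|\nabla_{\mathfrak m}u(x',\mathbf t)|^2\,\frac{|R(x',\mathbf t)\cap \widetilde E^c|}{|R(x',\mathbf t)|}\,\mathbf t\,d\mathbf t\,dx'.
\]
The weight is supported on the region $W:=\bigcup_{x\notin\widetilde E}\Gamma(x)$. For $(x',\mathbf t)\in W$ we have $|R(x',\mathbf t)\cap E|\le \frac12|R(x',\mathbf t)|$, so that with $\beta$ large (depending on the constants $\gamma_0,\mathcal A$ of Propositions \ref{prop:twist-R} and \ref{prop:it-twist}) a slightly enlarged region $\widetilde W$ satisfies $|u|\le\lambda$ on $\widetilde W$. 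This is where $\beta$ is chosen.

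The weight $\chi_W$ is not smooth and $W$ is not a product, so instead of Green's formula on smooth approximating domains we use an auxiliary function in the spirit of Malliavin--Malliavin. Take $\psi:=\phi*\chi_{\gamma_0\mathbf t}$-type smoothings of $\chi_{\widetilde E^c}$; concretely, set
\[
\Psi(x,\mathbf t):=\chi_{\widetilde E^c}*\prod_{\mu}\phi_{c t_\mu}(x)
\]
with $\phi$ smooth and compactly supported. This $\Psi$ is comparable to $1$ on $W$, vanishes off $\widetilde W$, and satisfies $t_\mu^{a}|\nabla_\mu^{a}\Psi|\lesssim 1$. We then estimate
\[
I:=\int|\nabla_{\mathfrak m}u|^2\,\Psi\,\mathbf t\,d\mathbf t\,dx .
\]
The basic identity is $\triangle_\mu(v^2)=2|\nabla_\mu v|^2$ for $\triangle_\mu v=0$ (Proposition \ref{prop:triangle-j}). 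We apply it inductively in $\mu=1,\dots,m$: write $|\nabla_{\mathfrak m}u|^2=\frac12\triangle_m|\nabla_{\mathfrak m_1\setminus\{m\}}u|^2$ and integrate in $t_m$ by Proposition \ref{prop:Green}, using Corollary \ref{cor:conditions}(b) to justify the boundary terms. Moving $\triangle_m$ onto $\Psi t_m$ produces three kinds of terms:
\begin{itemize}
\item[(i)] the main term at $t_m=0$, which has one fewer parameter;
\item[(ii)] error terms with $X_m\Psi$ or $\partial_{t_m}\Psi$, which are supported where $\Psi$ is nonconstant, i.e.\ near $\partial\widetilde W$, and are controlled by Cauchy--Schwarz against the main term plus $\int u^2|\nabla\Psi|^2\cdots$;
\item[(iii)] cross terms $\nabla_m(\cdot)\cdot\nabla_m(\cdot)$ involving mixed gradients $\nabla_{\mathfrak j}u$ with $\mathfrak j\subsetneq\mathfrak m$.
\end{itemize}
Iterating over all coordinates, every term is eventually reduced either to $\int_{\widetilde W}|u|^2$-type quantities on lower-dimensional boundary pieces, bounded by $\lambda^2|\widetilde E|+\int_{\{N^\beta\le\lambda\}}N^\beta(f)^2$ because $|u|\le\min(\lambda,N^\beta f)$ there, or to terms with derivatives of $\Psi$. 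The latter have $\mathbf t$-weights that, after integration in $\mathbf t$, give a bounded multiple of $\lambda^2|\widetilde E|$, since $\nabla\Psi$ lives in a region of measure $\lesssim|\widetilde E|$ at each scale.

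The main obstacle is step (iii): bookkeeping the inductive formula for arbitrary $m$. Each use of Green's formula in $t_\mu$ generates terms indexed by subsets $\mathfrak j\subset\mathfrak m$, with partial gradients $\nabla_{\mathfrak j}u$ weighted by derivatives of $\Psi$ in the complementary variables. We will set up an induction on $|\mathfrak j|$, with hypothesis
\[
\int|\nabla_{\mathfrak j}u|^2\,\big(\textstyle\prod_{\nu\notin\mathfrak j}D_\nu\big)\Psi\;\mathbf t_{\mathfrak j}\,d\mathbf t_{\mathfrak j}\cdots\lesssim \lambda^2|\widetilde E|+\int_{\{N^\beta\le\lambda\}}N^\beta(f)^2 ,
\]
where $D_\nu$ ranges over $\{1,t_\nu\nabla_\nu\}$. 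At each step we absorb the cross terms by Cauchy--Schwarz with a small parameter into the left side. If absorption fails for mixed terms, the fallback is to use the mean-value estimates of Proposition \ref{prop:decay-t} on $\widetilde W$ to bound $t_\mu|\nabla_\mu u|\lesssim\lambda$ pointwise there, which turns the offending terms into type (ii).
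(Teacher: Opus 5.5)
\paragraph{Gap: the auxiliary function is not harmonic.}
Your outer framework matches the paper's. You make the Chebyshev reduction to an $L^2$ bound for $S(f)$ over a set of good density. You enlarge the aperture so that $|u|\le\lambda$ on the support of the weight. You then integrate by parts in each $t_\mu$ using $\triangle_\mu|\nabla_{\mathfrak j}u|^2=2|\nabla_\mu\nabla_{\mathfrak j}u|^2$, organized by subsets of $\mathfrak m$.

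The gap is the choice of auxiliary function, which is exactly where the difficulty of the theorem lies. Your $\Psi=\chi_{\widetilde E^c}*\prod_\mu\phi_{ct_\mu}$, with $\phi$ compactly supported, is not harmonic in the $(e_\mu,t_\mu)$-half-planes. So moving $\triangle_\mu$ onto $\Psi t_\mu$ produces $\int|\nabla_{\mathfrak j}u|^2\,(t_\mu\triangle_\mu\Psi+2\partial_{t_\mu}\Psi)$, or after a single integration by parts, $\int|\nabla_{\mathfrak j}u|^2\,\partial_{t_\mu}\Psi$.

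These terms are \emph{linear} in the derivatives of $\Psi$. They cannot be absorbed by Cauchy--Schwarz. After using $|u|\le\lambda$, your bound for them amounts to $\lambda^2\int\!\!\int|\mathbf t\,\partial\Psi|\,\frac{d\mathbf t}{\mathbf t}\,dx$. Your justification gives at best a bound $\lesssim|\widetilde E|$ at each scale, and that does not survive integration against $d\mathbf t/\mathbf t$. Already for $m=n=1$ and $\widetilde E$ an interval, $\|t\partial_t\Psi(\cdot,t)\|_{L^1}\approx|\widetilde E|$ for all large $t$, so the bound diverges logarithmically. Nothing in your proposal exploits cancellation to do better.

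The same objection applies to your quadratic terms: a per-scale bound never sums over scales, so a genuine square-function estimate is needed. In one parameter this is avoided by a sawtooth region whose boundary meets each vertical line once. As the paper stresses, no such approximation is available for twisted multi-parameter cones. Your fallback $t_\mu|\nabla_\mu u|\lesssim\lambda$ only converts terms into the same non-summable form.

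\paragraph{What the paper does instead.}
The missing idea is the Malliavin--Malliavin auxiliary function built from a \emph{harmonic} function. The paper takes $v=P_{\mathbf t}(\chi_{E_\beta(\lambda)})$, the iterated Poisson extension of the good set $E_\beta(\lambda)=\{N^\beta(f)\le\lambda\}$. It satisfies $\triangle_\mu v=0$ for every $\mu$. The weight is $\phi^2(v)$, with $\phi=1$ on $[9/10,1]$ and $\phi=0$ on $[0,C_1]$. Proposition~\ref{prop:W} gives $\phi(v)=1$ on $W_\beta$ and $\phi(v)=0$ off $\widetilde W_\beta$; this is where $\beta$ large is needed, because of the Poisson tails.

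The identity $\triangle_\mu\phi^2(v)=2(\phi'(v)^2+\phi(v)\phi''(v))|\nabla_\mu v|^2$ contains no linear term. The cross terms $u\phi\phi'\nabla_\mu u\cdot\nabla_\mu v$ are absorbed by Cauchy--Schwarz, which is also why the weight is a square. So the induction of Lemmas~\ref{lem:estimate0}--\ref{lem:estimate2} and Proposition~\ref{prop:estimate-Fefferman-Stein0} leaves only terms $P_{\mathbf t_{\mathfrak j}}(f_\varepsilon)^2\Phi^2_{|\mathfrak j|}(v)\Sigma_{\mathfrak j}(v)$. Here $\Sigma_{\mathfrak j}$ is a sum over partitions of products of $|\nabla_{\mathfrak j_\alpha}v|^2$.

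Lemma~\ref{prop:estimate-u2-G-ki} bounds these by $\lambda^2|E_\beta(\lambda)^c|$. It uses $\nabla v=-\nabla P_{\mathbf t}(1-\chi)$, H\"older across the partition, the vector-valued maximal inequality, and the $L^{2l}$ bound for partial $g$-functions (Proposition~\ref{thm:g-function}). This square-function step is what sums over all scales.

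A minor point: $\Psi$ need not be comparable to $1$ on $W$, since $\widetilde E^c$ need not have density bounded below near its points. What you actually need is $|R(x',\mathbf t)\cap\widetilde E^c|/|R(x',\mathbf t)|\lesssim\Psi(x',\mathbf t)$, and that does follow from Proposition~\ref{prop:it-twist}.
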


 The following is a direct corollary of this theorem.
 \begin{cor} \label{cor:Fefferman-Stein-inequality} If we choose $\beta$   sufficient large, then  $\|   S (f)
\|_{L^1({  \mathbb{ R}^n})}   \lesssim
     \| N ^\beta( f ) \|_{L^1({  \mathbb{ R}^n})}$.
 \end{cor}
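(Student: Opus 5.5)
The corollary follows from the good-$\lambda$ inequality \eqref{eq:Fefferman-Stein-inequality} of Theorem \ref{thm:Fefferman-Stein-inequality} by integrating in $\lambda$. Fix $\beta$ large enough that Theorem \ref{thm:Fefferman-Stein-inequality} applies, and write $N:=N^\beta(f)$. We may assume $\|N\|_{L^1}<\infty$, since otherwise there is nothing to prove. By the layer-cake formula,
\begin{equation*}
\|S(f)\|_{L^1(\mathbb{R}^n)}=\int_0^\infty \left|\{x\in\mathbb{R}^n ; S(f)(x)>\lambda\}\right|\,d\lambda .
\end{equation*}
We insert \eqref{eq:Fefferman-Stein-inequality} into this integral and bound the two resulting terms separately.

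The first term is immediate:
\begin{equation*}
C\int_0^\infty |\{N>\lambda\}|\,d\lambda = C\|N\|_{L^1}.
\end{equation*}
For the second term we use Tonelli's theorem to exchange the order of integration:
\begin{equation*}
C\int_0^\infty \frac{1}{\lambda^2}\int_{\{N\le \lambda\}} N(x)^2\,dx\,d\lambda
= C\int_{\mathbb{R}^n} N(x)^2\int_{N(x)}^\infty \frac{d\lambda}{\lambda^2}\,dx
= C\int_{\mathbb{R}^n} N(x)\,dx .
\end{equation*}
The integrand is nonnegative, so the exchange is justified. On the set $\{N(x)=0\}$ the inner contribution is $0$, because the factor $N(x)^2$ vanishes there.

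Adding the two bounds gives $\|S(f)\|_{L^1}\le 2C\|N^\beta(f)\|_{L^1}$, which is the claimed estimate.

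The argument is essentially routine. The only point needing care is measurability, so that the layer-cake formula and Tonelli's theorem apply to $S(f)$ and $N^\beta(f)$. Both functions are lower semicontinuous: $N^\beta(f)$ is a supremum of continuous functions $x\mapsto |f*P_{\mathbf t}(x+\pi(\beta\boldsymbol{\lambda}\mathbf t))|$ over an open parameter set, and $S(f)$ is an integral of a nonnegative function over the open region $\Gamma(x)$, which varies continuously with $x$, so Fatou's lemma gives lower semicontinuity. Both are therefore Borel measurable. The substantive work lies entirely in Theorem \ref{thm:Fefferman-Stein-inequality} itself.
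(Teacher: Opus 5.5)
Your proof is correct and is exactly the argument the paper intends when it calls this a direct corollary of Theorem~\ref{thm:Fefferman-Stein-inequality}: you integrate the good-$\lambda$ inequality \eqref{eq:Fefferman-Stein-inequality} over $\lambda\in(0,\infty)$, using the layer-cake formula for the first term and Tonelli's theorem with $\int_{N^\beta(f)(x)}^\infty \lambda^{-2}\,d\lambda = N^\beta(f)(x)^{-1}$ for the second. Your lower-semicontinuity remarks on $N^\beta(f)$ and $S(f)$ justify the measurability that the paper leaves implicit; they are correct but optional.
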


  To prove  the Fefferman-Stein type  good-$\lambda$ inequality
 in Theorem
\ref{thm:Fefferman-Stein-inequality},
  let
  \begin{equation*}
    E_\beta(\lambda):=\left \{x \in \mathbb{ R}^n;   {N}^\beta( f )(x)\leq \lambda\right\},
  \end{equation*}
for $f \in L^1(  \mathbb{ R}^n)$ with $  N ^\beta( f ) \in L^1(  \mathbb{ R}^n)$ and $\lambda\geq0$,  and let
 \begin{equation*}
    A_\beta(\lambda):=\left \{x \in \mathbb{ R}^n;  M_{it}  (  \chi_{ E_\beta(\lambda)^c} )(x)\leq \frac 1{10 C_0}\right\},
 \end{equation*} where $C_0$ is given by \eqref{eq:heat-max}. By definition, $
 E_\beta(\lambda)^c\subset A_\beta(\lambda)^c$, and so
 $
    A_\beta(\lambda) \subset
 E_\beta(\lambda) ,
$ up to a set of measure zero.
   Moreover,
 \begin{equation*}
    \left|A_\beta(\lambda)^c\right|\lesssim \| M_{it}  (  \chi_{ E_\beta(\lambda)^c} )\|_{{ L^2}}^2\leq C\left| E_\beta(\lambda)^c\right|,
 \end{equation*}
 by the $L^2$-boundedness of
 iterated  maximal function, where $C$ is independent of  $\beta$. Therefore, we have
 \begin{equation}\label{eq:S-estimate} \begin{split}
    |\{x \in \mathbb{ R}^n;  S (f)>\lambda \}|& \leq |\{x\in  A_\beta(\lambda)^c;  S (f)(x)>\lambda \}|+ |\{x\in
    A_\beta(\lambda) ;  S (f)(x)>\lambda \}|\\
    &\leq C\left| E_\beta(\lambda)^c\right|+\frac 1{\lambda^2}\int_{ A_\beta(\lambda)}S (f)^2  (x) dx.
 \end{split} \end{equation}
  Consider the domains (see Figure 2)
 \begin{equation}\label{eq:widetilde-W}
    W_\beta:=\bigcup_{x\in A_\beta(\lambda) } \Gamma (x),\qquad {\rm
  and } \qquad
   \widetilde{ W}_\beta:=\bigcup_{x\in E_\beta(\lambda) } \Gamma_\beta (x).
 \end{equation}
 \begin{figure}[h]
  \centering
\includegraphics[scale=0.52]{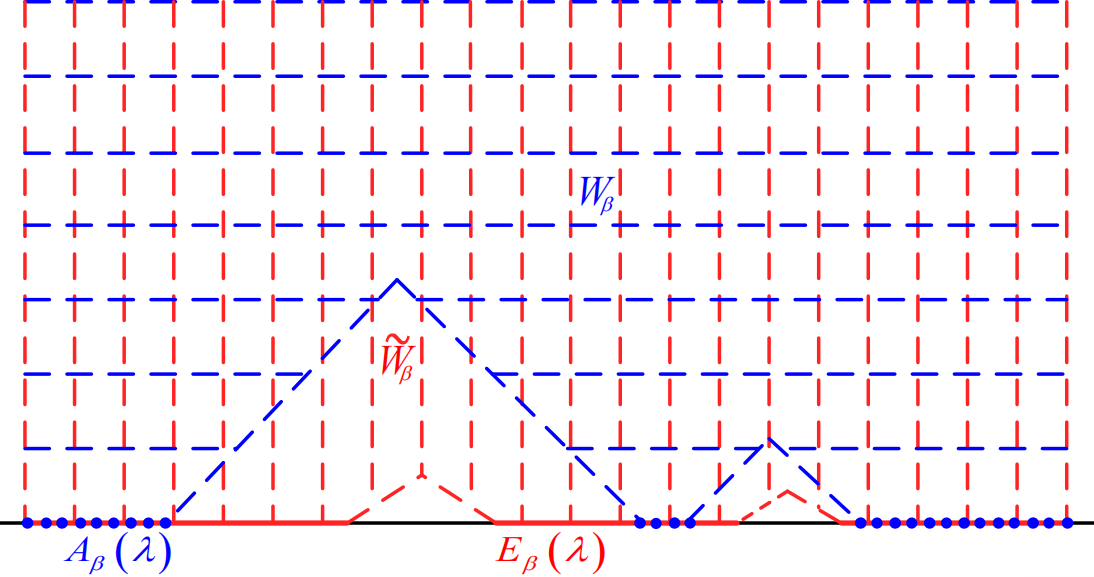}
  \caption{ \color{blue} Domains $ W_\beta$ and  $\widetilde{ W}_\beta$}
  \label{fig:fig2}
\end{figure}

 \begin{prop} \label{prop:W}  \textup{(1)} If $(x,\mathbf{t})\in { W}_\beta$, then
  $
      P_{\mathbf{t}}(\chi_{E_\beta(\lambda) })(x )> \frac 9{10}.
 $
  \\
 \textup{(2)} If we choose $\beta$   sufficient large, then there is a constant $C_1 \in (0,
9/10 )$ such that for any $(x,\mathbf{t})\in\widetilde{ W}_\beta^c=(  \mathbb{ R}^n \times  (\mathbb{R}_+)^m)\setminus \widetilde{ W}_\beta$, we have
    $
       P_{\mathbf{t}}(\chi_{E_\beta(\lambda) })(x )\leq C_1.
 $
 \end{prop}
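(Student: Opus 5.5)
For part (1), I will argue by complementation. Fix $(x,\mathbf{t})\in W_\beta$. By definition there is $x_0\in A_\beta(\lambda)$ with $(x,\mathbf{t})\in\Gamma(x_0)$, i.e. $x\in R(x_0,\mathbf{t})$. Since $P_{\mathbf{t}}$ is a positive operator with total mass one along each line, we have
\[
P_{\mathbf{t}}(\chi_{E_\beta(\lambda)})(x)=1-P_{\mathbf{t}}(\chi_{E_\beta(\lambda)^c})(x).
\]
Apply \eqref{eq:heat-max} to $\chi_{E_\beta(\lambda)^c}$ at the point $(x,\mathbf{t})\in\Gamma(x_0)$. This gives
\[
P_{\mathbf{t}}(\chi_{E_\beta(\lambda)^c})(x)\le C_0 M_{it}(\chi_{E_\beta(\lambda)^c})(x_0)\le \tfrac1{10},
\]
by the definition of $A_\beta(\lambda)$. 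Hence $P_{\mathbf{t}}(\chi_{E_\beta(\lambda)})(x)\ge 9/10$. To get strict inequality I will either use the strict positivity of the Poisson kernel (so that \eqref{eq:heat-max} can be taken with slack), or slightly enlarge $C_0$ in the definition of $A_\beta$.

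For part (2), take $(x,\mathbf{t})\notin\widetilde W_\beta$. Then no point of $E_\beta(\lambda)$ lies in the set $\{x_0: x\in R(x_0,\beta\mathbf{t})\}=x-R(0,\beta\mathbf{t})=R(x,\beta\mathbf{t})$, using the symmetry of twisted rectangles. So $R(x,\beta\mathbf{t})\subset E_\beta(\lambda)^c$, and therefore
\[
P_{\mathbf{t}}(\chi_{E_\beta(\lambda)})(x)\le \int_{\mathbb{R}^m\setminus \pi^{-1}\text{-preimage}} \prod_\mu P_{t_\mu}(s_\mu)\,d\mathbf{s},
\]
where the integral runs over those $\mathbf{s}$ with $x-\sum s_\mu e_\mu\notin R(x,\beta\mathbf{t})$. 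Since $\sum s_\mu e_\mu\in R(0,\beta\mathbf{t})$ whenever $|s_\mu|<\beta t_\mu$ for all $\mu$, this set is contained in $\bigcup_\mu\{|s_\mu|\ge\beta t_\mu\}$. The Poisson measure of that union is at most
\[
\sum_\mu\int_{|s|\ge\beta t_\mu}P_{t_\mu}(s)\,ds=m\int_{|s|\ge\beta}P_1(s)\,ds=\frac{2m}{\pi}\Bigl(\frac{\pi}{2}-\arctan\beta\Bigr)\lesssim \frac{m}{\beta}.
\]
Choosing $\beta$ large makes this at most some $C_1<9/10$, say $C_1=1/2$.

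The main point to check is the geometric step: that the twisted rectangle $R(x,\beta\mathbf{t})$ is exactly the image of the lifted box. This is immediate from the definition \eqref{eq:twisted-rectangle} and the evenness of $R(0,\cdot)$, so the proof works at the lifted level and never needs Proposition \ref{prop:twist-R}. The only mild subtlety is the strictness in (1), which I handle as indicated above.
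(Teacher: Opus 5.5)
Your proof is correct and follows the paper's argument essentially step by step. For (1) you pass to the complement and apply \eqref{eq:heat-max} at the point $x_0\in A_\beta(\lambda)$. For (2) you observe that $\chi_{E_\beta(\lambda)}(x-\pi(\mathbf s))\neq 0$ forces $\mathbf s\notin\widetilde R(0,\beta\mathbf t)$, and then take a union bound over the $m$ Poisson tails. Your remark on strictness in (1) is apt: the paper's argument likewise only gives $\ge 9/10$. Enlarging $C_0$ in the definition of $A_\beta(\lambda)$ fixes this, though it does not matter anyway, since $\phi\equiv 1$ on $[9/10,\infty)$.
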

 \begin{proof} (1) For $(x,\mathbf{t})\in { W}_\beta$, there exists $x'\in  A_\beta(\lambda) $ such that $(x,\mathbf{t})\in\Gamma(x')$. Thus
  \begin{equation*}
     P_{\mathbf{t}}(\chi_{E_\beta(\lambda)^c })(x )\leq C_0M_{it}(\chi_{E_\beta(\lambda)^c })(x' ) \leq C_0\frac 1{10 C_0}=\frac 1{10  },
 \end{equation*}by \eqref{eq:heat-max},
 and so $ P_{\mathbf{t}}(\chi_{E_\beta(\lambda) })(x )= P_{\mathbf{t}}(1-\chi_{E_\beta(\lambda)^c })(x )=1- P_{\mathbf{t}}( \chi_{E_\beta(\lambda)^c })(x )> \frac 9{10}.$

(2)
 Let $(x,\mathbf{t})\in\widetilde{ W}_\beta^c $. Given $\mathbf{s}\in(\mathbb{R}_+)^m$, if   $x'=x- \pi(\mathbf{s})\in E_\beta(\lambda)$,
     we must have $ x\notin
R \left(x',\beta  \mathbf{t}  \right)$ by definition, otherwise, we have $(x,\mathbf{t})\in\widetilde{ W}_\beta  $. So   $  \pi(\mathbf{s})\notin
R \left(0,\beta  \mathbf{t}  \right)$, i.e. $   \mathbf{s} \notin
 \widetilde{R}\left(0,\beta  \mathbf{t}  \right)$.   Consequently,  $|s_\mu| \geq \beta  t_\mu $ holds at least for one
 $\mu$. Thus,
 \begin{equation*}
 \begin{split}
 0\leq  P_{\mathbf{t}}(\chi_{E_\beta(\lambda) })(x )&=\int\chi_{E_\beta(\lambda) }\left(x- \sum_{j=1}^m s_je_j \right )P_{t_1} (s_1 ) \cdots  P_{t_m} (s_m) \, d\mathbf{ s}\\
&\leq \sum_{\mu=1}^m   \int_{|s_\mu| \geq \beta  t_\mu}
 P_{t_\mu} (s_\mu) \, d s_\mu
\prod_{\nu\neq \mu}\int_{ \mathbb{R}}
 P_{t_\nu} (s_\nu)  \,d s_\nu \\
&\leq \sum_{\mu=1}^m   \int_{|s_\mu| \geq \beta  t_\mu}
 P_{t_\mu} (s_\mu) \, d s_\mu
 \leq \frac C \beta\rightarrow 0,
     \end{split} \end{equation*}
     as $\beta\rightarrow+\infty$.
 \end{proof}

Note that the    Littlewood-Paley area function \eqref{eq:Littlewood-Paley-area} of $f$ can be written as
\begin{equation}\label{eq:Littlewood-Paley-area'}
   S (f)(x):=\left(\int_{ \Gamma (x)}\left| f  {{*}}_{1 }\widetilde{\nabla}_1 P_{t_1}  {{*}}_{ 2}\cdots {*  }_m  \widetilde{\nabla}_m P_{t_m} (x')\right|^2\frac
   {\mathbf{t}d\mathbf{t}dx'}{| R(0
,\mathbf{t})|}\right)^{\frac 12},
\end{equation}
for all $x \in \mathbb{R}^n $, by Proposition \ref{prop:triangle-j} (2).
    For $\mathbf{t} \in (\mathbb{R}_+)^m  $ and $x \in \mathbb{ R}^n $, define a function on $\mathbb{ R}^n\times  (\mathbb{R}_+)^m$:
  \begin{equation}\label{eq:u-Hf}
   U_1  (x, \mathbf{t} ):=f  {{*}}_{1 }   P_{t_1}  *_2\widetilde{\nabla}_2 P_{t_2}  *\cdots * _m \widetilde{\nabla}_m P_{t_m} (x ) .
 \end{equation}
Let $\phi$   be a non-negative ${ \mathscr C^\infty(\mathbb{R})}$
functions   such that $\phi(r) = 1$ if $r \geq  \frac 9{10  }$, $ \phi(r) = 0$ if $r \leq C_1$ 
(cf. \cite{MM,Sato}).
 But by Proposition \ref{prop:W} (1),  for $(x,\mathbf{t})\in { W}_\beta$, we have
 $
      P_{\mathbf{t}}(\chi_{E_\beta(\lambda) })(x )> \frac 9{10}$. Therefore, $\phi  (P_{\mathbf{t }} (\chi_{E_\beta(\lambda) })(x ) )=1$ and so
\begin{equation} \label{eq:S-estimate-I0} \begin{split}
    \int_{ A_\beta(\lambda)}S (f)^2  (x) \,dx&= 
    \int_{ A_\beta(\lambda)}\int_{ \Gamma (x)}\left| f  {{*}}_{1 }\widetilde{\nabla}_1
   P_{t_1}  *\cdots * _m \widetilde{\nabla}_m P_{t_m}\right|^2(x')\frac {\mathbf{t}d\mathbf{t}dx'}{|R(0 ,\mathbf{t})|} \,{ dx}\\
&\lesssim\int_{  {W}_\beta} \left| f  {{*}}_{1 }\widetilde{\nabla}_1
   P_{t_1}  *\cdots * _m \widetilde{\nabla}_m P_{t_m}\right|^2(x' ) \,\,\mathbf{t}d\mathbf{t}dx'  \\
   &{ = \int_{ { W}_\beta} \left|  \nabla_1  U _1 ( x,\mathbf{t })\right|^2 \mathbf{t}d\mathbf{t}dx } \\
&\leq \int_{ \mathbb{ R}^n\times ( \mathbb{R}_+ )^{m }  } \left|  \nabla_1   U_1  ( x,
\mathbf{t} )\right|^2\phi^2 (P_{\mathbf{t }} (\chi_{E_\beta(\lambda) } )(x )  \, \mathbf{t} d\mathbf{t}  dx
 .
 \end{split} \end{equation}

   \subsection{Estimates  of   integrals  by     differential identities }
To prove { the} Fefferman-Stein type inequality { of}  Theorem \ref{thm:Fefferman-Stein-inequality}, we need to estimate  { the} integral 
in the R. H. S. of \eqref{eq:S-estimate-I0}  in terms of
integrals only involving $ U _1  $,  instead of gradients of $ U _1 $. This can be done by using  differential identities to integrate by parts. We will at first estimate the integral in   \eqref{eq:S-estimate-I0}
with $\chi_{E_\beta(\lambda) } $ and $f$ replaced by their smoothing $
     \chi_\varepsilon:=P_{\boldsymbol {\varepsilon }} (\chi_{E_\beta(\lambda) } )$ and $ f_\varepsilon:=P_{\boldsymbol {\varepsilon }}(f)$,
  respectively, and then taking { the} limit $\varepsilon\rightarrow0$.

 \begin{lem}\label{lem:estimate0} Let $\varepsilon>0$. Suppose that $f_{\varepsilon} :=f* P_{\boldsymbol \varepsilon}$ for some  real valued function  $f \in  L^1(\mathbb{R}^n ) $, $\chi_{\varepsilon}
 :=\chi * P_{\boldsymbol \varepsilon}$ with $ \|\chi\|_{ L^\infty}\leq 1$ and  $1-\chi\in L^1(\mathbb{R}^n ) $, and
 \begin{equation}\label{eq:u-v}
     u:=f_{\varepsilon} *_\mu P_{t_\mu} , \qquad v:=\chi_{\varepsilon} *_\mu P_{t_\mu}  .
 \end{equation}  Then,
 we have
    \begin{equation}\label{eq:estimate0}
     \int_{\mathbb{R}^n \times  \mathbb{R}_+   } \left|  \nabla_\mu  u  \right|^2\phi^2 (v){t}_\mu d {t}_\mu dx\lesssim \int_{\mathbb{R}^n   }     u^2(x,0)\phi ^2 (v(x,0)) dx+
     \int_{\mathbb{R}^n\times  \mathbb{R}_+   }u^2   \Phi_1^2 (v)
     \left| \nabla_\mu   v \right|^2 {t}_\mu d {t}_\mu dx   ,
    \end{equation}where $\Phi_1$ is a ${ \mathscr C^{2m+2}} [0,1]$ function satisfying $\Phi_1(a)=0$ for $a\in [0,C_1]$ and for $a\in [0, 1]$,
\begin{equation}\label{eq:Phi}
   \Phi_1 (a)\geq  |\phi(a)|+|\phi'(a)|+|\phi''(a)|.
\end{equation}
 \end{lem}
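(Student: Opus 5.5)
The plan is the Malliavin--Sato auxiliary-function trick in the single half-plane variable $(x\text{ along }e_\mu,\ t_\mu)$. By Proposition \ref{prop:triangle-j} (2), both $u$ and $v$ satisfy $\triangle_\mu u=\triangle_\mu v=0$, and hence $\triangle_\mu(u^2)=2|\nabla_\mu u|^2$. I will expand $\triangle_\mu\big(u^2\phi^2(v)\big)$ by the Leibniz rule, using harmonicity of $u$ and $v$ (all functions are real valued):
\begin{equation*}
\triangle_\mu\big(u^2\phi^2(v)\big)=2|\nabla_\mu u|^2\phi^2(v)+8\,u\,\phi(v)\phi'(v)\,\nabla_\mu u\cdot\nabla_\mu v+u^2\big(2\phi'(v)^2+2\phi(v)\phi''(v)\big)|\nabla_\mu v|^2 .
\end{equation*}
Solving this for $2|\nabla_\mu u|^2\phi^2(v)$ and integrating against $t_\mu\,dt_\mu\,dx$, the pure Laplacian term becomes, by the integration by parts formula of Proposition \ref{prop:Green} applied to $U=u^2\phi^2(v)$, exactly $\int_{\mathbb R^n}u^2(x,0)\phi^2(v(x,0))\,dx$.

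The remaining two terms are handled pointwise. The last term is bounded by $4u^2\Phi_1^2(v)|\nabla_\mu v|^2$ because of \eqref{eq:Phi}. For the cross term I use Cauchy--Schwarz with a small parameter:
\begin{equation*}
8|u|\,|\phi(v)|\,|\phi'(v)|\,|\nabla_\mu u|\,|\nabla_\mu v|\le |\nabla_\mu u|^2\phi^2(v)+16\,u^2\Phi_1^2(v)|\nabla_\mu v|^2 .
\end{equation*}
The first piece is then absorbed into the left-hand side, leaving $\int|\nabla_\mu u|^2\phi^2(v)\,t_\mu\,dt_\mu\,dx\le \int u^2(x,0)\phi^2(v(x,0))\,dx+C\int u^2\Phi_1^2(v)|\nabla_\mu v|^2\,t_\mu\,dt_\mu\,dx$, which is \eqref{eq:estimate0}. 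The absorption is legitimate only if the left-hand side is finite. This holds because $f_\varepsilon\in L^1\cap L^\infty\subset L^2$, so $\int|\nabla_\mu u|^2 t_\mu\,dt_\mu\,dx\lesssim\|f_\varepsilon\|_2^2$ by the one-variable $L^2$ Littlewood--Paley identity (Proposition \ref{thm:g-function} with $\mathfrak j=\{\mu\}$), while $\phi$ is bounded.

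The main obstacle is verifying the hypotheses (1)--(3) of Proposition \ref{prop:Green} for $U=u^2\phi^2(v)$.
\begin{itemize}
\item Corollary \ref{cor:conditions} (a) already gives these conditions for $u^2$.
\item The factor $\phi^2(v)$ is smooth, bounded, and has bounded derivatives in $(x,t_\mu)$. Indeed $0\le v\le1$, and $\nabla_\mu v$, $\nabla_\mu^2 v$ are bounded by Proposition \ref{prop:decay-t} applied to $1-\chi\in L^1$, since derivatives of $v$ equal derivatives of $-(1-\chi)_\varepsilon*_\mu P_{t_\mu}$.
\item For the same reason $t_\mu\partial_{t_\mu}v$ is uniformly bounded, and $v(\cdot,t_\mu)\to v(\cdot,0)$ locally uniformly as $t_\mu\to0$.
\end{itemize}
The product rule then shows that $\nabla^a U(\cdot,t_\mu)\in L^1$ for $a=0,1,2$, and that $X_\mu U\to0$ along lines, because $u$ and $X_\mu u$ do. The $L^1$ decay of $U$ and $t_\mu\partial_{t_\mu}U$ as $t_\mu\to\infty$ follows from that of $u^2$ and $t_\mu\partial_{t_\mu}(u^2)$ times bounded factors. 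Convergence $U(\cdot,t_\mu)\to U(\cdot,0)$ in $L^1$ follows from $u^2\to u^2(\cdot,0)$ in $L^1$, together with dominated convergence for $\phi^2(v)$ using the uniform bound $|u|\lesssim_\varepsilon 1$. Finally, $t_\mu\partial_{t_\mu}U\to0$ in $L^1$ as $t_\mu\to0$ follows from the product rule and these same bounds.
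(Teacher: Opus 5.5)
Your proposal is correct and follows essentially the same route as the paper. It uses the same Leibniz identity for $\triangle_\mu\big(u^2\phi^2(v)\big)$ with $\triangle_\mu u=\triangle_\mu v=0$, Cauchy--Schwarz on the cross term with absorption into the left side, the bound \eqref{eq:Phi} for the remaining $|\nabla_\mu v|^2$ term, and Proposition \ref{prop:Green} for the Laplacian term; your check that the product $u^2\phi^2(v)$ satisfies the hypotheses of Proposition \ref{prop:Green} is more explicit than the paper's citation of Corollary \ref{cor:conditions}(a), which literally covers only $u^2$.
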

 \begin{proof} By direct differentiation, we see that
   \begin{equation} \label{eq:direct-differentiation}\begin{split}  (X_{\mu  }^{(a)} )^2( u^2\phi ^2 (v))=&2u  (X_{\mu  }^{(a)} )^2  u \cdot\phi ^2 (v)+ 2 |X_{\mu  }^{(a)}u |^2     \phi
   ^2 (v)+ 8 u \phi (v)\phi' (v)
   X_{\mu }^{(a)}u\cdot  X_{\mu  }^{(a)}  v
   \\&+ 2 u^2\phi (v)\phi' (v)   (X_{\mu  }^{(a)} )^2v+ 2 u^2(\phi'(v)^2+ \phi (v)\phi''(v))  | X_{\mu  }^{(a)}v |^2,
 \end{split} \end{equation}for $a=1,2$.
Since $u$ and $v$ satisfy partial Laplacian equation
  \begin{equation}\label{eq:partial-heat}
     \triangle_\mu u=0,\qquad \triangle_\mu v=0,
  \end{equation}
  on $ \mathbb{R}^n \times  \mathbb{R}_+ $,  by Proposition \ref{prop:triangle-j} (2),
the  summation of \eqref{eq:direct-differentiation} over $a=1,2$ gives us
  \begin{equation} \label{eq:S-estimate-F}\begin{split}
     \left|  \nabla_\mu  u  \right|^2\phi^2 (v)= & \frac 12 \triangle_\mu\left(   u^2\phi ^2 (v)\right)
    - 4 u \phi (v)  \phi '(v)  \nabla_\mu u \cdot  \nabla_\mu v
    -    u^2  ( \phi '(v) ^2
    + \phi (v)  \phi'' (v) ) |  \nabla_\mu v|^2\\
= :& T_1 +T_2 +T_3  .
 \end{split} \end{equation}
  But
 \begin{equation*} \begin{split}|T_2 |&\leq \frac 1 {10}\left|  \nabla_\mu  u  \right|^2\phi^2 (v)+ 20u^2  \phi '(v)^2  | \nabla_\mu v |^2 :=T_{2 1} +T_{2 2} .
 \end{split} \end{equation*}The integral $\int_{\mathbb{R}^n \times  \mathbb{R}_+  } T_{2 1}(x, {t}_\mu) 
 {t}_\mu d {t}_\mu dx$
can be absorbed by the integral of the L.H.S in \eqref{eq:S-estimate-F},  while
\begin{equation*}
T_{2 2}  + | T_3 |  \leq 40u^2   \Phi_1 ^2 (v)\left| \nabla_\mu v\right|^2   .
\end{equation*}
Then
\begin{equation*}\begin{split}
       \int_{\mathbb{R}^n \times  \mathbb{R}_+   } \left|  \nabla_\mu  u  \right|^2\phi^2 (v){t}_\mu d {t}_\mu dx&\leq \frac {5}9\int_{\mathbb{R}^n \times  \mathbb{R}_+
       }\triangle_\mu\left(   u^2\phi ^2 (v)\right){t}_\mu d {t}_\mu dx+ \frac {400}9\int_{\mathbb{R}^n \times  \mathbb{R}_+  } u^2   \Phi_1 ^2 (v)\left| \nabla_\mu v\right|^2
  {t}_\mu d {t}_\mu dx.
 \end{split} \end{equation*}
Since  $U=u^2\phi ^2 (v)$ satisfies  assumptions in Proposition \ref{prop:Green} by     Corollary
 \ref{cor:conditions},  we can apply Proposition \ref{prop:Green} to $U= u^2\phi ^2 (v)$  to get
 \begin{equation*} \begin{split} \int_{\mathbb{R}^n \times  \mathbb{R}_+  } \triangle_\mu\left(   u^2\phi ^2 (v)\right){t}_\mu d {t}_\mu dx=& \int_{\mathbb{R}^n    }
 u^2(x,0)\phi ^2 (v(x,0)) dx.
 \end{split} \end{equation*}
The estimate follows.
\end{proof}

If we  apply Lemma \ref{lem:estimate0}
      to    $u=  U_1$ in \eqref{eq:u-Hf} with $f$ replaced by $f_{\varepsilon} $, there appears a term like
      \begin{equation}\label{eq:U2-term}
          \int_{\mathbb{R}^n \times  \mathbb{R}_+   }  \left|  \nabla_2 U_2 \right|^2\Phi_1^2   (v) \left| \nabla_1 v\right|^2 {  {t}_\mu d {t}_\mu dx }.
      \end{equation}Here  for   $k=1,\dots,m$,  we define   functions
\begin{equation}\label{eq:u-Hf-2}
  U_{ k }      (x, \mathbf{t} ):=f_\varepsilon  {{*}}_{1 }   P_{t_1} *\cdots*_k P_{t_k} *_{k+1} \widetilde{\nabla}_{k+1} P_{t_{k+1}} *\cdots * _m \widetilde{\nabla}_m P_{t_m} (x ),
\end{equation}on $\mathbb{ R}^n\times  (\mathbb{R}_+)^m$, which satisfy
\begin{equation}\label{eq:nabla-U-k}
   \nabla_{k }   U_{k }     (x, \mathbf{t} )=U_{k-1 } (x, \mathbf{t} ),
\end{equation}by Proposition \ref{prop:triangle-j} (3). So we need the following
      estimate further to estimate
      the   term \eqref{eq:U2-term}.

 \begin{lem}\label{lem:estimate1} Assume as in Lemma \ref{lem:estimate0}. Let
 \begin{equation*}
     u(x,t_\mu, t_\nu) =f_{\varepsilon}*_\mu P_{t_\mu}*_\nu P_{t_\nu} , \qquad v(x,t_\mu, t_\nu)= \chi_{\varepsilon} *_\mu P_{t_\mu}*_\nu P_{t_\nu}  ,
 \end{equation*}for $\mu\neq\nu$. Then for fixed $t_\mu$,
 we have,
    \begin{equation}\label{eq:estimate-1}\begin{split}
   \left.  \int_{\mathbb{R}^n \times  \mathbb{R}_+  }  \left|  \nabla_\nu {u}  \right|^2 \Phi_1^2  (v)\left| \nabla_\mu v\right|^2\right |_{(x,t_\mu,{t}_\nu)}
   {t}_\nu d {t}_\nu dx\lesssim &
   \left.\int_{\mathbb{R}^n  }   {u}  ^2 \Phi_1^2
     (v ) \left| \nabla_\mu
     v \right|^2\right |_{(x,t_\mu,0)} dx \\& +\left.\int_{\mathbb{R}^n \times  \mathbb{R}_+   }    {u}  ^2\Phi_2 ^2 (v)  \Sigma_{\mu\nu}(v) \right |_{(x,t_\mu,{t}_\nu)}   {t}_\nu d {t}_\nu dx,
  \end{split}  \end{equation}
where
{ $$
    \Sigma_{\mu\nu}(v):=\left| \nabla_\mu   v \right|^2\left| \nabla_ \nu  v \right|^2 + \left| \nabla_\mu \nabla_\nu v \right|^2,
$$}
 and  $\Phi_2$ is a ${ \mathscr C^{2m}}[0,1]$ function satisfying  $\Phi_2(a)=0$ for $a\in [0,C_1]$, and for $a\in [0, 1]$,
{ $$
   \Phi_2 (a)\geq |\Phi_1  (a)| + |\Phi_1 '(a)|+|\Phi_1 ''(a)| .
$$}
 \end{lem}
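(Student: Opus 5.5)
I will mimic the proof of Lemma \ref{lem:estimate0}, now working in the $(x,t_\nu)$ variables with $t_\mu$ frozen, and with the weight $\phi^2(v)$ replaced by $W:=\Phi_1^2(v)\,|\nabla_\mu v|^2$. The key structural facts are that $\triangle_\nu u=0$ and $\triangle_\nu v=0$ by Proposition \ref{prop:triangle-j}~(2), and that $\triangle_\nu$ commutes with $\nabla_\mu$ (the vector fields $X_\mu,X_\nu,\partial_{t_\mu},\partial_{t_\nu}$ have constant coefficients). Hence every component of $\nabla_\mu v$ is $\triangle_\nu$-harmonic.

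\emph{Step 1: the differential identity.} Summing $(X^{(a)}_\nu)^2(u^2W)$ over $a=1,2$ and using $\triangle_\nu u=0$ gives
\begin{equation*}
|\nabla_\nu u|^2 W=\tfrac12\triangle_\nu(u^2W)-2u\,\nabla_\nu u\cdot\nabla_\nu W-\tfrac12u^2\triangle_\nu W .
\end{equation*}
Write $w=\nabla_\mu v$ componentwise. Since $\triangle_\nu v=0$ and $\triangle_\nu w=0$, we get
\begin{equation*}
\nabla_\nu W=2\Phi_1\Phi_1'(v)\,|w|^2\nabla_\nu v+2\Phi_1^2(v)\,w\cdot\nabla_\nu w ,
\end{equation*}
\begin{equation*}
\triangle_\nu W=2(\Phi_1'^2+\Phi_1\Phi_1'')(v)\,|\nabla_\nu v|^2|w|^2+8\Phi_1\Phi_1'(v)\,(\nabla_\nu v\cdot\nabla_\nu w)\cdot w+2\Phi_1^2(v)\,|\nabla_\nu w|^2 .
\end{equation*}
Since $\nabla_\nu w=\nabla_\mu\nabla_\nu v$, and using Cauchy--Schwarz with $|\Phi_1|,|\Phi_1'|,|\Phi_1''|\le\Phi_2$, we obtain $|u^2\triangle_\nu W|\lesssim u^2\Phi_2^2(v)\,\Sigma_{\mu\nu}(v)$.

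\emph{Step 2: absorbing the cross term.} The middle term satisfies
\begin{equation*}
|u\,\nabla_\nu u\cdot\nabla_\nu W|\lesssim |u|\,|\nabla_\nu u|\,\Phi_1(v)\,\Phi_2(v)\bigl(|w||\nabla_\nu v|+|\nabla_\nu w|\bigr).
\end{equation*}
I will handle it as in Lemma \ref{lem:estimate0} with the absorption inequality
\begin{equation*}
\le\tfrac1{10}|\nabla_\nu u|^2\Phi_1^2(v)|w|^2+C u^2\Phi_2^2(v)|\nabla_\nu v|^2|w|^2+\ldots
\end{equation*}
This works for the piece with factor $|w|$. The piece $|u||\nabla_\nu u|\Phi_1\Phi_2|\nabla_\nu w|$ carries no factor $|w|$, so it cannot be absorbed directly. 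This is the main obstacle.

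I would first try to treat it by writing $2\Phi_1^2(v)\,u\,\nabla_\nu u\cdot(w\cdot\nabla_\nu w)=\tfrac12\Phi_1^2(v)\nabla_\nu(u^2)\cdot\nabla_\nu|w|^2$. Integrating by parts in $x$ and $t_\nu$ with weight $t_\nu$ (in the spirit of Proposition \ref{prop:Green}) turns it into terms of the form
\begin{equation*}
u^2\Phi_1^2(v)\,\triangle_\nu|w|^2=2u^2\Phi_1^2(v)|\nabla_\nu w|^2 ,
\end{equation*}
together with $u^2\Phi_1\Phi_1'(v)\,\nabla_\nu v\cdot\nabla_\nu|w|^2$ and boundary terms. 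The first is bounded by $u^2\Phi_2^2\Sigma_{\mu\nu}$, and the second by $u^2\Phi_2^2\Sigma_{\mu\nu}$ after Cauchy--Schwarz. The boundary term at $t_\nu=0$ should be of the same type as the main boundary term, possibly after a harmless sign check. Alternatively one can simply observe that the whole expression $\tfrac12u^2W$ is a single function whose Laplacian is expanded exactly: then only $\nabla_\nu(u^2)\cdot\nabla_\nu W$ must be controlled, and the same integration by parts handles it uniformly. I would take this route.

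\emph{Step 3: Green's formula.} I will apply Proposition \ref{prop:Green} in the variable $t_\nu$ to $U=u^2W$ and to the auxiliary products from Step 2. The hypotheses (integrability, decay as $t_\nu\to\infty$, $L^1$ convergence as $t_\nu\to0$) follow as in Corollary \ref{cor:conditions}, because $f_\varepsilon,\chi_\varepsilon$ are smoothed and Proposition \ref{prop:decay-t} gives $L^1\cap L^\infty$ bounds on all derivatives. Here $1-\chi\in L^1$ and $\Phi_1$ vanishes near $1-$? No: it vanishes on $[0,C_1]$, and $\nabla_\mu v=\nabla_\mu(v-1)$ is integrable, which ensures integrability. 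This yields
\begin{equation*}
\int\!\!\int\triangle_\nu(u^2W)\,t_\nu\,dt_\nu\,dx=\int u^2\Phi_1^2(v)\,|\nabla_\mu v|^2\big|_{t_\nu=0}\,dx .
\end{equation*}
Combining this with the absorbed inequality gives \eqref{eq:estimate-1}.
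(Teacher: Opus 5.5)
Your Steps 1 and 3 are correct and coincide with the paper's argument. The identity $|\nabla_\nu u|^2W=\frac12\triangle_\nu(u^2W)-2u\,\nabla_\nu u\cdot\nabla_\nu W-\frac12u^2\triangle_\nu W$ is the expansion \eqref{eq:differential-identity2}: the middle term is $\tilde T_2+\tilde T_3$ and the last is $\tilde T_4+\tilde T_5+\tilde T_6$. The boundary term comes from Proposition \ref{prop:Green}, whose hypotheses the paper checks in Lemma \ref{prop:conditions-general}.

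\textbf{The gap is in Step 2.} Your displayed bound on $u\,\nabla_\nu u\cdot\nabla_\nu W$ drops a factor $|w|$ from each piece. From your own formula for $\nabla_\nu W$, and using $|w\cdot\nabla_\nu w|\le|w|\,|\nabla_\nu w|$, one actually has
\[
|u\,\nabla_\nu u\cdot\nabla_\nu W|\le 2|u|\,|\nabla_\nu u|\,\Phi_1(v)\,|w|\,\bigl(|\Phi_1'(v)|\,|w|\,|\nabla_\nu v|+\Phi_1(v)\,|\nabla_\nu w|\bigr).
\]
So the ``main obstacle'' does not exist: both pieces carry the factor $\Phi_1(v)|w|$. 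Cauchy--Schwarz then gives
\[
2|u\,\nabla_\nu u\cdot\nabla_\nu W|\le\tfrac15|\nabla_\nu u|^2\,\Phi_1^2(v)|w|^2+C\,u^2\Phi_2^2(v)\bigl(|\nabla_\nu v|^2|w|^2+|\nabla_\mu\nabla_\nu v|^2\bigr).
\]
The first term is absorbed into the left-hand side, and the second is $\lesssim u^2\Phi_2^2(v)\Sigma_{\mu\nu}(v)$. This is exactly how the paper handles $\tilde T_2$ and $\tilde T_3$. With this replacement your proof closes.

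\textbf{The workaround you commit to does not work.} Integrating $\int\!\!\int\nabla_\nu(u^2)\cdot\nabla_\nu W\,t_\nu\,dt_\nu\,dx$ by parts in the $(X_\nu,\partial_{t_\nu})$ plane necessarily differentiates the weight $t_\nu$. This produces an unweighted term such as $\int\!\!\int u^2\,\partial_{t_\nu}W\,dt_\nu\,dx$, which you also omit in the piecewise version with $\Phi_1^2(v)\nabla_\nu(u^2)\cdot\nabla_\nu|w|^2$. That term has no factor $t_\nu$ and is not of the form of either term on the right of \eqref{eq:estimate-1}. Removing it by a further integration by parts in $t_\nu$ only creates another unweighted cross term, $\int\!\!\int\partial_{t_\nu}(u^2)\,W\,dt_\nu\,dx$. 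Eliminating that one via Proposition \ref{prop:Green} just returns the identity of Step 1, so the computation is circular. It yields no bound on $\int\!\!\int|\nabla_\nu u|^2W\,t_\nu\,dt_\nu\,dx$.
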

 \begin{proof}  By direct differentiation similar to \eqref{eq:S-estimate-F},  we have the following identity:
   \begin{equation} \label{eq:differential-identity2}\begin{split}
     \left|  \nabla_\nu {u} \right|^2 \Phi_1^2  (v)\left| \nabla_\mu  v\right|^2= &\frac 12 \triangle_\nu\left(    {u}  ^2 \Phi_1^2  (v) \left| \nabla_\mu  v\right|^2\right)\\&
    - 4 {u}\Phi_1  (v)\Phi_1'  (v) \nabla_\nu  u\cdot   \nabla_\nu   v  \left| \nabla_\mu  v\right|^2- 4 {u}\Phi^2_1  (v) \nabla_\nu  u\cdot  (\nabla_\mu v  \cdot  \nabla_\nu
    \nabla_\mu
    v )\\& -    {u}^2(\Phi_1'  (v)^2 +\Phi_1  (v)\Phi_1''  (v) ) \left| \nabla_\nu   v\right|^2\left|  \nabla_\mu  v\right|^2\\&- 2 {u}^2\Phi_1   (v)\Phi_1'  (v) \nabla_\nu v\cdot
    (\nabla_\mu v  \cdot  \nabla_\nu \nabla_\mu   v )
    -    {u}^2\Phi_1^2  (v)\left| \nabla_\nu\nabla_\mu  v\right|^2
     \\
= &:\tilde{T}_1 +\tilde{T}_2 +\tilde{T}_3 +\tilde{T}_4 +\tilde{T}_5+\tilde{T}_6,
 \end{split} \end{equation}where
$
   \nabla_ \nu  u \cdot  (\nabla_\mu v \cdot  \nabla_\nu \nabla_\mu   v ):=\sum_{a,b=1,2}   X_{\nu }^{(a)} u \cdot
   X_{\mu }^{(b)} v  \cdot X_{\nu  }^{(a)} X_{\mu  }^{(b)} v .
$
 Since   $U=   {u}  ^2 \Phi_1^2  (v) \left| \nabla_\mu  v\right|^2$  satisfies assumptions in Proposition \ref{prop:Green} by the following Lemma
 \ref{prop:conditions-general}, we can
 apply Proposition \ref{prop:Green} to get
\begin{equation*}
   \int_{\mathbb{R}^n \times  \mathbb{R}_+ }\triangle_\nu\left(    {u}  ^2 \Phi_1^2  (v) \left| \nabla_\mu  v\right|^2\right) {t}_\nu d {t}_\nu dx =  \left.\int_{\mathbb{R}^n   }
   {u} ^2 \Phi_1^2  (v) \left| \nabla_\mu  v \right|^2\right|_{(x,t_\mu,0) } dx
   .
\end{equation*}
Apply  Cauchy-Schwarz inequality to
get\begin{equation*}
  \left |  \int_{\mathbb{R}^n \times  \mathbb{R}_+ }\tilde{T}_2    {t}_\nu d {t}_\nu dx\right| \leq\frac 1{10} \int_{\mathbb{R}^n \times  \mathbb{R}_+  }   \left|  \nabla_\nu {u} \right|^2\Phi
  ^2_1 (v)   \left| \nabla_\mu  v \right|^2 {t}_\nu d {t}_\nu dx+C\int_{\mathbb{R}^n \times  \mathbb{R}_+  }    {u}  ^2\Phi ^2_2 (v)  \left|  \nabla_\nu v \right|^2 \left| \nabla_\mu  v
  \right|^2  {t}_\nu d {t}_\nu dx .
\end{equation*}
by   definition   of
  $ \Phi_2$, and similarly
\begin{equation*}
 \left |  \int_{\mathbb{R}^n \times  \mathbb{R}_+}\tilde{T}_3{t}_\nu d {t}_\nu dx\right| \leq  \frac 1{10}\int_{\mathbb{R}^n \times  \mathbb{R}_+ }  \left|  \nabla_\nu {u} \right|^2
 \Phi_1^2  (v)
 \left| \nabla_\mu  v \right|^2
{t}_\nu d {t}_\nu dx + C\int_{\mathbb{R}^n \times  \mathbb{R}_+  }  {u}^2\Phi_1^2  (v) | \nabla_\nu \nabla_\mu   v |^2 {t}_\nu d {t}_\nu dx.
\end{equation*}The first terms in the R. H. S. above are absorbed by the integral of the L.H.S. of \eqref{eq:differential-identity2}.
The   integrals of $\tilde{T}_4,\tilde{T}_5,\tilde{T}_6$  are directly controlled by the second  terms in the R. H. S.   of \eqref{eq:estimate-1}. The Lemma is proved.
 \end{proof}
 \subsection{Estimates in  the general case}
If we  apply Lemma \ref{lem:estimate0}
      to   the term  \eqref{eq:U2-term}, there will  appear  a term like
      \begin{equation}\label{eq:U3-term}
        \int_{\mathbb{R}^n \times  \mathbb{R}_+   }   \left|  \nabla_3 U_3 \right|^2\Phi_2^2 (v)  \Sigma_{12}(v) \, t_{3 }d t_{3 } dx.
      \end{equation} Thus we need the following general estimates inductively.
 For a subset ${\mathfrak j} $ of $\mathfrak  m $, denote
      \begin{equation}\label{eq:Sigma-j}
    \Sigma_{\mathfrak j} (v):=\sum_{{\mathfrak j}_1,  \cdots, {\mathfrak j}_l }\left|\nabla_{{\mathfrak j}_1}v\right|^2  \cdots\left| \nabla_{{\mathfrak j}_l} v \right|^2  ,
    \end{equation}
where the summation is taken over all partitions ${\mathfrak j}_1, \cdots,{\mathfrak j}_l$   of ${\mathfrak j}$, i.e.  ${\mathfrak j}$ is the disjoint union of ${\mathfrak j}_1,
\cdots,{\mathfrak j}_l$.

 \begin{lem}\label{lem:estimate2} Assume as in Lemma \ref{lem:estimate0}.
   For a subset ${\mathfrak j} $ of $\{1,2,\dots,k-1\}$ with $a=|\mathfrak j|$,
 let
 \begin{equation}\label{eq:u-tj-tk}\begin{split}
     u(x, \mathbf{t}_{\mathfrak j},t_k) &=f_{\varepsilon}*_{{\mathfrak j} } P_{\mathbf{t}_{{\mathfrak j} }} *_{k } P_{t_{k }}(x), \qquad
      v(x, \mathbf{t}_{\mathfrak j},t_k) =\chi_{\varepsilon}*_{{\mathfrak j} } P_{\mathbf{t}_{{\mathfrak j} }}*_{k } P_{t_{k }}(x) .
    \end{split} \end{equation}   Then for fixed $\mathbf{t}_{\mathfrak j}\in (\mathbb{R}_+)^a$,
 we have
    \begin{equation}\label{eq:estimate2'}\begin{split}
   \left.  \int_{\mathbb{R}^n \times  \mathbb{R}_+  }  |\nabla_k  {u}|   ^2   \Phi^2 _a(v)  \Sigma_{\mathfrak j} (v)\right|_{(x, \mathbf{t}_{\mathfrak j},t_{k })}t_{k }d t_{k }
   dx\lesssim &
   \left.  \int_{\mathbb{R}^n   }{u}   ^2\Phi^2_a(v ) \Sigma_{\mathfrak j} (v)\right|_{(x, \mathbf{t}_{\mathfrak j},0)}dx\\& + \left. \int_{\mathbb{R}^n \times  \mathbb{R}_+  }
   {u}^2 \Phi_{a+1}(v)^2  \Sigma_{{\mathfrak j} \cup\{k\}} (v)\right|_{(x, \mathbf{t}_{\mathfrak j},t_{k })}t_{k }d t_{k } dx,
    \end{split} \end{equation}
    where  $\Phi_{a+1}$ is a ${ \mathscr C^{2m+2-2a}} [0,1]$ function satisfying
   $
       \Phi_{a+1}|_{[0,C_1]}=0$ and  
       $
          \Phi_{a+1} \geq |\Phi_a|+|\Phi_a'   |+|\Phi_a''  |
$
    on $[0,1]$.
        \end{lem}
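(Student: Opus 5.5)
The plan is to run the same mechanism as in Lemmas \ref{lem:estimate0} and \ref{lem:estimate1}, now with weight $W:=\Phi_a^2(v)\Sigma_{\mathfrak j}(v)$. For fixed $\mathbf{t}_{\mathfrak j}$, both $u$ and $v$ satisfy $\triangle_k u=\triangle_k v=0$ by Proposition \ref{prop:triangle-j}(2). Since $k\notin\mathfrak j$, the operators $\nabla_k$ and $\nabla_{\mathfrak j_i}$ commute. Hence every factor $\nabla_{\mathfrak j_i}v$ appearing in $\Sigma_{\mathfrak j}(v)$ is also $\triangle_k$-harmonic; each component is a partial convolution of $\chi_\varepsilon$ with derivatives of Poisson kernels.

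First I would expand $\frac12\triangle_k(u^2W)$ by the Leibniz rule, summing $(X_k^{(b)})^2$ over $b=1,2$. Harmonicity of $u$ gives
\[
\tfrac12\triangle_k(u^2W)=|\nabla_k u|^2W+2u\,\nabla_k u\cdot\nabla_k W+\tfrac12u^2\triangle_k W .
\]
The derivatives of $W$ are then computed explicitly.

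\begin{itemize}
\item $\nabla_k W$ is a sum of two kinds of terms. The first is $2\Phi_a\Phi_a'(v)\nabla_k v\,\Sigma_{\mathfrak j}(v)$. The second consists of terms $\Phi_a^2(v)\prod_{i'\ne i}|\nabla_{\mathfrak j_{i'}}v|^2\cdot 2\nabla_{\mathfrak j_i}v\cdot\nabla_k\nabla_{\mathfrak j_i}v$.
\item $\triangle_k W$ contains only products of at most two $\nabla_k$-derivatives. These are $(\Phi_a'^2+\Phi_a\Phi_a'')|\nabla_kv|^2\Sigma_{\mathfrak j}$, cross terms $\Phi_a\Phi_a'\,\nabla_kv\cdot\nabla_k\nabla_{\mathfrak j_i}v\,\nabla_{\mathfrak j_i}v\cdots$, and terms $\Phi_a^2|\nabla_k\nabla_{\mathfrak j_i}v|^2\cdots$ together with $\Phi_a^2\,\nabla_k\nabla_{\mathfrak j_i}v\cdot\nabla_{\mathfrak j_i}v\,\nabla_k\nabla_{\mathfrak j_{i'}}v\cdot\nabla_{\mathfrak j_{i'}}v\cdots$. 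No $\triangle_k\nabla_{\mathfrak j_i}v$ terms survive, since these vanish by harmonicity.
\end{itemize}

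Integrating against $t_k\,dt_k\,dx$, the term $\tfrac12\triangle_k(u^2W)$ is handled by Proposition \ref{prop:Green}. This produces the boundary term $\int u^2 W|_{t_k=0}\,dx$. Verifying the hypotheses of Proposition \ref{prop:Green} for $U=u^2W$ is the analogue of Corollary \ref{cor:conditions} and Lemma \ref{prop:conditions-general}. It follows from Proposition \ref{prop:decay-t}, because $u$, $v$ and all their derivatives are bounded and integrable, with bounds depending on $\varepsilon$ and fixed $\mathbf t_{\mathfrak j}$. The decay as $t_k\to\infty$ and the behaviour as $t_k\to0$ are as in Corollary \ref{cor:conditions}.

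For the cross term $2u\nabla_ku\cdot\nabla_kW$, I use Cauchy--Schwarz with a small constant to absorb $\frac1{10}|\nabla_ku|^2W$ into the left side. The first kind of term in $\nabla_k W$ leaves $u^2\Phi_a'^2|\nabla_kv|^2\Sigma_{\mathfrak j}$. The second kind leaves $u^2\Phi_a^2|\nabla_k\nabla_{\mathfrak j_i}v|^2\prod_{i'\neq i}|\nabla_{\mathfrak j_{i'}}v|^2$. This requires writing each such term as $(\Phi_a\sqrt{\prod}\,|\nabla_{\mathfrak j_i}v|\,|\nabla_ku|)\cdot(u\cdot\text{rest})$, so that the square root of $W$ appears on the $\nabla_k u$ side. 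That works because $W\ge\Phi_a^2\prod_{i'}|\nabla_{\mathfrak j_{i'}}v|^2$ for each fixed partition.

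The key combinatorial observation is that every resulting product is one summand of $\Sigma_{\mathfrak j\cup\{k\}}(v)$, up to bounded constants and Cauchy--Schwarz for the mixed products. The reason is that $|\nabla_kv|^2\cdot|\nabla_{\mathfrak j_1}v|^2\cdots$ is the partition $\{k\},\mathfrak j_1,\dots$, while $|\nabla_k\nabla_{\mathfrak j_i}v|^2\prod_{i'\ne i}|\nabla_{\mathfrak j_{i'}}v|^2$ is the partition obtained by adjoining $k$ to $\mathfrak j_i$. Mixed products are bounded by sums of such squares via $2ab\le a^2+b^2$. The coefficients $\Phi_a^2$, $\Phi_a'^2$ and $|\Phi_a\Phi_a''|$ are all at most $\Phi_{a+1}^2$ by assumption. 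Thus everything is bounded by $C u^2\Phi_{a+1}^2(v)\Sigma_{\mathfrak j\cup\{k\}}(v)$, which gives \eqref{eq:estimate2'}.

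I expect the main obstacle to be this bookkeeping, namely making sure no term arises that falls outside $\Sigma_{\mathfrak j\cup\{k\}}$. In particular, one must check that no product contains two $\nabla_k$'s on the same factor other than $\nabla_k\nabla_{\mathfrak j_i}$ squared. The absorption step also needs finiteness of the left side, which holds by the $\varepsilon$-regularisation. The regularity class $\mathscr C^{2m+2-2a}$ for $\Phi_{a+1}$ is consistent because each step uses two derivatives of $\Phi_a$.
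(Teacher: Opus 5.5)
Your proposal is correct and follows the paper's proof essentially step for step. Both arguments expand $\frac12\triangle_k$ by the Leibniz rule using $\triangle_k u=\triangle_k v=\triangle_k\nabla_{\mathfrak j_\alpha}v=0$, send the Laplacian term to the boundary via Proposition \ref{prop:Green}, absorb the terms containing $\nabla_k u$ by Cauchy--Schwarz, and dominate the remaining $v$-terms by $\Phi_{a+1}^2\Sigma_{\mathfrak j\cup\{k\}}(v)$; the paper works with one partition product $\prod_\beta|\nabla_{\mathfrak j_\beta}v|^2$ at a time, which is equivalent to your full weight $W$ by linearity. One small correction to your hypothesis check: $v$ itself is not integrable, since $\chi_\varepsilon$ tends to $1$ off a set of finite measure. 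This is harmless, because only the boundedness of $v$ and the integrability of $u^2$ and of the derivatives $\nabla_{\mathfrak l}v$ are needed, exactly as in Lemma \ref{prop:conditions-general}.
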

 \begin{proof}   Note that $ \frac 12  \triangle_k ( u^2) = |\nabla_k  {u}|   ^2 $ and
 \begin{equation*}\begin{split}
   \frac 12   \triangle_k (     \Phi ^2 _a(v))&= (\Phi _a(v)\Phi _a'' (v)+\Phi _a'(v)^2) \left| \nabla_k v \right|^2 ,\\
     \frac 12 \triangle_k (  \left| \nabla_{{\mathfrak j}_\alpha} v \right|^2 )&= 2\nabla_{{\mathfrak j}_\alpha} v  \cdot\triangle_k \nabla_{{\mathfrak j}_\alpha} v + \left|
     \nabla_k   \nabla_{{\mathfrak j}_\alpha} v  \right|^2
       =   \left| \nabla_k   \nabla_{{\mathfrak j}_\alpha} v   \right|^2,
 \end{split}\end{equation*}
 by  $\triangle_k u=0=\triangle_k v$  and $\triangle_k \nabla_{{\mathfrak j}_\alpha} v= \nabla_{{\mathfrak j}_\alpha}\triangle_k v=0$ for any $ {{\mathfrak j}_\alpha}$ and $k$.
 It follows from direct differentiation that \begin{equation} \label{eq:differential-identity3}\begin{split}
   |\nabla_k  {u}|   ^2   \Phi^2_a(v) \prod_{\beta } \left|\nabla_{{\mathfrak j}_\beta}v\right|^2   = &\frac 12\triangle_k\left(    {u}  ^2\Phi ^2_a(v) \prod_{\beta }
   \left|\nabla_{{\mathfrak j}_\beta}v\right|^2
   \right)  + \breve{T} _2 +\breve{ T}_3
= :  \breve{ T}_1 + \breve{T} _2 +\breve{ T}_3  ,
 \end{split} \end{equation}where
 \begin{equation} \label{eq:differential-identity3''}\begin{split}
   \breve{ T}_2:=& -4u\Phi_a(v) \Phi_a'(v) \nabla_k  {u}\cdot \nabla_k v\prod_{\beta } \left|\nabla_{{\mathfrak j}_\beta}v\right|^2 \\&-4u\Phi_a ^2 (v)\sum_\alpha\nabla_k
   {u}\cdot
 \left (  \nabla_k    \nabla_{{\mathfrak j}_\alpha} v\cdot  \nabla_{{\mathfrak j}_\alpha}v\right)\prod_{\beta\neq\alpha} \left|\nabla_{{\mathfrak j}_\beta}v\right|^2
= :  \breve{ T}_{21}+  \breve{ T}_{22},
 \end{split} \end{equation}are terms   involving first-order  derivatives of $u$,
 and
 \begin{equation} \label{eq:differential-identity3'}\begin{split}
   \breve{ T}_3:=&-   {u}^2
   (\Phi_a(v)\Phi_a''
   (v)+\Phi_a'(v)^2)|\nabla_k  v|^2  \prod_{\beta } \left|\nabla_{{\mathfrak j}_\beta}v\right|^2    \\& - 4  {u}^2
   \Phi_a(v)\Phi_a'(v) \sum_\alpha\nabla_k     v\cdot ( \nabla_k   \nabla_{{\mathfrak j}_\alpha} v  \cdot    \nabla_{{\mathfrak j}_\alpha} v ) \prod_{\beta\neq\alpha}
   \left|\nabla_{{\mathfrak j}_\beta}v\right|^2   \\&
   - {u}  ^2\Phi_a^2 (v) \sum_\alpha\left| \nabla_k   \nabla_{{\mathfrak j}_\alpha} v    \right|^2\prod_{\beta\neq\alpha}
   \left|\nabla_{{\mathfrak j}_\beta}v\right|^2  \\&
   - 4{u}  ^2\Phi_a^2  (v)\sum_{ \alpha \neq\gamma}  (\nabla_k   \nabla_{{\mathfrak j}_\alpha} v \cdot  \nabla_{{\mathfrak j}_\alpha} v)\cdot (\nabla_k   \nabla_{{\mathfrak j}_\gamma} v
   \cdot    \nabla_{{\mathfrak j}_\gamma} v)
   \prod_{\beta\neq\alpha,\gamma} \left|\nabla_{{\mathfrak j}_\beta}v\right|^2 \\
= &:   \breve{ T}_{31}+  \breve{ T}_{32}+ \breve{ T}_{33}+ \breve{ T}_{34} ,
 \end{split} \end{equation}
 are terms only involving derivatives of $v$.

 Since   $U=  {u}  ^2\Phi ^2_a(v) \prod_{\beta } \left|\nabla_{{\mathfrak j}_\beta}v\right|^2  $  satisfies assumptions in Proposition \ref{prop:Green} by the { ensuing} Lemma
 \ref{prop:conditions-general}, we can apply Proposition \ref{prop:Green} to get
 the integral of $ \breve{ T}_1 $ is controlled by the first term in  R.H.S. of \eqref{eq:estimate2'}. Similarly, by applying Cauchy-Schwarz inequality, we have
  \begin{equation}\label{eq:differential-identity6}\begin{split}
 \int_{\mathbb{R}^n \times  \mathbb{R}_+   }|\breve{ T}_{21}(x, \mathbf{t}_{\mathfrak j},t_k)|t_{k }\, d t_{k } dx &\leq \frac 1{10} \int_{\mathbb{R}^n \times  \mathbb{R}_+ }|\nabla_k
 {u}|   ^2
 \Phi ^2_a(v)\prod_{\beta } \left|\nabla_{{\mathfrak j}_\beta}v\right|^2t_{k }d t_{k } dx\\& \qquad +C
  \int_{\mathbb{R}^n \times  \mathbb{R}_+  }  {u}  ^2\Phi^2_{a+1}(v) |\nabla_k v|^2 \prod_{\beta } \left|\nabla_{{\mathfrak j}_\beta}v\right|^2t_{k }d t_{k } dx
,\\
 \int_{\mathbb{R}^n \times  \mathbb{R}_+  } |\breve{ T}_{22}(x, \mathbf{t}_{\mathfrak j},t_k)|t_{k }d t_{k } dx&\leq\frac 1{10} \int_{\mathbb{R}^n \times  \mathbb{R}_+ }|\nabla_k
 {u}|   ^2
 \Phi_a ^2 (v)\prod_{\beta } \left|\nabla_{{\mathfrak j}_\beta}v\right|^2t_{k }\, d t_{k } dx\\&\qquad +C
  \int_{\mathbb{R}^n \times  \mathbb{R}_+}   {u}  ^2\Phi_a^2  (v)\sum_\alpha\left| \nabla_k   \nabla_{{\mathfrak j}_\alpha} v   \right|^2\prod_{\beta\neq\alpha}
  \left|\nabla_{{\mathfrak j}_\beta}v\right|^2t_{k }d t_{k } dx .
 \end{split}\end{equation}
The first terms in  the  R.H.S. of \eqref{eq:differential-identity6} can be
absorbed by the integral of the  L.H.S. of  \eqref{eq:differential-identity3}.  The second terms in  the  R.H.S. of \eqref{eq:differential-identity6} can be controlled by the second terms in the   R.H.S. of
\eqref{eq:estimate2'}, by definition of  $\Sigma_{{\mathfrak j} \cup\{k\}} $.

The integrals of $\breve{ T}_{3j}$'s   are controlled directly by the second term in  R.H.S. of \eqref{eq:estimate2'} by
applying Cauchy-Schwarz inequality. The estimate \eqref{eq:estimate2'} follows.
\end{proof}

 \begin{lem}\label{prop:conditions-general}   Let $\varepsilon>0$ and let $u,v $ be given  as in Lemma \ref{lem:estimate2}. Then
  $U=  {u}  ^2\Phi ^2_a(v) \prod_{\beta } \left|\nabla_{{\mathfrak j}_\beta}v\right|^2  $  satisfies conditions in Proposition \ref{prop:Green}.
\end{lem}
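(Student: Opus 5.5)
We follow the proof of Corollary \ref{cor:conditions}(a), with the single variable $t_\mu$ of Proposition \ref{prop:Green} now being $t_k$, and $\mathbf t_{\mathfrak j}$ frozen. The basic input is Proposition \ref{prop:decay-t} applied to $f_\varepsilon=f*P_{\boldsymbol\varepsilon}$ and $\chi_\varepsilon=\chi*P_{\boldsymbol\varepsilon}$. Since $\chi_\varepsilon$ is not integrable, we write $\chi_\varepsilon=1-(1-\chi)*P_{\boldsymbol\varepsilon}$ with $1-\chi\in L^1$. Then every derivative of $v$ of order $\ge 1$ in any of the directions $\nabla_\nu$ is a derivative of $(1-\chi)*P_{\boldsymbol\varepsilon}*_{\mathfrak j}P_{\mathbf t_{\mathfrak j}}*_kP_{t_k}$, and the constant $1$ is killed. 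By \eqref{eq:L1-f-Pt} applied with $\mathbf t$ replaced by $\mathbf t+\boldsymbol\varepsilon$, the functions $u$, $\nabla^b u$ and $\nabla^b v$ ($b\ge1$) for $b\le 4$ therefore lie in $L^1\cap L^\infty(\mathbb R^n)$ for each fixed $t_k$. Their norms depend only on $\varepsilon$, $\|f\|_1$ and $\|1-\chi\|_1$. In addition, $v$ itself satisfies $0\le v\le 1$.

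We expand $U=u^2\Phi_a^2(v)\prod_\beta|\nabla_{\mathfrak j_\beta}v|^2$ and its first and second derivatives by the Leibniz rule. Each resulting term contains at least two factors from $\{u,\nabla u,\nabla^2u\}$; the remaining factors are bounded, either because they are derivatives of $v$ or because they are $\Phi_a^{(i)}(v)$ with $i\le2$, which is continuous on $[0,1]$ since $\Phi_a\in\mathscr C^{2m+2-2a}$ and $a\le m-1$. Hence $U,\nabla U,\nabla^2U\in L^1$. For the condition $X_kU(x^\perp+se_k,t_k)\to0$ as $s\to\pm\infty$, we proceed as in Corollary \ref{cor:conditions}. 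We write $X_kU=2uX_ku\cdot(\text{bounded})+u^2\cdot(\text{bounded})$. Then $u(x^\perp+se_k)$ is a one-dimensional Poisson integral along $e_k$ of a function in $L^1(\mathbb R)$ for a.e.\ $x^\perp$, so it tends to $0$.

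Condition (2) of Proposition \ref{prop:Green} concerns $t_k\to\infty$. We use $\|U\|_1\le\|u\|_\infty\|u\|_1\cdot C$ together with $\|u(\cdot,t_k)\|_\infty\lesssim\|f\|_1/|R(0,\mathbf t+\boldsymbol\varepsilon)|\to0$. For $t_k\partial_{t_k}U$, the derivative falls either on $u$, where $\|t_k\partial_{t_k}u\|_1\lesssim\|f\|_1$ is multiplied by $\|u\|_\infty\to0$, or on a $v$-factor. In the latter case we get $t_k\partial_{t_k}\nabla^bv$, which is bounded by \eqref{eq:L1-f-Pt}, multiplied by $u^2$, whose $L^1$ norm $\le\|u\|_\infty\|u\|_1\to0$.

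Condition (3) concerns $t_k\to0$. We show $U(\cdot,t_k)\to U(\cdot,0)$ in $L^1$ by writing the difference of products telescopically. This reduces to $L^1$-convergence of $u(\cdot,t_k)\to u(\cdot,0)$, obtained from the translation-continuity argument in Corollary \ref{cor:conditions}, and to locally uniform convergence of the bounded factors $v$ and $\nabla^b v$; these factors are smooth up to $t_k=0$ because of the $\varepsilon$-smoothing, and the quantities are uniformly continuous. Likewise $t_k\partial_{t_k}U\to0$ in $L^1$. When the derivative hits $u$, we use $\int t_k\partial_{t_k}P_{t_k}=0$ as before. When it hits a $v$-factor, we get $t_k\cdot\partial_{t_k}\nabla^bv$, where the second factor is uniformly bounded thanks to the smoothing; this gives an $O(t_k)$ bound times $\|u\|_\infty\|u\|_1$.

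The main obstacle is bookkeeping rather than any single estimate: one must ensure that every term produced by differentiating a product of arbitrarily many factors keeps two integrable $u$-factors, or one integrable and one sup-norm-decaying $u$-factor. One must also handle the non-integrability of $v$ via the decomposition $\chi_\varepsilon=1-(1-\chi)_\varepsilon$.
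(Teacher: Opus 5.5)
Your proposal is correct and takes essentially the paper's route. You write the derivatives of $v$ as derivatives of $(1-\chi)*P_{\boldsymbol\varepsilon}*\cdots$, get the $L^1\cap L^\infty$ bounds from Proposition~\ref{prop:decay-t}, and use the Leibniz rule to reduce conditions (1)--(3) of Proposition~\ref{prop:Green} to the factor-wise arguments of Corollary~\ref{cor:conditions}. The only inessential difference is that you place all integrability and decay on the $u^2$ factor and treat every $v$-factor as merely uniformly bounded, whereas the paper also uses that each $|\nabla_{\mathfrak j_\beta}v|^2$ lies in $L^1\cap L^\infty$. One small slip: $\nabla_{\mathfrak j_\beta}$ already has order $|\mathfrak j_\beta|$, so the derivatives of $v$ that occur have order up to $|\mathfrak j|+2$, not $4$. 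This changes nothing, since Proposition~\ref{prop:decay-t} covers arbitrary multi-indices.
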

\begin{proof} Although there  may be $\chi_{\varepsilon} :=\chi * P_{\boldsymbol \varepsilon}  \notin  L^1(\mathbb{R}^n)$, we must have  $\nabla_l\chi_{\varepsilon} =(1-  \chi ) *
\widetilde{\nabla}_l P_{\boldsymbol \varepsilon}\in  L^1(\mathbb{R}^n) $. But $   v \in     L^\infty (\mathbb{R}^n) $. Thus by the boundary growth estimate in Proposition \ref{prop:upperbound-hardy}, we see that 
for any nonempty subset $\mathfrak {l } $ of $\mathfrak {m } $,  $\left|\nabla_{\mathfrak {l } }  v\right|\in  L^1  \cap  L^\infty (\mathbb{R}^n) $ with norms only depending on $\varepsilon$.
  Note that
    \begin{equation}\label{eq:conditions-Proposition}
    {u}  ^2  ,  \left|\nabla_{{\mathfrak j}_\beta}v\right|^2\hskip 2mm  {\rm satisfy \hskip 2mm  conditions\hskip 2mm  in\hskip 2mm  Proposition} \hskip 2mm \ref{prop:Green},
  \end{equation}except for $  \left|\nabla_{{\mathfrak j}_\beta}v\right|^2(\cdot,t_k)\rightarrow 0$ as $t_k\rightarrow 0 $,
 by Corollary \ref{cor:conditions} (1)-(2) for $\mu=k$, and
\begin{equation*} \label{eq:nabla-U} \begin{split}
   \nabla_k U=&\Big(\nabla_k {u}^2 \Phi ^2_a(v)   + 2{u}  ^2\Phi  _a(v)\Phi '_a(v) \nabla_k v \Big)\prod_{\beta }
   \left|\nabla_{{\mathfrak j}_\beta}v\right|^2
   +\sum_\alpha {u}  ^2\Phi ^2_a(v)\nabla_k(\left|\nabla_{{\mathfrak j}_\alpha}v\right|^2)\prod_{\beta\neq\alpha }\left|\nabla_{{\mathfrak j}_\beta}v\right|^2.
 \end{split}\end{equation*}
 Then $\nabla_k U(\cdot,t_k) \in L^1(\mathbb{R}^n)$ because $\Phi ,\Phi '$ are bounded, and all factors in the R.H.S. above is both in $L^1\cap L^\infty(\mathbb{R}^n)$ by \eqref{eq:conditions-Proposition}.   
 Similarly,  $\nabla^a U(\cdot,t_k) \in L^1(\mathbb{R}^n)$ for $a=0,2$.

{ Note that} 
 $ \lim_{t_k\rightarrow+\infty} \|t_k^a\partial_{t_k}^{a} U(\cdot,t_k)\|_{L^1(\mathbb{R}^n)}= 0$ for $a=0,1$, because $t_\mu^a\partial_{t_k}^{a}({u}  ^2)$,
   $t_k^a\partial_{t_k}^{a} (\left|\nabla_{{\mathfrak j}_\alpha}v\right|^2)\rightarrow 0 $ in $ L^1(\mathbb{R}^n)$  by \eqref{eq:conditions-Proposition}, and other
 factors are all bounded by a constant only depending on $\varepsilon$.

 By   \eqref{eq:conditions-Proposition} again, as $t_k\rightarrow 0 $, ${u}  ^2(\cdot,t_k) \rightarrow  {u}  ^2(\cdot,0)  $,  $\left|\nabla_{{\mathfrak j}_\beta}v\right|^2 (\cdot,t_k) \rightarrow
 \left|\nabla_{{\mathfrak j}_\beta}v\right|^2   (\cdot,0)  $ in $L^1(\mathbb{R}^n)$,  and $\Phi ^2_a(v)  (\cdot,t_k)$ $\rightarrow  \Phi ^2_a(v)  (\cdot,0)$ a.e. Consequently,  $
 U(\cdot,t_k) \rightarrow U(\cdot,0)$. Similarly,  $t_k\partial_{t_k}   U (\cdot,t_k) \rightarrow 0$ in $L^1(\mathbb{R}^n)$ as $t_k\rightarrow 0 $.
\end{proof}
  \subsection{Proof of the Fefferman-Stein type good-$\lambda$ inequality  }

   \begin{prop}\label{prop:estimate-Fefferman-Stein0} Let $\varepsilon>0$. Suppose that $f_{\varepsilon} :=f* P_{\boldsymbol \varepsilon}$ for some   $f \in  L^1(\mathbb{R}^n ) $,
   $\chi_{\varepsilon} :=\chi_E* P_{\boldsymbol \varepsilon}$ with $E^c$ having bounded measure, and
 $   v(x,\mathbf{t} )=
    \chi_\varepsilon {{*}}  P_{\mathbf{t }}.
    $ Then, we have the estimate
    \begin{equation} \label{eq:S-estimate-m} \begin{split}
    I&=\int_{  \mathbb{ R}^n\times (\mathbb{R}_+ )^{m }  } \left|   \nabla_\mathfrak {m}( f_\varepsilon*P_{\mathbf{t} })  \right|^2\phi^2 (v  )  \mathbf{t} d\mathbf{t}  dx   \lesssim   \left.   \sum_{ \mathfrak j 
    }  \int_{\mathbb{ R}^n\times
(\mathbb{R}_+)^{ |\mathfrak j |}} P_{\mathbf{t}_{\mathfrak j}  }(f_\varepsilon)  ^2   \Phi ^2_{| \mathfrak j |}(v )
    \Sigma_{{\mathfrak j}  } (v)\right|_{(x,{\mathbf{  0 }}_{{\mathfrak j} ^c} ,{\mathbf{  { t}}}_{\mathfrak j} )} {\mathbf{  { t}}}_{\mathfrak j}   d{\mathbf{ { t}}}_{\mathfrak j}
    dx,
\end{split} \end{equation}
where the summation is taken over subsets ${\mathfrak j}  $   of $\mathfrak {m}$, $\Phi_0=\phi$,
    $\Sigma_{\mathfrak j}  (v)$ is given by \eqref {eq:Sigma-j},  and $
 {\mathfrak j}  ^c:=\mathfrak {m}\setminus {\mathfrak j}  $.
  \end{prop}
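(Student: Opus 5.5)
The plan is to peel off the $m$ gradients one direction at a time, by induction on the number of directions already integrated out, using Lemma \ref{lem:estimate0} for the first step and Lemma \ref{lem:estimate2} for every later step. Write $U_k$ as in \eqref{eq:u-Hf-2}, so $\nabla_{\mathfrak m}(f_\varepsilon*P_{\mathbf t})=\nabla_1 U_1$ and $\nabla_k U_k=U_{k-1}$ by \eqref{eq:nabla-U-k}.

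\textbf{Step 1.} Freeze $\mathbf t_{\mathfrak m_2}$ and regard $U_1$ as $g*_1P_{t_1}$ with $g=f_\varepsilon*_2\widetilde\nabla_2P_{t_2}\cdots$. Regard $v$ likewise as a $P_{t_1}$-extension of $\chi_\varepsilon*_{\mathfrak m_2}P_{\mathbf t_{\mathfrak m_2}}$. Applying Lemma \ref{lem:estimate0} in direction $1$ and integrating over $\mathbf t_{\mathfrak m_2}$ with weight $\mathbf t_{\mathfrak m_2}d\mathbf t_{\mathfrak m_2}$ gives two terms:
\begin{itemize}
\item a boundary term at $t_1=0$, namely $\int |\nabla_{\mathfrak m_2}(f_\varepsilon*_{\mathfrak m_2}P)|^2\phi^2(v)|_{t_1=0}$;
\item an interior term, namely $\int |\nabla_{\mathfrak m_2}U|^2\Phi_1^2(v)|\nabla_1 v|^2$.
\end{itemize}
The smoothing by $P_{\boldsymbol\varepsilon}$ and the vector-valued components are handled exactly as in Lemma \ref{lem:estimate0}. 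This works because the gradient in the remaining variables commutes with $*_1P_{t_1}$ and preserves harmonicity in $\triangle_1$.

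\textbf{Step 2 (inductive claim).} After processing directions $1,\dots,k-1$, the integral $I$ is bounded by a sum over subsets $\mathfrak j\subset\{1,\dots,k-1\}$ of
\[
\int |\nabla_{\mathfrak m_k}(f_\varepsilon*_{\mathfrak j}P_{\mathbf t_{\mathfrak j}}*_{\mathfrak m_k}P_{\mathbf t_{\mathfrak m_k}})|^2\,\Phi_{|\mathfrak j|}^2(v)\,\Sigma_{\mathfrak j}(v)\Big|_{\mathbf t_{\{1,\dots,k-1\}\setminus\mathfrak j}=0}\ \mathbf t_{\mathfrak j\cup\mathfrak m_k}\,d\mathbf t_{\mathfrak j\cup\mathfrak m_k}\,dx .
\]
Here the coordinates in $\{1,\dots,k-1\}\setminus\mathfrak j$ are those for which the boundary term was chosen.

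To advance from $k$ to $k+1$, treat direction $k$ in each summand. Write the integrand as $|\nabla_k u|^2$ with
\[
u=\nabla_{\mathfrak m_{k+1}}(\cdots),
\]
which is a $P_{t_k}$-extension in direction $k$ with $\triangle_k u=0$. Then apply Lemma \ref{lem:estimate2} with this $\mathfrak j$ and fixed $\mathbf t_{\mathfrak j}$, the remaining variables entering only as parameters. Its boundary term sets $t_k=0$ and leaves $\mathfrak j$ unchanged. Its interior term replaces $\mathfrak j$ by $\mathfrak j\cup\{k\}$ and upgrades $\Phi_a$ to $\Phi_{a+1}$.

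Setting some $t_\nu=0$ inside $v$ is harmless, since $v$ is still $\chi_\varepsilon$ convolved with the remaining Poisson kernels. The $\Sigma_{\mathfrak j}$ factors are likewise evaluated at those zero coordinates.

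\textbf{Step 3 (conclusion).} At $k=m+1$ no gradients of $u$ remain. Each summand is then $\int P_{\mathbf t_{\mathfrak j}}(f_\varepsilon)^2\Phi^2_{|\mathfrak j|}(v)\Sigma_{\mathfrak j}(v)$ with $\mathbf t_{\mathfrak j^c}=0$, which is exactly the right-hand side of \eqref{eq:S-estimate-m}. The smoothness requirement $\mathscr C^{2m+2-2a}$ on $\Phi_a$ suffices for the at most $m$ upgrades.

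\textbf{Main obstacle.} I expect the main difficulty to be justifying each application of Proposition \ref{prop:Green}. For vector-valued $u$ (components of $\nabla_{\mathfrak m_{k+1}}$), for $v$ with some variables set to $0$, and with the extra parameters frozen, one must check conditions (1)--(3) for the product $U=u^2\Phi_a^2(v)\prod|\nabla_{\mathfrak j_\beta}v|^2$. My plan is to reuse Lemma \ref{prop:conditions-general} and Corollary \ref{cor:conditions}(b):
\begin{itemize}
\item the derivative factors $u$ are built from $f_\varepsilon$ with $\widetilde\nabla P$ kernels, hence lie in $L^1\cap L^\infty$ by Proposition \ref{prop:decay-t};
\item their $t_k\to0$ limits are the boundary values used.
\end{itemize}
One must also verify measurability and integrability in the frozen parameters, so that Fubini applies when integrating the one-variable inequalities over $\mathbf t_{\mathfrak j}$ and $\mathbf t_{\mathfrak m_{k+1}}$. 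The absorption step additionally needs the left side to be finite a priori. This follows from Proposition \ref{prop:decay-t}, since $\varepsilon>0$ gives bounds $\lesssim t_\mu^{-1}(\varepsilon+t_\mu)^{-1}$-type decay, making every weighted integral finite.
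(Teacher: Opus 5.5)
Your proposal is correct and follows essentially the same route as the paper: Lemma~\ref{lem:estimate0} in direction $1$ with $\mathbf t_{\mathfrak m_2}$ frozen, then an induction on $k$. At each step, the integral of $|U_{k-1}|^2\Phi^2_{|\mathfrak j|}(v)\Sigma_{\mathfrak j}(v)$ with $\mathbf t_{\{1,\dots,k-1\}\setminus\mathfrak j}=0$ is split by Lemma~\ref{lem:estimate2} into the analogous terms for $\mathfrak j$ and for $\mathfrak j\cup\{k\}$, ending with $U_m=P_{\mathbf t_{\mathfrak j}}(f_\varepsilon)$ at $k=m+1$. Your remark that the absorption step needs the left-hand sides to be finite a priori (true for $\varepsilon>0$) is a point the paper leaves implicit.
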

  \begin{proof}
     Applying estimate \eqref{eq:estimate0} in Lemma \ref{lem:estimate0}
      to   real and imaginary parts of $    U_1  ( x,t_1, \mathbf{t }_{{\mathfrak m}_2})  $ given by \eqref{eq:u-Hf-2}-\eqref{eq:nabla-U-k} for fixed $\mathbf{t }_{{\mathfrak m}_2}$,
      we get the integral $I$ in \eqref{eq:S-estimate-m}  can be estimated as
      \begin{equation} \label{eq:S-estimate-I} \begin{split}
    I=&\int_{   (\mathbb{R}_+ )^{m-1}  } 
    \int_{ \mathbb{ R}^n\times  \mathbb{R}_+   } \left|  \nabla_1   U_1   ( x,t_1,
    \mathbf{t }_{{\mathfrak m}_2})\right|^2\phi^2 (v )t_1d {t_1} dx \, 
    { \mathbf{t}_{{\mathfrak m}_2}d\mathbf{t}_{{\mathfrak m}_2}}\\
\lesssim &\left.
 \int_{   (\mathbb{R}_+ ) ^{m-1}  } 
 \left(  \int_{\mathbb{R}^n\times  \mathbb{R}_+   }|U_1| ^2   \Phi_1^2  (v) \left| \nabla_1
 v\right|^2\right|_{( x, \mathbf{t} )}   {t}_1  d {t}_1 dx   +\left.\int_{\mathbb{R}^n   }    | U_1|^2 \phi
 ^2 (v )\right|_{( x,0,\mathbf{t}_{{\mathfrak m}_2})} dx\right)\, 
  { \mathbf{t}_{{\mathfrak m}_2}d\mathbf{t}_{{\mathfrak m}_2}} \\
=&\left.\int_{\mathbb{ R}^n\times  (\mathbb{R}_+)^m } \left|  \  U_{ 1 }  \right|^2 \Phi_1^2  (v) \left| \nabla_1  v \right|^2\right|_{( x, \mathbf{t} )}
\mathbf{t}d\mathbf{t}dx+\left.\int_{\mathbb{ R}^n\times (\mathbb{R}_+ )^{m-1}  } \left|    U_{ 1 }\right|^2\phi ^2(v)\right|_{( x,0,\mathbf{t}_{{\mathfrak m}_2})}\, 
\mathbf{t}_{{\mathfrak m}_2}d\mathbf{t}_{{\mathfrak m}_2}  dx.
 \end{split} \end{equation}

We claim that
  \begin{equation} \label{eq:S-estimate-k} \begin{split}
    I\lesssim  &  \sum_{a=0}^{k-1}\sum_{|{\mathfrak j} |={ a }} S_{\mathfrak j} ^{(k)}, \end{split} \end{equation}
    where the summation is taken over all subsets ${\mathfrak j}  $   of $\{1,2,\dots,k-1\}$, and
\begin{equation} \label{eq:S-inductive}
 S_{\mathfrak j} ^{(k)}:=  \left.\int_{\mathbb{ R}^n\times  \mathbb{R}_+^{m-k+1+|\mathfrak j|} } \left|      U_{k-1 } \right|^2 \Phi ^2_{|{\mathfrak j} |}(v ) \Sigma_{{\mathfrak j}  }
 (v)\right|_{(x,\mathbf{0}_{{\mathfrak j} _k^c}, {\mathbf{  {t}}}_{{\mathfrak j} \cup {\mathfrak m}_k})} {\mathbf{  t}}_{{\mathfrak j} \cup {\mathfrak m}_k} d{\mathbf{
 t}}_{{\mathfrak j} \cup {\mathfrak m}_k} dx,
\end{equation}
  $k=2,\dots, m+1$. Here $\Sigma_{{\mathfrak j}  }
 (v)=1$ if ${\mathfrak j}=\emptyset$,  $U_{ k } $ is given by \eqref{eq:u-Hf-2},
$    U_{k -1} = \nabla_k   U_{k } $, ${\mathfrak m}_k =\{k,\dots,m\}$, and
  \begin{equation*}
     {\mathfrak j} _k^c:=\{1,2,\dots,k-1\}\setminus {\mathfrak j}   .
  \end{equation*}

 Let us prove the claim \eqref{eq:S-estimate-k} inductively. \eqref{eq:S-estimate-I} implies that the claim
\eqref{eq:S-estimate-k} already holds for $k=2$.
Assume that \eqref{eq:S-estimate-k} holds for positive integer $k\leq m+1$.
Let
\begin{equation*}\begin{split}
   \hat   u(x, \mathbf{t}_{{\mathfrak j}  },t_k) &=  \hat f_\varepsilon*_{{\mathfrak j} } P_{\mathbf{t}_{\mathfrak j} } *_{k } P_{t_{k }}(x),  \qquad
   \hat    v(x, \mathbf{t}_{{\mathfrak j}  },t_k) =  \hat \chi_\varepsilon *_{{\mathfrak j} } P_{\mathbf{t}_{\mathfrak j} }*_{k } P_{t_{k }}(x) ,
    \end{split} \end{equation*}for fixed  $\mathbf{t}_{{\mathfrak m}_{k+1}} $,
where
 \begin{equation*}\begin{split}
      \hat f (x )= f    *_{k+1} \widetilde{\nabla}_{k+1} P_{t_{k+1}} *\cdots * _m \widetilde{\nabla}_m P_{t_m}(x ), \qquad
       \hat \chi(x ) &=\chi   *_{k+1}  P_{t_{k+1}} *\cdots * _m   P_{t_m} (x ).
    \end{split} \end{equation*}Note that   $\hat f\in   L^1(\mathbb{R}^n )$ by applying \eqref{eq:f*P-L1} repeatedly, and similarly,  $ \|\hat\chi\|_{ L^\infty}\leq 1$ and  $1-\hat\chi\in L^1(\mathbb{R}^n ) $.  
    Moreover their $L^1$-norms are independent of $\mathbf{t}$. Then, by definition,
        \begin{equation*}
        U_{k }|_{(x,\mathbf{0}_{{\mathfrak j} _k^c}, {\mathbf{  {t}}}_{{\mathfrak j} \cup {\mathfrak m}_k})}=\hat   u(x, \mathbf{t}_{{\mathfrak j}  },t_k), \qquad v
        |_{(x,\mathbf{0}_{{\mathfrak j} _k^c}, {\mathbf{  {t}}}_{{\mathfrak j} \cup {\mathfrak m}_k})}=\hat   v(x, \mathbf{t}_{{\mathfrak j}  },t_k)
    \end{equation*}for fixed  $\mathbf{t}_{{\mathfrak m}_{k+1}} $.  Applying estimate \eqref{eq:estimate2'} in Lemma \ref{lem:estimate2} to  $u= \hat   u$ and $ v=\hat   v$ for fixed $ {\mathbf{  t}}_{{\mathfrak 
    j}  } $  and ${\mathbf{  t}}_{  {\mathfrak m}_{k+1}} $
 to get
        \begin{equation}\label{eq:S-estimate-I1} \begin{split}
       S_{\mathfrak j} ^{(k)}= &  \left. \int_{   (\mathbb{R}_+)^{m-k+|\mathfrak j| } } 
    \int_{\mathbb{ R}^n\times  \mathbb{R}_+  } \left|  \nabla_k   \hat u   \right|^2 \Phi ^2_{|\mathfrak j|}( \hat v ) \Sigma_{{\mathfrak j}   } (\hat v )  \right|_{(x,
    \mathbf{t}_{{\mathfrak j}  },t_k) } t _k dt_k  dx \, 
        {   {\mathbf{  t}}_{{\mathfrak j} \cup {\mathfrak m}_{k+1}}  d{\mathbf{  t}}_{{\mathfrak j} \cup
       {\mathfrak m}_{k+1}}  } \\
 \lesssim&
\int_{   (\mathbb{R}_+)^{m-k+|\mathfrak j| } } 
\Bigg(\int_{  \mathbb{ R}^n} \left|  \hat   u  \right|^2 \Phi ^2_{|\mathfrak j|}(\hat v ) \Sigma_{{\mathfrak j} } ( \hat v) |_{ (x,
 \mathbf{t}_{{\mathfrak j}
 },0)}
 dx 
 \\ &\qquad  \qquad \qquad\qquad +\left. 
 \int_{\mathbb{
R}^n\times
\mathbb{R}_+  } \left|  \hat   u   \right|^2 \Phi ^2_{|\mathfrak j|+1}( \hat v ) \Sigma_{{\mathfrak j}  \cup\{ k\} } ( \hat v )\right|_{(x, \mathbf{t}_{{\mathfrak j} },t_k) }  t _k
dt_k
dx { \Bigg)  }  {   {\mathbf{  t}}_{{\mathfrak j} \cup {\mathfrak m}_{k+1}}  d{\mathbf{  t}}_{{\mathfrak j} \cup {\mathfrak m}_{k+1}}  }
\\
=&\left.\int_{ (  \mathbb{R}_+)^{m-k+|\mathfrak j| } } \left|    U_{k  }    \right|^2 \Phi ^2_{|\mathfrak j|}(v ) \Sigma_{{\mathfrak j} } (v)\right|_{(x,
  \mathbf{0}_{{\mathfrak j} ^c_{k+1}},
  {\mathbf{  t}}_{{\mathfrak j} \cup {\mathfrak m}_{k+1}})}  {\mathbf{  t}}_{{\mathfrak j} \cup {\mathfrak m}_{k+1}} d{\mathbf{  t}}_{{\mathfrak j} \cup {\mathfrak m}_{k+1}}   dx
\\ & \qquad+\left.\int_{\mathbb{ R}^n\times  (\mathbb{R}_+)^{m-k+|\mathfrak j|+1} } \left|     U_{k  }    \right|^2
\Phi ^2_{|\mathfrak j|+1}(v )\Sigma_{{\mathfrak j} \cup\{ k\} } (v)\right|_{(x,\mathbf{0}_{{\mathfrak j} _k^c},  \mathbf{  t}_{{\mathfrak j} \cup {\mathfrak m}_{k }})}
{\mathbf{  t}}_{{\mathfrak j} \cup {\mathfrak m}_{k }} d{\mathbf{  t}}_{{\mathfrak j} \cup {\mathfrak m}_{k }}  dx  
\\
=&S_{\mathfrak j} ^{(k+1)} + S_{{\mathfrak j} \cup\{ k\}}^{(k+1)}.
 \end{split} \end{equation}It follows that
  \begin{equation*}
    \sum_{a=0}^{k-1}\sum_{|{\mathfrak j} |={ a }} S_{\mathfrak j} ^{(k)}\lesssim \sum_{a=0}^{k }\sum_{|  {\mathfrak l} |={ a }} S_{  {\mathfrak l} }^{(k+1)}
 \end{equation*}
 by definition, where the summation in the right hand side is taken over subsets $ {\mathfrak l}  $   of $\{1,2,\dots,k \}$. The R.H.S. of the estimate \eqref{eq:S-estimate-m}
 is exactly \eqref{eq:S-estimate-k} for $k=m+1$, since $U_m|_{(x,\mathbf{0}_{{\mathfrak j} ^c},  \mathbf{  t}_ {\mathfrak j}  )} = P_{\mathbf{t}_{\mathfrak j}  }(f)$ by definition \eqref{eq:u-Hf-2}.
      \end{proof}

 Integrals in Proposition \ref{prop:estimate-Fefferman-Stein0} can be further estimated as   follows.
  \begin{lem}\label{prop:estimate-u2-G-ki} Assume as in Proposition \ref{prop:estimate-Fefferman-Stein0}. Then for a nonempty  subset  ${\mathfrak j}  $   of $\mathfrak {m}$,
   $E= E_\beta(\lambda) $, and $\lambda>0$, we have
     \begin{equation}\label{eq:estimate-u2-G-ki}
  \left.  \int_{\mathbb{ R}^n\times  (\mathbb{R}_+ )^{|{\mathfrak j} | } }  P_{\mathbf{t}_{\mathfrak j}  }(f_\varepsilon)  ^2   \Phi_{|{\mathfrak j} | }(v)  ^2 \Sigma_{\mathfrak j}
  (v)\right|_{(x,\mathbf{0}_{{\mathfrak j} ^c}, \mathbf{t}_{{\mathfrak j}  })}\mathbf{t}_{{\mathfrak j}  }d\mathbf{t}_{{\mathfrak j}  }dx
\lesssim\lambda^2 | E_\beta(\lambda)^c|.
     \end{equation}
     \end{lem}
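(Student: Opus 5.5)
The plan is to separate the factor $P_{\mathbf{t}_{\mathfrak j}}(f_\varepsilon)^2$, which I will bound pointwise by $\lambda^2$ on the support of $\Phi_{|\mathfrak j|}(v)$, from the remaining factor $\Phi^2_{|\mathfrak j|}(v)\Sigma_{\mathfrak j}(v)$, whose integral I will show is $\lesssim|E_\beta(\lambda)^c|$.

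\emph{Pointwise step.} At the point $(x,\mathbf{0}_{\mathfrak j^c},\mathbf{t}_{\mathfrak j})$ we have $v=\chi_E*P_{\mathbf{t}'}(x)$ and $P_{\mathbf{t}_{\mathfrak j}}(f_\varepsilon)=f*P_{\mathbf{t}'}(x)$, where $\mathbf{t}'=\boldsymbol\varepsilon+(\mathbf{0}_{\mathfrak j^c},\mathbf{t}_{\mathfrak j})\in(\mathbb R_+)^m$. Since $\Phi_a$ vanishes on $[0,C_1]$, on the support of the integrand we have $v>C_1$. By Proposition \ref{prop:W}(2) (with $\beta$ large) this forces $(x,\mathbf{t}')\in\widetilde W_\beta$. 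Hence there is $x_0\in E_\beta(\lambda)$ with $(x,\mathbf{t}')\in\Gamma_\beta(x_0)$, and so $|f*P_{\mathbf{t}'}(x)|\le N^\beta(f)(x_0)\le\lambda$. The left side of \eqref{eq:estimate-u2-G-ki} is therefore at most $\lambda^2\int \Phi^2_{|\mathfrak j|}(v)\Sigma_{\mathfrak j}(v)\,\mathbf{t}_{\mathfrak j}d\mathbf{t}_{\mathfrak j}dx$.

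\emph{Reduction to $w$.} Write $v=1-w$ with $w=(1-\chi_E)*P_{\boldsymbol\varepsilon}*P_{\mathbf{t}}$. Then $0\le w\le1$ and $\|w(\cdot,\mathbf{t})\|_{L^1}\le|E^c|$ by \eqref{eq:f*P-L1}. Since $\mathfrak j\neq\emptyset$, every factor of $\Sigma_{\mathfrak j}$ involves a derivative, so $\nabla_{\mathfrak j_\alpha}v=-\nabla_{\mathfrak j_\alpha}w$ and $\Sigma_{\mathfrak j}(v)=\Sigma_{\mathfrak j}(w)$. For the single-block term of the partition, $\mathfrak j_1=\mathfrak j$, I bound $\Phi$ by a constant and use Proposition \ref{thm:g-function} with $p=2$ in the variables $\mathbf{t}_{\mathfrak j}$:
\[
\int|\nabla_{\mathfrak j}w|^2\,\mathbf{t}_{\mathfrak j}d\mathbf{t}_{\mathfrak j}dx\lesssim\|(1-\chi_E)*P_{\boldsymbol\varepsilon}\|_2^2\le|E^c|.
\]

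\emph{Main obstacle: partitions with at least two blocks.} Here one cannot simply bound all but one factor by sup norms. The available bounds $|\nabla_{\mathfrak j_\alpha}w|\lesssim(\prod t)^{-1}$ lose a factor $dt/t$, which is not integrable. I would argue by induction on $|\mathfrak j|$, peeling off one index $k\in\mathfrak j$ exactly as in Lemma \ref{lem:estimate2}, with the role of $u$ now played by $w$. The key identities are $\frac12\triangle_k(w^2)=|\nabla_k w|^2$ and $\frac12\triangle_k|\nabla_{\mathfrak l}w|^2=|\nabla_k\nabla_{\mathfrak l}w|^2$. Expanding $\frac12\triangle_k\big(w^2\Phi_a^2(v)\prod_\beta|\nabla_{\mathfrak j_\beta}w|^2\big)$ and applying Proposition \ref{prop:Green} via Lemma \ref{prop:conditions-general}, the terms of $\Sigma_{\mathfrak j}$ containing $\nabla_k$ are controlled by two contributions. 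The first is a boundary term at $t_k=0$; it involves one fewer parameter and is handled by the inductive hypothesis, using $w^2\le 1$. The second consists of cross terms such as $w\,\nabla_kw\cdot(\nabla_k\nabla_{\mathfrak l}w\cdot\nabla_{\mathfrak l}w)$ and $w^2\Phi\Phi'\nabla_kv\cdots$. The difficulty is that these cross terms are of the same order as the left side, not small. My first attempt is to replace $w^2$ by a higher power $w^{2N}$, or to exploit the smallness of $w$ on $\{v>C_1\}$, where $w<1-C_1$, in order to gain a factor less than one in Cauchy--Schwarz so that the cross terms can be absorbed. The base case of this induction is the $L^2$ estimate above.

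Summing over the finitely many partitions of $\mathfrak j$ then gives $\int\Phi^2_{|\mathfrak j|}(v)\Sigma_{\mathfrak j}(v)\,\mathbf{t}_{\mathfrak j}d\mathbf{t}_{\mathfrak j}dx\lesssim|E^c|$. Combined with the pointwise step, this yields \eqref{eq:estimate-u2-G-ki}.
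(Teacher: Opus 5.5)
\textbf{Gap: partitions of $\mathfrak j$ with two or more blocks.} Your pointwise step, the passage from $v$ to $w=1-v$, and the one-block case are correct and agree with the paper. The pointwise step uses Proposition \ref{prop:W}(2) to put $(x,\boldsymbol\varepsilon+\mathbf t_{\mathfrak j})$ in $\widetilde W_\beta$, so that $|P_{\mathbf t_{\mathfrak j}}(f_\varepsilon)(x)|\le\lambda$; the one-block case follows from the $L^2$ bound for $g_{\mathfrak j}$. The gap is the case of $l\ge2$ blocks, which is where the substance of the lemma lies. There you sketch an induction by integration by parts modeled on Lemma \ref{lem:estimate2}. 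But you note yourself that the cross terms are of the same order as the quantity being estimated. The proposed remedies (working with $w^{2N}$, or using $w<1-C_1$ on the support of $\Phi$) are not carried out. Nothing shows that the resulting inequalities close. It is also unclear how the boundary terms at $t_k=0$, which still carry products of several $|\nabla_{\mathfrak j_\beta}w|^2$, fit the inductive hypothesis. So the bound $\int\Phi^2_{|\mathfrak j|}(v)\Sigma_{\mathfrak j}(v)\,\mathbf t_{\mathfrak j}d\mathbf t_{\mathfrak j}dx\lesssim|E_\beta(\lambda)^c|$ is not established.

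\textbf{Missing idea: decoupling through maximal functions in the complementary parameters.} No integration by parts is needed. By commutativity of partial convolutions,
\[
\nabla_{\mathfrak j_\alpha}v=\nabla_{\mathfrak j_\alpha}\big(\chi_\varepsilon*_{\mathfrak j_\alpha}P_{\mathbf t_{\mathfrak j_\alpha}}\big)*_{\mathfrak j\setminus\mathfrak j_\alpha}P_{\mathbf t_{\mathfrak j\setminus\mathfrak j_\alpha}},
\]
so \eqref{eq:heat-max} gives
\[
|\mathbf t_{\mathfrak j_\alpha}\nabla_{\mathfrak j_\alpha}v(x,\mathbf t_{\mathfrak j})|\lesssim M_{\mathfrak j\setminus\mathfrak j_\alpha}\big(\mathbf t_{\mathfrak j_\alpha}\nabla_{\mathfrak j_\alpha}P_{\mathbf t_{\mathfrak j_\alpha}}(\chi_\varepsilon)\big)(x).
\]
The right side no longer depends on $\mathbf t_{\mathfrak j\setminus\mathfrak j_\alpha}$. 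This is what your remark about sup-norm bounds was missing: take the supremum only over the parameters outside the block, through a maximal function, not over $x$.

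Since $|\nabla_{\mathfrak j_\alpha}v|^2\,\mathbf t_{\mathfrak j_\alpha}d\mathbf t_{\mathfrak j_\alpha}=|\mathbf t_{\mathfrak j_\alpha}\nabla_{\mathfrak j_\alpha}v|^2\frac{d\mathbf t_{\mathfrak j_\alpha}}{\mathbf t_{\mathfrak j_\alpha}}$, the $\mathbf t_{\mathfrak j}$-integral of $\prod_\alpha|\nabla_{\mathfrak j_\alpha}v|^2$ then factors into a product of one-block integrals. Next apply H\"older in $x$ with $l$ equal exponents, followed by the Fefferman--Stein vector-valued inequality for the iterated maximal functions. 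Finally apply Proposition \ref{thm:g-function} with $p=2l$, after replacing $\chi_\varepsilon$ by $1-\chi_\varepsilon$. This bounds the integral by
\[
\prod_{\alpha=1}^l\|g_{\mathfrak j_\alpha}(1-\chi_\varepsilon)\|_{2l}^2\lesssim\|1-\chi_\varepsilon\|_{2l}^{2l}\le\|1-\chi_\varepsilon\|_{1}=|E_\beta(\lambda)^c|,
\]
where the middle inequality uses $0\le1-\chi_\varepsilon\le1$. The factor $\Phi^2_{|\mathfrak j|}$ is simply bounded by a constant.
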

  \begin{proof}
   For $ \Phi_a(P_{{\mathbf{t}_{\mathfrak j}  }} (\chi_\varepsilon ))(x)\neq0$, we have
  \begin{equation*}
   P_{\mathbf{t}_{\mathfrak j}  +  \boldsymbol {\varepsilon }} (\chi  )(x)=  P_{\mathbf{t}_{\mathfrak j}  } (\chi_\varepsilon )(x)>C_1
   \end{equation*}by definition of $\Phi_a$. Here by abusing of notations, we denote $( \mathbf{0}_{{\mathfrak j} ^c}, \mathbf{t}_{{\mathfrak j}  })$ by $\mathbf{t}_{{\mathfrak j}  }$
   briefly.
Consequently,  $(x,\mathbf{t}_{\mathfrak j}  +  \boldsymbol {\varepsilon }) \in\widetilde{ W}_\beta$ by Proposition
 \ref{prop:W}. Hence, there exists $x'\in E_\beta(\lambda)$ such that $(x,\mathbf{t}_{\mathfrak j}  +  \boldsymbol {\varepsilon })\in \Gamma_\beta(x')$, and so
 \begin{equation*}
 P_{\mathbf{t}_{\mathfrak j}   } (f_\varepsilon)(x)=   P_{\mathbf{t}_{\mathfrak j}  +  \boldsymbol {\varepsilon }} (f)(x)\leq  {N}^\beta( f ) (x')\leq\lambda.
 \end{equation*}Therefore,
 \begin{equation*}\begin{split}
  \left. \int_{\mathbb{ R}^n\times  (\mathbb{R}_+ )^{|\mathfrak j|} }  P_{\mathbf{t}_{\mathfrak j}  }(f_\varepsilon)  ^2   \Phi  ^2   _{|\mathfrak j|}(v ) \Sigma_{\mathfrak j}
   (v )\right|_{(x,\mathbf{t}_{{\mathfrak j}  })}\mathbf{t}_{\mathfrak j} d \mathbf{t}_{\mathfrak j}  dx
\,\leq&\, C  \left.\sum_{{\mathfrak j} _1, \cdots,{\mathfrak j} _l}\lambda^2\int_{\mathbb{ R}^n\times  (\mathbb{R}_+)^{|\mathfrak j|} }      \left|\nabla_{{\mathfrak j} _1}v \right|^2
\cdots\left|
\nabla_{{\mathfrak j} _l} v  \right|^2\right|_{(x,\mathbf{t}_{{\mathfrak j}  })} \mathbf{t}_{\mathfrak j}  d\mathbf{t}_{\mathfrak j}  dx
  \end{split}\end{equation*}    
  where $C=\max_a\max_{[0,1]} \Phi_a^2$,   the summation is taken over partitions ${\mathfrak j} _1, \cdots,{\mathfrak j} _l$  of ${\mathfrak j} $,   and
    \begin{equation*}
     v (x,\mathbf{t}_{{\mathfrak j}  })= P_{\mathbf{t}_{{\mathfrak j}  } }  (\chi_\varepsilon  )(x)= P_{\mathbf{t}_{{\mathfrak j}  } +  \boldsymbol {\varepsilon }}  (\chi
     )(x).
  \end{equation*}

Recall that
$\left |  P_{t_\lambda}(h)\right|\lesssim M_\lambda(h),
   \left |t _\lambda\nabla_ \lambda  P_{t_\lambda}(h)\right|\lesssim M_\lambda(h)
$ for $h\in L^1(\mathbb{R})$.
   This together with
      \begin{equation*}
       \nabla_{{\mathfrak j} _\alpha}v (x,\mathbf{t}_{{\mathfrak j}  })=\chi_\varepsilon  *_{{\mathfrak j}_1  } P_{\mathbf{t}_{{\mathfrak j}_1  } }\cdots*_{{\mathfrak
       j}_\alpha  } \widetilde{\nabla}_{{\mathfrak j} _\alpha}P_{\mathbf{t}_{{\mathfrak j}_\alpha  } }*\cdots *_{{\mathfrak j}_l  } P_{\mathbf{t}_{{\mathfrak j}_l  } }
       = {\nabla}_{{\mathfrak j} _\alpha}\left(\chi_\varepsilon   *_{{\mathfrak j}_\alpha  }P_{\mathbf{t}_{{\mathfrak j}_\alpha  } }\right)
       *_{\mathbf{t}_{ {\mathfrak j}\setminus {\mathfrak j} _\alpha} }
       P_{\mathbf{t}_{ {\mathfrak j}\setminus {\mathfrak j} _\alpha}},
   \end{equation*}by the  commutativity \eqref{eq:convolution-commutativity} of  partial  convolutions
and \eqref{eq:heat-max},  implies
      \begin{equation*}
      \left|\mathbf{t}_{{\mathfrak j} _\alpha }\nabla_{{\mathfrak j} _\alpha}v (x,\mathbf{t}_{{\mathfrak j}  })\right|^2\lesssim \left|M_{ {\mathfrak j}\setminus {\mathfrak j} _\alpha
      }\left(\mathbf{t}_{{\mathfrak j} _\alpha}\nabla_ {{\mathfrak j} _\alpha}P
      _{\mathbf{t}_{{\mathfrak j} _\alpha}}(\chi_\varepsilon)\right) \right|^2,
   \end{equation*}where    $ {\mathfrak j}\setminus {\mathfrak j} _\alpha$ is    the complement of ${\mathfrak j} _\alpha$ in ${\mathfrak j} $, and $M_{ {\mathfrak j}\setminus {\mathfrak j} 
   _\alpha}=M_{\lambda_a}\circ
   \cdots \circ M_{\lambda_1}$ is an iterated maximal function if $ {\mathfrak j}\setminus {\mathfrak j} _\alpha=\{\lambda_1, \cdots ,\lambda_a\}$. For a subset $  \mathfrak j=\{ {j_1},\dots, j_a\}$ of $\mathfrak 
   m $, denote
$ \frac{ d\mathbf{t}_{\mathfrak j}} {\mathbf{t}_{\mathfrak j}}:=\frac{d t_{j_1} \cdots d t_{j_a}}{  t_{j_1} \cdots  t_{j_a}}.
$
 Then,
 \begin{equation*}\begin{split}
\int_{\mathbb{ R}^n\times  (\mathbb{R}_+) ^{|\mathfrak j|} }      \left|\nabla_{{\mathfrak j} _1}v \right|^2  \cdots\left| \nabla_{{\mathfrak j} _l} v  \right|^2
\mathbf{t}_{\mathfrak j}  d\mathbf{t}_{\mathfrak j}  dx \lesssim& \int_{\mathbb{ R}^n\times  (\mathbb{R}_+) ^{|\mathfrak j|} }  \prod_{\alpha=1}^l
     \left|M_{ {\mathfrak j}\setminus {\mathfrak j} _\alpha}\left(\mathbf{t}_{{\mathfrak j} _\alpha}\nabla_ {{\mathfrak j} _\alpha}P _{\mathbf{t}_{{\mathfrak j} _\alpha}}(\chi_\varepsilon)\right)
     \right|^2 dx\frac
    {d\mathbf{t}_{\mathfrak j} }{\mathbf{t}_{\mathfrak j} }\\=&
    \int_{\mathbb{ R}^n }  \prod_{\alpha=1}^l\int_{   (\mathbb{R}_+ )^{|{\mathfrak j} _\alpha|} }
     \left|M_{ {\mathfrak j}\setminus {\mathfrak j} _\alpha}\left(\mathbf{t}_{{\mathfrak j} _\alpha}\nabla_ {{\mathfrak j} _\alpha}P _{\mathbf{t}_{{\mathfrak j} _\alpha}}(\chi_\varepsilon)\right)
     \right|^2 \frac {d\mathbf{t}_{{\mathfrak j} _\alpha}}{\mathbf{t}_{{\mathfrak j} _\alpha}} { dx} \\
\leq & \prod_{\alpha=1}^l \left|\int_{\mathbb{ R}^n }\left(\int_{  ( \mathbb{R}_+ )^{|{\mathfrak j} _\alpha|} }
     \left|M_{ {\mathfrak j}\setminus {\mathfrak j} _\alpha}\left(\mathbf{t}_{{\mathfrak j} _\alpha}\nabla_ {{\mathfrak j} _\alpha}P _{\mathbf{t}_{{\mathfrak j} _\alpha}}(1-\chi_\varepsilon)\right)
     \right|^2 \frac {d\mathbf{t}_{{\mathfrak j} _\alpha}}{\mathbf{t}_{{\mathfrak j} _\alpha}}\right)^ldx\right|^{\frac 1l}\\\lesssim & \prod_{\alpha=1}^l \left|\int_{\mathbb{ R}^n
     } g_{{\mathfrak j} _\alpha   }^{2l} (1-\chi_\varepsilon)dx\right|^{\frac 1l}\\
\lesssim &  \|1-\chi_{\varepsilon}\|_{L^ {2l}(\mathbb{R}^n)} ^{2l}\rightarrow  \|1-\chi \|_{L^ {2l}(\mathbb{R}^n)} ^{2l}= | E_\beta(\lambda)^c|.
 \end{split}\end{equation*}by using H\"older inequality, the vector-valued inequality for the Hardy-Littlewood
maximal functions,
 and  the $L^{2l}$ boundedness of  the partial Littlewood-Paley  $g$-function in Proposition \ref{thm:g-function} .
 \end{proof}

   \begin{proof}[Proof of Theorem \ref{thm:Fefferman-Stein-inequality}] Since $f_\varepsilon=f*P_{\boldsymbol \varepsilon}$ is smooth on $\overline{T_\Omega}$, we see that
 $f_\varepsilon(x)  \leq  {N}^\beta(f_\varepsilon)(x )\leq  {N}^\beta(f )(x )
$ by the definition of   nontangential maximal function,
  and  similarly, $v(x,\mathbf{0})=(\chi_{E_\beta(\lambda)})_\varepsilon$. Hence,
 \begin{equation}\label{eq:Fefferman-Stein-inequality-varepsilon} \begin{split}
I=& \int_{  \mathbb{ R}^n\times (\mathbb{R}_+ )^{m }  } \left|   \nabla_\mathfrak {m}( f_\varepsilon*P_{\mathbf{t} })  \right|^2\phi^2 (v  )  \mathbf{t} d\mathbf{t}  dx    \\\lesssim&  \int_{\mathbb{ R}^n }  
f_\varepsilon  ^2   \phi ^2(v )  (x)    dx+\left.  \sum_{a=1}^{m}\sum_{|{\mathfrak j} |={ a }}
   \int_{\mathbb{
   R}^n\times ( \mathbb{R}_+)^{ a} } P_{\mathbf{t}_{\mathfrak j}  }(f_\varepsilon)  ^2   \Phi ^2_{a}(v ) \Sigma_{{\mathfrak j}  } (v)\right|_{(x,{\mathbf{  0 }}_{{\mathfrak j} ^c}
   ,{\mathbf{  {
   t}}}_{\mathfrak j} )} {\mathbf{  { t}}}_{\mathfrak j}   d{\mathbf{ { t}}}_{\mathfrak j}    dx\\\lesssim &\int_{\mathbb{ R}^n } N^\beta( f )  ^2 \phi ^2\left((\chi_{E_\beta(\lambda)})_\varepsilon \right)  (x)       
   dx+
\lambda^2 | E_\beta(\lambda)^c|,
 \end{split} \end{equation}
by   Proposition \ref{prop:estimate-Fefferman-Stein0} and Lemma \ref{prop:estimate-u2-G-ki}. As $\varepsilon\rightarrow0$, $  \nabla_\mathfrak {m}( f_\varepsilon*P_{\mathbf{t} })\rightarrow  \nabla_\mathfrak {m}( 
f *P_{\mathbf{t} })  $ and $    v( \cdot,\mathbf{t} )\rightarrow
    \chi  {{*}}  P_{\mathbf{t }}$ a.e.  The resulting inequality \eqref{eq:Fefferman-Stein-inequality} follows from the inequality \eqref{eq:S-estimate-I0} and the inequality by  taking limit 
    $\varepsilon\rightarrow0$ in \eqref{eq:Fefferman-Stein-inequality-varepsilon} and using Fatou's lemma and Lebesgues' dominated convergence theorem.
  \end{proof}

  \section{Littlewood-Paley $g$ function and $S$   functions}
  \subsection{The Plancherel-P\'olya type inequality for $\mathfrak{H} $-valued  functions}
  Let   $ \mathfrak  H $ be a separable Hilbert space and let
$\{h_1,h_2,\dots\}$ be an orthonormal basis of $\mathfrak  H $.
A $\mathfrak{ {H}} $-valued function  $\mathfrak f   $ called {\it measurable} if  $\mathfrak f  =\sum_{j=1}^\infty f_j(x)h_j$ converges a.e. with each $f_j $ measurable.
 A measurable function $ \mathfrak f \in   L
^p
(\mathbb{ R},\mathfrak{H})$ if $\|\mathfrak f \|_{L
^p
(\mathbb{ R},\mathfrak{H})}:=(\int_{\mathbb{ R}}|\mathfrak f(x)|_{\mathfrak{H}}^pdx)^\frac 1p<\infty$.  In particular, $\int_{\mathbb{ R}}|\mathfrak f(x)|^2_{\mathfrak{H}}dx=\sum_{j=1}^\infty\int
_{\mathbb{ R}}| f_j(x)|^2dx$.

  Fix $ 0 < \beta < 1$ and $\gamma  > 0$, we say that
  $\mathfrak f$
  defined on  $\mathbb{  R}^1$
  belongs
to $ \mathcal{ {M} }  (\beta, \gamma, r , x_0;  \mathfrak {H})$ with $r > 0$ and  $ x_0\in \mathbb{  R}^1$ if  it satisfies $\int_{\mathbb{  R}^1}\mathfrak f( x  )dx=0$ and
\begin{equation}\label{eq:test} \begin{split}
      |\mathfrak f( x  )|_{\mathfrak{H}}&\leq C\frac { r ^\gamma}{(r +|x-x_0|)^{1+\gamma}},\\
       |\mathfrak  f(  x)- \mathfrak  f(  x' )|_{\mathfrak{H}}&\leq C \left(\frac { |x-x'|}{ r +|x-x_0| }\right )^\beta\frac { r ^\gamma}{(r +|x-x_0|)^{1+\gamma}},
 \end{split}  \end{equation}for $ |x-x'|\leq \frac {r +|x-x_0|}2$. Denote
$
    \|\mathfrak   f\|_{\mathcal{ {M} }  (\beta, \gamma, r , x_0; \mathfrak {H})}:=\inf\{C;\eqref{eq:test}\hskip 2mm {\rm hold}\}.
 $ It is a Banach space as in the scalar case. In particular, $\mathcal{ {M} }  (\beta, \gamma, r , x_0 )=\mathcal{ {M} }  (\beta, \gamma, r , x_0; \mathbb{R})$.

Denote  $\rho_t:=t\frac {\partial P_t}{\partial t}$. { Recall   (cf. e.g. \cite[Section 2.3]{HLLW})
that there exists  function $\varphi\in \mathcal{S}(\mathbb{R})$ such that \\
(1) supp $\varphi\subseteq [-1,1]$; \\
(2) $\int_{\mathbb{R}} s^a
\varphi(s) ds=0$ for $a= 0,1,2$;\\
(3) $\int_{0}^{+\infty} e^{-\xi} \widehat{\varphi}(\xi) \, d\xi=1$.}

 Then we have
the Calder\'on
reproducing formula for $\mathfrak{H} $-valued functions on $\mathbb{R}$:
\begin{equation}\label{eq:reproducing}  \mathfrak f( x  )=\lim_{\substack{T\rightarrow+\infty  \\\varepsilon\rightarrow+0}}\int_{\varepsilon}^T\varphi_ { t }*\rho_ { {t}}* \mathfrak
f( x  )\, \frac { d {t}}{ {t}}=
\int_{0}^{+\infty}\varphi_ { t }*\rho_ { {t}}* \mathfrak f( x  )\, \frac { d {t}}{ {t}},
 \end{equation}for $ \mathfrak f  \in L
^2
(\mathbb{ R},\mathfrak{H})$. This is because $\mathfrak f =\sum_i f_ih_i$, 
{ and} each entry $f_i\in L^2
(\mathbb{ R} )$   { satisfies} the usual Calder\'on
reproducing formula on $\mathbb{R}$. Here $\phi*\mathfrak f =\sum_i \phi*f_ih_i$ for $\phi\in L^1(\mathbb{R})$ decaying like the Poisson kernel. $\phi* \mathfrak f   \in   L
^p
(\mathbb{ R},\mathfrak{H})$ if $ \mathfrak f   \in   L
^p
(\mathbb{ R},\mathfrak{H})$. This is because
\begin{equation*}\begin{split}
   \|\phi*\mathfrak f  \|_{L
^p
(\mathbb{ R},\mathfrak{H})}&=\left(\int_{\mathbb{ R}}\left(\sum_i |\phi*f_i |^2( x  )\right) ^\frac p2 dx\right)^\frac 1p \lesssim \left(\int_{\mathbb{ R}}\left(\sum_i |M(f_i )|^2( x  )\right) ^\frac p2 
dx\right)^\frac 1p\\& \lesssim \left(\int_{\mathbb{ R}}\left(\sum_i | f_i  |^2( x  )\right) ^\frac p2 dx\right)^\frac 1p=\| \mathfrak f  \|_{L
^p
(\mathbb{ R},\mathfrak{H})}, 
    \end{split} \end{equation*}
by the vector-valued inequality for the Hardy-Littlewood
maximal functions.

We have  the wavelet Calder\'on reproducing formula for $\mathfrak{H} $-valued  functions. See the appendix for its proof.
In the sequel, for $j\in\mathbb{ Z}$, a {\it  dyadic interval $I$ with side-length  $\ell (I) = 2^{-\alpha(j+N)}$} means  $ I   =(0, 2^{-\alpha( j  +N) }]+l2^{-\alpha( j  +N) } $ for  some $l\in\mathbb{ Z}$.
   \begin{prop}  \label{prop:discrete-Calderon}
  There exist a fixed small $\alpha>0$  and a large integer
$N$ such that for each dyadic interval  $I$ with side-length  $\ell (I) = 2^{-\alpha(j+N)}$, $j\in\mathbb{ Z}$,  and    any fixed point  $x_I$   in $I $,  there exists $\phi_{j}(x , x_I)  \in\mathcal{ M} (\beta, 
\gamma,  2^{-\alpha j},
 x_I;\mathfrak{H})$ satisfying
\begin{equation} \label{eq:wavelet-Calderon} \begin{split}
 \mathfrak f (x)=\sum_{j\in \mathbb{Z}} \sum_{I   }   \mathfrak
 f_{j;I   } \phi_{j  } (  \cdot, x_I ), 
        \end{split}   \end{equation} 
        where the summation is taken over dyadic intervals with side-length  $\ell (I) = 2^{-\alpha(j+N)}$,  and
        \begin{equation}\label{eq:widetilde-rho} \mathfrak
               f_{j;I    } = c_\alpha \ell(I) |\widetilde{ \rho}_ {{j}}* \mathfrak f(x_I) |_{\mathfrak{H}},\qquad \widetilde{\rho}_ {j}:=\frac 1{c_\alpha }\int_{2^{-\alpha j }}^{2^{-\alpha(j -1)  }}  \rho_ { t
         } \frac { d {t}}{ {t}},\qquad  c_\alpha =\int_{2^{-\alpha j }}^{2^{-\alpha(j-1)  }}   \frac { d {t}}{ {t}}=\alpha \ln    2 .
        \end{equation} The series in \eqref{eq:wavelet-Calderon} converges   in  $  L^2(\mathbb{R},\mathfrak{H})$
 and in   the Banach space $ {M}   (\beta, \gamma, r, x_0;\mathfrak{H})  $.
 \end{prop}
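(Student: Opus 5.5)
The plan is to follow the scalar argument of Han (cf. \cite{HLLW} and the references there). We first discretize the continuous formula \eqref{eq:reproducing}, then invert the resulting discretized operator by a Neumann series. All estimates are done componentwise in the orthonormal basis $\{h_i\}$ and reassembled via the $\mathfrak H$-norm; this works because every operator involved is a scalar kernel acting diagonally on $\mathfrak f=\sum_i f_ih_i$.

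\textbf{Step 1: discretize.} Group the $t$-integral in \eqref{eq:reproducing} into the intervals $[2^{-\alpha j},2^{-\alpha(j-1)}]$, which gives
\[
\mathfrak f=\sum_j c_\alpha\,\varphi_{2^{-\alpha j}}*\widetilde\rho_j*\mathfrak f+\mathcal R_1\mathfrak f .
\]
The remainder $\mathcal R_1$ collects the error from freezing $\varphi_t$ at $t=2^{-\alpha j}$. Next split $\mathbb R$ into the dyadic intervals $I$ with $\ell(I)=2^{-\alpha(j+N)}$ and replace the convolution integral over $I$ by its value at $x_I$. This gives
\[
\mathfrak f=T_N\mathfrak f+\mathcal R_N\mathfrak f,\qquad T_N\mathfrak f(x):=\sum_j\sum_I c_\alpha\,\ell(I)\,\varphi_{2^{-\alpha j}}(x-x_I)\,\widetilde\rho_j*\mathfrak f(x_I).
\]

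\textbf{Step 2: the main estimate.} We will show that
\[
\|\mathcal R_N\|_{L^2(\mathbb R,\mathfrak H)\to L^2(\mathbb R,\mathfrak H)}+\|\mathcal R_N\|_{\mathcal M(\beta,\gamma,r,x_0;\mathfrak H)\to\mathcal M(\beta,\gamma,r,x_0;\mathfrak H)}\lesssim \alpha^{\delta}+2^{-N\delta}
\]
for some $\delta>0$. The kernel of $\mathcal R_N$ is a sum of differences of the form $\varphi_{2^{-\alpha j}}(x-y)\widetilde\rho_j(y-z)-\varphi_{2^{-\alpha j}}(x-x_I)\widetilde\rho_j(x_I-z)$ for $y\in I$, plus the $\varphi_t-\varphi_{2^{-\alpha j}}$ error. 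Smoothness of $\varphi$ and of the Poisson derivative $\rho_t$ gives a factor $(\ell(I)/2^{-\alpha j})^{\beta}=2^{-\alpha N\beta}$, respectively $\alpha$, times a standard almost-orthogonal bound. Here we use the vanishing moments of $\varphi$, the cancellation $\int\rho_t=0$, and the Poisson decay of $\rho_t$. Cotlar--Stein then gives the $L^2$ bound, and the usual almost-orthogonality estimates for kernels acting on molecules give the $\mathcal M$ bound.

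Since only scalar kernels act on $\mathfrak f$, each of these bounds transfers to the $\mathfrak H$-valued setting. For $L^2$ this is exact by orthogonality. For $\mathcal M$ we use Minkowski's inequality in $\mathfrak H$, because the size and smoothness conditions in \eqref{eq:test} are stated with $|\cdot|_{\mathfrak H}$. Choosing $N$ large first (with $\alpha$ small but fixed) makes the norm of $\mathcal R_N$ at most $1/2$, so $T_N=I-\mathcal R_N$ is invertible on both spaces via the Neumann series $T_N^{-1}=\sum_k\mathcal R_N^k$.

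\textbf{Step 3: conclude.} Write $\mathfrak f=T_N(T_N^{-1}\mathfrak f)$ and move $T_N^{-1}$ onto the building blocks; $T_N^{-1}$ commutes with the scalar convolutions in the appropriate sense. The coefficient $\widetilde\rho_j*\mathfrak f(x_I)\in\mathfrak H$ is written as $|\widetilde\rho_j*\mathfrak f(x_I)|_{\mathfrak H}$ times a unit vector $e_{j,I}\in\mathfrak H$. We then set
\[
\phi_j(\cdot,x_I):=T_N^{-1}\big(\varphi_{2^{-\alpha j}}(\cdot-x_I)\big)\,e_{j,I},
\]
which yields \eqref{eq:wavelet-Calderon} with coefficients as in \eqref{eq:widetilde-rho}. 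Membership $\phi_j(\cdot,x_I)\in\mathcal M(\beta,\gamma,2^{-\alpha j},x_I;\mathfrak H)$ follows because $\varphi_{2^{-\alpha j}}(\cdot-x_I)$ lies in this space uniformly (it has compact support, mean zero and smoothness), and $T_N^{-1}$ is bounded there.

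Convergence of the series in $L^2(\mathbb R,\mathfrak H)$ follows from the $L^2$ boundedness of $T_N$ applied to truncations, together with the square-function bound $\sum_{j,I}\ell(I)|\widetilde\rho_j*\mathfrak f(x_I)|_{\mathfrak H}^2\lesssim\|\mathfrak f\|_2^2$. Convergence in $\mathcal M$ follows in the same way, using that $T_N$ and $T_N^{-1}$ are bounded on $\mathcal M$.

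The step I expect to be the main obstacle is the uniform smallness of $\mathcal R_N$ on the molecule space $\mathcal M$ in Step 2. This requires the standard but delicate almost-orthogonality bookkeeping for the Poisson-type (non-compactly supported, only polynomially decaying) kernel $\rho_t$, in particular checking that its decay of order $1+1$ suffices for $\gamma<1$.
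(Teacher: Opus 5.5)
Your proposal follows essentially the same route as the paper. You discretize the continuous Calder\'on formula at scales $2^{-\alpha j}$ and on dyadic intervals of length $2^{-\alpha(j+N)}$, show the two remainders have norms $O(\alpha)$ and $O(2^{-\alpha N\beta})$ on $L^2(\mathbb{R},\mathfrak{H})$ and on $\mathcal{M}(\beta,\gamma,r,x_0;\mathfrak{H})$, and invert by a Neumann series. The paper checks the Calder\'on--Zygmund conditions (i)--(v) and transfers the scalar test-function lemma componentwise, which is equivalent to your Cotlar--Stein/almost-orthogonality bookkeeping. Your frozen-$\varphi$, point-sampled variant of $T_N$ works as well as the paper's averaged $\widetilde{\varphi}_j$.

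One correction in Step 3: the identity to use is $\mathfrak f=T_N^{-1}(T_N\mathfrak f)$, not $T_N(T_N^{-1}\mathfrak f)$. The operator $T_N^{-1}$ does not commute with $\widetilde\rho_j*$ or with sampling at $x_I$, because the dyadic grid breaks translation invariance. What you actually need is only $T_N^{-1}(g\,e)=(T_N^{-1}g)\,e$ for scalar $g$ and fixed $e\in\mathfrak{H}$, which holds since $T_N$ has a scalar kernel. This gives exactly your $\phi_j(\cdot,x_I)$ with coefficients $c_\alpha\ell(I)|\widetilde\rho_j*\mathfrak f(x_I)|_{\mathfrak H}$.
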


 We also need  the Calder\'on
reproducing formula with compactly supported $\psi$:
  \begin{equation*}
    \mathfrak   f(x)=
\int_{0}^{+\infty}\psi_ {t}*\psi_ {t}*\mathfrak  f(x)\frac { dt}{t},
  \end{equation*} for any   $ \mathfrak f  \in L
^2
(\mathbb{ R},\mathfrak{H})$. Here we can require   $\psi $ to be smooth and compactly supported. The $S$-function associated  $\psi $ and  $g$-function associated to  the Poisson kernel for $\mathfrak{H} $-valued 
functions on $\mathbb{R}$ are
defined as
\begin{equation*} \begin{split}
   S_\psi(\mathfrak  f)(x): =\left(\int_{0}^{+\infty}\int_{|x-x'|<t}\left|\psi_t*\mathfrak  f  (x' )  \right|_{  \mathfrak  {H}}^2 \frac
   {dt  dx'} t\right)^{\frac 12} \quad {\rm and } \quad g_P(\mathfrak  f)(x):=\left(\int_{0}^{+\infty}\left|\rho_t*\mathfrak  f \right|_{   \mathfrak H }^2 (x  ) \frac
   {dt  } t\right)^{\frac 12},
     \end{split}\end{equation*}respectively, where $\psi_t(x)=\frac 1t \psi(x/t)$.  See e.g. \cite{HLS} for vector valued Littlewood-Paley $g$ function and $S$   functions.
  The wavelet Calder\'on
reproducing formula  \eqref{prop:discrete-Calderon} yields the following Plancherel-P\'olya type inequality  (cf. e.g. \cite[Theorem 2.16-2.17]{HLLW} for the scalar case).
\begin{prop}\label{thm:Plancherel-Polya} Let $\alpha>0$  and
$N$ as in Proposition \ref{prop:discrete-Calderon}.   For a fixed $C_0>0$ and any  $\mathfrak f \in L^2(\mathbb{R},\mathfrak{H})$,
    \begin{equation*}  \begin{split}
     \left\|\left(\sum_{j\in \mathbb{Z}}\sum_I   \int_{2^{-\alpha j}}^{2^{-\alpha (j-1)}}
     \!\!\!\! \sup_{u\in C_0I} |\psi_t* \mathfrak  f(u )|_{ \mathfrak   {H}}^2
    \frac  {dt}{  t}\chi_I( \cdot)\right)^{\frac
    12}\right \|_{{ L^1}}
     \lesssim
     \left\|\left(\sum_{j\in \mathbb{Z}}\sum_I \inf_{u\in I}  \int_{2^{-\alpha j}}^{2^{-\alpha (j-1)}} 
     \!\!\!\! \left|\rho_t* \mathfrak  f(u )\right|_{ \mathfrak   {H}}^2
     \frac     {dt}{  t}\chi_I( \cdot)\right)^{\frac
      12}\right\|_{{ L^1}},
        \end{split}   \end{equation*} where the summation is taken over all  dyadic intervals $I$ with side-lengths $\ell (I) =2^{-\alpha( j+N)}$,  and $\chi_I$ is the indicator function  of $I$. The implicit 
        constant   depends   on $\alpha, N$ and $C_0$.
\end{prop}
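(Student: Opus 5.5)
We prove Proposition \ref{thm:Plancherel-Polya} by expanding $\mathfrak f$ with the wavelet Calder\'on reproducing formula of Proposition \ref{prop:discrete-Calderon}. The reproducing formula controls $\psi_t*\mathfrak f$ at every point of $C_0I$ by an almost-orthogonal sum of the coefficients $|\widetilde\rho_{j'}*\mathfrak f(x_{I'})|_{\mathfrak H}$. A Fefferman--Stein maximal argument then passes from these sums back to the $\ell^2$ square function. Since $x_{I'}\in I'$ is arbitrary, the right-hand side of \eqref{eq:wavelet-Calderon} may be evaluated at any point, which is what produces the infimum on the right.

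\emph{Step 1 (pointwise almost-orthogonality).} Fix $j$, $t\in[2^{-\alpha j},2^{-\alpha(j-1)}]$, a dyadic $I$ of side $2^{-\alpha(j+N)}$ and $u\in C_0I$. Apply $\psi_t*$ to \eqref{eq:wavelet-Calderon}:
\[
\psi_t*\mathfrak f(u)=\sum_{j'}\sum_{I'}\mathfrak f_{j';I'}\,\psi_t*\phi_{j'}(\cdot,x_{I'})(u).
\]
Here we read $\mathfrak f_{j';I'}$ as the $\mathfrak H$-valued coefficient $c_\alpha\ell(I')\widetilde\rho_{j'}*\mathfrak f(x_{I'})$, so that only its norm enters the estimates. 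The function $\psi$ is smooth, compactly supported and has vanishing moments, while $\phi_{j'}\in\mathcal M(\beta,\gamma,2^{-\alpha j'},x_{I'};\mathfrak H)$ has mean zero and regularity $\beta$. The standard almost-orthogonality estimate therefore gives
\[
|\psi_t*\phi_{j'}(\cdot,x_{I'})(u)|_{\mathfrak H}\lesssim 2^{-\alpha|j-j'|\epsilon}\,
\frac{2^{-\alpha(j\wedge j')\gamma}}{\bigl(2^{-\alpha(j\wedge j')}+|u-x_{I'}|\bigr)^{1+\gamma}}
\]
for some $0<\epsilon<\min(\beta,\gamma)$, with a constant uniform in $u\in C_0I$. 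The factor $C_0$ only enlarges the constant.

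\emph{Step 2 (maximal function bound).} Summing over $I'$ at a fixed scale $j'$ is the usual discrete estimate. For $\max(1,\tfrac1{1+\gamma})<1/r<1$, i.e.\ $r\in(\tfrac1{1+\gamma},1)$, and any $x\in I$,
\[
\sup_{u\in C_0I}|\psi_t*\mathfrak f(u)|_{\mathfrak H}\lesssim\sum_{j'}2^{-\alpha|j-j'|\epsilon'}\Bigl\{M\Bigl(\sum_{I'}|\widetilde\rho_{j'}*\mathfrak f(x_{I'})|_{\mathfrak H}^r\chi_{I'}\Bigr)(x)\Bigr\}^{1/r},
\]
with $\epsilon'>0$ after adjusting exponents. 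This is where we use that the side-length $2^{-\alpha(j+N)}$ is comparable to $2^{-\alpha j}$ up to the fixed factor $2^{\alpha N}$. The quantity $\widetilde\rho_{j'}*\mathfrak f$ is the average over $t'\in[2^{-\alpha j'},2^{-\alpha(j'-1)}]$ of $\rho_{t'}*\mathfrak f$. By Cauchy--Schwarz,
\[
|\widetilde\rho_{j'}*\mathfrak f(x_{I'})|_{\mathfrak H}^2\lesssim\int_{2^{-\alpha j'}}^{2^{-\alpha(j'-1)}}|\rho_{t'}*\mathfrak f(x_{I'})|_{\mathfrak H}^2\frac{dt'}{t'}.
\]
Choosing $x_{I'}$ to (nearly) attain the infimum over $u\in I'$ turns the right side into the infimum appearing in the proposition.

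\emph{Step 3 (summation).} We square, integrate over $t$, and sum over $j,I$. Cauchy--Schwarz in $j'$ with the geometric factor gives
\[
\text{L.H.S.}\lesssim\Bigl\|\Bigl(\sum_{j'}\Bigl\{M\bigl(\textstyle\sum_{I'}a_{j',I'}^r\chi_{I'}\bigr)\Bigr\}^{2/r}\Bigr)^{1/2}\Bigr\|_{L^1},
\qquad
a_{j',I'}=\Bigl(\inf_{u\in I'}\int_{2^{-\alpha j'}}^{2^{-\alpha(j'-1)}}|\rho_{t'}*\mathfrak f(u)|_{\mathfrak H}^2\frac{dt'}{t'}\Bigr)^{1/2}.
\]
We then apply the Fefferman--Stein vector-valued maximal inequality on $L^{1/r}(\ell^{2/r})$, which is available because $1/r>1$ and $2/r>1$. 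This bounds the expression by the right-hand side of the proposition.

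The main obstacle is Step 1 in the $\mathfrak H$-valued setting, together with the case $j'<j$. When $j'<j$, $\phi_{j'}$ is coarser than $\psi_t$, and the decay must come from the vanishing moment of $\psi$ combined with the H\"older regularity of $\phi_{j'}$ in the $\mathfrak H$-norm. We will run the scalar argument of \cite[Theorem 2.16]{HLLW} verbatim with absolute values replaced by $|\cdot|_{\mathfrak H}$. This is legitimate because every estimate there uses only the triangle inequality and the size and smoothness conditions \eqref{eq:test}.
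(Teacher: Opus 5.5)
Your proposal is correct and follows essentially the same route as the paper's proof. Both arguments convolve the wavelet Calder\'on formula with $\psi_t$ and use the almost-orthogonality bound between $\psi_t$ and $\phi_{j'}(\cdot,x_{I'})$. Both then sum over intervals at a fixed scale via the annular decomposition and the $r<1$ trick to get $\{M(\sum_{I'}|\widetilde\rho_{j'}*\mathfrak f(x_{I'})|_{\mathfrak H}^r\chi_{I'})\}^{1/r}$, and sum over scales with a discrete Young inequality. Both finish with the Fefferman--Stein inequality on $L^{1/r}(\ell^{2/r})$, the arbitrariness of $x_{I'}$, and Cauchy--Schwarz on the average defining $\widetilde\rho_{j'}$.

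The only slip is your stated range for $r$. It should read $\frac1{1+\gamma}<r<1$, and $r$ must also be close enough to $1$ that $1/r-1$ is below the almost-orthogonality exponent. That is what your ``adjusting exponents'' amounts to, exactly as in the paper's condition $\sum_k 2^{-\alpha|k-j|\beta}2^{\alpha(j\wedge k-j)(1-1/r)}<C$.
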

This inequality { actually holds  for certain   distributions}, but here the inequality for $L^2$ functions is sufficient for our { purposes}.
See the appendix for { a} proof.

 \subsection{The estimate  $ \| f \|_{{ L^1}}   \lesssim  \|  g (f) \|_{{ L^1}} $} At first, we use the   Plancherel-P\'olya type inequality  to prove the control the $S$-function by 
 $g$-function for $\mathfrak{H} $-valued functions.

\begin{prop}\label{prop:S<g} For    $\mathfrak f \in L^2(\mathbb{R},\mathfrak{H})$ with    $g_P(\mathfrak f) \in L^1(\mathbb{R} )$, we have $S_\psi(\mathfrak f) \in L^1(\mathbb{R} )$  and $ \|S_\psi(\mathfrak 
f)\|_{L^1(\mathbb{R} )}   \lesssim \|g_P(\mathfrak f)\|_{L^1(\mathbb{R} )} $.
  \end{prop}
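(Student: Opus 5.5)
The plan is the standard discretization argument. First I discretize the $S$-function at scale $2^{-\alpha j}$ and majorize it by the sup-type square function on the left-hand side of Proposition \ref{thm:Plancherel-Polya}. Then I invoke that proposition to pass to the inf-type square function built from $\rho_t$. Finally I bound the latter pointwise by $g_P(\mathfrak f)$.

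\emph{Step 1.} Split $t\in(0,\infty)$ into the annuli $[2^{-\alpha j},2^{-\alpha(j-1)})$, $j\in\mathbb Z$, and split $x$ according to the dyadic intervals $I$ with $\ell(I)=2^{-\alpha(j+N)}$. Fix $x\in I$ and $t$ in the $j$-th annulus. If $|x-x'|<t$, then
\[
|x'-x|<t\le 2^{-\alpha(j-1)}=2^{\alpha(N+1)}\ell(I),
\]
so $x'\in C_0I$ with $C_0:=2^{\alpha(N+1)}+2$. This gives
\[
\int_{|x-x'|<t}|\psi_t*\mathfrak f(x')|_{\mathfrak H}^2\frac{dx'}{t}\le 2\sup_{u\in C_0I}|\psi_t*\mathfrak f(u)|^2_{\mathfrak H}.
\]
Summing over $j$ and $I$, I obtain the pointwise bound
\[
S_\psi(\mathfrak f)(x)^2\lesssim \sum_{j}\sum_I\int_{2^{-\alpha j}}^{2^{-\alpha(j-1)}}\sup_{u\in C_0I}|\psi_t*\mathfrak f(u)|^2_{\mathfrak H}\frac{dt}{t}\,\chi_I(x).
\]

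\emph{Step 2.} Take square roots, integrate over $\mathbb R$, and apply Proposition \ref{thm:Plancherel-Polya} with this $C_0$. The right-hand side then becomes the $L^1$ norm of
\[
\Big(\sum_j\sum_I\inf_{u\in I}\int_{2^{-\alpha j}}^{2^{-\alpha(j-1)}}|\rho_t*\mathfrak f(u)|^2_{\mathfrak H}\frac{dt}{t}\chi_I\Big)^{1/2}.
\]
For $x\in I$ the infimum is at most its value at $u=x$. For each $x$ and each $j$, exactly one $I$ contains $x$. Hence the sum is at most $\sum_j\int_{2^{-\alpha j}}^{2^{-\alpha(j-1)}}|\rho_t*\mathfrak f(x)|^2\,dt/t=g_P(\mathfrak f)(x)^2$. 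Consequently $\|S_\psi(\mathfrak f)\|_{L^1}\lesssim\|g_P(\mathfrak f)\|_{L^1}$, and finiteness of the right-hand side gives $S_\psi(\mathfrak f)\in L^1$.

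\emph{Main obstacle.} All the real work sits inside Proposition \ref{thm:Plancherel-Polya}, which I take as given from the appendix. It rests on the wavelet Calder\'on formula of Proposition \ref{prop:discrete-Calderon}, which expresses $\psi_t*\mathfrak f(u)$ through the coefficients $|\widetilde\rho_{j'}*\mathfrak f(x_{I'})|$, together with almost-orthogonality estimates between $\psi_t$ and the $\phi_{j'}$. These estimates produce a factor $2^{-|j-j'|\epsilon}$ and a kernel that is controlled by the maximal function $M$ raised to a power $r<1$, applied to the coefficient sequence. The coefficients must be taken with arbitrary $x_I\in I$, so that the infimum appears; Fefferman–Stein then closes the argument in $L^{1/r}(\ell^{2/r})$. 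The only point I must check here is that $\mathfrak f\in L^2(\mathbb R,\mathfrak H)$ suffices to justify the convergence used there, and the paper states that it does. I must also check that $C_0$ is fixed independently of $j$, which holds because the ratio $t/\ell(I)$ is bounded by $2^{\alpha(N+1)}$.
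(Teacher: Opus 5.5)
Your proposal is correct and is essentially the paper's proof. The paper also majorizes $S_\psi(\mathfrak f)$ by the sup-type discrete square function over $C_0I$ (picking up the factor $\sqrt2$), applies Proposition \ref{thm:Plancherel-Polya}, and bounds the infimum by the value at the point, summing over the unique $I\ni x$ to recover $g_P(\mathfrak f)$. One harmless arithmetic slip: if $C_0I$ is the concentric dilate, then $x'\in C_0I$ requires $C_0\ge 1+2^{\alpha(N+1)+1}$ rather than $2^{\alpha(N+1)}+2$; this changes nothing, since any fixed $C_0$ depending only on $\alpha,N$ works.
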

  \begin{proof}  We can write
  \begin{equation*}  \begin{split}
     S_\psi(\mathfrak  f)(x)= \left(\sum_{j\in \mathbb{Z}}\sum_I   \int_{2^{-\alpha j}}^{2^{-\alpha (j-1)}}\int_{\mathbb{ R} }\chi_t(x-x')  |\psi_t* \mathfrak  f(x' )|_{
     \mathfrak  {H}}^2\chi_I(x)\frac
     {dx'dt}{  t}\right)^{\frac 12}  ,
        \end{split}   \end{equation*}where the summation is taken over all  dyadic  intervals $I$ with side-lengths $\ell (I) =2^{-\alpha( j+N)}$.  Note that there exists a fixed constant $C_0$ depending only on 
        $\alpha, N$ such that for $ 2^{-\alpha j} \leq  t  \leq
2^{-\alpha (j-1)}$ and $ x' \in\mathbb{ R} $,
\begin{equation*}
   \chi_t(x-x')  |\psi_t* \mathfrak  f(x' )|_{  \mathfrak {H}}\chi_I(x)\leq \chi_t(x-x')\sup_{u\in C_0I} |\psi_t*\mathfrak   f(u )|_{ \mathfrak   {H}}\chi_I(x).
\end{equation*}
Therefore, { we have}
 \begin{equation*}  \begin{split}
     \|S_\psi(\mathfrak  f)\|_{L^1(\mathbb{R})}&\leq \left\|\left(\sum_{j\in \mathbb{Z}}\sum_I   \int_{2^{-\alpha j}}^{2^{-\alpha (j-1)}}\int_{\mathbb{ R} }\chi_t(\cdot-x')\sup_{u\in C_0I} |\psi_t*
     \mathfrak  f(u )|_{  \mathfrak {H}}^2\chi_I(\cdot)\frac {dx'dt}{
     t}\right)^{\frac 12}\right \|_{L^1(\mathbb{R})}
     \\&=\sqrt 2 \left\|\left(\sum_{j\in \mathbb{Z}}\sum_I   \int_{2^{-\alpha j}}^{2^{-\alpha (j-1)}}  \sup_{u\in C_0I} |\psi_t* \mathfrak  f(u )|_{  \mathfrak {H}}^2\chi_I(\cdot)\frac { dt}{
     t}\right)^{\frac 12}\right \|_{L^1(\mathbb{R})}
     \\&
     \lesssim \left\|\left(\sum_{j\in \mathbb{Z}}\sum_I  \inf_{u\in I}  \int_{2^{-\alpha j}}^{2^{-\alpha (j-1)}}  \left|\rho_t* \mathfrak  f(u )\right|_{ \mathfrak
     {H}}^2\frac { dt}{
     t}\chi_I(\cdot)\right)^{\frac 12}\right \|_{L^1(\mathbb{R})} \\&
     \leq \left\|\left(\sum_{j\in \mathbb{Z}}\sum_I     \int_{2^{-\alpha j}}^{2^{-\alpha (j-1)}}  \left|\rho_t* \mathfrak  f(\cdot )\right|_{ \mathfrak   {H}}^2\frac { dt}{
     t}\chi_I(\cdot)\right)^{\frac 12}\right \|_{L^1(\mathbb{R})}
   \\& =\left\| \left  (   \int_{0}^{+\infty}  \left|\rho_t* \mathfrak  f(\cdot )\right|_{ \mathfrak   {H}}^2\frac { dt}{
     t} \right)^{\frac 12}\right \|_{L^1(\mathbb{R})}=\|g_P(\mathfrak f)\|_{L^1(\mathbb{R} )},
        \end{split}   \end{equation*}
by the Plancherel-P\'olya type inequality in Proposition \ref{thm:Plancherel-Polya}.
 \end{proof}

\begin{prop}\label{prop:g=L1} For any $\mathfrak f\in L ^2(\mathbb{R}, \mathfrak { H})$ with $S_\psi(\mathfrak f) \in L^1(\mathbb{R} )$, we have $\mathfrak f\in L ^1(\mathbb{R}, \mathfrak { H})$  and  $ 
\|\mathfrak  f \|_{L^1(\mathbb{R}, \mathfrak { H})}   \lesssim \|
S_\psi(\mathfrak f)\|_{L^1(\mathbb{R})} $.
  \end{prop}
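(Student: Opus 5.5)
We argue by duality with $\mathfrak H$-valued BMO. Since $\mathfrak f\in L^2(\mathbb R,\mathfrak H)$, it suffices to show that for every $\mathfrak b\in L^2(\mathbb R,\mathfrak H)$ with compact support and $|\mathfrak b|_{\mathfrak H}\le 1$,
\[
\left|\int_{\mathbb R}\langle \mathfrak f,\mathfrak b\rangle_{\mathfrak H}\,dx\right|\lesssim \|S_\psi(\mathfrak f)\|_{L^1}.
\]
Taking the supremum over such $\mathfrak b$ (which are dense in the unit ball of $L^\infty(\mathbb R,\mathfrak H)$ in the weak-$*$ sense restricted to $L^2$ pairings) then gives $\|\mathfrak f\|_{L^1(\mathbb R,\mathfrak H)}\lesssim\|S_\psi(\mathfrak f)\|_{L^1}$. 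In particular $\mathfrak f\in L^1$, since the bound holds uniformly.

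\emph{Step 1: reduce the pairing to tent spaces.} Using the compactly supported Calder\'on reproducing formula $\mathfrak f=\int_0^\infty \psi_t*\psi_t*\mathfrak f\,\frac{dt}t$, which converges in $L^2$, we write
\[
\int\langle \mathfrak f,\mathfrak b\rangle
=\int_0^\infty\!\!\int_{\mathbb R}\langle \psi_t*\mathfrak f(y),\,\tilde\psi_t*\mathfrak b(y)\rangle_{\mathfrak H}\,\frac{dy\,dt}{t},
\qquad \tilde\psi(s)=\psi(-s).
\]
Averaging over $x$ with $|x-y|<t$, we insert the identity $1=\frac1{2t}\int\chi_{\{|x-y|<t\}}\,dx$. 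This recasts the pairing as an integral over $\mathbb R^2_+$, which is the tent-space duality pairing between $T^1_2$ (containing $F(y,t)=\psi_t*\mathfrak f(y)$, whose $T^1_2$-norm is exactly $\|S_\psi(\mathfrak f)\|_{L^1}$) and $T^\infty_2$ (containing $G(y,t)=\tilde\psi_t*\mathfrak b(y)$).

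\emph{Step 2: the Carleson estimate for $G$.} We must show that $|G|_{\mathfrak H}^2\,\frac{dy\,dt}{t}$ is a Carleson measure with constant $\lesssim\|\mathfrak b\|_\infty^2\le1$. For an interval $J$, we split $\mathfrak b=\mathfrak b\chi_{3J}+\mathfrak b\chi_{(3J)^c}$.
\begin{itemize}
\item The local part is controlled by the $L^2(\mathbb R,\mathfrak H)$ square-function bound, which follows coordinatewise from Plancherel because $\int\psi=0$. This gives a contribution $\lesssim|J|$.
\item The far part vanishes on the tent $\widehat J$ when $\psi$ is supported in $[-1,1]$, since then $\tilde\psi_t*(\mathfrak b\chi_{(3J)^c})(y)=0$ for $y\in J$ and $t<|J|$.
\end{itemize}
No cancellation of $\mathfrak b$ is needed, because compact support handles the far part.

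\emph{Step 3: the tent-space duality inequality.} We need
\[
\int_{\mathbb R^2_+}|F|_{\mathfrak H}\,|G|_{\mathfrak H}\,\frac{dy\,dt}t\lesssim \|S_\psi(\mathfrak f)\|_{L^1}\cdot\sup_J\Bigl(|J|^{-1}\!\int_{\widehat J}|G|^2\tfrac{dy\,dt}t\Bigr)^{1/2}.
\]
This is the Coifman--Meyer--Stein argument, and it involves only the scalar functions $|F|_{\mathfrak H}$ and $|G|_{\mathfrak H}$, so the Hilbert-space values cause no difficulty. The method is as follows.
\begin{itemize}
\item Introduce the truncated cone function $C(x,r)=\bigl(\int_{\Gamma^r(x)}|G|^2\frac{dy\,dt}{t^2}\bigr)^{1/2}$, where $\Gamma^r(x)$ is the part of the cone $\Gamma(x)$ with $t<r$.
\item Choose a stopping time $\tau(x)=\sup\{r:C(x,r)\le K\}$. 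The Carleson condition gives $|\{x\in J:\tau(x)<|J|\}|\le |J|/2$ for $K$ large.
\item Then $\int\!\int|F||G|\frac{dy\,dt}{t}\lesssim\int\!\int_{\Gamma^{\tau(x)}(x)}|F||G|\frac{dy\,dt}{t^2}\,dx$.
\item Apply Cauchy--Schwarz in each truncated cone. The result is $\lesssim K\int S_\psi(\mathfrak f)\,dx$.
\end{itemize}

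\emph{Main obstacle.} The main technical point is the justification in Step 1. We must exchange the limit $\varepsilon\to0$, $T\to\infty$ in the reproducing formula with the pairing against $\mathfrak b$, and handle Fubini with the cone average. Since $\mathfrak f$ and $\mathfrak b$ are both in $L^2$ and the truncated integrals converge in $L^2$, this should go through by first proving the bound for the truncated region $\varepsilon<t<T$ and then letting $\varepsilon\to0$, $T\to\infty$. I also expect that checking the stopping-time inequality in Step 3 in this generality requires the most care.
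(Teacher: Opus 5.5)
Your argument is correct, but it takes a different route from the paper. The paper disposes of this proposition in one line. It appeals to an atomic decomposition for $\mathfrak H$-valued $L^2$ functions with integrable $S$-function, built as in the scalar case from the reproducing formula and a decomposition of $\mathbb R^2_+$ along the level sets of $S_\psi(\mathfrak f)$. This writes $\mathfrak f=\sum\lambda_{k,I}\mathfrak a_{k,I}$ with $\|\mathfrak a_{k,I}\|_{L^1(\mathbb R,\mathfrak H)}\lesssim 1$ and $\sum|\lambda_{k,I}|\lesssim\|S_\psi(\mathfrak f)\|_{L^1}$, and the $L^1$ bound then follows from the triangle inequality. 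The details (support and size of the atoms, convergence of the series back to $\mathfrak f$) are omitted.

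You dualize instead. You test $\mathfrak f$ against bounded, compactly supported $\mathfrak b$ and move the pairing onto $\mathbb R^2_+$ with the same reproducing formula. A near/far splitting shows that $|\tilde\psi_t*\mathfrak b|_{\mathfrak H}^2\,dy\,dt/t$ is a Carleson measure; the far part vanishes identically because $\psi$ has compact support. You then conclude with the Coifman--Meyer--Stein $T^1_2$--Carleson inequality, which involves only the scalar functions $|F|_{\mathfrak H}$ and $|G|_{\mathfrak H}$.

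Your route is shorter and self-contained. It needs only $L^\infty$ test functions rather than BMO, and it avoids the convergence and identification issues of an atomic series. It gives exactly what Corollary \ref{cor:L1<g} and Proposition \ref{prop:g-L1} need. The atomic route yields more: structural membership of $\mathfrak f$ in an $\mathfrak H$-valued atomic $H^1$, which your argument does not provide.

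Three small points to tighten:
\begin{itemize}
\item The norming step needs no density statement. Take $\mathfrak b=\chi_{[-R,R]}\,\mathfrak f/|\mathfrak f|_{\mathfrak H}$ (set to $0$ where $\mathfrak f=0$) and let $R\to\infty$ by monotone convergence.
\item The limit exchange you flag in Step 1 is harmless. By Cauchy--Schwarz and the $L^2$ square-function bound, $\int\!\!\int|F|_{\mathfrak H}|G|_{\mathfrak H}\,dy\,dt/t\lesssim\|\mathfrak f\|_{L^2}\|\mathfrak b\|_{L^2}<\infty$, so dominated convergence applies.
\item Your identification of the $T^1_2$ norm with $\|S_\psi(\mathfrak f)\|_{L^1}$ uses the cone measure $dx'\,dt/t^2$. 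That is the right one; the proof of Proposition \ref{prop:S<g} implicitly uses it too. The $dx'\,dt/t$ printed in the definition of $S_\psi$ is not dilation invariant and would make the statement false under rescaling.
\end{itemize}
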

 \begin{proof} It follows from atomic decomposition for
$ \mathfrak { H}$-valued $L ^2$ functions with $L^1$ integrable   $S$-functions. This   can be established in a  standard way exactly   as the scalar case (cf. e.g. \cite{DLWY,WW24}). We omit details.
 \end{proof}
 Proposition \ref{prop:S<g}  and \ref{prop:g=L1} imply the following corollary.

 \begin{cor}\label{cor:L1<g}
    For    $\mathfrak f \in L^2(\mathbb{R},\mathfrak{H})$ with    $g_P(\mathfrak f) \in L^1(\mathbb{R} )$,   we have $\mathfrak f\in L ^1(\mathbb{R}, \mathfrak { H})$  and $ \|\mathfrak  f \|_{L^1(\mathbb{R}, 
    \mathfrak { H})}   \lesssim \| g_P(\mathfrak
    f)\|_{L^1(\mathbb{R})} $.
 \end{cor}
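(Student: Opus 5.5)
The corollary follows by chaining Proposition \ref{prop:S<g} and Proposition \ref{prop:g=L1}. The only thing to check is that the hypotheses match at each step.

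First, take $\mathfrak f\in L^2(\mathbb{R},\mathfrak{H})$ with $g_P(\mathfrak f)\in L^1(\mathbb{R})$. Proposition \ref{prop:S<g} applies directly and gives $S_\psi(\mathfrak f)\in L^1(\mathbb{R})$ with
\begin{equation*}
\|S_\psi(\mathfrak f)\|_{L^1(\mathbb{R})}\lesssim \|g_P(\mathfrak f)\|_{L^1(\mathbb{R})}.
\end{equation*}
Here $\psi$ is the fixed smooth compactly supported function in the Calder\'on reproducing formula $\mathfrak f=\int_0^\infty \psi_t*\psi_t*\mathfrak f\,\frac{dt}{t}$. I fix the same $\psi$ throughout, so that the $S_\psi$ in Proposition \ref{prop:S<g} is literally the $S_\psi$ in Proposition \ref{prop:g=L1}.

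Second, $\mathfrak f$ is still in $L^2(\mathbb{R},\mathfrak{H})$ and now has $S_\psi(\mathfrak f)\in L^1(\mathbb{R})$. These are exactly the hypotheses of Proposition \ref{prop:g=L1}, which yields $\mathfrak f\in L^1(\mathbb{R},\mathfrak{H})$ with
\begin{equation*}
\|\mathfrak f\|_{L^1(\mathbb{R},\mathfrak{H})}\lesssim \|S_\psi(\mathfrak f)\|_{L^1(\mathbb{R})}.
\end{equation*}
Combining the two inequalities gives $\|\mathfrak f\|_{L^1(\mathbb{R},\mathfrak{H})}\lesssim\|g_P(\mathfrak f)\|_{L^1(\mathbb{R})}$. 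The implicit constant depends only on $\alpha$, $N$ (through the Plancherel--P\'olya inequality, Proposition \ref{thm:Plancherel-Polya}) and on $\psi$, and not on $\mathfrak H$ or $\mathfrak f$.

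All the substantive difficulty sits in the two cited propositions, so at this level I expect no real obstacle. Within them, the delicate point is Proposition \ref{prop:g=L1}, the $\mathfrak H$-valued atomic decomposition from an $L^1$ area function. That argument is the scalar tent-space construction applied to $\psi_t*\mathfrak f$ with absolute values replaced by $\mathfrak H$-norms, and the scalar proof carries over verbatim. The corollary itself is just this two-step composition.
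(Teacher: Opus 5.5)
Your proposal is correct and is exactly the paper's argument: the corollary is obtained by composing Proposition \ref{prop:S<g}, which gives $\|S_\psi(\mathfrak f)\|_{L^1(\mathbb{R})}\lesssim\|g_P(\mathfrak f)\|_{L^1(\mathbb{R})}$, with Proposition \ref{prop:g=L1}, which gives $\|\mathfrak f\|_{L^1(\mathbb{R},\mathfrak H)}\lesssim\|S_\psi(\mathfrak f)\|_{L^1(\mathbb{R})}$. The same fixed $\psi$ is used in both steps.
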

\begin{prop}  \label{prop:g-L1} For    $  f \in L^2(\mathbb{R}^n)$ with    $g (  f) \in L^1(\mathbb{R}^n)$, we have $  f\in L ^1(\mathbb{R}^n )$  and  $ \| f \|_{L^1(\mathbb{ R}^n)}\lesssim  \|  g (f) 
\|_{L^1(\mathbb{ R}^n)} $.
\end{prop}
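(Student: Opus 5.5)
We will prove Proposition \ref{prop:g-L1} by peeling off the $m$ one-dimensional Poisson parameters one at a time, applying Corollary \ref{cor:L1<g} at each stage to a suitable Hilbert-space-valued function of one real variable (the coordinate along $e_\mu$).

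\textbf{Inductive statement.} For $k=0,1,\dots,m$ let $\mathfrak j_k=\{k+1,\dots,m\}$, so that $\mathfrak j_0=\mathfrak m$ and $\mathfrak j_m=\emptyset$, and put $g_{\emptyset}(f)=|f|$. We claim that for each $k=1,\dots,m$,
\begin{equation*}
\|g_{\mathfrak j_{k}}(f)\|_{L^1(\mathbb{R}^n)}\lesssim \|g_{\mathfrak j_{k-1}}(f)\|_{L^1(\mathbb{R}^n)} .
\end{equation*}
Chaining these $m$ inequalities gives $\|f\|_{L^1}=\|g_{\emptyset}(f)\|_{L^1}\lesssim \|g_{\mathfrak m}(f)\|_{L^1}=\|g(f)\|_{L^1}$.

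\textbf{One step.} Fix $k$ and set $\mu=k$. Let
\begin{equation*}
\mathfrak H=L^2\bigl((\mathbb{R}_+)^{m-k},\ \mathbf{t}_{\mathfrak j_k}d\mathbf{t}_{\mathfrak j_k}\bigr)\otimes \mathbb{C}^{2^{m-k}},
\end{equation*}
which is separable. Write $x=x^\perp+se_\mu$ with $x^\perp\in e_\mu^\perp$ and define the $\mathfrak H$-valued function of $s\in\mathbb{R}$
\begin{equation*}
\mathfrak f_{x^\perp}(s):=\nabla_{\mathfrak j_k}(f*_{\mathfrak j_k}P_{\mathbf{t}_{\mathfrak j_k}})(x^\perp+se_\mu).
\end{equation*}
Then $|\mathfrak f_{x^\perp}(s)|_{\mathfrak H}=g_{\mathfrak j_k}(f)(x^\perp+se_\mu)$.

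Since partial convolutions commute by \eqref{eq:convolution-commutativity}, and $\nabla_\mu$ passes to $\widetilde\nabla_\mu P_{t_\mu}$ by Proposition \ref{prop:triangle-j}(2), the one-variable Poisson $g$-function of $\mathfrak f_{x^\perp}$ along $e_\mu$ is dominated pointwise by $g_{\mathfrak j_{k-1}}(f)(x^\perp+se_\mu)$. Concretely, $\rho_{t_\mu}=t_\mu\partial_{t_\mu}P_{t_\mu}$, so $|\rho_{t_\mu}*\mathfrak f_{x^\perp}|_{\mathfrak H}^2\,dt_\mu/t_\mu$ is bounded by the $\partial_{t_\mu}$-component of $|\nabla_\mu(\cdot)|^2\,t_\mu dt_\mu$.

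Assuming $\mathfrak f_{x^\perp}\in L^2(\mathbb{R},\mathfrak H)$ for a.e.\ $x^\perp$, Corollary \ref{cor:L1<g} gives
\begin{equation*}
\int_{\mathbb{R}}g_{\mathfrak j_k}(f)(x^\perp+se_\mu)\,ds\lesssim\int_{\mathbb{R}}g_{\mathfrak j_{k-1}}(f)(x^\perp+se_\mu)\,ds .
\end{equation*}
Integrating over $x^\perp\in e_\mu^\perp$, and using that $x\mapsto(x^\perp,s)$ preserves volume because $e_\mu$ is a unit vector, yields the inductive step.

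\textbf{Main obstacle: the $L^2$ hypotheses.} Corollary \ref{cor:L1<g} requires $\mathfrak f_{x^\perp}\in L^2(\mathbb{R},\mathfrak H)$. By Fubini,
\begin{equation*}
\int_{e_\mu^\perp}\|\mathfrak f_{x^\perp}\|_{L^2(\mathbb{R},\mathfrak H)}^2\,dx^\perp=\|g_{\mathfrak j_k}(f)\|_{L^2(\mathbb{R}^n)}^2\lesssim\|f\|_{L^2(\mathbb{R}^n)}^2
\end{equation*}
by Proposition \ref{thm:g-function}, which is valid for $1<p<\infty$. Hence $\mathfrak f_{x^\perp}\in L^2(\mathbb{R},\mathfrak H)$ for a.e.\ $x^\perp$, and this is the only place where $f\in L^2$ is used. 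The right-hand side $g_{\mathfrak j_{k-1}}(f)$ of the inductive inequality may be infinite; in that case the inequality is trivial.

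A second point to check is that the $g$-function in Corollary \ref{cor:L1<g} uses only $\rho_t$, whereas $g_{\mathfrak j_{k-1}}$ contains the full gradient $\nabla_\mu$. This only helps, because we need an upper bound by $g_{\mathfrak j_{k-1}}$ and $|\rho_t*\cdot|$ is one component of it. Finally, one should note that $\mathfrak f_{x^\perp}$ is genuinely $\mathfrak H$-valued measurable, which is standard.
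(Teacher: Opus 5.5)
Your proposal is correct and follows essentially the same route as the paper: you peel off the parameters $t_1,\dots,t_m$ one at a time by applying Corollary \ref{cor:L1<g} to Hilbert-space-valued functions along $e_\mu$, get the $L^2$ hypothesis from Proposition \ref{thm:g-function}, and dominate $g_P$ pointwise using commutativity of the partial convolutions. The only cosmetic difference is that your intermediate quantities are the partial $g$-functions $g_{\mathfrak j_k}(f)$, with full gradients $\nabla_{\mathfrak j_k}$ and weight $\mathbf t_{\mathfrak j_k}d\mathbf t_{\mathfrak j_k}$, whereas the paper keeps only the $\rho_t=t\partial_tP_t$ components with weight $d\mathbf t/\mathbf t$; both choices work.
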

 \begin{proof}
Let  $\mathfrak {H}:=L
^2
\big((\mathbb{R}_+)^{m-1},\frac {d\mathbf{t}_{{\mathfrak m}_2}} {\mathbf{t}_{{\mathfrak m}_2}} \big)$ with the usual $L^2$-norm. It   is a separable Hilbert space.
Given $  f \in L^2(\mathbb{ R}^n)$ with $  g(f) \in L^1(\mathbb{ R}^n)$,
define measurable  functions
\begin{equation*}
   \mathfrak  f_{{x^\perp }}(s)(t_2,{ \dots} ,{t}_m ):=f     *_2{t}_2\partial_{t_2} P_{t_2}  *\cdots * _m {t}_m\partial_{t_ m} P_{t_m} ({x^\perp }+s  e_1).
\end{equation*}For almost all ${x^\perp }\in e_1^\perp$,  $f_{{x^\perp }} $ is $\mathfrak{H} $-valued because
  \begin{equation}\label{eq:g-m2}\begin{split}
  \int_{  \mathbb{ R}^{n-1} } 
 \int_{  \mathbb{ R}  }   | \mathfrak  f_{{x^\perp }}(s)|^2_{\mathfrak { H}}ds{ dx^\perp}&= \int_{  \mathbb{ R}^{n } } 
\int_{(\mathbb{R}_+)^{m-1}} \left| f     *_2{t}_2\partial_{t_2} P_{t_2}  *\cdots * _m
    {t}_m\partial_{t_m}
    P_{t_m} ({x^\perp }+s  e_1 )  \right |^2  \frac {d\mathbf{t}_{{\mathfrak m}_2}}{\mathbf{t}_{{\mathfrak m}_2}} { ds  dx^\perp  } \\&
 \leq \| g_{\mathfrak m_2  }(f) \|_{{ L^2}}^2 \lesssim \|  f  \|_{{ L^2}}^2,
  \end{split}\end{equation}
    by Proposition \ref{thm:g-function}. Consequently,   $\mathfrak f_{{x^\perp }} \in L^2(\mathbb{ R}, { \mathfrak {H}} )$ for almost all ${x^\perp }\in e_1^\perp$.
 On the other hand,
\begin{equation} \label{eq:g-varphi-3}\begin{split}
   g_P  (\mathfrak  f_{{x^\perp }} )(s)& = \left(\int_{ \mathbb{R} _+  }\left|      \mathfrak  f_{{x^\perp }}
   * {t}_1\partial_{t_1}P_{t_1}(  s   )  \right|_{\mathfrak {H}}  ^2
   \frac {dt_1}{t_1}
   \right)^{\frac 12}
   \\ &=\left(\int_{(\mathbb{R} _+)^m}\left| f     *_2{t}_2\partial_{t_2} P_{t_2}  *\cdots * _m {t}_m\partial_{t_m} P_{t_m}
   *_1{t}_1\partial_{t_1} P_{t_1}({x^\perp }+s  e_1 )  \right|  ^2 \frac
   {d\mathbf{t}_{{\mathfrak m}_2}}{\mathbf{t}_{{\mathfrak m}_2}}
   \frac {dt_1}{t_1} \right)^{\frac 12}
   \\& =  \left(\int_{(\mathbb{R}_+)^m}\left| f  {{*}}_{1 }\partial_{t_1} P_{t_1}  {{*}}_{ 2} \cdots {*  }_m  \partial_{t_m} P_{t_m} \right|^2({x^\perp }+s  e_1 )
   \mathbf{t}d\mathbf{t}\right)^{\frac 12}\leq g(f) ({x^\perp }+s  e_1 )
   ,
 \end{split}\end{equation}by  the commutativity \eqref{eq:convolution-commutativity} of partial convolutions along lines. Consequently, by $g(f) \in L^1(\mathbb{ R}^n)$,
   we see that $g_P  (\mathfrak  f_{{x^\perp }} ) \in L^1(\mathbb{ R} )$ for almost all ${x^\perp }\in e_1^\perp$.
Then applying   Corollary \ref{cor:L1<g} ,  we get  $\mathfrak f_{{x^\perp }} \in L^1(\mathbb{ R}, { \mathfrak {H}} )$ for almost all ${x^\perp }\in e_1^\perp$, and
\begin{equation*} \begin{split} \int_{  \mathbb{ R}^{n-1} } 
 \int_{  \mathbb{ R}  } \left|
 \mathfrak f_{{x^\perp }}  \right|_{\mathfrak{H}} (s )  \, ds { dx^\perp} \lesssim \int_{  \mathbb{ R}^{n-1} }\int_{  \mathbb{ R}  } g_P  (\mathfrak f_{{x^\perp }}     ) (s) \,ds { dx^\perp}= 
 \int_{
 \mathbb{ R}^n } g   (
 f  ) ({x^\perp }+s  e_1 ) \,dsdx^\perp .
 \end{split}\end{equation*}
It can be rewritten as
 \begin{equation*}  \begin{split}
\int_{  \mathbb{ R}^n  } \left (\int_{    (\mathbb{R}_+) ^{m-1}} \left| f*_2 {t}_2\partial_{t_2} P_{t_2}  *\cdots * _m {t}_m\partial_{t_m} P_{t_m}
  (x ) \right |^2  \frac
  {d\mathbf{t}_{{\mathfrak m}_2}} {\mathbf{t}_{{\mathfrak m}_2}}  \right)^{\frac
  12}   dx
\lesssim \|g (f)\|_{{ L^1(\mathbb R^n)}} .
 \end{split}\end{equation*}If we define $\mathfrak {H}_1:=L
^2
\big((\mathbb{R}_+)^{m-2},\frac {d\mathbf{t}_{{\mathfrak m}_3}} {\mathbf{t}_{{\mathfrak m}_3}} \big)$ with the usual norm,
and   define   functions $\mathfrak f_{1;x'}:\mathbb{ R} \longrightarrow  { \mathfrak {H}}_1 $ for almost all $x'\in e_2^\perp$ by
$
   \mathfrak f_{1;x' }(s)=f     *_3{t}_3\partial_{t_3} P_{t_3}  *\cdots * _m {t}_m\partial_{t_m} P_{t_m} (x'+s  e_2 ) .
$
Similarly, as in \eqref{eq:g-m2}-\eqref{eq:g-varphi-3}, we get  $ \mathfrak f_{1;x' } \in L^2(\mathbb{ R}, { \mathfrak {H}}_1 )$ and  $g_P  (\mathfrak  f_{1;x' }  ) \in L^1(\mathbb{ R} )$  for almost all $x'\in 
e_2^\perp$, and
\begin{equation*}   \begin{split} &
\int_{  \mathbb{ R}^n  } \left (\int_{    (\mathbb{R}_+) ^{m-2}} \left| f*_3 {t}_3\partial_{t_3} P_{t_3}  *\cdots * _m {t}_m
\partial_{t_m} P_{t_m}
  (x ) \right |^2  \frac
  {d\mathbf{t}_{{\mathfrak m}_3}} {\mathbf{t}_{{\mathfrak m}_3}}  \right)^{\frac
  12}   dx \\
\lesssim &\int_{  \mathbb{ R}^n  } \left (\int_{    (\mathbb{R}_+) ^{m-1}} \left| f*_2 {t}_2\partial_{t_2} P_{t_2}  *\cdots * _m {t}_m\partial_{t_m} P_{t_m}
  (x ) \right |^2  \frac
  {d\mathbf{t}_{{\mathfrak m}_2}} {\mathbf{t}_{{\mathfrak m}_2}}  \right)^{\frac
  12}   dx .
 \end{split} \end{equation*}
Repeating this procedure, we finally get
\begin{equation*}  \begin{split}
 \| f
  \|_{L^1( \mathbb{ R}^n  )}  \lesssim \int_{  \mathbb{ R}^n  } \left (\int_{    \mathbb{R} _+ } | f  * _m {t}_m\partial_{t_m} P_{t_m} (x )  |^2  \frac {d t_m}{ {t}_m} \right)^{\frac 12}   dx\lesssim 
  \|g (f)\|_{{ L^1(\mathbb R^n)}} .
 \end{split}\end{equation*}
 The proposition is proved.
  \end{proof}

  \begin{cor}\label{cor:g-L1}For    $  f \in L^2(\mathbb{R}^n)$ with    $g (  f) \in L^1(\mathbb{R}^n)$, we have
 {
 $$\sup_{\mathbf{t}\in(\mathbb{R}_+)^m} \|f*P_{\mathbf{t}}\|_{L^1(\mathbb{R}^n)}\lesssim \|g(f)\|_{L^1(\mathbb{R}^n)} .
 $$}
  \end{cor}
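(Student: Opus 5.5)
The plan is to apply Proposition \ref{prop:g-L1} to the function $f*P_{\mathbf{t}}$ in place of $f$, for each fixed $\mathbf{t}\in(\mathbb{R}_+)^m$, and to show that $g(f*P_{\mathbf{t}})$ is pointwise controlled by a quantity whose $L^1$ norm is at most $\|g(f)\|_{L^1}$, uniformly in $\mathbf{t}$. First, $f*P_{\mathbf{t}}\in L^2(\mathbb{R}^n)$ with $\|f*P_{\mathbf{t}}\|_{L^2}\le\|f\|_{L^2}$, by applying Minkowski's inequality along each line $e_\mu$ as in \eqref{eq:f*P-L1}. So the only hypothesis of Proposition \ref{prop:g-L1} still to check is $g(f*P_{\mathbf{t}})\in L^1$, and then
$$\|f*P_{\mathbf{t}}\|_{L^1}\lesssim \|g(f*P_{\mathbf{t}})\|_{L^1}.$$

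To bound $g(f*P_{\mathbf{t}})$, I use the commutativity \eqref{eq:convolution-commutativity} of partial convolutions. For fixed $\mathbf{t}$ and running variable $\mathbf{s}\in(\mathbb{R}_+)^m$, the semigroup property $P_{t_\mu}*P_{s_\mu}=P_{t_\mu+s_\mu}$ gives
$$\nabla_{\mathfrak m}\big((f*P_{\mathbf{t}})*P_{\mathbf{s}}\big)(x)=\big(\nabla_{\mathfrak m}(f*P_{\mathbf{t}+\mathbf{s}})\big)(x),$$
where $\nabla_{\mathfrak m}$ on the right is evaluated at the parameter $\mathbf{t}+\mathbf{s}$. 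Since $s_\mu\le s_\mu+t_\mu$, the change of variables $\tau_\mu=s_\mu+t_\mu$ yields
$$g(f*P_{\mathbf{t}})(x)^2=\int_{(\mathbb{R}_+)^m}|\nabla_{\mathfrak m}(f*P_{\boldsymbol\tau})(x)|^2\prod_\mu(\tau_\mu-t_\mu)\chi_{\{\tau_\mu>t_\mu\}}\,d\boldsymbol\tau\le g(f)(x)^2.$$
This gives the clean pointwise bound $g(f*P_{\mathbf{t}})\le g(f)$, so $g(f*P_{\mathbf{t}})\in L^1$ and Proposition \ref{prop:g-L1} applies. Taking the supremum over $\mathbf{t}$, with a constant independent of $\mathbf{t}$, finishes the proof.

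The step needing most care is justifying the derivative identity, namely that $\nabla_\mu$ in the variables $(X_\mu,\partial_{s_\mu})$ of $(f*P_{\mathbf{t}})*_\mu P_{s_\mu}$ equals $\nabla_\mu$ of $f*_\mu P_{\tau_\mu}$ at $\tau_\mu=t_\mu+s_\mu$. This follows since $\partial_{s_\mu}P_{t_\mu+s_\mu}=(\partial_\tau P_\tau)|_{\tau=t_\mu+s_\mu}$ and $X_\mu$ commutes with all the partial convolutions. Interchanging derivatives and integrals is routine for $f\in L^2$, because $f*P_{\mathbf{t}}$ is smooth with bounded derivatives (cf. Proposition \ref{prop:decay-t} for the analogous $L^1$ estimates).

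If one prefers to avoid the semigroup argument, a fallback is to use the fact that $g_P$ of a Poisson-smoothed $\mathfrak H$-valued function is dominated by a maximal function of $g_P$. However, the monotone weight comparison above is simpler and is the route I would take.
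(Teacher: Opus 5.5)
Your proposal is correct and is essentially the paper's proof: you apply Proposition~\ref{prop:g-L1} to $f*P_{\mathbf{t}}$, and you use $P_{t}*P_{t'}=P_{t+t'}$ with the shift $\boldsymbol\tau=\mathbf{t}+\mathbf{s}$ to get the pointwise bound $g(f*P_{\mathbf{t}})\le g(f)$ from $\prod_\mu(\tau_\mu-t_\mu)\le\prod_\mu\tau_\mu$, with a constant independent of $\mathbf{t}$. The only cosmetic difference is that you obtain $f*P_{\mathbf{t}}\in L^2(\mathbb{R}^n)$ from Young's inequality along each line, whereas the paper cites the $L^2$-boundedness of the maximal function.
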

  \begin{proof} For fixed $\mathbf{t} \in  (\mathbb{R}_+)^m$, $f*P_{\mathbf{t}}\in L^2(\mathbb{R}^n)$  by the  $L^2$-boundedness of the maximal function, and
  \begin{equation*}  \begin{split} g (f*P_{\mathbf{t}})(x ) =  &   \left(\int_{(\mathbb{R}_+)^m}\left| \nabla_{\mathfrak {m}}( f*P_{\mathbf{t}}*P_{\mathbf{t}' }) (x ) \right|^2
  \mathbf{t}'d\mathbf{t} '
   \right)^{\frac 12} =      \left(\int_{(\mathbb{R}_+)^m}\left| \nabla_{\mathfrak {m}}( f*P_{\mathbf{t}+\mathbf{t}' })(x )  \right|^2
   \mathbf{t}'d\mathbf{t} '
   \right)^{\frac 12} \end{split}\end{equation*} by $P_t*P_{t'}=P_{t+t'}$. Thus, it equals to
\begin{equation*} \begin{split} &   \  \left(\int_{   \mathbf{t}+(\mathbb{R}_+)^m}\left| \nabla_{\mathfrak {m}}( f*P_{\mathbf{t}' }) (x ) \right|^2
   (\mathbf{t}'-\mathbf{t})d\mathbf{t} '
   \right)^{\frac 12}  \leq    \  \left(\int_{(\mathbb{R}_+)^m}\left| \nabla_{\mathfrak {m}}( f*P_{\mathbf{t}' }) (x ) \right|^2  \mathbf{t}' d\mathbf{t}
   '
   \right)^{\frac 12} = g (f )(x ) ,
 \end{split}\end{equation*}
i.e. $g (f*P_{\mathbf{t}})\in L^1(\mathbb{R}^n)$. So we can apply Proposition \ref{prop:g-L1} to get $f*P_{\mathbf{t}}\in L^1(\mathbb{R}^n)$ and 
{ $$
   \|f*P_{\mathbf{t}}\|_{L^1(\mathbb{R}^n)}\lesssim \|g(f)\|_{L^1(\mathbb{R}^n)} ,
   $$}
    where the implicit constant is independent of $\mathbf{t}$.
   \end{proof}

\section{Proof of the  equivalence of various characterizations }
In this section, we prove the  equivalence of various characterizations in  Theorem \ref{thm:equivalence}.
\vskip 2mm
 \noindent  {\bf (1) implies (2)}.
Let $f_\varepsilon:=F(\cdot +\mathbf{i}\pi( \boldsymbol
  { {\varepsilon }}))\in L^1(\mathbb{ R}^n   )$.
         Apply     Proposition
  \ref{prop:subharmonic-ineq}  to get
 \begin{equation*}
 | F ( x+\mathbf{i}\pi(\mathbf{t}+\boldsymbol
  { {\varepsilon }}))|^q \leq  | f_\varepsilon
    |^q *  P_{\mathbf{t }} (x)
 \end{equation*} for   $(x,\boldsymbol {t})\in  \mathbb{ R}^n \times(\mathbb{R}_+)^m$, if we choose $0<q<1$. Then, $| f_\varepsilon|^q\in L^r(\mathbb{ R}^n   )$ with $r=\frac{1}{q}>1$, and
 \begin{equation*} \begin{split}
 \sup\limits_{(x',\mathbf{ {t}}   )\in  \Gamma_\beta(x)} |F ( x'+\mathbf{i}\pi(\mathbf{t}+\boldsymbol
  { {\varepsilon }}))|^q&\leq  \sup\limits_{(x',\mathbf{ {t}}   )\in  \Gamma_\beta (x)}  | f_\varepsilon
    |^q *  P_{\mathbf{t }} (x')  \lesssim
   M_{it}\left( |f_\varepsilon|^q \right)(x),
 \end{split} \end{equation*}by using \eqref{eq:heat-max},
where  $M_{it}$ is
  the iterated maximal function on
  $\mathbb{ R}^n   $. Therefore,
\begin{equation}   \label{maximal-ineq} \begin{split}
 \left\| \sup\limits_{(x',\mathbf{ {t}}   )\in  \Gamma_\beta(x)} |F ( x'+\mathbf{i}\pi(\mathbf{t}+\boldsymbol
  { {\varepsilon }}))|^q\right\|^r_{L^r(\mathbb{ R}^n , { dx}  )} &
 \lesssim \left\| M_{it}\left( |f_\varepsilon|^q \right) \right\|^r_{L^r(\mathbb{ R}^n   )}
  \lesssim \|\left|f_\varepsilon\right|^q\|^r_{L^r(\mathbb{ R}^n   )} =\| f_\varepsilon \|_{L^1(\mathbb{ R}^n   )},
 \end{split}\end{equation}
 where implicit constants are independent of $F $ and $  {\varepsilon } $.
Letting $\varepsilon  \rightarrow 0$ in \eqref{maximal-ineq}, we obtain
\begin{equation}   \label{maximal-ineq2} \begin{split} \left\| \sup\limits_{(x',\mathbf{ {t}}   )\in  \Gamma_\beta(x)} |F ( x'+\mathbf{i}\pi(\mathbf{t} ))|  \right\|_{L^1(\mathbb{
R}^n  , { dx} )} &= \left\| \lim_{ \varepsilon \rightarrow 0}\sup\limits_{
(x',\mathbf  {t}    )\in (0, \boldsymbol   {\varepsilon } )+\Gamma_\beta(x)}
|F ( x'+\mathbf{i}\pi(\mathbf{t} ))|^q\right\|_{L^r(\mathbb{ R}^n ,  { dx}
  )}^r\\ &
 \leq\liminf_{ \varepsilon \rightarrow 0}  \left\|\sup\limits_{(x',\mathbf{ {t}}   )\in  \Gamma_\beta(x)} |F ( x'+\mathbf{i}\pi(\mathbf{t}+\boldsymbol
  { {\varepsilon }}))|^q \right\|_{L^r(\mathbb{ R}^n  , { dx}  )}^r\\
& \lesssim  \liminf_{ \varepsilon \rightarrow 0} \| f_\varepsilon \| _{L^1(\mathbb{ R}^n  )}
 \end{split}\end{equation}  by Fatou's theorem, since domains $(0, \boldsymbol
  { {\varepsilon }})+\Gamma_\beta (x) $
are increasing when $\varepsilon \downarrow 0$ and the limit is the set
 $\Gamma_\beta(x)$. Therefore,
 $\|  \mathbb{N} ^\beta( F )\|_{L^1({  \mathbb{ R}^n})}
  \lesssim \|F\|_{{H^1( T_\Omega)}} $.

\vskip 2mm
\noindent   {\bf (2) implies (3)}. At first,  $F(x+\mathbf{i} y )\rightarrow 0 $ as $| y| \rightarrow +\infty$  in $y_0+\Omega $ for any fixed $y_0\in  \Omega$, by the boundary growth  in Proposition 
\ref{prop:upperbound-hardy}.
  Denote $
   F_\varepsilon ( x+\mathbf{i}y  ) :=  F ( x+\mathbf{i}y+ \mathbf{i}\pi( \boldsymbol
  { {\varepsilon }})).
 $ Then
  $F_\varepsilon $ is smooth on $\overline{T_\Omega}$. By the estimate in Proposition \ref{prop:upperbound-hardy},
      \begin{equation*}
       | F_\varepsilon ( x+\mathbf{i}\pi(\mathbf{t}))|\lesssim \frac {  \|F\|_{H^1( T_\Omega)}  }
       {|   R  (0,\boldsymbol     {\varepsilon } )| } , \quad {\rm for }\quad (x,\boldsymbol {t})\in  \mathbb{ R}^n \times(\mathbb{R}_+)^m ,
    \end{equation*}
   i.e.
   $F_\varepsilon$ is bounded on $\overline{T_\Omega}$, and so it belongs
  to $   H^p(T_\Omega)$ for any $p>1$. Then by Theorem \ref{prop:integral-representation}, we get
   $
     F_\varepsilon ( x+\mathbf{i}\pi(\mathbf{t}))=
   f_\varepsilon*  P_{\mathbf{t }}
$
   with $ f_\varepsilon(\cdot)= F  ( \cdot+\mathbf{i}\pi( \boldsymbol
  { {\varepsilon }})  )\in L^p(\mathbb{R}^n)$.
   Apply
    Corollary \ref{cor:Fefferman-Stein-inequality} to get
      \begin{equation} \label{eq:S-N}
       \| S(  {f}_\varepsilon)\|_{L^1(\mathbb{R}^n )}\lesssim\|N^\beta( f_\varepsilon) \|_{L^1(\mathbb{R}^n)}   =    \| \mathbb{N }^\beta( F_\varepsilon
       )\|_{L^1(\mathbb{R}^n)}\leq \|\mathbb{ N} ^\beta( {F} ) \|_{L^1(\mathbb{R}^n )}.
     \end{equation}

   On the other hand,
  $ \| S(  {f}_\varepsilon)\|_{L^1(\mathbb{R}^n )}=\| \mathbb{S}( F_\varepsilon)\|_{L^1(\mathbb{R}^n)}
$ by definition, and
\begin{equation}\label{eq:change-t}\begin{split}
  \mathbb{ S}^2 ( F_{\varepsilon })(x)& =\int_{\Gamma(0)}
\left|  \nabla_\mathfrak {m}(F  (x+x' +\mathbf{i}\pi(\mathbf{t}) +\mathbf{i}\pi( \boldsymbol
  { {\varepsilon }} ) ) \right|^2  \frac { \mathbf{t} d\mathbf{t}dx' }{| R  (0,\mathbf{t} )|} \\&=\int_{\mathbb{R}^n \times(\mathbb{R}_+)^m }
\left|  \nabla_\mathfrak {m}(F  (x+x' +\mathbf{i}\pi(\mathbf{t})  ) \right|^2\chi_{
  (0, \boldsymbol   {\varepsilon } )+\Gamma_\beta(x)} ( x',\mathbf{t}   )  \frac { (t_1-\varepsilon)\cdots
(t_m-\varepsilon)d\mathbf{t}dx' }{| R  (0,\mathbf{t}-\boldsymbol
  { {\varepsilon }} )|}.\end{split}\end{equation}
Given $\mathbf{t} $, let  $\mathfrak l$ be $\{l_1 , \dots, l_n \} $ such that $t_{l_1}, \dots,t_{l_n}$ are the largest $n$ numbers among   $\{t_{ 1}  , \dots,t_{m} \}$. Then,  $(t_{l_1}  -\varepsilon , 
\dots,t_{l_m}-\varepsilon)$ are also the largest
$n$ numbers among $\{t_1-\varepsilon , \dots,t_{
n}-\varepsilon\}$. Hence,
\begin{equation*}
    \frac { (t_1-\varepsilon)\cdots (t_m-\varepsilon)  }{| R  (0,\mathbf{t}-\boldsymbol
  { {\varepsilon }} )|}   \rightarrow   \frac { t_1 \cdots  t_m  }{| R  (0,\mathbf{t} )|},
\end{equation*}as  $\varepsilon\rightarrow 0$, by Proposition \ref{prop:twist-R} and \eqref{eq:volum}.
Thus,
\begin{equation*}\begin{split} \mathbb{ S}^2 ( F_{\varepsilon })(x)
 &\rightarrow \int_{\Gamma(0)}
\left|  \nabla_\mathfrak {m}(F  (x+x' +\mathbf{i}\pi(\mathbf{t})  ) \right|^2 \frac { \mathbf{t} d\mathbf{t}dx' }{| R  (0,\mathbf{t} )|}
 =  \mathbb{ S}^2 ( F )(x),
\end{split}\end{equation*}by  Lebesgues' dominated convergence theorem.
   The result follows by taking limit $\varepsilon\rightarrow 0$ in \eqref{eq:S-N}.

 \vskip 2mm
  \noindent {\bf (3) implies (4)}.
 \begin{prop}\label{prop:g-S}
For $ f \in L^1( \mathbb{ R}^n )$, we have $g (f)(x) \lesssim  {S} (f)(x)$.
In particular, if $F$ is     holomorphic   on
$ T_\Omega $, then $ \mathbb{ G} (F)(x) \lesssim  \mathbb{ S} (F)(x)$.
  \end{prop}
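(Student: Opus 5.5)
The plan is to prove the pointwise inequality $g(f)(x)\lesssim S(f)(x)$ by applying the mean value property in each of the $m$ half-planes $L_\mu^+$ separately, as the introduction indicates. Write $u(x,\mathbf{t}):=f*P_{\mathbf{t}}(x)$ and $w:=\nabla_1\cdots\nabla_m u$. By Proposition \ref{prop:triangle-j} (2), each component of $w(x+s_\mu e_\mu,\mathbf{t})$ is harmonic in $(s_\mu,t_\mu)\in\mathbb{R}_+^2$, because $\triangle_\mu$ commutes with $\nabla_\nu$ for $\nu\neq\mu$ and with $\nabla_\mu$ itself. The same holds for $|w|^2$ being subharmonic in each pair $(s_\mu,t_\mu)$.

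\textbf{Step 1: iterated mean value inequality.} Fix $(x,\mathbf{t})$ and $\delta=\frac 1{2}$ (any fixed $\delta<1$ such that the resulting region lies in $\Gamma(x)$). Apply the subharmonic mean value inequality for $|w|^2$ on the disc $\{|s_\mu'+\mathbf{i}(t_\mu'-t_\mu)|<\delta\gamma_0 t_\mu\}$, successively for $\mu=1,\dots,m$, exactly as in \eqref{eq:mean-value}. This gives
\begin{equation*}
|w(x,\mathbf{t})|^2\lesssim \frac1{t_1^2\cdots t_m^2}\int_{|t'_\mu-t_\mu|<\delta\gamma_0 t_\mu}\int_{|s'_\mu|<\delta\gamma_0 t_\mu}\Big|w\Big(x+\sum_\mu s'_\mu e_\mu,\mathbf{t}'\Big)\Big|^2 d\mathbf{s}'\,d\mathbf{t}'.
\end{equation*}
By the change of variables \eqref{eq:transformation-Theta} and Proposition \ref{prop:it-twist}, the $\mathbf{s}'$-integral is $\lesssim \frac{t_1\cdots t_m}{|R(0,\mathbf{t})|}\int_{R(x,\gamma_0^{-1}\delta\gamma_0\mathbf{t})}|w(x'',\mathbf{t}')|^2dx''$. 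Equivalently, one can directly use that the pushforward of the product measure $\chi_{\delta\gamma_0\mathbf t}$ onto $\mathbb{R}^n$ is bounded by $C|R(0,\mathbf t)|^{-1}$ times the indicator of a twisted rectangle.

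\textbf{Step 2: integrate in $\mathbf{t}$.} Multiply by $\mathbf{t}\,d\mathbf{t}$ and integrate over $(\mathbb{R}_+)^m$. Then swap the order of integration via Fubini, noting that $t_\mu\approx t'_\mu$ on the region. Because of \eqref{eq:volum}, $|R(0,\mathbf{t})|\approx|R(0,\mathbf{t}')|$, and the $\mathbf{t}$-integration over $\{|t_\mu-t'_\mu|<\delta\gamma_0 t_\mu\}$ produces a factor $\approx t'_1\cdots t'_m$. So
\begin{equation*}
g(f)(x)^2\lesssim\int_{(\mathbb{R}_+)^m}\int_{R(x,c\mathbf{t}')}|w(x'',\mathbf{t}')|^2\frac{\mathbf{t}'d\mathbf{t}'dx''}{|R(x,\mathbf{t}')|}
\end{equation*}
with $c<1$, hence $\le S(f)(x)^2$ since $R(x,c\mathbf t')\subset R(x,\mathbf t')$, i.e. $(x'',\mathbf t')\in\Gamma(x)$.

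\textbf{Step 3: the holomorphic case.} The holomorphic statement follows identically using the functions $\mathbf{t}\mapsto\nabla_{\mathfrak m}F(x+\mathbf{i}\pi(\mathbf{t}))$. These are harmonic in each $(s_\mu,t_\mu)$ by Proposition \ref{prop:triangle-j} (1), because $F(x+\mathbf{i}\pi(\mathbf t)+(s+\mathbf{i}t)e_\mu)$ is holomorphic in $s+\mathbf{i}t$. No representation formula is needed for this step.

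\textbf{Main obstacle.} The main care is Step 1's geometric bookkeeping: the discs must stay in $t_\mu>0$ (radius $<t_\mu$), and the projected set of shifts $\sum s'_\mu e_\mu$ must fit in $R(0,\mathbf{t}')$. This uses $|s'_\mu|<\delta\gamma_0t_\mu\le t'_\mu$ for $\delta\gamma_0<1/2$. The density bound by $|R(0,\mathbf t)|^{-1}$ then follows from Proposition \ref{prop:it-twist}.
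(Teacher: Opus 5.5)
Your proposal is correct and follows essentially the same route as the paper: an iterated mean value estimate over the discs of radius $\gamma_0 t_\mu/2$ centered at $(0,t_\mu)$, an exchange of the $\mathbf t$ and $\mathbf t'$ integrations using $t_\mu\approx t'_\mu$, and Proposition \ref{prop:it-twist} to pass from the product box to the twisted rectangle $R(x,\mathbf t')$, so that the region of integration lies in $\Gamma(x)$. The differences are minor: you use subharmonicity of $|w|^2$ where the paper applies Cauchy--Schwarz to the mean value formula, and you skip the regularization $f_\varepsilon=f*P_{\boldsymbol\varepsilon}$, which is harmless because $f*P_{\mathbf t}$ is already smooth and separately harmonic for $f\in L^1$; your direct treatment of holomorphic $F$ also spells out the ``in particular'' part, which the paper leaves implicit.
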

  \begin{proof} By translations, it is sufficient to prove the inequality for $x=0$.  By Proposition \ref{prop:triangle-j},
      $\triangle_\mu u      (x,\mathbf{t})=0$ for $u(x,\mathbf{t})=f_\varepsilon*   P_{  \mathbf{t }}(x)$ with $\varepsilon>0$, where $f_{
   \varepsilon  } =f* P_{    \boldsymbol    \varepsilon  } \in L^1( \mathbb{ R}^n )$ and is smooth. Thus $\triangle_\mu \nabla_\nu u
     (x,\mathbf{t})=0$, i.e.   $ \nabla_\nu u(x) $        is also harmonic in the half plane spanned by $e_\mu$ and $t_\mu$. Then for given $\mathbf{t}\in(\mathbb{R}_+)^m $,  we get
          \begin{equation} \label{eq:mean}
        \nabla_\mathfrak {m}u(0,\mathbf{t})=\frac {2 ^{2m}}{\pi^m \gamma_0^{2m} t^2_1\cdots t^2_m}\int_{D_1( \gamma_0 t_1/2)\times\cdots\times D_m( \gamma_0 t_m/2)}  \nabla_\mathfrak {m}
        u\left(\sum_{j=1}^m s_j'e_j,\mathbf{t}'\right) d\mathbf{s}'d\mathbf{t}'.
     \end{equation}by repeatedly using the mean value formula for harmonic
     functions, where disc $D_j( r):=\{(s_j',t_j'); |s_j'|^2+ |t_j'-t_j|^2\leq r^2    \}$. Note that $t_j'/2  \leq t_j\leq 2t_j'$ when $(s_j',t_j')\in D_j( \gamma_0 t_j/2)$.
     Apply Cauchy-Schwarz inequality to \eqref{eq:mean} to get
       \begin{equation*}\begin{split}
    g^2(f_\varepsilon)(0)&=  \int_{(\mathbb{R}_+)^m}  | \nabla_\mathfrak {m} u(0,\mathbf{t})|^2\mathbf{t} \, d\mathbf{t}
    \\&\lesssim  \int_{(\mathbb{R}_+)^m}
 \int_{D_1( \gamma_0 t_1/2)\times\cdots\times D_m( \gamma_0 t_m/2)} \left | \nabla_\mathfrak {m}  u\left(\sum_{j=1}^m s_j'e_j,\mathbf{t}'\right)\right|^2d\mathbf{s}'d\mathbf{t}'  
{  \frac  {d\mathbf{t}}{  t _1\cdots t   _m}} \\
    &\lesssim   \int_{(\mathbb{R}_+)^m}  
    \int_{|s_1'| \leq \gamma_0 {t_1' } }   \cdots \int_{|s_m'| \leq \gamma_0{t_m' } }
   \left | \nabla_\mathfrak {m}  u\left(\sum_{j=1}^m s_j'e_j,\mathbf{t}'\right)\right|^2 { 
   ds_m'\cdots ds_1' \, d\mathbf{t}' }\\
    &\lesssim \int_{(\mathbb{R}_+)^m}  |  \nabla_\mathfrak {m }u(\cdot, \mathbf{t}') |^2* \chi_{\gamma_0 \mathbf{t} '   }(0)\, {  \mathbf{t}'d\mathbf{t}'  } 
 \\&\lesssim \int_{(\mathbb{R}_+)^m} \int_{R (0,\mathbf{t}')}| \nabla_\mathfrak {m}u(x',\mathbf{t}') |^2dx'
 { \frac { \mathbf{t}'d\mathbf{t}'}{| R  (0,\mathbf{t}')|}} 
    \\&=\int_{\Gamma(0)}
    | \nabla_\mathfrak {m} u(x',\mathbf{t}')  |^2\frac { \mathbf{t}'d\mathbf{t}'dx'}{| R  (0,\mathbf{t}')|}=S^2(f_\varepsilon)(0)   .
  \end{split}  \end{equation*}  by using Proposition \ref{prop:it-twist}.   The result follows by taking limit $\varepsilon\rightarrow 0$.
    \end{proof}

\vskip 2mm
 \noindent  {\bf  (4) implies (1)}.
For $0<\varepsilon<M$, let
\begin{equation}\label{eq:f-varepsilon-K} \begin{split}
 F_{\varepsilon M}(  x+\mathbf{i}\pi(\mathbf{t} )):=&\sum_{\mathfrak j} (-1)^{|{\mathfrak j} |}F (  x+\mathbf{i}\pi(\mathbf{t} +\mathbf{M}_{\mathfrak j} ) )
\\= & \int_{t_1+ \varepsilon}^{t_1+M }\cdots \int_{t_m+ \varepsilon}^{t_m+M }  \partial_{t_1'}\cdots \partial_{t_m'}( F )  (  x+\mathbf{i}\pi(\mathbf{t}' ))
 d\mathbf{t}',
 \end{split}\end{equation}where  the summation is taken over subsets ${\mathfrak j}  $   of $\{1,2,\dots,m\}$, and
  \begin{equation*}
    \mathbf{M}_{\mathfrak j}=(M_1,\dots,M_m), \qquad M_j=\left\{
    \begin{array}{ll} M,\qquad & j\notin \mathfrak j,\\
    \varepsilon,\qquad & j\in \mathfrak j.
           \end{array}
    \right.
 \end{equation*}

 By applying Cauchy-Schwarz inequality to \eqref{eq:f-varepsilon-K}, we get
 \begin{equation*}\begin{split}
     | F_{\varepsilon M}(  x+\mathbf{i}\pi(\mathbf{t} ))|
     &\leq \left(\ln\frac M\varepsilon \right)^{\frac m2}\left(\int_{t_1+ \varepsilon}^{t_1+M}\cdots \int_{t_m+ \varepsilon}^{t_m+M}  |\partial_{t_1'}\cdots \partial_{t_m'}( F )  (
     x+\mathbf{i}\pi(\mathbf{t}' )) |^2 \mathbf{t}'   d\mathbf{t}'   \right)^{\frac 12}\\
 &\leq \left(\ln\frac M\varepsilon \right)^{\frac m2}\mathbb{  G } (F)(x)<\infty.
  \end{split}\end{equation*}
 Thus,
\begin{equation*}
 \sup_{\mathbf{ {t}}\in (\mathbb{R}_+)^m} \left \|  F_{\varepsilon M}(  \cdot+\mathbf{i}\pi(\mathbf{t} ))\right\|_{L^1(\mathbb{R}^n)}\leq \left(\ln\frac M\varepsilon \right)^{\frac m2} \|\mathbb{  G  }(
 F )\|_{L^1(\mathbb{R}^n)}<+\infty,
\end{equation*}
i.e. $F_{\varepsilon M}\in H^1(T_\Omega)$.
Consequently, as in (2) implies (3), $F_{\varepsilon M}( \cdot  +\mathbf{i}\pi(\boldsymbol
    {\varepsilon })  )  $ is smooth on $\overline{T_\Omega}$ by definition \eqref{eq:f-varepsilon-K}, and by the estimate in Proposition \ref{prop:upperbound-hardy}, it  is bounded on $\overline{T_\Omega}$, and so 
    it
belongs to $   H^2(T_\Omega)$. Then by Proposition \ref{prop:integral-representation}, we get
    \begin{equation}\label{eq: F-varepsilon-rep}
    F_{\varepsilon M} ( x+\mathbf{i}\pi(\mathbf{t}+\boldsymbol
    {\varepsilon })  )=
  f_{\varepsilon M}*  P_{\mathbf{t }} ,
    \end{equation}
 where   $ f_{\varepsilon M}(\cdot)= F_{\varepsilon M}( \cdot  +\mathbf{i}\pi(\boldsymbol
    {\varepsilon })  )\in L^2 (\mathbb{R}^n)$. Moreover,
        $
       \|g(
 f_{\varepsilon M})\|_{L^1(\mathbb{R}^n)} =\|\mathbb{  G  }(
 F_{\varepsilon M}  )\|_{L^1(\mathbb{R}^n)}
 $
     by  \eqref{eq: F-varepsilon-rep}, and so
     \begin{equation}\label{eq:widetilde-f-g-f-g'}\begin{split}  g( f_{\varepsilon M})  (x)&=
  \mathbb{ {G}} ( F_{\varepsilon M}(\cdot ))(x +\mathbf{i}\pi(\boldsymbol
    {\varepsilon }) ) \\
    &=\left(\int_{(\mathbb{R}_+)^m  }
\left| \nabla_\mathfrak {m}(F_{\varepsilon M} (x +\mathbf{i}\pi(\mathbf{t})  +\mathbf{i}\pi(\boldsymbol
    {\varepsilon })  )  \right|^2
 { \mathbf{t}}  {d\mathbf{t}  }\right)^{\frac 12}\\
 &\lesssim\left(\sum_{\mathfrak j}\int_{(\mathbb{R}_+)^m  }
\left| \nabla_\mathfrak {m}(F  (x +\mathbf{i}\pi(\mathbf{t}+ \mathbf{M}_{\mathfrak j} ) +\mathbf{i}\pi(\boldsymbol
    {\varepsilon })  )  \right|^2
 { \mathbf{t}}  {d\mathbf{t}  }\right)^{\frac 12}\\
 & \lesssim\left(\int_{(\mathbb{R}_+)^m  }
\left| \nabla_\mathfrak {m}
(F  (x +\mathbf{i}\pi(\mathbf{t} ) )  \right|^2
 { \mathbf{t}}  {d\mathbf{t}  }\right)^{\frac 12}\approx  \mathbb{ {G}} ( F )(x),
\end{split}\end{equation}
by definition \eqref{eq:f-varepsilon-K} of $ F_{\varepsilon M}$ and changing  coordinates as in \eqref{eq:change-t}.
     Thus,
 by Proposition \ref{prop:g-L1}, we see that $f_{\varepsilon M} \in L^1(\mathbb{R}^n)$ and $\| f_{\varepsilon M} \|_{L^1(\mathbb{R}^n)} \lesssim  \|  \mathbb{ {G}} ( F ) \|_{L^1(\mathbb{R}^n)}$.

On the other hand,  by Corollary \ref{cor:g-L1} and \eqref{eq: F-varepsilon-rep}, we have
\begin{equation*}\begin{split}
   \| F_{\varepsilon M} (\cdot +\mathbf{i}\pi(\mathbf{t}+\boldsymbol
    {\varepsilon })  )\|_{L^1(\mathbb{R}^n)}
   \lesssim   \| g( f_{\varepsilon M} ) \|_{L^1(\mathbb{R}^n)} \lesssim  \|  \mathbb{ {G}} ( F ) \|_{L^1(\mathbb{R}^n)},
\end{split}\end{equation*}for any  $\mathbf{t}\in (\mathbb{R}_+)^m$.
  Since for $y\in\Omega$,  $F(x+\mathbf{i} y +\mathbf{i}y')\rightarrow 0 $ as $| y'| \rightarrow +\infty$ by the assumption,  it follows from  definition \eqref{eq:f-varepsilon-K} that
    \begin{equation*}
     F_{\varepsilon M}   (
  x+\mathbf{i}y ) \rightarrow (-1)^m F  (
x+\mathbf{i}y  )
  \end{equation*}
    as $\varepsilon\rightarrow 0$ and $M\rightarrow+\infty$. So we  conclude that
\begin{equation*}
 \| F  (\cdot +\mathbf{i}y)\|_{L^1(\mathbb{R}^n)} \leq \liminf_{\substack{\varepsilon\rightarrow 0\\ M\rightarrow+\infty} } \| F_{\varepsilon M} (\cdot +\mathbf{i}y +\mathbf{i}\pi(\boldsymbol
    {\varepsilon }) )\|_{L^1(\mathbb{R}^n)}
    \lesssim \|  \mathbb{ {G} } ( {f} )\|_{L^1(\mathbb{R}^n)},
\end{equation*}
by Fatou's theorem, where the implicit constants are independent of $y\in\Omega $. This complete the proof.

 \appendix
\section{The Plancherel-P\'olya type inequality for $\mathfrak{H} $-valued  functions}
See e.g. \cite{FJ,Han94,Han,HLLW} for Plancherel-P\'olya type inequality.
 For  $\mathfrak f  =\sum_{j=1}^\infty f_j(x)h_j$, by definition,  $\mathfrak  f_{k }=\sum_{j=1}^k
f_j(x)h_j$ converges to  $\mathfrak  f $ under the norm $L
^p
(\mathbb{ R},\mathfrak{H})$, i.e. $ \int_{\mathbb{ R}}\left|\mathfrak  f -\mathfrak  f_{k }\right|^p_{\mathfrak{H}}dx=\int_{\mathbb{ R}}(\sum_{j=k+1}^\infty| f_j(x)|^2)^\frac
p2 { dx}\rightarrow 0$.

By the wavelet Calder\'on
reproducing formula \eqref{eq:reproducing} on $\mathbb{R} $, we get
\begin{equation*}  \begin{split}
   \mathfrak   f(x)=&
\int_{\mathbb{R} _+}\varphi_ { t }*\rho_ { {t}}*  \mathfrak  f(x)\frac { d {t}}{ {t}}
=  \sum_{ {j}\in \mathbb{Z} }  \int_{2^{-\alpha j} }^{2^{-\alpha (j-1) }}  \varphi_ { t }*\rho_ { {t}}* \mathfrak f(x)\frac { d {t}}{ {t}}\\
=&c_\alpha\sum_{j } \widetilde{\varphi}_{j}* \widetilde{ \rho}_ {{j}}* \mathfrak f(x)
  +\sum_{ {j}\in \mathbb{Z} } \int_{2^{-\alpha j} }^{2^{-\alpha (j-1) }} ( \varphi_ { t }*\rho_ { {t}}-\widetilde{\varphi}_ {j}* \widetilde{ \rho}_ {j})*\mathfrak f(x)\frac { d {t}}{ {t}}
   := \mathcal  T_\alpha \mathfrak  f (x) +\mathcal  R_\alpha \mathfrak f (x),
          \end{split}  \end{equation*}for $ \mathfrak f \in   L
^2
(\mathbb{ R},\mathfrak{H})$
where
\begin{equation}
   \widetilde{\varphi}_ {j}:=\frac 1{c_\alpha}\int_{2^{-\alpha j} }^{2^{-\alpha (j-1) }}  \varphi_ { t } \frac { d {t}}{ {t}}.
\end{equation}
Now decompose $\mathbb{R} $ into disjoint dyadic intervals $I$
with $\ell(I)=2^{-\alpha( j  +N) }$, where $N$ is a large positive integer fixed later. Then
 \begin{equation*} \begin{split}T_\alpha  \mathfrak f (x)&:= c_\alpha\sum_{j \in \mathbb{Z}} \widetilde{\varphi}_{j}* \widetilde{ \rho}_ {{j}}*  \mathfrak  f(x) =c_\alpha\sum_{j \in
 \mathbb{Z}}\sum_I \int_{I}\widetilde{\varphi}_{j}(x-x')  \widetilde{ \rho}_ {{j}}* \mathfrak f(x') dx' \\
 &=c_\alpha\sum_{j\in \mathbb{Z} }\sum_I   \ell(I) \left(\frac 1{\ell(I)}\int_{I }\widetilde{\varphi}_{j}(x-x') dx'\right) \widetilde{ \rho}_ {{j}}* \mathfrak f(x_I)+\mathcal R_{\alpha,N} '
 \mathfrak f (x)
   \end{split}\end{equation*}
   with   $x_I$  any fixed point  in $I$, and
    \begin{equation*} \begin{split}\mathcal  R_{\alpha,N} ' \mathfrak f (x) &:=  c_\alpha\sum_{j\in \mathbb{Z} }\sum_I \int_{I}\widetilde{\varphi}_{j}(x-x') \Big  (\widetilde{
    \rho}_ {{j}}*\mathfrak f(x') -\widetilde{ \rho}_ {{j}}* \mathfrak f(x_I)\Big )dx'.
   \end{split}\end{equation*}
Now we can write
  \begin{equation}\label{eq:T-f}
    \mathfrak f (x)= \mathcal{T}  \mathfrak f (x)+\mathcal  R_\alpha \mathfrak f (x) +R_{\alpha,N} ' \mathfrak f (x), \qquad {\rm where}  \qquad\mathcal{T}  \mathfrak f (x)=\sum_{ j }\sum_{I}   \mathfrak
 f_{j;I    }   \widetilde{\phi}_{j  } (  \cdot, x_I ),
 \end{equation}
 where    $\mathfrak f_{j;I    } $'s are given by \eqref{eq:widetilde-rho}, and
 \begin{equation}   \label{eq:phi-I}\begin{split}
 \widetilde{\phi}_{j  } ( x, x_I ):= &\frac 1{\ell(I)}\int_{I }\widetilde{\varphi}_{j}(x-x') dx'\frac {\widetilde{ \rho}_ {{j}}*\mathfrak f(x_I)  }{ |\widetilde{ \rho}_ {{j}}*\mathfrak f(x_I)  |_{\mathfrak{H}}} 
 \in \mathcal{ M} (\beta, \gamma,  2^{-\alpha j},
 x_I;\mathfrak{H}) .
              \end{split}  \end{equation}Here we take $ \widetilde{\phi}_{j  } (  \cdot, x_I ) \equiv 0$ if $|\widetilde{ \rho}_ {{j}}*\mathfrak f(x_I)  |_{\mathfrak{H}}=0$.
            Since $\varphi$ is compactly supported and smooth, it is easy to see that  $\widetilde{\phi}_{j  } (  \cdot, x_I )$ have uniformly bounded norm in $\mathcal{ M} (\beta, \gamma,  2^{-\alpha j},
 x_I;\mathfrak{H})$ by the scalar case.
                            To invert $\mathcal{T}$, we now need to show   $\mathcal{R}_{\alpha,N} '   $ and  $\mathcal{R}_{\alpha }     $ bounded on $  {M}   (\beta, \gamma, r, x;\mathfrak{H})  $  with small 
                            norms.
Here
$\mathcal R_{\alpha,N} ' $ has kernel
\begin{equation} \label{eq:R-alpha-N}  \begin{split}\mathcal R_{\alpha,N} ' (x,u)=c_\alpha    \sum_{j } \sum_I \int_{I}\widetilde{\varphi}_{j}(x-x')\left [ \widetilde{ \rho}_ {{j}}(x'-u)-\widetilde{
\rho}_ {{j}}(x_I-u)\right]    dx'.
       \end{split}  \end{equation}

        Let $T$ be a bounded linear operator on $L^2(\mathbb{R})$ associated
with a scalar kernel $K(x, y)$ defined on $\{(x, y) \in\mathbb{R}\times\mathbb{R}: x \neq y\}$, given initially by
$T f(x) = \int_{\mathbb{R}}
K(x, y)f(y)dy$ or $x \in $ supp $f$
for $f \in { \mathscr C^\infty_c}(\mathbb{R})$, where $K(x, y)$ satisfies the following conditions: there exists a constant $C > 0$ such that for all $x \neq y$,
\\
(i) $|K(x, y)|\leq C|x -y|^{-1}$;
\\
(ii) $|K(x, y)- K(x ', y)|\leq C|x-x ' ||x - y|^{-2}$,
  if $|x- x' |\leq |x- y|/2$;
  \\
(iii) $|K(x, y) - K(x, y' )|\leq C|y -y' ||x-y|^{-2}$,
  if $|y - y' |\leq|x - y|/2$;
  \\
(iv) $|K(x, y) - K(x' , y) - K(x, y' ) + K(x' , y' )| \leq C|x - x' ||y - y' ||x - y|^{-3}$,
  if $|x - x' | $, $|y -y' |\leq|x-y|/2$;
   \\
(v) $ T(1) =
T ^*(1) = 0$.

We denote by $ \|K\|_{\mathbb{R}}$   the smallest constant $C$ that satisfies (i)-(iv) above. The
operator norm of $T$ is defined by ${ \|T\|_*} := \| T\|_{ L^2(\mathbb{R})\rightarrow L^2(\mathbb{R})} + \| K\|_{\mathbb{R}}$. It is known that
$\| T\|_{ L^2(\mathbb{R})\rightarrow L^2(\mathbb{R})}\rightarrow 0$ as $C\rightarrow 0$.

       \begin{prop}\label{prop:T-M}
          Suppose that $T$ is an operator as  above. Then $T$ is bounded on the test function space ${M}   (\beta, \gamma, r, x_0;\mathfrak{H})$  for $  \beta,\gamma \in
(0, 1),r> 0$ and $x_0\in\mathbb{ R}$. Moreover, there exists a constant $C$ independent of $\beta, \gamma, r, x_0$ such that
\begin{equation*}
  \|T(f)\|_{{M}   (\beta, \gamma, r, x_0;\mathfrak{H})}\leq C { \|T\|_*}   \| f \|_{{M}   (\beta, \gamma, r, x_0;\mathfrak{H})}.
\end{equation*}
       \end{prop}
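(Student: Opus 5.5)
The plan is to reduce the $\mathfrak H$-valued statement to the known scalar one (the scalar version is classical; cf. \cite{Han94,Han,HLLW}) by duality. The key fact is that the kernel $K(x,y)$ is scalar. For $\mathfrak f=\sum_i f_ih_i$, define $T\mathfrak f:=\sum_i T(f_i)h_i$. For each unit vector $e\in\mathfrak H$ set $f^e(x):=\langle \mathfrak f(x),e\rangle_{\mathfrak H}$. Then linearity of $T$ and its continuity on $L^2$ give
\[
\langle T\mathfrak f(x),e\rangle_{\mathfrak H}=T(f^e)(x).
\]
Moreover, the two defining inequalities \eqref{eq:test} hold for $f^e$ with the same constant, since $|f^e(x)|\le|\mathfrak f(x)|_{\mathfrak H}$ and $|f^e(x)-f^e(x')|\le|\mathfrak f(x)-\mathfrak f(x')|_{\mathfrak H}$. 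Finally, $\int f^e=\langle\int\mathfrak f,e\rangle=0$. Hence
\[
\|f^e\|_{{M}(\beta,\gamma,r,x_0)}\le\|\mathfrak f\|_{{M}(\beta,\gamma,r,x_0;\mathfrak H)}
\]
uniformly in $e$.

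Next, apply the scalar theorem to each $f^e$. This gives, with $C$ independent of $e,\beta,\gamma,r,x_0$,
\[
|T(f^e)(x)|\le C\|T\|_*\|\mathfrak f\|_{M}\frac{r^\gamma}{(r+|x-x_0|)^{1+\gamma}}
\]
and the corresponding H\"older estimate for $|T(f^e)(x)-T(f^e)(x')|$ when $|x-x'|\le (r+|x-x_0|)/2$. Taking the supremum over unit $e$ turns the left-hand sides into $|T\mathfrak f(x)|_{\mathfrak H}$ and $|T\mathfrak f(x)-T\mathfrak f(x')|_{\mathfrak H}$. Because the bounds are pointwise in $(x,x')$, no measurability issue arises in passing to the supremum. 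The cancellation $\int T\mathfrak f=0$ follows from $\langle\int T\mathfrak f,e\rangle=\int T(f^e)=0$, which is the consequence of $T^*(1)=0$ already contained in the scalar result. This closes the argument.

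For completeness, I would also indicate the scalar proof, since it is where the work lies. By translation and dilation invariance of conditions (i)--(v) (they hold with the same constants for the rescaled kernel $rK(x_0+r\cdot,x_0+r\cdot)$), it suffices to take $r=1$, $x_0=0$.

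\emph{Size estimate, $|x|\le 1$.} Split $\mathfrak f=\mathfrak f\eta+\mathfrak f(1-\eta)$ with a smooth cutoff $\eta$ equal to $1$ near $x$. Write $T(\mathfrak f\eta)(x)=\int K(x,y)(\mathfrak f(y)-\mathfrak f(x))\eta(y)\,dy+\mathfrak f(x)T\eta(x)$. The first term is bounded using (i) and the H\"older regularity of $\mathfrak f$. The second is bounded because $T\eta$ is uniformly bounded, which uses $T1=0$ together with the $L^2$ bound.

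\emph{Size estimate, $|x|>1$.} Split the $y$-integral into three regions $\{|y-x|<|x|/2\}$, $\{|y|<|x|/2\}$ and the remainder. On $\{|y|<|x|/2\}$, use $\int\mathfrak f=0$ to replace $K(x,y)$ by $K(x,y)-K(x,0)$ and apply (iii); this yields $|x|^{-2}\lesssim|x|^{-1-\gamma}$ since $\gamma<1$. The region near $x$ is handled by the same cancellation device as in the case $|x|\le1$.

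\emph{Regularity estimate.} The difference $T\mathfrak f(x)-T\mathfrak f(x')$ is treated the same way, now using (ii) and (iv). The restriction $\beta<1$ is what makes the integrals converge.

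The main obstacle is this scalar bookkeeping, in particular the regularity estimate near the diagonal, where the $T1=0$ cancellation has to be combined with the H\"older bound on $\mathfrak f$. Since this part is standard, I would cite it rather than reprove it and present only the duality reduction in detail.
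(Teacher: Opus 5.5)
Your proof is correct, and it differs from the paper's at exactly the point that matters. The paper also reduces to the scalar case, but coordinatewise. It writes $\mathfrak f=\sum_i f_ih_i$, applies the scalar bound to each $f_i$, and sums in $\ell^2$. This relies on the asserted identity $\|\mathfrak f\|_{M(\beta,\gamma,r,x_0;\mathfrak H)}=\bigl(\sum_i\|f_i\|^2_{M(\beta,\gamma,r,x_0)}\bigr)^{1/2}$. You instead test against unit vectors $e$ and take a supremum. The $M$-norm is the best constant in two pointwise inequalities, so your route gives $\|f^e\|_M\le\|\mathfrak f\|_M$ directly. It then recovers $|T\mathfrak f(x)|_{\mathfrak H}$ and $|T\mathfrak f(x)-T\mathfrak f(x')|_{\mathfrak H}$ as suprema over $e$, which yields precisely the stated bound.

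The paper's identity, by contrast, only holds as $\|\mathfrak f\|_M\le\bigl(\sum_i\|f_i\|_M^2\bigr)^{1/2}$. The last step of its chain needs the reverse inequality, and that fails. Take $r=1$, $x_0=0$ and $f_i(x)=(1+2i)^{-1-\gamma-\beta}\psi(x-2i)$, with $\psi$ a mean-zero bump supported in $[-\tfrac14,\tfrac14]$. The H\"older condition forces $\|f_i\|_M\approx1$ for every $i$. Yet $\|\sum_i f_ih_i\|_M\lesssim1$, because at each point at most one coordinate is nonzero. So the coordinatewise argument only gives $\|T\mathfrak f\|_M\le C\|T\|_*\bigl(\sum_i\|f_i\|_M^2\bigr)^{1/2}$, which may be infinite. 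Your duality step is what actually proves the proposition.

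One technical point needs correcting. The identity $\langle T\mathfrak f(x),e\rangle=T(f^e)(x)$ comes from $L^2$-convergence: $\mathfrak f\in L^2(\mathbb R,\mathfrak H)$ by the size condition, and $\sum_i\|Tf_i\|_2^2\le\|T\|^2_{L^2\to L^2}\|\mathfrak f\|^2_{L^2(\mathbb R,\mathfrak H)}$. It therefore holds only for a.e.\ $x$, with an exceptional set that depends on $e$. The issue is this, not measurability. To fix it:
\begin{itemize}
\item take the supremum over a countable dense subset of the unit sphere of the separable space $\mathfrak H$;
\item this gives the size and H\"older bounds off a single null set;
\item then pass to the continuous representative of $T\mathfrak f$.
\end{itemize}

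There is also a small slip in your scalar sketch. The region $|y|<|x|/2$ contributes $|x|^{-2}\int_{|y|<|x|/2}(1+|y|)^{-\gamma}\,dy\lesssim|x|^{-1-\gamma}$, plus the term $|K(x,0)|\,\bigl|\int_{|y|\ge|x|/2}\mathfrak f\bigr|\lesssim|x|^{-1-\gamma}$. It does not contribute $|x|^{-2}$. The conclusion is unaffected.
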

    \begin{proof}     For the scalar case, this is  \cite[Lemma 2.9]{HLLW}, i.e. the theorem holds for ${M}   (\beta, \gamma, r, x_0 )$. For the $ \mathfrak{H} $-valued function $\mathfrak f  =\sum_{j=1}^\infty 
    f_j(x)h_j\in {M}   (\beta,
    \gamma, r, x_0;\mathfrak{H})$, we must have $ f_i \in {M}   (\beta, \gamma, r, x_0 ) $ for each $i$, and   $\|\mathfrak f\|_{{M}   (\beta, \gamma, r, x_0;\mathfrak{H})}=(\sum_i
   \|f_i\|_{{M}   (\beta, \gamma, r, x_0 )}^2)^\frac 12$ by definition. Because   $T\mathfrak f = \sum_{j=1}^\infty Tf_j h_j$, we see that
       \begin{equation*}  \begin{split}
    \|T(\mathfrak f)\|_{{M}   (\beta, \gamma, r, x_0;\mathfrak{H})}&=\left(\sum_i  \|T(f_i)\|_{{M}   (\beta, \gamma, r, x_0 )}^2\right)^\frac 12\\
    &\leq  C{ \|T\|_*} \left(\sum_i  \| f_i \|_{{M}   (\beta, \gamma, r, x_0 )}^2\right)^\frac 12 \\
    &=C{ \|T\|_*} \, \|  \mathfrak f \|_{{M}   (\beta, \gamma, r, x_0;\mathfrak{H})}.
          \end{split}  \end{equation*}
          The result follows.
      \end{proof}

    \begin{proof}[Proof of Proposition \ref{prop:discrete-Calderon}]
    Noting that for $x'\in I$,  $ \widetilde{ \rho}_ {{j}}(x'-u)-\widetilde{ \rho}_ {{j}}(x_I-u) \sim  2^{-\alpha N } \widetilde{ \rho}_ {{j}}(x'-u)$ in terms of the size and smoothness
condition,  we get
\begin{equation} \label{eq:R-alpha}
  R_{\alpha,N} '(x,u)\sim     2^{-\alpha N } \sum_{j } \widetilde{\varphi}_{j}* \widetilde{ \rho}_ {{j}}(x -u)
\end{equation}
by definition \eqref{eq:R-alpha-N},
where $\sim$ denotes the equivalence in terms of   the size and smoothness
conditions of $ R_{\alpha,N} '(x,u)$. It is { straightforward to verify} that the kernel $ R_{\alpha,N} '(x,u)$ satisfies conditions (i)-(iv) and the cancellation { condition}, e.g. if 
$|s|\approx 2^{-\alpha l  }$ for some $l\in \mathbb{Z}$,
then
\begin{equation*}  \begin{split}
    \sum_{j } |\widetilde{\varphi}_{j}* \widetilde{ \rho}_ {{j}}|(s)  \lesssim \sum_{j   }\frac {2^{-\alpha j\gamma  }}{(2^{-\alpha j  }+|s|)^{1+\gamma}} \leq\sum_{j > l  }\frac
    {2^{-\alpha j\gamma  }}{ |s| ^{1+\gamma}}+\sum_{j \leq l  } 2^{ \alpha j   } \approx \frac{2^{-\alpha l\gamma  }}{ |s| ^{1+\gamma}}+2^{ \alpha l   } \approx \frac
    1{ |s|}.
          \end{split}  \end{equation*}
 Similarly, $\mathcal R_{\alpha } $ has
the similar expression as \eqref {eq:R-alpha} with $\alpha$ instead of $2^{-\alpha N }   $ and satisfies the same conditions.

Hence,  applying Proposition \ref{prop:T-M},  we obtain that
\begin{equation}
   \| \mathcal R_{\alpha }    \|_{{M}   (\beta, \gamma, r, x_0;\mathfrak{H})}\lesssim \alpha  \leq \frac 13,
\end{equation}
 { provided} we choose $\alpha$   sufficiently small,
and then
  \begin{equation}
    \|R_{\alpha,N} '\|_{{M}   (\beta, \gamma, r, x_0;\mathfrak{H})} \lesssim   2^{-\alpha N } \leq \frac 13,
 \end{equation}
  if  we choose $N$  sufficiently large.
Therefore, $\mathcal{T} =id-R_{\alpha,N} '-\mathcal R_{\alpha } $ is invertible and its inverse $\mathcal{T}^{-1}$ is also bounded on ${M}   (\beta, \gamma, r,
x_0;\mathfrak{H})$, with a bound independent of $r,
x_0$.
Consequently, for
$\widetilde{\phi}_{j  } (  \cdot, x_I )$ given by \eqref{eq:phi-I},
$\mathcal{T}^{-1}(\widetilde{\phi}_{j  } (  \cdot, x_I ))$ is also in ${M}   (\beta, \gamma, 2^{-\alpha j}, x_I;\mathfrak{H})$. Denote it by $ \phi _ {j}(x,x_I)$. Also, $\mathcal{T}  $ is invertible and its 
inverse $\mathcal{T}^{-1}$ is also bounded on $  L^2(\mathbb{R},\mathfrak{H})$. Then applying $\mathcal{T}^{-1}$ to \eqref{eq:T-f}, we get
\begin{equation}\begin{split}
   \mathfrak   f(x)=&\mathcal{T}^{-1} \mathcal{T}\mathfrak f =\sum_{ j }\sum_{I} c_\alpha\mathfrak  |I||\widetilde{ \rho}_ {{j}}*  \mathfrak  f(x_I) |_{\mathfrak{H}} \widetilde{ \phi} _ {j}(x,x_I).
          \end{split}  \end{equation}
          Here  $ \|\phi _ {j}( \cdot,x_I)\|_{{M}   (\beta, \gamma, 2^{-\alpha j}, x_I;\mathfrak{H})}$ are uniformly bounded by Proposition \ref{prop:T-M}.
\end{proof}

   \begin{proof}[Proof of Proposition \ref{thm:Plancherel-Polya}] { Convolving}  the wavelet Calder\'on reproducing formula \eqref{eq:wavelet-Calderon}  with $ \psi_t$  { we obtain}
   \begin{equation} \label{eq:psi*f} \begin{split}
\psi_t *  \mathfrak   f(x)=& \sum_{ j }\sum_{I} \mathfrak f_{j;I} \psi_t* \widetilde{\phi} _ {j} (\cdot,x_I)(x).
          \end{split}  \end{equation}
          But   for $2^{-\alpha k } \leq t<2^{-\alpha(k -1)}$, we have the following almost orthogonality
estimate for $\widetilde{\phi} _ {j} (\cdot,x_I)$ and $\psi_t$ (cf.  \cite[Lemma 6]{HLS}),
          \begin{equation}   \label{eq:almost-orthogonality} \begin{split}
 \left |\psi_t*\widetilde{\phi} _ {j} (\cdot,x_I)(x)\right|_{\mathfrak{H}} \leq C 2^{-\alpha |k-j|\beta} \frac {2^{-\alpha  (j \wedge k)\gamma}}{(2^{-\alpha  (j \wedge k)}+|x-x_I|
 )^{  1+ \gamma}}.
          \end{split}  \end{equation}
          Let $   \mathfrak  T_{ 0 } := \big\{I ; \ell(I)= 2^{-\alpha( j+N)},  \frac { |x-x_{I }| }{2^{-\alpha(j \wedge k)   }}\leq 1    \big\} $, and for $ {l}\in \mathbb{N} $,
          let
   \begin{equation}\label{eq:T-li}  \begin{split}
     \mathfrak T_{ {l} } :=\left\{I ; \ell(I)= 2^{-\alpha( j+N)},2^{\alpha(l -1)}\leq \frac { |x-x_{I }| }{2^{-\alpha(j \wedge k)   }}\leq 2^{\alpha  l }  \right \}.
      \end{split}   \end{equation}Then $\bigcup_{I \in T_{  l} } I$ is contained in an interval $\widetilde{I}$ centered at $x$ with length
     $
      \left | \bigcup_{I \in   \mathfrak  T_{  l} } I\right| \lesssim 2^{ -\alpha(j \wedge k)    } 2^{\alpha  l } .
  $
       Therefore,
           \begin{equation}  \label{eq:sum1}\begin{split} 
  \sum_{\ell(I)= 2^{-\alpha( j+N)}} & \frac {2^{-\alpha (j \wedge k)\gamma}}{(2^{-\alpha (j \wedge k) }+|x-x_I|   )^{  1+ \gamma}}c_\alpha\ell(I) |\widetilde{ \rho}_ {{j}}*  \mathfrak  f(x_I)
  |_{\mathfrak{H}}\\
  \lesssim& \sum_{ {l}=0 }^\infty  2^{-\alpha  l (1+ \gamma) }  2^{ \alpha (j \wedge k -j)}\sum_{I\in    \mathfrak  T_{  l} } |\widetilde{ \rho}_ j*  \mathfrak  f(x_I)  |_{\mathfrak{H}} \\
        \lesssim   &\sum_{ {l}=0 }^\infty 2^{-\alpha  l (1+ \gamma) }  2^{ \alpha (j \wedge k -j)}\left(\sum_{I\in    \mathfrak  T_{  l} } |\widetilde{ \rho}_ j*  \mathfrak  f(x_I)  
        |^r_{\mathfrak{H}}\right)^{\frac
        1r}    \\
       = &\sum_{ {l}=0 }^\infty 2^{-\alpha  l (1+ \gamma) }  2^{ \alpha (j \wedge k -j)}\left(\frac 1{\ell(I)}\int_{\mathbb{R}}\sum_{I\in    \mathfrak  T_{  l} } |\widetilde{ \rho}_ j*  \mathfrak  f(x_I)
       |^r_{\mathfrak{H}}\chi_I \right\}^{\frac 1r} \\
       \lesssim  &\sum_{ {l}=0 }^\infty 2^{-\alpha  l (1+ \gamma - \frac {1}r) }  2^{ \alpha (j \wedge k -j)\left(1  - \frac {1}r\right)}
        \left\{M  \left(\sum_{\ell(I)= 2^{-\alpha( j+N)}}
       |\widetilde{ \rho}_ j*  \mathfrak  f(x_I)  |^r_{\mathfrak{H}}\chi_I\right) (x^* ) \right\}^{\frac 1r} \end{split}  \end{equation}
  for any $|x^*-x|\leq 2C_02^{-\alpha( k +N) }$ and $0<r<1$. In the last inequality,  we replace the integral by { the}  average over the interval $C_0\widetilde{I}$ and then by the maximal function. 
  The implicit constants above  { depend} on  $\alpha, N,C_0$.

 Note that the summation in \eqref{eq:sum1} over $ {l} $ converges if we choose  $1>r>  \frac 1 { 1+ \gamma}    $. If substituting \eqref{eq:almost-orthogonality} and \eqref{eq:sum1} into \eqref {eq:psi*f},
 we get for given   dyadic interval $J$
with $\ell(J)=2^{-\alpha( k +N) }$,
   \begin{equation*}  \begin{split}\sup_{v\in C_0J} |\psi_t* \mathfrak   f(v) |_{\mathfrak{H}}\chi_J(u) & \lesssim    \sum_{ j }   2^{-\alpha |k-j|\beta}2^{ \alpha (j \wedge k
   -j)\left(1
   - \frac {1}r\right)}
   \left\{M  \left(\sum_{\ell(I)= 2^{-\alpha( j+N)}}   |\widetilde{ \rho}_ j*  \mathfrak  f(x_I)  |^r_{\mathfrak{H}}\chi_I\right) (u )\right\}^{\frac 1r}\chi_J(u) \end{split}  \end{equation*}
   for any $u \in J$.
 { Using} H\"older inequality as in the proof of { the} discrete Young inequality $\|a*b\|_{l^2(\mathbb{Z})}\leq \|a \|_{l^1(\mathbb{Z})} \| b\|_{l^2(\mathbb{Z})}$, and { the fact 
 that} 
 \begin{equation*}
    \sum_k 2^{- \alpha |k-j|\beta}2^{ \alpha (j \wedge k -j)\left(1  - \frac {1}r\right)} <C<\infty
 \end{equation*}
 for some $C>0$ independent of $j$, we { obtain} 
   \begin{equation*}
    \begin{split}\sum_{ k }\sum_{J}\int_{2^{-\alpha k}}^{2^{-\alpha (k -1)  }}   \sup_{v\in C_0J} |\psi_t* \mathfrak   f(v) |^2_{\mathfrak{H}}\chi_J (u) \frac { d {t}}{ {t}}  &
    \lesssim \sum_{ j }
   \left\{M  \left(\sum_{ I }\inf_{x\in I}  |\widetilde{ \rho}_ j*  \mathfrak   f(x)  |^r_{\mathfrak{H}}\chi_I\right) (u)\right\}^{\frac 2r}  ,
   \end{split}  \end{equation*} where the summations are taken over   dyadic interval $J$  and $I$
with $\ell(J)=2^{-\alpha( k +N) }$ and $\ell(I)=2^{-\alpha( j +N) }$, respectively.
Then
   \begin{equation*}
    \begin{split}\left\|\left(\sum_{ k }\sum_{j\in \mathbb{Z}}\int_{2^{-\alpha k}}^{2^{-\alpha (k -1)  }}  \sup_{v\in C_0J} |\psi_t* \mathfrak   f(v) |^2_{\mathfrak{H}}\chi_J  \frac { d {t}}{ {t}}
    \right)^{\frac 12}\right\|_{{ L^1}}  &  \lesssim \left\|\left(\sum_{ j }
   \left\{M  \left(\sum_{I}\inf_{x\in I}   |\widetilde{ \rho}_ j*  \mathfrak  f(x)  |^r_{\mathfrak{H}}\chi_I\right) \right\}^{\frac 2r} \right)^{\frac r 2}
   \right\|_{{{ L^\frac 1r}}  }^\frac 1r\\
   &  \lesssim \left\|\left(\sum_{ j }
   \sum_{I}\inf_{x\in I}   |\widetilde{ \rho}_ j*  \mathfrak  f(x)  |^2_{\mathfrak{H}}\chi_I   \right)^{\frac r 2}  \right\|_{{{ L^\frac 1r}}  }^\frac 1r\\
   & =\left\| \left(\sum_{ j }
   \sum_{I}\inf_{x\in I}   |\widetilde{ \rho}_ j*  \mathfrak  f(x)  |^2_{\mathfrak{H}}\chi_I      \right)^{\frac 12} \right\|_{{ L^1}} 
   \end{split}  \end{equation*}
      by the Fefferman-Stein vector-valued maximal function inequality \cite{Gra}
for $1/r> 1$. We { derive the required} conclusion by
 \begin{equation*}
    \begin{split}\inf_{x\in I}   |\widetilde{ \rho}_ j*  \mathfrak  f(x)  |^2_{\mathfrak{H}}&=\frac 1{c_\alpha^2}\inf_{x\in I}  \left | \int_{2^{-\alpha j} }^{2^{-\alpha (j-1) }} \rho_ { t } *  \mathfrak  f(x) 
    \frac { d
    {t}}{
    {t}}\right |^2_{\mathfrak{H}} \leq\frac 1{c_\alpha^2}\inf_{x\in I}\left\{\int_{2^{-\alpha j} }^{2^{-\alpha (j-1) }}  \left | \rho_ { t } *  \mathfrak  f(x)\right |_{\mathfrak{H}} \frac { d {t}}{
    {t}}\right\}^2\\
    &\leq  \frac 1{c_\alpha}\inf_{x\in I} \int_{2^{-\alpha j} }^{2^{-\alpha (j-1) }} \left | \rho_ { t } *  \mathfrak  f(x)\right |^2_{\mathfrak{H}} \frac { d {t}}{ {t}},
   \end{split}  \end{equation*}
 using the Minkowski  inequality and  the Cauchy-Schwarz  inequality.
\end{proof}

\end{document}